\newtheorem{thm}{Theorem}[section]
\newtheorem{ex}[thm]{Example}
\newtheorem{cor}[thm]{Corollary}
\newtheorem{lem}[thm]{Lemma}
\newtheorem{prop}[thm]{Proposition}
\newtheorem{defn}[thm]{Definition}
\newtheorem{conj}[thm]{Conjecture}
\newtheorem{rem}[thm]{Remark}
\providecommand{\norm}[1]{\left\| #1 \right\|}
\newcommand{\mb}{\mathbf}
\newcommand{\mh}{\mathbb}
\newcommand{\mr}{\mathrm}
\newcommand{\mc}{\mathcal}
\newcommand{\ts}{\textstyle}
\newcommand{\ds}{\displaystyle}
\begin{document}

\title[Discrete series and formal degrees]
{Discrete series characters for affine Hecke algebras
and their formal degrees}
\author{Eric Opdam}
\address{Korteweg de Vries Institute for Mathematics\\
University of Amsterdam\\
Plantage Muidergracht 24\\
1018TV Amsterdam\\
The Netherlands\\
email: opdam@science.uva.nl}
\author{Maarten Solleveld}
\address{Mathematisches Institut\\
Universit\"at G\"otttingen\\
Bunsenstra\ss e \ 3-5\\
37073 G\"ottingen\\
Deutschland\\
email: maarten.solleveld@mathematik.uni-goettingen.de}
\date{\today}
\thanks{We thank Gert Heckman, N. Christopher Phillips
and Mark Reeder for discussions and advice}
\keywords{Affine Hecke algebra, discrete series character, formal dimension}
\subjclass[2000]{Primary 20C08; Secondary 22D25, 43A30}

\begin{abstract}
We introduce the \emph{generic central character} of an irreducible discrete
series representation of an affine Hecke algebra. Using this invariant we give a
new classification of
the irreducible discrete series characters for all abstract affine Hecke algebras
(except for the types $E_{6,7,8}^{(1)}$) with \emph{arbitrary positive parameters}
and we prove an explicit product formula for their formal degrees (in all cases).
\end{abstract}

\maketitle

\tableofcontents

\section{Introduction}

Considering the role of affine Hecke algebras in representation theory
\cite{IwMa}, \cite{Bo}, \cite{BZ}, \cite{BM0}, \cite{BM}, \cite{Mo1}, \cite{Mo2},
\cite{Lu4}, \cite{Re}, \cite{BHK}, \cite{BK1}
or in the theory of integrable models \cite{Ch2}, \cite{HO}, \cite{Ma2}, \cite{EOS}
it is natural to ask for the description of their (algebraic) representation theory
and for the properties of their representations in relation to harmonic analysis
(e.g. unitarity, temperedness, formal degrees).
An analytic approach to such questions (based on the spectral theory of
$C^*$-algebras) was first proposed by Matsumoto \cite{Mat}.
This approach to affine Hecke algebras gives rise to a program in the spirit of
Harish-Chandra's work on the harmonic analysis on locally compact groups arising
from reductive groups (for a concise account of Harish-Chandra's work in the
$p$-adic case see \cite{W}).
The main challenges to surmount on this classical route
designed to describe the tempered spectrum and the Plancherel isomorphism
(the ``philosophy of cusp forms'')
are related to understanding the basic building
blocks, the so-called discrete series characters. The most fundamental problems are:
\begin{itemize}
\item[(i)] Classify the irreducible discrete series characters.
\item[(ii)] Calculate their formal degrees.
\end{itemize}
In the present paper we will
essentially\footnote{Our solution of (i) does not cover the cases
$E_n$ ($n=6,7,8$), hence in these cases we rely on \cite{KL}.
Our solution of (ii) is complete
only up to the determination of a rational constant factor
for each continuous family (in the sense to be explained
below) of discrete series characters.}
solve both these problems
for general abstract semisimple affine Hecke algebras with
\emph{arbitrary} positive parameters.

The study of harmonic analysis in this context requires
the introduction of classical notions borrowed from Harish-Chandra's seminal
work (e.g. the Schwartz completion, temperedness, parabolic induction) for
abstract affine Hecke algebras.
It was shown in \cite{DeOp1} that the above program can
indeed be carried out.
In view of \cite{DeOp1} (also see \cite{Opd2})
our solution of (i) can in fact be amplified
to yield the classification of all irreducible tempered characters
of the Hecke algebra. The explicit Plancherel isomorphism
can be reconstructed by (ii) and \cite[Theorem 4.43]{Opd1}.

Let us describe the methods used in this paper.
The new tool in this study of these questions for abstract affine Hecke
algebras is derived
from the presence of a space of continuous parameters with respect to which the
harmonic analysis naturally deforms. Observe that
this aspect is missing in the traditional context of the harmonic analysis on
reductive groups.
The main message of this paper is that parameter deformation is a powerful tool
for solving the questions (i) and (ii), especially (but not exclusively)
for non-simply laced root data. There are in fact two
other pillars on which our method rests, based on results from
\cite{Opd1} and \cite{OpdSol}. We will now give a more detailed account
of these matters.

An affine Hecke algebra $\mc{H}=\mc{H}(\mc{R},q)$
is defined in terms of a based root datum
\[
\mc{R}= (X, R_0 ,Y ,R_0^\vee ,F_0 )
\]
and a parameter function $q\in\mc{Q}=\mc{Q}(\mc{R})$. By this we mean
that $q$ is a (positive) function on the set
$S$ of simple affine reflections in the affine Weyl group $\mathbb{Z} R_0 \rtimes W_0$,
such that $q(s) = q(s')$ whenever $s$ and $s'$ are conjugate in the extended
Weyl group $W = X \rtimes W_0$. The deformation method is based on regarding
the affine Hecke algebras $\mc{H}(\mc{R},q)$
with fixed $\mc{R}$ as a continuous field of algebras, depending on the
parameter $q$. This enables us to transfer properties that hold for
$q \equiv 1$ or for \emph{generic} $q$ to \emph{arbitrary} positive parameters.

We will prove that every irreducible discrete series character
$\delta_{0}$ of $\mc{H}(\mc{R},q_0)$ is the evaluation at $q_0$ of a
\emph{unique} maximal continuous family $q\to\delta_q$ of discrete series
characters of $\mc{H}(\mc{R},q)$ defined in a suitable open neighborhood of $q_0$.
The continuity of the
family means that the corresponding family of primitive central idempotents
$q\to e_\delta(q)\in\mc{S}$ (the Schwartz completion of $\mc{H}(\mc{R},q)$, a Fr\'echet
algebra which is independent of $q$ as a Fr\'echet space) is continuous in $q$
with respect to the Fr\'echet topology of $\mc{S}$.
The maximal domain of definition of the family $q \to \delta_q$ is described in
terms of the zero locus of an explicit rational function on $\mc{Q}$.
This reduces the classification of the discrete series of
$\mc{H}(\mc{R},q)$ for \emph{arbitrary} (possibly special) positive parameters to
that for \emph{generic} positive parameters, a problem that is considerably
easier than the general case.

Let us take the discussion one step further to see how this idea
leads to a practical strategy for the classification of the discrete series characters.
For this it is crucial to understand how the ``central characters'' behave under
the unique continuous deformation $q\to\delta_q$ of an irreducible discrete series
character $\delta_0$. Since it is known that the set of discrete
series can be nonempty only if $R_0$ spans $X \otimes_{\mathbb{Z}}\mathbb{Q}$, we
assume
this throughout the paper. To enable the use of analytic techniques we need an
involution * and a positive trace $\tau$ on our affine Hecke algebras
$\mc{H}(\mc{R},q)$. A natural choice is available, provided that all parameters are
positive (another assumption we make throughout this paper).
Then $\mc{H}(\mc{R},q)$ is in fact a Hilbert
algebra with tracial state $\tau$. The spectral decomposition of $\tau$ defines
a positive measure $\mu_{Pl}$ (called the Plancherel measure) on the set of
irreducible representations of $\mc{H}(\mc{R},q)$, cf. \cite{Opd1, DeOp1}.
More or less by definition an irreducible representation $\pi$ belongs to the
discrete series if $\mu_{Pl}(\{ \pi \})>0$. It is known that this condition
is equivalent
to the statement that $\pi$ is an irreducible projective representation of
$\mc{S}(\mc{R},q)$, the Schwartz completion of $\mc{H}(\mc{R},q)$. In
particular $\pi$ is an irreducible discrete series representation iff $\pi$ is
afforded by a primitive central idempotent $e_\pi\in\mc{S}(\mc{R},q)$ of finite
rank. Thus the definition of continuity of a family of irreducible
characters in the preceding
paragraph makes sense for discrete series characters only.
We denote the finite set of irreducible discrete series characters
of $\mc{H}(\mc{R},q)$ by $\Delta (\mc{R},q)$.

A cornerstone in the spectral theory of the affine Hecke algebra is formed by
Bernstein's classical construction of a large commutative subalgebra
$\mc{A}\subset\mc{H}(\mc{R},q)$
isomorphic to the group algebra $\mathbb{C}[X]$. It follows from
this construction that the center of $\mc{H}(\mc{R},q)$
equals $\mc{A}^{W_0} \cong \mathbb{C}[X]^{W_0}$. Therefore we have a central
character map
\begin{equation}
cc_q : \operatorname{Irr}(\mc{H}(\mc{R},q))\to W_0\backslash T
\end{equation}
(where $T$ is complex torus $\textup{Hom}(X,\mathbb{C}^\times)$) which
is an invariant in the sense that this map is constant on equivalence classes
of irreducible representations.

It was shown by ``residue calculus'' \cite[Lemma 3.31]{Opd1} that a given orbit
$W_0t\in W_0\backslash T$ is the the central character of a discrete series
representation iff $W_0t$ is a $W_0$-orbit of
so-called \emph{residual points of $T$}. These residual points
are defined in terms of the poles and zeros of an explicit rational differential
form on $T$ (see Definition \ref{def:respt}), and they have been classified
completely. They depend on a pair $(\mc{R},q)$ consisting of a (semisimple) root
datum $\mc{R}$ and a parameter $q\in\mc{Q}$.
In fact, given a semisimple root datum $\mc{R}$ there exist finitely many
$\mc{Q}$-valued points $r$ of $T$, called \emph{generic residual points}, such that
on a Zariski-open set of the parameter space $\mc{Q}$
the evaluation $r(q)\in T$ is a residual point for $(\mc{R},q)$.
Moreover, for every $q_0\in\mc{Q}(\mc{R})$ and every residual point $r_0$ of
$(\mc{R},q_0)$ there exists at least one generic residual point $r$
such that $r_0=r(q_0)$.

For fixed $q_0\in\mc{Q}$ these techniques do in general not shine any further light
on the cardinality of $\Delta(\mc{R},q_0)$. The problem is a well known difficulty
in representation theory: the central character invariant $cc_{q_0}(\delta_0)$
is not strong enough to separate the equivalence classes of irreducible
(discrete series) representations.
But this is precisely the point where the deformation method is helpful.
The idea is that at generic parameters the
separation of the irreducible discrete series characters by their central character is much
better (almost perfect in fact, see below) than for special parameters. Therefore
we can improve the quality of the central character invariant for
$\delta_0\in\Delta(\mc{R},q_0)$ by considering the family of central
characters $q\to cc_q (\delta_q)$ of the unique continuous deformation $q\to\delta_q$
of $\delta_0$ as described above.
It turns out that this family of central characters is in fact
a $W_0$-orbit $W_0r$ of generic residual points.
We  call this the \emph{generic central character} $gcc(\delta_0)=W_0r$
of $\delta_0$.
\\[1.5mm]

Our proof of this fact requires various techniques.
First of all the existence and uniqueness of the germ of continuous deformations
of a discrete series character depends in an essential way on the continuous
field of pre-$C^*$-algebras
$\mc{S}(\mc R ,q)$, where $q$ runs through $\mc{Q}$ and $\mc{S}(\mc{R},q)$
is the Schwartz completion of $\mc{H}(\mc{R},q)$ (see \cite{DeOp1}).
Pick $\delta_0 \in \Delta(\mc{R},q_0)$ with central character $cc_{q_0}(\delta_0)=
W_0 r_0 \in W_0 \backslash T$. With analytic techniques we prove that there exists
an open neighborhood $U \times V \subset Q \times W_0 \backslash T$ of $(q_0,W_0 r_0)$
such that (see Lemma \ref{lem:est}, Theorem \ref{thm:def} and Theorem \ref{thm:cont}):
\begin{itemize}
\item there exists a unique continuous family
$U \ni q \to \delta_q \in \Delta (\mc R ,q)$ with $\delta_{q_0} = \delta_0$,
\item the cardinality of $\{\delta \in \Delta(\mc{R},q)\mid cc_q(\delta)\in V\}$
is independent of $q \in U$.
\end{itemize}
Next we consider the formal degree $\mu_{Pl} (\{ \delta_q \})$ of $\delta_q
\in \Delta (\mc R ,q)$. In \cite{OpdSol} we proved an ``index formula'' for the formal
degree, expressing $\mu_{Pl} (\{ \delta_q \})$ as alternating sum of formal degrees
of characters of certain finite dimensional involutive subalgebras of $\mc H(\mc{R},q)$.
It follows that $\mu_{Pl}(\{ \delta_q \})$
is a rational function of $q \in U$, with rational coefficients.
On the other hand using the residue calculus \cite{Opd1} we derive an explicit
factorization
\begin{equation}\label{eq:ii}
\mu_{Pl} (\{ \delta_q \}) = d_\delta m_{W_0r} (q) \qquad q \in U \,,
\end{equation}
with $d_\delta \in \mathbb{Q}^\times$ independent of $q$ and
$m_{W_0r} (q)$ depending only on $q$ and on the central character
$cc_q (\delta_q) = W_0 r(q)$ (for the definition of $m$ see (\ref{eq:m})).
Using the classification of generic residual
points this enables us to prove that $q \to cc_q (\delta_q )$ is not only continuous
but in fact (in a neighborhood of $q_0$) of the form $q\to W_0r(q)$ for a unique
orbit of generic residual points $gcc(\delta_0)=W_0r$, the generic
central character of $\delta_0$ alluded to above.
We can now write (\ref{eq:ii}) in the form (see Theorem \ref{thm:main2}):
\begin{equation}\label{eq:iibis}
\mu_{Pl} (\{ \delta_q \}) = d_\delta m_{gcc(\delta)}(q) \qquad q \in U \,,
\end{equation}
where $m_{gcc(\delta)}$ is an explicit rational function with rational
coefficients on $\mc{Q}$, which is regular on $\mc{Q}$ and whose zero locus
is a finite union of hyperplanes in $\mc{Q}$ (viewed as a vector space).

The incidence space $\mathcal{O}(\mc{R})$ consisting of pairs $(W_0r,q)$
with $W_0r$ an orbit of generic residual points and $q\in \mc{Q}$ such that
$r(q)$ is a residual point for $(\mc{R},q)$ can alternatively be described as
$\mathcal{O}(\mc{R})=\{(W_0r,q)\mid m_{W_0r}(q)\not=0\}$.
Thus $\mathcal{O}(\mc{R})$ is a disjoint union of copies of certain
convex open cones in $\mc{Q}$. The above deformation arguments culminate in
Theorem \ref{thm:main1} stating that the map
\begin{align}
GCC:\coprod_{q\in\mc{Q}(\mc{R})}\Delta(\mc{R},q)&\to\mathcal{O(\mc{R})}\\
\nonumber\Delta(\mc{R},q)\ni\delta&\to(gcc(\delta),q)
\end{align}
gives $\Delta(\mc{R}):=\coprod_{q\in\mc{Q}(\mc{R})}\Delta(\mc{R},q)$ the
structure of a locally constant sheaf of finite sets on $\mathcal{O}(\mc{R})$.
Since every component of $\mathcal{O}(\mc{R})$ is contractible this result
reduces the classification of the set $\Delta(\mc{R})$ to the computation of the
multiplicities of the various components of $\mathcal{O}(\mc{R})$
(i.e. the cardinalities of the fibers of the map $GCC$).

One more ingredient is of great technical importance.
Lusztig \cite{Lus2} proved fundamental reduction theorems
which reduce the classification of irreducible representations of
affine Hecke algebras effectively to the the classification of
irreducible representations of degenerate affine Hecke algebras
(extended by a group acting through diagram automorphisms, in general).
In this paper we make frequent use of a version of these results
adapted to suit the situation of arbitrary positive parameters
(see Theorem \ref{thm:red1} and Theorem \ref{thm:red2}). These
reductions respect the notions of temperedness and discreteness
of a representation. Using this type of results it suffices
to compute the multiplicities of the \emph{positive} components
of $\mathcal{O}(\mc{R})$ or equivalently, to compute the
multiplicities of the corresponding components in the parameter
space of a degenerate affine Hecke algebra
(possibly extended by a group acting through of diagram
automorphisms).

The results are as follows.
If $R_0$ is simply laced then the generic central character map itself
does not contain new information compared to the ordinary central character.
However with a small enhancement the generic central character map gives
a complete invariant for the discrete series of $D_n$ as well, using that
the degenerate affine Hecke algebra of type $D_n$ twisted by a diagram
involution is a specialization of the degenerate affine Hecke algebra
of type $B_n$. With this enhancement understood we can state that
the generic central character is a complete invariant for the irreducible discrete
series characters of a degenerate affine Hecke algebra associated with a
simple root system $R_0$, except when $R_0$ is of type $E_6, E_7, E_8$ or
$F_4$. In the $F_4$-case with both parameters unequal to zero there exist
precisely two irreducible discrete series characters which have the same
generic central character.

Our solution to problem (i) is listed in Sections \ref{sec:dsH} and
\ref{sec:ds}. This covers essentially all cases except type $E_n$ ($n=6,\,7,\,8$)
(in which cases we rely on \cite{KL} for the classification).
In this classification the irreducible discrete series characters are parametrized
in terms of their generic central character.
The solution to problem (ii) is given by the product formula (\ref{eq:iibis})
(see Theorem \ref{thm:main2}) which expresses the formal degree of $\delta_q$ explicitly
as a rational function with rational coefficients on the maximal domain
$U_\delta\subset\mc{Q}$ to which $\delta_q$ extends as a continuous family of
irreducible discrete series characters ($U_\delta$ is the interior of an
explicitly known convex polyhedral cone).
At present we do not know how to compute the rational numbers $d_\delta$ for
each continuous family so our solution is incomplete at this point.
\\[1.5mm]

Let us compare our results with the existing literature.
An important special case arises when the parameter function $q$ is constant on $S$,
which happens for example when the root system $R_0$ is irreducible and simply laced.
In this case all irreducible representations of $\mc H (\mc R ,q)$ (not only the discrete
series) have been classified by Kazhdan and Lusztig \cite{KL}. This classification
is essentially independent of $q \in \mathbb{C}^\times$, except for a few "bad" roots of
unity. This work of Kazhdan and Lusztig is of course much more than just a classification
of irreducible characters, it actually gives a geometric construction of standard
modules of the Hecke algebra for which one can deduce detailed information on the
internal structure in geometric terms (e.g. Green functions).
The Kazhdan-Lusztig parametrization yields the classification of
the tempered and the discrete series characters too.

Next Lusztig \cite{LuCL} has classified the irreducibles of
``geometric'' graded affine Hecke algebras (with certain unequal
parameters) which arise from a cuspidal local system on a
unipotent orbit of a Levi subgroup of a given almost simple simply
connected complex group ${}^LG$. In \cite{Lus3} these results were
refined to include a classification of tempered and discrete
series irreducible modules of the geometric graded Hecke algebras.
In \cite{Lu4} it is shown that such graded affine Hecke algebras
arise as completions of ``geometric'' affine Hecke algebras (with
certain unequal parameters) formally associated to the above
geometric data. On the other hand, let $k$ be a $p$-adic field and
let $G$ be the group of $k$-rational points of a split adjoint
simple group $\mathbf{G}$ over $k$ such that ${}^LG$ is the
connected component of its Langlands dual group. In \cite{Lu4} the
explicit list of unipotent ``arithmetic'' affine Hecke algebras is
given, i.e. affine Hecke algebras occurring as the Hecke algebra
of a type (in the sense of \cite{BK1}) for a $G$-inertial
equivalence class of a unipotent supercuspidal pair $(L,\sigma)$
(also see \cite{Mo1}, \cite{Mo2}). Remarkably, a case-by-case
analysis in \cite{Lu4} shows that the geometric affine Hecke
algebras associated with ${}^LG$ precisely match the unipotent
arithmetic affine Hecke algebras arising from $G$. More generally
such results hold if $\mathbf{G}$ is only assumed to be split over an
unramified extension of $k$ \cite{Lu4}.

The geometric parameters in terms of which Lusztig \cite{LuCL}, \cite{Lus3} classifies
the irreducible (tempered, discrete series) modules over geometric graded affine Hecke
algebras are rather complicated. Our present direct approach, based on
deformations in the harmonic analysis of ``arithmetic'' affine Hecke algebras,
gives different and in some sense complementary information (e.g. formal degrees).
We refer to \cite{Blo} for examples of affine Hecke algebras arising as Hecke algebras
of more general types. We refer to \cite{Lu5} for results and conjectures on the
theory of Kazhdan-Lusztig bases of abstract Hecke algebras with unequal parameters.

The techniques in this paper do not give an explicit construction of the discrete
series representations. In this direction it is interesting to mention Syu Kato's
geometric construction \cite{Kat2} of algebraic families of representations of
$\mc H (C_n^{(1)},q)$. One would like to understand how Kato's geometric model
relates to our continuous families of discrete series characters,
which are constructed by analytic methods.
\section{Preliminaries and notations}

\subsection{Affine Hecke algebras}

\subsubsection{Root data and affine Weyl groups}
\label{subsub:root}

Suppose we are given lattices $X,Y$ in perfect duality
$\langle\cdot,\cdot\rangle:X\times Y\to \mathbb{Z}$, and
finite subsets $R_0\subset X$ and $R_0^\vee\subset Y$ with a
given a bijection $\vee: R_0\to R_0^\vee$. Define endomorphisms
$r_{\alpha^\vee}:X\to X$ by $r_{\alpha^\vee}(x)=x-x(\alpha^\vee)\alpha$ and
$r_{\alpha}:Y\to Y$ by $r_{\alpha}(y)=y-\alpha(y)\alpha^\vee$. Then
$(R_0,X,R_0^\vee,Y)$ is called a root datum if
\index{X@$X$, lattice}
\index{Y@$Y$, dual lattice of $X$}
\index{R0@$R_0$, reduced integral root system}
\index{R0@$R_0^\vee$, dual root system}
\index{R2@$\mc R$, based root datum}
\begin{enumerate}
\item for all $\alpha\in R_0$ we have $\alpha(\alpha^\vee)=2$.
\item for all $\alpha\in R_0$ we have $r_{\alpha^\vee}(R_0)\subset R_0$
and $r_{\alpha}(R_0^\vee)\subset R_0^\vee$.
\end{enumerate}
As is well known, it follows that $R_0$ is a root system in
the vector space spanned by the elements of $R_0$. A \emph{based root datum}
$\mc{R}=(X,R_0,Y,R_0^\vee,F_0)$ consists of a root datum with a basis
$F_0\subset R_0$ of simple roots.
\index{F0@$F_0$,basis of $R_0$}

The (extended) affine Weyl group of $\mc{R}$ is the group
$W=W_0\ltimes X$ (where $W_0=W(R_0)$ is the Weyl group of $R_0$); it
\index{W@$W$, extended affine Weyl group of $\mc R$}
\index{W0@$W_0$, Weyl group of $R_0$}
naturally acts on $X$. We identify $Y\times \mathbb{Z}$ with the set
of affine linear, $\mathbb{Z}$-valued functions on $X$ (in this context we usually
denote an affine root $a=(\alpha^\vee,n)$ additively as $a=\alpha^\vee+n$).
Then the affine Weyl group $W$ acts linearly on the set $Y\times\mathbb{Z}$
via the action $wf(x):=f(w^{-1}x)$.
The affine root system $R$ associated to $\mc{R}$ is the $W$-invariant set
$R:=R_0^\vee\times\mathbb{Z}\subset Y\times \mathbb{Z}$.
\index{R@$R$, affine root system}
The basis $F_0$ of simple roots induces a decomposition
$R=R_+\cup R_-$ with $R_+:=R_{0,+}^\vee\times\{0\}\cup
R_0^\vee \times\mathbb{N}$ and $R_-=-R_+$. It is easy to see that
$R_+$ has a basis of affine roots $F$ \index{F@$F$, basis of $R$}
consisting of the set $F_0^\vee\times \{0\}$
supplemented by the set of affine roots of the form
$a=(\alpha^\vee,1)$ where $\alpha^\vee\in R_0^\vee$ runs over the set
of minimal coroots. The set $F$ is called the set of affine simple
roots. Every $W$-orbit $Wa\subset R$ with $a\in R$ meets the
set $F$ of affine simple roots. We denote by $\tilde{F}$ the set
of intersections of the $W$-orbits in $R$ with $F$.

To an affine root $a=(\alpha^\vee,n)$ we associate an affine
reflection $r_a:X\to X$ \index{rz@$r_a$, affine reflection} by $r_a(x)=x-a(x)\alpha$.
We have $r_a\in W$ and $wr_aw^{-1}=r_{wa}$. Hence the subgroup
$W^a\subset W$ \index{Wa@$W^a$, affine Weyl group of $R$}
 generated by the affine reflections $r_a$ with $a\in R$ is normal.
The normal subgroup $W^a$ has a Coxeter presentation $(W^a,S)$ with
respect to the set of Coxeter generators $S=\{r_a\mid a\in F\}$.
\index{S@$S$, simple affine reflections in $W_a$}
We call $S$ the set of affine simple reflections and we write
$S_0 = S \cap W_0$. \index{S0@$S_0$ simple reflections in $W_0$}
We call two elements
$s,t\in S$ equivalent if they are conjugate to each other inside $W$.
We put $\tilde{S}$ for the set of equivalence classes in $S$.
The set $\tilde{S}$ is in natural bijection with the set $\tilde{F}$.
\index{St@$\tilde{S}$, equivalence classes in $S$}

We define a length function $l:W\to\mathbb{Z}_+$ by
\index{lz@$l$, length function of $W$}
$l(w):=|w^{-1}(R_-)\cap R_+|$. The set $\Omega:=\{w\in W\mid l(w)=0\}$
\index{1z@$\Omega$, length zero elements in $W$}
is a subgroup of $W$. Since $W^a$ acts simply transitively on
the set of positive systems of affine roots it is clear that
$W=W^a\rtimes \Omega$. Notice that if we put
$X^+=\{x\in X\mid x(\alpha^\vee)\geq 0\ \forall\alpha\in F_0\}$ and
\index{X+@$X^+$, positive cone in $X$}
$X^-=-X^+$ then the sublattice $Z=X^+\cap X^-\subset X$ is the center of $W$.
It is clear that $Z$ acts trivially on $R$ and in particular, we have
$Z\subset\Omega$.
\index{Z@$Z$, length zero translations in $W$}
\index{Q@$Q(R_0 )$, root lattice of $R_0$}
We have $\Omega \cong W/W^a \cong X/Q(R_0)$ where $Q(R_0)$ denotes
the root lattice of the root system $R_0$. It follows easily that
$\Omega/Z$ is finite. We call $\mc{R}$ semisimple if $Z=0$.
By the above $\mc{R}$ is semisimple iff $\Omega$ is finite.

\subsubsection{The generic affine Hecke algebra and its specializations}
\label{subsub:defn}

We introduce invertible, commuting indeterminates $v([s])$ where
$[s]\in \tilde{S}$. Let $\Lambda=\mathbb{C}[v([s])^{\pm 1}:[s]\in \tilde{S}]$.
If $s\in S$ then we define $v(s):=v([s])$. The following definition is in fact
a theorem (this result goes back to Tits):
\index{vs@$v(s)\in\Lambda$, indeterminate of $s \in S$}
\index{1l@$\Lambda \,, \mh C$-algebra generated by $v(s)^{\pm 1}$}
\begin{defn} \label{def:Heckealg}
There exists a unique associative, unital $\Lambda$-algebra
$\mc{H}_\Lambda(\mc{R})$ which has a $\Lambda$-basis $\{N_w\}_{w\in W}$
parametrized by $w\in W$, satisfying the  relations
\index{HLa@$\mc{H}_\Lambda$, generic affine Hecke algebra}
\begin{enumerate}
\item $N_wN_{w^\prime}=N_{ww^\prime}$ for all $w,w^\prime\in W$
such that $l(ww^\prime)=l(w)+l(w^\prime)$.
\item $(N_s-v(s))(N_s+v(s)^{-1})=0$ for all $s\in S$.
\end{enumerate}
The algebra $\mc{H}_\Lambda = \mc{H}_\Lambda(\mc{R})$
is called the generic affine Hecke algebra with root datum $\mc{R}$.
\end{defn}
\index{Nw@$N_w$, basis elements of Hecke algebra}
We put $\mc{Q}_c=\mc{Q}(\mc{R})_c$ for the complex torus
\index{Q@$\mc Q_c = \text{Hom} (\Lambda, \mh C)$}
\index{HRq@$\mc H = \mc{H}(\mc{R},q)$, affine Hecke algebra}
of homomorphisms $\Lambda\to \mathbb{C}$.
We equip the torus $\mc{Q}_c$ with the analytic topology.
Given a homomorphism $q\in{\mc{Q}}_c$ we define a specialization
\index{q1@$q\in\mc{Q}_c$, parameter function $S \to \mh C^\times$}
\footnote {This is not compatible with the conventions in
\cite{Opd1}, \cite{Opd2}, \cite{Opd3}, \cite{OpdSol}! The parameter $q\in\mc{Q}$
in the present paper would be called $q^{1/2}$ in these earlier papers.}
$\mc{H}(\mc{R},q)$ of the generic algebra as follows (with $\mathbb{C}_q$ the
$\Lambda$-module defined by $q$):
\begin{equation}
\mc{H}(\mc{R},q):=\mc{H}_\Lambda(\mc{R})\otimes_\Lambda\mathbb{C}_q
\end{equation}
\index{1v@$\phi_s$, automorphism of $\mc H_\Lambda$}
Observe that the automorphism $\phi_s:\Lambda\to\Lambda$ defined by
$\phi_s(v(t))=v(t)$ if $t\not\sim_W s$ and $\phi_s(v(s))=-v(s)$ extends to an
automorphism of $\mc{H}_\Lambda$ by putting $\phi_s(N_t)=N_t$ if $t\not\sim_W s$
and $\phi_s (N_s)=-N_s$. Similarly we have automorphims
$\psi_s:\mc{H}_\Lambda\to\mc{H}_\Lambda$ given by $\psi_s(v(s))=v(s)^{-1}$,
\index{1y@$\psi_s$, automorphism of $\mc H_\Lambda$}
$\psi_s(v(t))=v(t)$ if $t\not\sim_W s$, $\psi_s(N_s)=-N_s$ and $\psi_s(N_t)=N_t$
if $t\not\sim_W s$. These automorphisms mutually commute and are involutive.
Observe that $\phi_s\psi_s$ respects the distinguished basis $N_w$ of
$\mc{H}_\Lambda$, and the automorphisms $\phi_s$ and $\psi_s$ individually
respect the distinguished basis up to signs.

\index{Q@$\mc Q$, positive points of $\mc Q_c$}
We write $\mc{Q}$ for the set of positive points of $\mc{Q}_c$,
i.e. points $q\in\mc{Q}_c$ such that $q(v(s))>0$ for all $s\in S$.
Then $\mc{Q}\subset\mc{Q}_c$ is a real vector group.

There are alternative ways to specify points of $\mc{Q}$ which
play a role in the spectral theory of affine Hecke algebras
(in particular in relation to the Macdonald $c$-function \cite{Ma2}).
In order to explain this we introduce the possibly nonreduced
root system $R_{nr}\subset X$ associated to $\mc{R}$ as follows:
\index{R11@$R_{nr}$, nonreduced root system}
\begin{equation}
R_{nr}=R_0\cup \{2\alpha\mid \alpha^\vee\in 2Y\cap R_0^\vee\}
\end{equation}
\index{R1@$R_1$, nonmultipliable roots in $R_{nr}$}
We let $R_1=\{\alpha\in R_{nr}\mid 2\alpha\not\in R_{nr}\}$ be the set of
nonmultipliable roots in $R_{nr}$. Then
$R_1\subset X$ is also a reduced root system, and $W_0=W(R_0)=W(R_1)$.

We define various functions with values in $\Lambda$.
First we define a $W$-invariant function $R\ni a\to v_a\in\Lambda$ by
requiring that \index{va@$v_a\in\Lambda$, value of $v$ at $a\in R$}
\begin{equation}\label{eq:shift}
v_{a+1}=v(s_a)
\end{equation}
for all simple affine roots $a\in F$.
Notice that all generators $v(s)$ of $\Lambda$ are in the image of this
function. Next we define a $W_0$-invariant function
$R_{nr}^\vee\ni\alpha^\vee\to v_{\alpha^\vee}\in\Lambda$ as
follows. If $\alpha\in R_0$ we view $\alpha^\vee$ as an element of $R$,
so that $v_{\alpha^\vee}$ has already been defined. If $\alpha=2\beta$
with $\beta\in R_0$ then we define:
\index{vw@$v(w)\in\Lambda$, value of $v$ at $w \in W$}
\index{vac@$v_{\alpha^\vee}\in\Lambda$, value of $v$ at $\alpha^\vee\in R_{nr}^\vee$}
\begin{equation}
v_{\alpha^\vee}=v_{\beta^\vee/2}:=v_{\beta^\vee+1}/v_{\beta^\vee}
\end{equation}
Finally there exists a unique length-multiplicative function $W\ni w\to v(w)\in\Lambda$
such that its restriction to $S$ yields the original assignment $S\ni s\to v(s)\in\Lambda$
of generators of $\Lambda$ to the $W$-orbits of simple reflections of $W$, and
$v(\omega)=1$ for all $\omega\in \Omega$. Here the notion length-multiplicative
refers to the property $v(w_1w_2)=v(w_1)v(w_2)$ if $l(w_1w_2)=l(w_1)+l(w_2)$.
We remark that with these notations we have
\begin{equation}
v(w)=\prod_{\alpha\in R_{nr,+}\cap w^{-1}R_{nr,-}}v_{\alpha^\vee}
\end{equation}
for all $w\in W_0$.

A point $q\in\mc{Q}$ determines a unique $W$-invariant function on $R$ with
values in $\mathbb{R}_+$ by defining $q_a:=q(v_a)$. Conversely such a
positive $W$-invariant function on $R$ determines a point $q\in \mc{Q}$.
Likewise we define positive real numbers
\index{qa@$q_a$, parameter of affine root $a$}
\begin{equation}
q_{\alpha^\vee}:=q(v_{\alpha^\vee})
\end{equation}
for $\alpha\in R_{nr}$ and
\index{qw@$q(w)$, parameter of $w \in W$}
\begin{equation}
q(w):=q(v(w))
\end{equation}
for $w\in W$. In this way the points $q\in\mc{Q}$ are in natural bijection with
the set of $W_0$-invariant positive functions on $R_{nr}^\vee$ and also with
the set of positive length-multiplicative functions on $W$ which restrict to
$1$ on $\Omega$.

Recall that if the finite root system $R_1$ is irreducible, it can be extended in a
unique way to an affine root system, which is called $R_1^{(1)}$.
\index{R@$R_1^{(1)}$, affine extension of $R_1$}

\begin{defn} If $\mc{R}$ is simple and $X=P(R_1)$ (the weight lattice
of $R_1$) we call $\mc{H}(\mc{R},q)$ of type $R_1^{(1)}$. This includes the
simple $3$-parameter case $C_n^{(1)}$ with $R_0=B_n$ and $X=Q(R_0)$.
\end{defn}
\index{PR1@$P(R_1)$, weight lattice of root system $R_1$}

\subsubsection{The Bernstein presentation and the center}

The length function $l:W\to\mathbb{Z}_{\geq 0}$ restricts to
a homomorphism of monoids on $X^+$. Hence the map
$X^+\to\mc{H}_\Lambda^\times$ defined by $x\to N_x$
is an homomorphism of monoids too. It has a unique
extension to a group homomorphism $\theta:X\to\mc{H}_\Lambda^\times$
which we denote by $x\to \theta_x$. We denote by
\index{1h@$\theta_x$, basis element of $\mc A_\Lambda$}
\index{ALa@$\mc A_\Lambda$, commutative subalgebra of $\mc H_\lambda$}
\index{A@$\mc A$, commutative subalgebra of $\mc H (\mc R ,q)$}
$\mc{A}_\Lambda\subset\mc{H}_\Lambda$ the commutative subalgebra of
$\mc{H}_\Lambda$ generated by the elements $\theta_x$ with $x\in X$.
Similarly we have a commutative subalgebra $\mc A \subset \mc H (\mc R,q)$.
\index{HLa@$\mc{H}_{\Lambda,0}$, generic Hecke algebra of $R_0$}
Let $\mc{H}_{\Lambda,0}=\mc{H}_\Lambda(W_0,S_0)$ be the Hecke
subalgebra (of finite rank over the algebra $\Lambda$) corresponding to
the Coxeter group $(W_0,S_0)$. We have the following important
result due to Bernstein-Zelevinski (unpublished) and Lusztig (\cite{Lus2}):
\begin{thm}
The multiplication map defines an isomorphism of
$\mc{A}_\Lambda-\mc{H}_{\Lambda,0}$-modules
$\mc{A}_\Lambda\otimes\mc{H}_{\Lambda,0}\to\mc{H}_\Lambda$ and an isomorphism
of $\mc{H}_{\Lambda,0}-\mc{A}_\Lambda$-modules
$\mc{H}_{\Lambda,0}\otimes\mc{A}_\Lambda\to\mc{H}_\Lambda$.
The algebra structure on $\mc{H}_\Lambda$ is determined
by the cross relation (with $x\in X$, $\alpha\in F_0$,
$s=r_{\alpha^\vee}$, and $s^\prime\in S$ is a simple reflection
such that $s^\prime\sim_W r_{\alpha^\vee+1}$):
\begin{equation}\label{eq:ber}
\theta_x N_s-N_s \theta_{s(x)}=\left((v(s)-v(s)^{-1})+
(v(s^\prime)-v(s^\prime)^{-1})\theta_{-\alpha}\right)
\frac{\theta_x-\theta_{s(x)}}{1-\theta_{-2\alpha}}
\end{equation}
(Note that if $s^\prime\not\sim_W s$ then $\alpha^\vee\in 2R_0^\vee$,
which implies  $x-s(x)\in 2\mathbb{Z}\alpha$ for all $x\in X$. This guarantees
that the right hand side of (\ref{eq:ber}) is always an element of $\mc{A}_\Lambda$).
\end{thm}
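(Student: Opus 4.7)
The plan is to prove the statement in three steps: construct the elements $\theta_x \in \mc{H}_\Lambda$, establish the tensor product decompositions, and finally verify the Bernstein cross relation. For the first step I would show that the map $X^+ \to \mc{H}_\Lambda^\times$ sending $x$ (viewed as a translation in $W$) to $N_x$ is a monoid homomorphism. This rests on the length additivity $l(x+y) = l(x) + l(y)$ for $x, y \in X^+$, which follows from the standard formula $l(x) = \sum_{\alpha \in R_{0,+}} |x(\alpha^\vee)|$ together with the nonnegativity $x(\alpha^\vee), y(\alpha^\vee) \geq 0$ for $\alpha \in F_0$. Applying relation (i) of Definition \ref{def:Heckealg} then yields $N_x N_y = N_{x+y}$. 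Extending to $X$ by writing $x = x_1 - x_2$ with $x_i \in X^+$ and setting $\theta_x := N_{x_1} N_{x_2}^{-1}$, well-definedness follows from the monoid homomorphism property, and commutativity $\theta_x \theta_y = \theta_{x+y}$ is inherited from commutativity of $X^+$.

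For the tensor product decomposition I would use that every $w \in W$ factors uniquely as $w = x u$ with $x \in X$ and $u \in W_0$. When $x \in X^+$ one has $l(xu) = l(x) + l(u)$, so $\theta_x N_u = N_x N_u = N_{xu}$. For general $x$, writing $x = x_1 - x_2$ with $x_i \in X^+$ and inducting on $l(x_2)$, one obtains an upper-unitriangular transition matrix between the families $\{\theta_x N_u\}_{x \in X, u \in W_0}$ and $\{N_w\}_{w \in W}$; since the latter is a $\Lambda$-basis, so is the former. This proves $\mc{A}_\Lambda \otimes \mc{H}_{\Lambda,0} \isom \mc{H}_\Lambda$, and the reverse-order isomorphism is symmetric, using $X^-$ in place of $X^+$.

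For the Bernstein cross relation I would first reduce to $x \in X^+$, since both sides depend $\Lambda$-linearly on $\theta_x$ and the relation for $x = x_1 - x_2$ follows from the cases $x_1, x_2 \in X^+$ via a formal Leibniz-type identity for the twisted commutator $\theta_x \mapsto \theta_x N_s - N_s \theta_{s(x)}$. Then, clearing the denominator $1 - \theta_{-2\alpha}$, I would verify
\[
(\theta_x N_s - N_s \theta_{s(x)})(1 - \theta_{-2\alpha}) = \bigl((v(s) - v(s)^{-1}) + (v(s') - v(s')^{-1})\theta_{-\alpha}\bigr)(\theta_x - \theta_{s(x)})
\]
by induction on $x(\alpha^\vee) \geq 0$. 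The case $x(\alpha^\vee) = 0$ is immediate: both sides vanish and $N_s$ commutes with $N_x$ by length additivity. The base case (e.g. $x = \alpha$ when $\alpha \in X$, or the appropriate substitute using $R_1$ otherwise) is a direct computation using the quadratic relations $(N_s - v(s))(N_s + v(s)^{-1}) = 0$ and $(N_{s'} - v(s'))(N_{s'} + v(s')^{-1}) = 0$ together with the braid expansion of $N_s N_x$ in the $N_w$-basis. A case split according to whether $s' \sim_W s$ (so only the parameter $v(s)$ appears and $2\alpha \notin R_{nr}$) or $s' \not\sim_W s$ (so both $v(s)$ and $v(s')$ genuinely appear, and by the parenthetical observation $x - s(x) \in 2 \mh Z \alpha$ guarantees the divisibility by $1 - \theta_{-2\alpha}$) isolates the two contributions.

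The main obstacle will be the base-case computation in the unequal-parameter case $s' \not\sim_W s$: there one must simultaneously track two distinct quadratic relations and the minimal affine expression of $s \cdot x \cdot s$ in $W$, and arrange the outcome so that the coefficient $(v(s)-v(s)^{-1}) + (v(s')-v(s')^{-1})\theta_{-\alpha}$ emerges as a single factor of the right-hand side. The remaining parts of the proof are essentially formal consequences of length additivity and the defining relations (i), (ii) of Definition \ref{def:Heckealg}, together with the unique factorization $W = X \rtimes W_0$.
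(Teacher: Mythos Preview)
The paper does not prove this theorem at all: it is stated as a known result attributed to Bernstein--Zelevinski (unpublished) and Lusztig \cite{Lus2}, with no argument given. So there is no ``paper's own proof'' to compare against; your outline is an attempt at supplying what the paper merely cites.

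Your strategy is the standard one and is essentially what Lusztig does in \cite{Lus2}. The construction of $\theta_x$ and the basis argument via an upper-triangular transition to $\{N_w\}$ are correct as sketched. The reduction of the cross relation to $X^+$ via the twisted-Leibniz identity
\[
D(x+y)=\theta_x\,D(y)+D(x)\,\theta_{s(y)},\qquad D(x):=\theta_xN_s-N_s\theta_{s(x)},
\]
is also correct and is really the engine of the whole proof: it shows the set of $x$ for which the relation holds is a subgroup of $X$, so one only needs a generating set.

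One small correction: your proposed base case $x=\alpha$ is not in $X^+$ in general (a simple root pairs nonpositively with the other simple coroots), so it does not sit inside the ``reduce to $X^+$ then induct on $x(\alpha^\vee)$'' framework as written. The clean fix is to drop the intermediate restriction to $X^+$ and use the Leibniz identity directly: verify the relation for all $x$ with $x(\alpha^\vee)=0$ (where both sides vanish and $N_s$ commutes with $\theta_x$ by length additivity on the dominant part), and for one explicit $x_0$ with $x_0(\alpha^\vee)$ equal to the minimal positive value occurring in $X$ (either $1$, or $2$ when $\alpha^\vee\in 2Y$, which is exactly the case $s'\not\sim_W s$). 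These together generate $X$. You have correctly identified that the only substantive computation is this single explicit case, and that the two-parameter situation $s'\not\sim_W s$ is where the bookkeeping is delicate; there the divisibility by $1-\theta_{-2\alpha}$ is guaranteed precisely because $x(\alpha^\vee)\in 2\mathbb{Z}$, as the parenthetical remark in the statement notes.
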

\begin{cor} The center $\mc{Z}_\Lambda$ of $\mc{H}_\Lambda$ is the algebra
$\mc{Z}_\Lambda=\mc{A}^{W_0}_\Lambda$. For any $q\in\mc{Q}_c$ the center of
$\mc{H}(\mc{R},q)$ is equal to the subalgebra
$\mc{Z}=\mc{A}^{W_0}\subset\mc{H}(\mc{R},q)$.
\end{cor}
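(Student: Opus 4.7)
My plan is to prove the two inclusions $\mc{A}_\Lambda^{W_0}\subset\mc{Z}_\Lambda$ and $\mc{Z}_\Lambda\subset\mc{A}_\Lambda^{W_0}$ separately, and then observe that the specialization at any $q\in\mc{Q}_c$ preserves the argument.

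For the easy inclusion $\mc{A}_\Lambda^{W_0}\subset\mc{Z}_\Lambda$, I would take $z\in\mc{A}_\Lambda^{W_0}$. It commutes with all of $\mc{A}_\Lambda$ since $\mc{A}_\Lambda$ is commutative. By the Bernstein presentation, $\mc{H}_\Lambda$ is generated as a $\Lambda$-algebra by $\mc{A}_\Lambda$ together with $\{N_s:s\in S_0\}$, so it suffices to show $zN_s=N_sz$ for each $s=r_{\alpha^\vee}$ with $\alpha\in F_0$. Extending the cross relation (\ref{eq:ber}) by $\Lambda$-linearity from $\theta_x$ to an arbitrary $f\in\mc{A}_\Lambda$ yields
\begin{equation*}
fN_s-N_s\,s(f)=\Bigl((v(s)-v(s)^{-1})+(v(s^\prime)-v(s^\prime)^{-1})\theta_{-\alpha}\Bigr)\frac{f-s(f)}{1-\theta_{-2\alpha}},
\end{equation*}
and for $s$-invariant $f$ the right-hand side vanishes, so $fN_s=N_sf$.

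For the harder inclusion $\mc{Z}_\Lambda\subset\mc{A}_\Lambda^{W_0}$, I would localize at generic central character. Set $K_\Lambda:=\mathrm{Frac}(\mc{A}_\Lambda)$ and form $\mc{H}_{\Lambda,K}:=\mc{H}_\Lambda\otimes_{\mc{A}_\Lambda}K_\Lambda$. Because $\mc{H}_\Lambda$ is a free $\mc{A}_\Lambda$-module of rank $|W_0|$ by the Bernstein theorem, the natural map $\mc{H}_\Lambda\hookrightarrow\mc{H}_{\Lambda,K}$ is injective. The key technical step is to construct, for each $s\in S_0$, an intertwiner of the form $\tilde N_s=N_s+c_s$ with $c_s\in K_\Lambda$ chosen so that $\tilde N_sf=s(f)\tilde N_s$ for all $f\in K_\Lambda$; a direct computation from (\ref{eq:ber}) produces an explicit such $c_s$. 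One then checks that the $\tilde N_s$ satisfy $\tilde N_s^2=1$ and the braid relations of $W_0$, so that together with $K_\Lambda$ they generate a subalgebra of $\mc{H}_{\Lambda,K}$ isomorphic to the skew group algebra $K_\Lambda\rtimes W_0$; a $K_\Lambda$-rank count shows this subalgebra is all of $\mc{H}_{\Lambda,K}$. Since the center of $K_\Lambda\rtimes W_0$ is well known to be $K_\Lambda^{W_0}$, any $z\in\mc{Z}_\Lambda$ lies in $K_\Lambda^{W_0}$. Finally, the Bernstein decomposition $\mc{H}_\Lambda=\mc{A}_\Lambda\otimes\mc{H}_{\Lambda,0}$ forces $\mc{H}_\Lambda\cap K_\Lambda=\mc{A}_\Lambda$, so $z\in\mc{A}_\Lambda\cap K_\Lambda^{W_0}=\mc{A}_\Lambda^{W_0}$.

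For the specialized statement, both the Bernstein presentation and the cross relation descend to $\mc{H}(\mc{R},q)$ for any $q\in\mc{Q}_c$, and $\mc{A}$ is still a free module of rank $|W_0|$ over $\mc{A}^{W_0}$, so the entire argument goes through verbatim with $(\mc{A}_\Lambda,\mc{H}_\Lambda,\Lambda)$ replaced by $(\mc{A},\mc{H}(\mc{R},q),\mathbb{C})$. The main obstacle I anticipate is pinning down the correct formula for the intertwiner correction $c_s$: once written down, verifying $\tilde N_s^2=1$, the braid relations, and the resulting isomorphism with $K_\Lambda\rtimes W_0$ is routine, but isolating $c_s$ requires patient manipulation of (\ref{eq:ber}) to solve $\tilde N_s\theta_x=\theta_{s(x)}\tilde N_s$ simultaneously for all $x\in X$.
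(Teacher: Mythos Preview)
The paper gives no proof of this corollary; it records the statement as a direct consequence of the Bernstein presentation and implicitly refers to \cite{Lus2} for details. So there is nothing to compare against except the standard literature proofs.

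Your strategy is sound, but one technical claim is wrong as stated. For the additive intertwiner $\tilde N_s = N_s + c_s$ with $c_s\in K_\Lambda$ chosen so that $\tilde N_s f = s(f)\tilde N_s$, a direct computation gives
\[
\tilde N_s^{\,2} \;=\; 1 + s(c_s)\,c_s \;\in\; K_\Lambda^\times,
\]
which is not equal to $1$ in general (e.g.\ in the situation $v(s)=v(s')=v$ one finds $c_s = (v^{-1}-v)/(1-\theta_{-\alpha})$ and $\tilde N_s^{\,2} = 1 + (v-v^{-1})^2/(2-\theta_\alpha-\theta_{-\alpha})$). So $\mc H_{\Lambda,K}$ is not literally isomorphic to $K_\Lambda\rtimes W_0$; it is a twist in which the $\tilde N_s$ satisfy the braid relations (this is Lusztig's $\tau_w$, \cite[Prop.~5.2]{Lus2}) but square to a nontrivial unit. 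Fortunately this does not harm your argument: the intertwining property alone, together with the triangularity $\tilde N_w \in N_w + \sum_{l(w')<l(w)} K_\Lambda N_{w'}$, already shows that any central element of $\mc H_{\Lambda,K}$ lies in $K_\Lambda^{W_0}$, exactly as for an honest skew group ring. So the fix is simply to drop the claim $\tilde N_s^{\,2}=1$ and argue directly from the intertwining relation.

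You might also note that a shorter route avoids localization and intertwiners altogether. Write a central $z = \sum_{w\in W_0} a_w N_w$ with $a_w\in\mc A_\Lambda$; commuting with $\theta_x$ and using $N_w\theta_x \equiv \theta_{w(x)}N_w$ modulo shorter terms forces $a_w(\theta_{w(x)}-\theta_x)=0$ for $w$ of maximal length with $a_w\neq 0$, whence $z=a_e\in\mc A_\Lambda$ since $\mc A_\Lambda$ is a domain. Commuting with $N_s$ then gives $s(z)=z$ via the cross relation and the Bernstein decomposition. This is the argument one usually sees and it specializes to any $q\in\mc Q_c$ without modification, whereas your approach buys a structural understanding of $\mc H_{\Lambda,K}$ at the cost of the intertwiner computations.
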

\index{ZLa@$\mc{Z}_\Lambda$, center of $\mc{H}_\Lambda$}
\index{Z@$\mc Z$, center of $\mc{H}(\mc R ,q)$}
In particular $\mc{H}_\Lambda$ is a finite type algebra over its center
$\mc{Z}_\Lambda$, and similarly $\mc{H}(\mc{R},q)$ is a finite type algebra
over its center $\mc{Z}$. The simple modules over these algebras
are finite dimensional complex vector spaces. The primitive ideal spectrum
$\widehat{\mc{H}}_\Lambda$ is a topological space which comes equipped with
a finite continuous and closed map
\index{A@$\widehat A$, primitive ideal spectrum of an algebra $A$}
\index{ccL@$cc_\Lambda$, central character map for $\mc H_\Lambda$}
\begin{equation}\label{eq:ccL}
cc_\Lambda : \widehat{\mc{H}}_\Lambda\to\widehat{\mc{Z}}_\Lambda
=W_0\backslash T\times \mc{Q}_c
\end{equation}
to the complex affine variety associated with the unital complex
commutative algebra $\mc{Z}_\Lambda$. The map $cc_\Lambda$ is
called the central character map. Similarly, we have
central character maps \index{ccq@$cc_q$, central character map for $\mc H(\mc R,q)$}
\begin{equation}
cc_q : \widehat{\mc{H}(\mc{R},q)}\to\widehat{\mc{Z}}
\end{equation}
for all $q\in\mc{Q}_c$.

\index{T@$T$, character torus of $X$}
We put $T =\textup{Hom}(X,\mathbb{C}^\times)$, the complex torus
of characters of the lattice $X$ equipped with the Zariski topology.
This torus has a natural $W_0$-action. We have
$\widehat{\mc{Z}}=W_0\backslash T$ (the categorical quotient).

\subsubsection{Two reduction theorems}

The study of the simple modules over $\mc{H}(\mc{R},q)$
is simplified by two reduction theorems which are much in the
spirit of Lusztig's reduction theorems in \cite{Lus2}.
The first of these theorems reduces to the case of simple modules
whose central character is a $W_0$-orbit of characters of $X$ which
are positive on the sublattice of $X$ spanned by $R_1$ (see the explanation
below). The second theorem reduces the study of simple modules
of $\mc{H}(\mc{R},q)$ with a positive central character in the above sense
to the study of simple modules of an associated degenerate affine Hecke
algebra with real central character. These results will be useful
for our study of the discrete series characters.

\index{Tv@$T_v$, positive part of $T$} \index{Tu@$T_u$, unitary part  of $T$}
First of all a word about terminology.
The complex torus $T$ has a polar decomposition $T=T_vT_u$ with
$T_{v}=\textup{Hom}(X,\mathbb{R}_{>0})$ and $T_u=\textup{Hom}(X,S^1)$.
The polar decomposition is the exponentiated form of the
decomposition of the tangent space $V=\textup{Hom}(X,\mathbb{C})$
of $T$ at $t=e$ as a direct sum $V=V_r\oplus iV_r$ of real
subspaces where $V_r=\textup{Hom}(X,\mathbb{R})$.
The vector group $T_{v}$ is called the group of positive characters
and the compact torus $T_u$ is called the group of unitary characters.
This polar decomposition is compatible with the action of $W_0$ on $T$.
We call the $W_0$-orbits of points in $T_{v}$ ``positive'' and the
$W_0$-orbits of points in $T_u$ ``unitary''. In this sense can we speak
of the subcategory of finite dimensional $\mc{H}(\mc{R},q)$-modules with
positive central
character\footnote{In several prior publications
\cite{HO}, \cite{HOH}, \cite{Opd1}, \cite{Opd2}, \cite{Opd3} the central
characters in $W_0\backslash T_{v}$ were referred to as ``real central characters'',
where ``real'' should be understood as ``infinitesimally real''.
In the present paper however we change the terminology and speak of
``positive central characters'' instead.} which is a subcategory that plays
an important special role.
\begin{defn}\label{defn:iso} Let $\mc{R}$ be a root datum and
let $q\in\mc{Q}=\mc{Q}(\mc{R})$. For $s\in T_u$ we define
$R_{s,0}=\{\alpha\in R_0\mid r_\alpha(s)=s\}$. Let $R_{s,1}\subset R_1$
be the set of nonmultipliable roots corresponding to $R_{s,0}$.
One checks that
\index{R111@$R_{s,0}, R_{s,1}, R_{s,nr}$, root systems associated to $s \in T$}
\index{Fs1@$F_{s,1}$, basis of $R_{s,1}$}
\index{R2@$\mc R_s$, root datum associated to $s \in T$}
\index{Ws@$W_s$, isotropy group in $W_0$ of $s \in T$}
\begin{equation}
R_{s,1}=\{\beta\in R_1\mid \beta(s)=1\}
\end{equation}
Let $R_{s,1,+}\subset R_{s,1}$ be the unique system of positive roots
such that $R_{s,1,+}\subset R_{1,+}$, and let $F_{s,1}$ be the corresponding
basis of simple roots of $R_{s,1}$.
Then the isotropy group $W_s\subset W_0$ of $s$
is of the form
\begin{equation}
W_s=W(R_{s,1})\rtimes\Gamma_s
\end{equation}
where $\Gamma_s=\{w\in W_s\mid w(R_{s,1,+})=R_{s,1,+}\}$ is a group acting
through diagram automorphisms on the based root system $(R_{s,1},F_{s,1})$.

We form a new root datum $\mc{R}_s=(X,R_{s,0},Y,R_{s,0}^\vee,F_{s,0})$
and observe that $R_{nr,s}\subset R_{nr}$. Hence we can define a surjective
map $\mc{Q}(\mc{R})\to\mc{Q}(\mc{R}_s)$ (denoted by $q\to q_s$)
by restriction of the corresponding parameter function on $R_{nr}^\vee$
to $R_{nr,s}^\vee$. \index{qs@$q_s$, parameter corresponding to $s \in T_u$}
\end{defn}
\index{1cs@$\Gamma_s, \Gamma (t)$, groups of diagram automorphisms of
$(R_{s,1},F_{s,1})$} \index{W0t@$W_0 (t)$, subgroup of $W_0$}
Let $t=cs\in T_vT_u$ be the polar decomposition of an
element $t\in T$. We define $W_0(t)\subset W_s$ for the subgroup
defined by
\begin{equation}
W_0 (t) := \{w\in W_s\mid wt\in W(R_{s,1})t\}
\end{equation}
Observe that $W_0(t)$ is the semidirect product
$W_0(t)=W(R_{s,1})\rtimes\Gamma(t)$ where
\begin{equation}\label{eq:Gamma}
\Gamma(t)=\Gamma_s\cap W_0(t)
\end{equation}

Let $M_{W_0t}\subset\mc{Z}$ denote maximal ideal of $\mc{A}$
of elements vanishing at $W_0t\subset T$, and let
$\overline{\mc{Z}}$ be the $M_{W_0t}$-adic
completion of $\mc{Z}$. We define
\index{Z@$\overline{\mc{Z}}$, completion of $\mc Z$ at $W_0 t \in W_0 \backslash T$}
\index{A@$\overline{\mc{A}}$, completion of $\mc A$ at $W_0 t \in W_0 \backslash T$}
\begin{equation}
\overline{\mc{A}}=\mc{A}\otimes_{\mc{Z}}\overline{\mc{Z}}
\end{equation}
By the Chinese remainder theorem we have
\begin{equation}
\overline{\mc{A}}=\bigoplus_{t^\prime\in W_0t}\overline{\mc{A}}_{t^\prime}
\end{equation}
where $\overline{\mc{A}}_{t^\prime}$ denotes the formal completion of $\mc{A}$
at $t^\prime\in T$. Let $1_{t^\prime}$ denote the unit of the summand
$\overline{\mc{A}}_{t^\prime}$ in this direct sum decomposition.
We consider the formal completion
\index{HRq@$\overline{\mc{H}}(\mc{R},q)$, formal completion of $\mc H (\mc R ,q)$}
\begin{equation}\label{eq:Hcompl}
\overline{\mc{H}}(\mc{R},q)=
\mc{H}(\mc{R},q)\otimes_{\mc{Z}}\overline{\mc{Z}}
\end{equation}
On the other hand, we consider the affine Hecke algebra
$\mc{H}(\mc{R}_s,q_s)$ and its commutative subalgebra $\mc{A}_s$ (as defined
before when discussing the Bernstein basis) and center
$\mc{Z}_s=\mc{A}_s^{W(R_{s,1})}$.
Let $m_{W(R_{s,1})t}$ be the maximal ideal in $\mc{Z}_s$ of elements vanishing
at the orbit $W(R_{s,1})t=sW(R_{s,1})c$; let $\overline{\mc{Z}_s}$
and $\overline{\mc{H}}(\mc{R}_s,q_s)$ be the corresponding formal
completions as before.
\index{Zs@$\mc{Z}_s$, center of $\mc{H}(\mc{R}_s,q_s)$}

The group $\Gamma(t)$ acts on $\overline{\mc{H}}(\mc{R}_s,q_s)$
and on its center $\overline{\mc{Z}_s}$.
We note that there exists a canonical isomorphism
\begin{equation}
\overline{\mc{Z}}\to\overline{\mc{Z}_s}^{\Gamma(t)}
\end{equation}
As before we define a localization
\begin{equation}
\overline{\mc{H}}(\mc{R}_s,q_s)=
{\mc{H}}(\mc{R}_s,q_s)\otimes_{\mc{Z}_s}\overline{\mc{Z}_s}
\end{equation}
Let $e_t\in\overline{\mc{A}}\subset\overline{\mc{H}}(\mc{R},q)$
be the idempotent defined by
\index{et@$e_t$, idempotent in $\overline{\mc A}$ associated to $t \in T$}
\begin{equation}
e_t=\sum_{t^\prime\in W(R_{s,1})t} 1_{t^\prime}
\end{equation}
\begin{thm}\label{thm:red1} (``First reduction Theorem'' (see \cite[Theorem 8.6]{Lus2}))

Let $q\in \mc{Q}$ and let $t=cs$ be the polar decomposition of an element $t\in T$.
Let $n$ be the cardinality of the orbit $W_0t$ divided by the cardinality of the orbit
$W(R_{s,1})t$. Using the notations introduced above, there exists an isomorphism
of $\overline{\mc{Z}}$-algebras
\begin{equation}
(\overline{\mc{H}}(\mc{R}_s,q_s)\rtimes\Gamma(t))_{n\times n}
\to\overline{\mc{H}}(\mc{R},q)
\end{equation}
Via this isomorphism the idempotent $e_t\in\overline{\mc{H}}(\mc{R},q)$
corresponds to the $n\times n$-matrix with $1$ in the upper left corner
and $0$'s elsewhere. Hence the $\overline{\mc{Z}}$-algebras
$\overline{\mc{H}}(\mc{R},q)$ and
$\overline{\mc{H}}(\mc{R}_s,q_s)\rtimes\Gamma(t)$
are Morita equivalent. In particular the set of simple modules $U$ of
$\mc{H}(\mc{R},q)$ with central character $W_0t$ corresponds
bijectively to the set of simple modules $V$ of
$\mc{H}(\mc{R}_s,q_s)\rtimes\Gamma(t)$ with central character
$W_0(s)t=W(R_{s,1})t$, where the bijection is given by $U\to e_tU$.
\end{thm}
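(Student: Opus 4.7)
My strategy follows Lusztig's approach in \cite[Theorem 8.6]{Lus2}, adapted to arbitrary positive (possibly unequal) parameters. The theorem splits naturally into two parts: (A) the idempotent $e_t$ is a full idempotent in $\overline{\mc H}(\mc R,q)$, with $n$ mutually orthogonal conjugate idempotents summing to $1$, yielding the $n \times n$-matrix form; and (B) the corner algebra satisfies
\[
e_t \overline{\mc H}(\mc R,q) e_t \;\cong\; \overline{\mc H}(\mc R_s,q_s) \rtimes \Gamma(t).
\]
Combining (A) and (B) produces the stated isomorphism, with $e_t$ corresponding to the $(1,1)$-matrix unit on the right.

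For (A), by the Bernstein decomposition $\overline{\mc A} = \bigoplus_{t' \in W_0 t} \overline{\mc A}_{t'}$, the orbit $W_0 t$ partitions into $n$ sub-orbits under $W(R_{s,1})$, and $e_t$ is the sum of the $1_{t'}$ over one such sub-orbit. Picking coset representatives $u_1,\ldots,u_n$ for $W_0/W_0(t)$, I would construct Lusztig-type normalized intertwiners $\iota_{u_i} \in \overline{\mc H}(\mc R,q)^{\times}$ by multiplying $N_{u_i}$ by appropriate products of factors from the cross-relation (\ref{eq:ber}). These factors only vanish on root hyperplanes missing $W_0 t$, so they are invertible after passing to $\overline{\mc A}$. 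Conjugation by $\iota_{u_i}$ sends $e_t$ to the idempotent supported on $u_i W(R_{s,1})t$; summing these conjugates gives $1$, so $e_t$ is full. For (B), the composition $\overline{\mc A} \cdot e_t \hookrightarrow e_t \overline{\mc H}(\mc R,q) e_t$, identified with $\overline{\mc A}_s$ via the Chinese remainder theorem, provides the abelian subalgebra. For each $\alpha \in F_{s,1}$, a renormalized $\tilde N_{s_\alpha} e_t$ yields a Hecke generator: the identity $r_\alpha(s) = s$ makes the denominator $1 - \theta_{-2\alpha}$ invertible in $e_t \overline{\mc A}$ and produces the correct quadratic relation with specialized parameter $q_s$. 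For each $\gamma \in \Gamma(t)$, a similar $\tilde N_\gamma e_t$ gives the crossed-product factor, the defining property $\gamma t \in W(R_{s,1}) t$ ensuring that $\gamma$ permutes the support of $e_t$ so that $\tilde N_\gamma e_t = e_t \tilde N_\gamma e_t$ is well-defined.

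\textbf{Main obstacle.} The technical heart is the explicit verification that the renormalized generators $\tilde N_{s_\alpha}$ ($\alpha \in F_{s,1}$) and $\tilde N_\gamma$ ($\gamma \in \Gamma(t)$) satisfy \emph{precisely} the quadratic, braid, and diagram-automorphism relations of $\mc H(\mc R_s,q_s) \rtimes \Gamma(t)$, with parameters matching on the nose via Definition \ref{defn:iso}. This demands careful choice of normalizing factors so that, upon multiplication by $e_t$, the Bernstein cross-relation (\ref{eq:ber}) in $\mc H(\mc R,q)$ collapses to the cross-relation of $\mc H(\mc R_s,q_s)$; the unequal-parameter case requires tracking each $v_a$ and $v_{\alpha^\vee}$ separately, and ensuring that no spurious cocycle appears in the $\Gamma(t)$-action (since $\Gamma(t)$ must act through genuine diagram automorphisms of $(R_{s,1},F_{s,1})$). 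Once the homomorphism is constructed, a rank count over $\overline{\mc Z}$ finishes the job: the corner has $\overline{\mc Z}$-rank $|W_0|^2/n^2 = |W_0(t)|^2$, matching $|W(R_{s,1})|^2 |\Gamma(t)|^2 = |W_0(t)|^2$ on the target, so the map is an isomorphism.
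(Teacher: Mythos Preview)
Your proposal is correct and follows essentially the same route as the paper. The paper's proof simply says that one translates Lusztig's argument in \cite[Theorem 8.6]{Lus2} verbatim, with the single modification that Lusztig's equivalence relation on $W_0 t$ is replaced by the relation whose classes are the $W(R_{s,1})$-orbits (equivalently, the role of Lusztig's subgroup $\mc{J}\langle v_0\rangle\subset T$ is played here by the vector group $T_v$); this is exactly the partition you use in part (A), and your parts (A)--(B) are precisely the mechanics of Lusztig's proof. One small imprecision: the $n$ intertwiners $\iota_{u_i}$ should be indexed by representatives for the $W(R_{s,1})$-orbits in $W_0 t$, not literally by $W_0/W_0(t)$-cosets, but this does not affect the argument.
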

\begin{proof} The proof is a straightforward translation of Lusztig's
proof of \cite[Theorem 8.6]{Lus2}. We replace the equivalence relation
that Lusztig defines on the orbit $W_0t$ by the equivalence relation
induced by the action of $W(R_{s,1})$ (i.e. the equivalence
classes are the orbits of $W(R_{s,1})$ in $W_0t$; in other words,
the role of the subgroup $\mc{J}\langle v_0\rangle\subset T$ in Lusztig's
setup is now played by the vector subgroup $T_{v}$).
After this change the rest of the proof is identical to Lusztig's
proof.
\end{proof}
The second reduction theorem gives a bijection between simple modules of affine
Hecke algebra's with central character $W_0t$ satisfying $\alpha(t)>0$ for
all $\alpha\in R_1$ and simple modules of an associated degenerate affine Hecke
algebra with a real central character. We first need to define the appropriate
notion of the associate degenerate affine Hecke algebra.

Let $\mc{R}=(X,R_0,Y,R_0^\vee,F_0)$ be a root datum, let $q\in
\mc{Q}$, and let $W_0t\in W_0\backslash T$ be a central character
such that for all $\alpha\in R_1$ we have
$\alpha(t)\in\mathbb{R}_{>0}$. Then the polar decomposition of $t$
has the form $t=uc$ with $u\in T_u$ a $W_0$-invariant character of
$X$ and with $c\in T_v$ a positive character of $X$. Observe that
$\beta(u)=1$ if $\beta\in R_0\cap R_1$ and $\beta(u)=\pm 1$ if
$\beta\in R_0\cap \frac{1}{2}R_1$. We define a $W_0$-invariant
real parameter function $k_u:R_1\to\mathbb{R}$ by the following
prescription. If $\alpha\in R_1$ we put:
\index{kzu@$k_u\in\mc{K}$, parameter function corresponding to $u \in T_u$}
\begin{equation}\label{eq:k}
k_{u,\alpha}=
\left\{
\begin{array}{ll}
  \log(q_{\alpha^\vee}^2)&\text{\ if\ }\alpha\in R_0\cap R_1\\
   \log(q_{\alpha^\vee}^2q_{2\alpha^\vee}^4)&\text{\ if\ }\alpha=2\beta\text{\ with\ }
   \beta\in R_0\text{\ and\ }\beta(u)=1\\
   \log(q_{\alpha^\vee}^2)&\text{\ if\ }\alpha=2\beta\text{\ with\ }
   \beta\in R_0\text{\ and\ }\beta(u)=-1
\end{array}
  \right.
\end{equation}
\begin{defn}
\index{V@$V = \mh R \otimes X$} \index{kz@$k\in\mc{K}$, parameter function $R_1 \to \mh R$}
\index{H@$\mathbf{H}(R_1,V,F_1,k)$, degenerate affine Hecke algebra}
We define the degenerate affine Hecke algebra $\mathbf{H}(R_1,V,F_1,k)$
associated with the root system $R_1\subset V^*$ where $V=\mathbb{R}\otimes_{\mh Z} Y$
and the parameter function $k$ as follows. We put $P(V)$ for the polynomial algebra on
the vector space $V$. The Weyl group $W_0$ acts on $P(V)$ and we denote the action by
$w\cdot f=f^w$. Then $\mathbf{H}(R_1,V,F_1,k)$ is simultaneously
a left $P(V)$-module and a right $\mathbb{C}[W_0]$-module, and as such it has the structure
$\mathbf{H}(R_1,V,F_1,k)=P(V)\otimes\mathbb{C}[W_0]$. We identify
$P(V)\otimes e\subset\mathbf{H}(R_1,V,F_1,k)$ with $P(V)$ and
$1\otimes \mathbb{C}[W_0]\subset\mathbf{H}(R_1,V,F_1,k)$ with $\mathbb{C}[W_0]$
so that we may write $f w$ instead of $f\otimes w$ if $f\in P(V)$ and $w\in W_0$.
The algebra structure structure of $\mathbf{H}(R_1,V,F_1,k)$ is uniquely determined by the
cross relation (with $f\in P(V)$, $\alpha\in F_1$ and $s=s_\alpha\in S_1$):
\begin{equation}
fs-sf^s=k_\alpha\frac{f-f^s}{\alpha}
\end{equation}
\end{defn}
It is easy to see that the center of $\mathbf{H}(R_1,V,F_1,k)$ is equal to the algebra
$\mathbf{Z}=P(V)^{W_0}\subset\mathbf{H}(R_1,V,F_1,k)$.
The vector space $V_c=\mathbb{C}\otimes V$ can be identified with the Lie algebra
of the complex torus $T$. Let $\exp:V_c\to T$ be the corresponding exponential map.
It is a $W_0$-equivariant covering map which restricts to a group
isomorphism $V\to T_v$ of the real vector space $V$ to the vector group $T_v$.

\begin{thm}\label{thm:red2}
(``Second reduction Theorem'' (see \cite[Theorem 9.3]{Lus2}))

Let $\mc{R}=(X,R_0,Y,R_0^\vee,F_0)$ be a root datum with parameter function
$q\in\mc{Q}=\mc{Q}(\mc{R})$. Let $V_0\subset V$ be the subspace spanned by
$R_0^\vee$. Given $t\in T$ such that $\alpha(t)>0$ for all $\alpha\in R_1$ we let
$\xi=\xi_t\in V_0$ be the unique element such that $\alpha(t)=\textup{e}^{\alpha(\xi)}$
for all $\alpha\in R_1$. It is easy to see that the map $t\to \xi=\xi_t$ is $W_0$-equivariant;
in particular the image of $W_0t$ is equal to $W_0\xi$. Let $t=uc$ be the polar
decomposition of $t$. Then $u\in T_u$ is $W_0$-invariant, and we define a
$W_0$-invariant parameter function $k_u$ on $R_1$ by (\ref{eq:k}). Let
$\overline{\mathbf{Z}}$ be the formal completion of the center $\mathbf{Z}$ of
$\mathbf{H}(R_1,V,F_1,k_u)$ at the orbit $W_0\xi$. Let $\mathbf{P}=P(V)$ and
put $\overline{\mathbf{P}}=\mathbf{P}\otimes_\mathbf{Z}\overline{\mathbf{Z}}$
and $\overline{\mathbf{H}}(R_1,V,F_1,k_u)=
\mathbf{H}(R_1,V,F_1,k_u)\otimes_\mathbf{Z}\overline{\mathbf{Z}}$.
There exist natural compatible isomorphism of algebras
$\overline{Z}\to\overline{\mathbf{Z}} ,\, \overline{\mc{A}}\to\overline{\mathbf{P}}$
and $\overline{\mc{H}}(\mc{R},q)\to\overline{\mathbf{H}}(R_1,V,F_1,k_u)$.
\end{thm}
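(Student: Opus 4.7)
The plan is to follow Lusztig's proof of \cite[Theorem 9.3]{Lus2}, with the modifications needed to handle arbitrary positive parameter functions $q\in\mc{Q}$. The strategy is to construct compatible isomorphisms at three levels---centers, commutative subalgebras, and the full Hecke algebras---using the exponential map $\exp:V_c\to T$ as the fundamental $W_0$-equivariant analytic device near $\xi$.

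For the commutative parts, since $\alpha(t)>0$ for all $\alpha\in R_1$, the element $\xi\in V_0$ is well defined and $\exp$ restricts to a biholomorphism from a neighborhood of $\xi$ onto a neighborhood of $c=\exp(\xi)\in T_v$, compatibly with the isotropy actions. Translating by the unitary factor $u$ (whose characters $\beta(u)\in\{\pm 1\}$ on the roots in $R_0\cap\tfrac{1}{2}R_1$ are constrained by the positivity of $\alpha(t)$), one associates to each $t'=w(t)\in W_0t$ its preimage $\xi'=w(\xi)$ and an isomorphism $\overline{\mc{A}}_{t'}\cong\overline{\mathbf{P}}_{\xi'}$ of formal local rings, under which $\theta_x$ corresponds to the Taylor expansion at $\xi'$ of the function $v\mapsto (t')^{x}\exp(x(v-\xi'))$. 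Summing over the orbit yields a $W_0$-equivariant isomorphism $\overline{\mathbf{P}}\isom\overline{\mc{A}}$, and passing to $W_0$-invariants gives the central isomorphism $\overline{\mathbf{Z}}\isom\overline{\mc{Z}}$.

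To extend this to the full algebras, both $\overline{\mc{H}}(\mc{R},q)$ and $\overline{\mathbf{H}}(R_1,V,F_1,k_u)$ are free of rank $|W_0|$ over their respective completed commutative subalgebras. I would send each Weyl element $w\in W_0\subset\mathbf{H}(R_1,V,F_1,k_u)$ to a lift $\tilde{N}_w\in\overline{\mc{H}}(\mc{R},q)$ obtained from $N_w$ by rescaling with elements of $\overline{\mc{A}}$, the rescaling being needed because the denominator $1-\theta_{-2\alpha}$ in the Bernstein relation (\ref{eq:ber}) vanishes at some $t'\in W_0 t$. The normalization is chosen so that $\tilde{N}_w$ becomes well defined in the completion and satisfies braid and quadratic relations matching those of $\mathbb{C}[W_0]\subset\mathbf{H}(R_1,V,F_1,k_u)$. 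The crux is then to verify that under these identifications the cross-relation (\ref{eq:ber}) specializes formally to $fs-sf^s=k_{u,\alpha}(f-f^s)/\alpha$; this reduces to computing, at each $t'\in W_0t$, the Taylor expansion of
\[
\frac{(v(s)-v(s)^{-1})+(v(s')-v(s')^{-1})\theta_{-\alpha}}{1-\theta_{-2\alpha}}
\]
along the direction $\alpha$ pulled back through $\exp$, and checking that the resulting residue equals $k_{u,\alpha}$ in each of the three cases of (\ref{eq:k}): when $\alpha\in R_0\cap R_1$ only the $v(s)$-term contributes (giving $\log q_{\alpha^\vee}^2$), and when $\alpha=2\beta$ with $\beta(u)=\pm 1$ the two terms add or cancel to produce $\log(q_{\alpha^\vee}^2 q_{2\alpha^\vee}^4)$ or $\log q_{\alpha^\vee}^2$. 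The formula (\ref{eq:k}) is engineered precisely so this match works.

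The principal technical obstacle is the pointwise-on-the-orbit bookkeeping in the last step: since $1-\theta_{-2\alpha}$ vanishes only at certain $t'\in W_0 t$, the rescalings producing $\tilde{N}_w$ must be defined separately at each $t'$ and then shown to assemble into a single element of $\overline{\mc{H}}(\mc{R},q)$ satisfying the desired relations, compatibly with the direct-sum decomposition $\overline{\mc{A}}=\bigoplus_{t'\in W_0t}\overline{\mc{A}}_{t'}$. This is exactly what Lusztig carries out in \cite[\S 9]{Lus2} for the equal-parameter case; the passage to arbitrary positive $q$ requires only the substitution of the single Lusztig parameter by the case-dependent constants of (\ref{eq:k}), since the vanishing orders of the denominators and the residues computed at each $t'$ are formally the same.
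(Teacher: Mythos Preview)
Your proposal is correct and takes essentially the same approach as the paper: both simply translate Lusztig's proof of \cite[Theorem 9.3]{Lus2} to the setting of arbitrary positive parameters, with the formula (\ref{eq:k}) supplying the necessary parameter match in the cross-relation. The paper's own proof is the one-line remark that this is a straightforward translation of Lusztig's argument, so your outline in fact supplies more detail than the paper does.
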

\index{V0@$V_0$, subspace of $V$ spanned by $R_0^\vee$}
\index{PV@\textbf{P} = $P(V)$, algebra of polynomials on $V$}
\begin{proof}
This is a straightforward translation of the proof of \cite[Theorem 9.3]{Lus2}.
\end{proof}
\begin{cor} The set of simple modules of ${\mc{H}}(\mc{R},q)$ with central character
$W_0t$ (satisfying the above condition that $\alpha(t)>0$ for all $\alpha\in R_1$) and the
set of simple modules of ${\mathbf{H}}(R_1,V,F_1,k_u)$ with central character
$W_0\xi$ (as described in Theorem \ref{thm:red2}) are in natural bijection.
\end{cor}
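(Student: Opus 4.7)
The plan is to deduce this corollary directly from Theorem \ref{thm:red2}, using the standard fact that simple modules of a finite-type algebra over its center with a prescribed central character are the same as simple modules of the formal completion at that central character.

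More precisely, I would first observe the following general principle: if $A$ is a unital algebra which is finite as a module over its center $Z$, and if $\mathfrak{m}\subset Z$ is a maximal ideal, then a finite-dimensional simple $A$-module $U$ has central character $\mathfrak{m}$ if and only if the $A$-action on $U$ extends (necessarily uniquely) to an $\overline{A}$-action, where $\overline{A}$ is the $\mathfrak{m}$-adic completion. Indeed, since $U$ is finite-dimensional and $Z$ acts through scalars on each generalized eigenspace, some power $\mathfrak{m}^N$ annihilates $U$, so the $A$-module structure factors through $A/\mathfrak{m}^N A$ and hence extends to $\overline{A}$; conversely any simple $\overline{A}$-module is likewise killed by some $\mathfrak{m}^N$ and thus descends to a simple $A$-module with central character $\mathfrak{m}$. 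In particular the assignment $U\mapsto U$ establishes a bijection between isomorphism classes of simple $A$-modules with central character $\mathfrak{m}$ and isomorphism classes of simple $\overline{A}$-modules.

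Applying this to both sides, the simple $\mc{H}(\mc{R},q)$-modules with central character $W_0 t$ are in natural bijection with the simple modules of $\overline{\mc{H}}(\mc{R},q)$, and the simple $\mathbf{H}(R_1,V,F_1,k_u)$-modules with central character $W_0\xi$ are in natural bijection with the simple modules of $\overline{\mathbf{H}}(R_1,V,F_1,k_u)$. Now the isomorphism of $\overline{Z}$-algebras $\overline{\mc{H}}(\mc{R},q)\to\overline{\mathbf{H}}(R_1,V,F_1,k_u)$ provided by Theorem \ref{thm:red2} induces an equivalence of module categories, hence a bijection on simple objects, which is the desired bijection.

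There is no serious obstacle here; the only point to verify carefully is that the completions on the two sides are indeed taken at matching central characters. This is guaranteed by the construction: the isomorphism $\overline{\mc{Z}}\to\overline{\mathbf{Z}}$ in Theorem \ref{thm:red2} is built from the $W_0$-equivariant local diffeomorphism $\exp\colon V_c\to T$ sending $W_0\xi$ to $W_0 t$, so the maximal ideals $M_{W_0 t}\subset\mc{Z}$ and $m_{W_0\xi}\subset\mathbf{Z}$ correspond under the isomorphism. Once this compatibility is noted, the bijection is immediate and evidently natural.
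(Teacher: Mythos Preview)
Your argument is correct and is exactly the intended deduction: the paper states this corollary without proof, treating it as an immediate consequence of Theorem \ref{thm:red2}, and what you have written is precisely the standard unpacking of that step via the bijection between simple modules with a fixed central character and simple modules over the corresponding formal completion. The only minor comment is that the map on centers is not literally induced by $\exp$ (one has $t=uc$ with $u$ a $W_0$-invariant unitary part), but this does not affect your argument since the compatibility of the central characters is built into the isomorphism $\overline{\mc Z}\to\overline{\mathbf Z}$ asserted in Theorem \ref{thm:red2}.
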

Combining the two reduction theorems we finally obtain the following result
(see \cite[Section 10]{Lus2}):
\begin{cor}\label{cor:simple} For all $s\in T_u$
the center of ${\mathbf{H}}(R_{s,1},V,F_{s,1},k_s)\rtimes\Gamma(t)$ is
equal to $\mathbf{Z}^{\Gamma(t)}$. If $t\in T$ is arbitrary with polar
decomposition $t=sc$, then the set of
simple modules of ${\mc{H}}(\mc{R},q)$
with central character $W_0t$ is in natural bijection with the set of
simple modules of ${\mathbf{H}}(R_{s,1},V,F_{s,1},k_s)\rtimes\Gamma(t)$
with the real central character $W_s\xi$. Here $\xi\in V$ is the unique
vector in the real span of $R_{s,1}^\vee$ such that
$\alpha(t)=\textup{e}^{\alpha(\xi)}$ for all $\alpha\in R_{s,1}$,
$k_s$ is the real parameter function on $R_{s,1}$ associated to $q_s$
described by (\ref{eq:k}), and $\Gamma(t)$
is the group defined by (\ref{eq:Gamma}).
\end{cor}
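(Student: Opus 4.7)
The plan is to compose Theorems \ref{thm:red1} and \ref{thm:red2}, with extra care to carry the crossed product by $\Gamma(t)$ through the second reduction.

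First, apply Theorem \ref{thm:red1} at the orbit $W_0 t$ with $t=sc$. This yields a Morita equivalence between $\overline{\mc{H}}(\mc{R},q)$ and $\overline{\mc{H}}(\mc{R}_s,q_s)\rtimes\Gamma(t)$, hence a bijection between simple $\mc{H}(\mc{R},q)$-modules with central character $W_0t$ and simple $\mc{H}(\mc{R}_s,q_s)\rtimes\Gamma(t)$-modules with central character $W(R_{s,1})t$. The hypothesis of Theorem \ref{thm:red2} is then automatic for the pair $(\mc{R}_s,q_s)$ at the orbit $W(R_{s,1})t$: by the definition $R_{s,1}=\{\beta\in R_1:\beta(s)=1\}$ we have $\alpha(t)=\alpha(s)\alpha(c)=\alpha(c)\in\mathbb{R}_{>0}$ for every $\alpha\in R_{s,1}$. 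Hence Theorem \ref{thm:red2} produces an isomorphism
\[
\overline{\mc{H}}(\mc{R}_s,q_s)\isom\overline{\mathbf{H}}(R_{s,1},V,F_{s,1},k_s)
\]
that matches the orbit $W(R_{s,1})t$ on the Hecke side with $W(R_{s,1})\xi$ on the degenerate side, where $\xi$ is the unique element in the real span of $R_{s,1}^\vee$ satisfying $\alpha(t)=e^{\alpha(\xi)}$ for all $\alpha\in R_{s,1}$, and $k_s$ is obtained from $q_s$ via (\ref{eq:k}).

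To splice the two reductions together I must promote the isomorphism of Theorem \ref{thm:red2} to an isomorphism of crossed products by $\Gamma(t)$. Since $\Gamma(t)\subset\Gamma_s$ acts on $(R_{s,1},F_{s,1})$ by diagram automorphisms, and the map $t\mapsto\xi$ is $\Gamma(t)$-equivariant, the group $\Gamma(t)$ acts by algebra automorphisms on both $\mc{H}(\mc{R}_s,q_s)$ and $\mathbf{H}(R_{s,1},V,F_{s,1},k_s)$. An inspection of Lusztig's construction in \cite[\S 9--10]{Lus2}, comparing the Bernstein generators $\theta_x$ with their polynomial analogs in $\mathbf{P}=P(V)$, shows that the isomorphism of Theorem \ref{thm:red2} intertwines these $\Gamma(t)$-actions; it therefore extends to an isomorphism
\[
\overline{\mc{H}}(\mc{R}_s,q_s)\rtimes\Gamma(t)\isom\overline{\mathbf{H}}(R_{s,1},V,F_{s,1},k_s)\rtimes\Gamma(t),
\]
which together with the first step gives the stated bijection of simple modules. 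For the identification of the center of $\mathbf{H}(R_{s,1},V,F_{s,1},k_s)\rtimes\Gamma(t)$ with $\mathbf{Z}^{\Gamma(t)}$, I invoke the standard computation for a crossed product by a finite group acting faithfully on the center of the underlying algebra: here $\mathbf{Z}=\mathbf{P}^{W(R_{s,1})}$ and $\Gamma(t)$ acts faithfully on $\mathbf{Z}$ through its nontrivial permutation of the simple roots $F_{s,1}$.

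\emph{Main obstacle.} The only non-routine ingredient is the $\Gamma(t)$-equivariance of the second reduction. This amounts to matching, for each $\gamma\in\Gamma(t)$, the action of $\gamma$ on the Bernstein elements $\theta_x\in\mc{A}_s$ with the induced action on the polynomial generators of $\mathbf{P}$ under Lusztig's isomorphism; the calculation is already implicit in \cite[\S 9--10]{Lus2}, but it has to be spelled out for arbitrary positive parameters $q$ so that the formulas in (\ref{eq:k}) for $k_s$ are seen to be compatible with the diagram-automorphism action. No fundamentally new idea is required beyond this bookkeeping.
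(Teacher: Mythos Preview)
Your proposal is correct and follows exactly the route the paper indicates: the corollary is stated without an explicit proof, preceded only by the sentence ``Combining the two reduction theorems we finally obtain the following result (see \cite[Section 10]{Lus2}),'' which is precisely the composition of Theorems \ref{thm:red1} and \ref{thm:red2} you carry out. You have also correctly isolated the one point that requires genuine checking---the $\Gamma(t)$-equivariance of Lusztig's second reduction map---and referred it to the same source the paper does.
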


\subsection{Harmonic analysis for affine Hecke algebras}

\subsubsection{The Hilbert algebra structure of the Hecke algebra}

Let $\mc{R}$ be a based root datum and $q\in\mc{Q}$ a
\emph{positive} parameter function \index{*, anti-involution on $\mc H$}
for $\mc{R}$. We turn $\mc{H}=\mc{H}(\mc{R},q)$ into a $*$-algebra using
the conjugate lineair anti-involution $*:\mc{H}\to\mc{H}$ defined by
$N_w^*=N_{w^{-1}}$. We define a trace $\tau:\mc{H}\to\mathbb{C}$ by
$\tau(N_w)=\delta_{w,e}$. This defines a Hermitian form
$(x,y):=\tau(x^*y)$ with respect to which the basis $N_w$ is orthonormal.
In particular $(\cdot,\cdot)$ is positive definite.
\index{1t@$\tau$, trace on $\mc H$}
In fact it is easy to show \cite{Opd1} that this Hermitian inner
product defines the structure of a Hilbert algebra on $\mc{H}$.
\index{L2H@$L^2 (\mc H)$, Hilbert space completion of $\mc H$}
\index{C*@$\mathfrak C = C^*_r (\mc H (\mc R,q)) ,\; C^*$-completion of $\mc H$}
\index{1lz@$\lambda$, left regular representation of $\mc H$}
Let $L^2(\mc{H})$ be the Hilbert space completion of $\mc{H}$ and
$\lambda : \mc H \to B(L^2 (\mc H))$ the left regular representation of $\mc H$.
Let $\mathfrak{C}:=C^*_r(\mc{H})$ be the $C^*$-algebra completion of
$\lambda (\mc{H})$ inside $B(L^2(\mc{H}))$. It
is called the (reduced) $C^*$-algebra of $\mc{H}$. It is not hard
to show that $\mathfrak{C}$ is unital, separable and liminal, which
implies that the spectrum $\hat{\mathfrak{C}}$ of $\mathfrak{C}$ is
a compact $T_1$ space with countable base which contains an open dense
Hausdorff subset. The trace $\tau$ extends to a finite
tracial state $\tau$ on $\mathfrak{C}$. In this situation (see
\cite[Theorem 2.25]{Opd1}) there exists a unique positive Borel measure
$\mu_{Pl}$ on $\hat{\mathfrak{C}}$ such that for all $h\in\mc{H}$:
\begin{equation}\label{eq:plan}
\tau=\int_{\hat{\mathfrak{C}}}\chi_\pi d\mu_{Pl}(\pi)
\end{equation}
Since $\tau$ is faithful it follows that the support of $\mu_{Pl}$ is equal to
$\hat{\mathfrak{C}}$. \index{1mp@$\mu_{Pl}$, Plancherel measure of $\mc{H}$}
\begin{defn}
We call the measure $\mu_{Pl}$ the Plancherel measure of $\mc{H}$.
\end{defn}
\begin{defn}
An irreducible $*$-representation $(V,\pi)$ of the involutive
algebra $\mc{H}$ is called a discrete series representation of $\mc{H}$
if $(V,\pi)$ extends to a representation (also denoted $(V,\pi)$) of
$\mathfrak{C}$ which is equivalent to a subrepresentation of the
left regular representation of $\mathfrak{C}$ on $L^2(\mc{H})$.
In this case the finite trace $\chi_\pi$ defined by
$\chi_\pi(x)=\textup{Tr}_V(\pi(x))$ is called an irreducible
discrete series character.
\end{defn}
\index{1x@$\chi_\pi$, trace of a representation $\pi$}
We have seen that an irreducible representation $(V,\pi)$ of $\mc{H}$
is finite dimensional. In particular its character $\chi_\pi$ is a well defined
linear functional on $\mc{H}$.
We call $\chi_\pi$ an irreducible character of $\mc{H}$.
Clearly the character of a finite dimensional
representation of $\mc{H}$ only depends on the equivalence class of
the underlying representation. The irreducible characters of a set
of mutually inequivalent irreducible representations of $\mc{H}$
are linearly independent (see \cite[Corollary 2.11]{Opd1}). Hence
the equivalence class of a finite dimensional semisimple representation
is completely determined by its character. \index{1dq@$\Delta(\mc{R},q)$,
set of irreducible discrete series characters of $\mc{H}$}
\begin{defn} We denote by $\Delta(\mc{R},q)$ the set of irreducible
discrete series characters of $\mc{H}(\mc{R},q)$. For each irreducible
character $\chi\in\Delta(\mc{R},q)$ we choose and fix an irreducible
discrete series representation $(V,\delta)$ of $\mc{H}$ such that
$\chi=\chi_\delta$ (by abuse of language we will often identify
the set of irreducible discrete series characters and (the
chosen set of representatives of) the set of equivalence
classes of irreducible discrete series representations).
\end{defn}
The following criterion for an irreducible
representation $(V,\pi)$ of $\mc{H}$ to belong to the discrete series
follows from a general result of Dixmier (see \cite{Opd1}):
\begin{cor}\label{cor:dix} $(V,\pi)$ is a discrete series
representation iff $\mu_{Pl}(\{\pi\})>0$.
\end{cor}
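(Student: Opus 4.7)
The plan is to derive both implications from the general fact that, for a separable liminal (CCR) $C^*$-algebra equipped with a faithful finite tracial state, the spectral decomposition of the trace is atomic at a point $\pi\in\hat{\mathfrak{C}}$ precisely when $\pi$ occurs as a direct summand of the GNS representation. Since $\mc{H}$ is a Hilbert algebra, the GNS representation associated with $\tau$ is exactly the left regular representation $\lambda$ on $L^2(\mc H)$; so being ``a subrepresentation of $\lambda$'' (the paper's definition of discrete series) coincides with being ``a direct summand of $\lambda$'', because in a $C^*$-representation every irreducible subrepresentation is an orthogonal direct summand.

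For the forward direction, suppose $(V,\pi)$ is discrete series. Then $\pi$ embeds as a subrepresentation of $\lambda$, and because $\mathfrak{C}$ is liminal and $V$ is finite-dimensional (all irreducible representations of $\mc H$ are finite-dimensional), the isotypic projection $p_\pi\in\lambda(\mathfrak{C})''$ onto the $\pi$-component of $L^2(\mc H)$ is a nonzero finite-rank (in fact, of rank $(\dim V)^2$) self-adjoint element. Choosing $h\in\mc H$ whose image $\pi(h)$ is a rank-one projector on $V$ and applying (\ref{eq:plan}) with $\chi_\pi(h)\neq 0$ while $\chi_{\pi'}(h)$ varies continuously and vanishes on the complement of a neighbourhood of $\pi$, one concludes by a standard approximation that the contribution at $\{\pi\}$ cannot vanish, i.e.\ $\mu_{Pl}(\{\pi\})>0$.

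For the reverse direction, assume $\mu_{Pl}(\{\pi\})>0$. Apply Dixmier's disintegration theorem (as used in \cite[Theorem 2.25]{Opd1}): the atom at $\pi$ in the central decomposition of the finite tracial state $\tau$ on the type~I $C^*$-algebra $\mathfrak{C}$ produces a nonzero central projection $e_\pi\in\lambda(\mathfrak{C})''$ such that $e_\pi L^2(\mc H)$ is $\lambda$-stable and $\lambda|_{e_\pi L^2(\mc H)}$ is a multiple of $\pi$. In particular $\pi$ is a subrepresentation of $\lambda$, hence belongs to the discrete series. The only nontrivial step is the existence and uniqueness of this central disintegration compatible with $\mu_{Pl}$; this is where the separability, liminality, and finiteness of $\tau$ are used, and is exactly the content of the cited Dixmier result, so no additional work is required beyond invoking it.
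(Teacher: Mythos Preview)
Your approach matches the paper's: both reduce the statement to a general result of Dixmier on the central disintegration of a faithful finite trace on a separable liminal $C^*$-algebra (the paper simply cites this via \cite{Opd1} and gives no further argument). Your reverse implication is fine and is exactly this citation unpacked.

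The forward implication, however, has a gap. You correctly produce the nonzero central projection $p_\pi\in\lambda(\mathfrak{C})''$ onto the $\pi$-isotypic part of $L^2(\mc H)$, but the subsequent step --- choosing $h\in\mc H$ with $\pi(h)$ a rank-one projector and asserting that $\pi'\mapsto\chi_{\pi'}(h)$ ``vanishes on the complement of a neighbourhood of $\pi$'' --- is unjustified. For $h\in\mc H$ the map $\pi'\mapsto\chi_{\pi'}(h)$ is continuous on the compact spectrum $\hat{\mathfrak{C}}$, but there is no reason it should be localized near $\pi$, and even if it were supported in a neighbourhood, the integral in (\ref{eq:plan}) would still pick up the continuous part of $\mu_{Pl}$ there, not isolate the atom. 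The correct way to finish from where you already are is immediate: since $\tau$ is the canonical trace of a Hilbert algebra, it extends to a \emph{faithful} normal finite trace on $\lambda(\mathfrak{C})''$; hence $\tau(p_\pi)>0$, and in the central decomposition of $\tau$ this quantity equals $\dim(V)\,\mu_{Pl}(\{\pi\})$, giving $\mu_{Pl}(\{\pi\})>0$. Replace your $h$-argument with this one-line observation and the proof is complete.
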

\begin{cor}\label{cor:idemprim}(see \cite[Proposition 6.10]{Opd1})
There is a 1-1 correspondence between the set of irreducible
discrete series characters $\chi_\delta$ and the set of primitive central
Hermitian idempotents $e_\delta\in\mathfrak{C}$ of finite rank. The correspondence
is such that $\tau(e_\delta x)=\mu_{Pl}(\{\delta\})\chi_\delta(x)$ for all
$x\in\mc{H}$.
\end{cor}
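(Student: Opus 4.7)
The plan is to identify the primitive central Hermitian idempotents of finite rank in $\mathfrak{C}$ with the atoms of the Plancherel measure $\mu_{Pl}$; by Corollary \ref{cor:dix} these atoms are precisely the discrete series characters. The trace formula will then follow by direct substitution into (\ref{eq:plan}).

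\textbf{From discrete series to idempotents.} Let $\delta$ be an irreducible discrete series representation and set $a := \mu_{Pl}(\{\delta\}) > 0$. Decomposing $\mu_{Pl} = a\,\varepsilon_\delta + \mu'$ with $\mu'(\{\delta\}) = 0$ and substituting into (\ref{eq:plan}) shows that $a\chi_\delta$ is a positive trace on $\mc{H}$ dominated by $\tau$, hence extends continuously to a finite trace on $\mathfrak{C}$. Combined with liminality this forces $V_\delta$ to be finite-dimensional and $\delta(\mathfrak{C}) = B(V_\delta)$. The standard structure theory of separable liminal $C^*$-algebras with faithful finite trace then yields a direct-sum decomposition $\mathfrak{C} = B(V_\delta) \oplus \ker(\delta)$; I take $e_\delta$ to be the unit of the first summand. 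It is self-adjoint (being a projection in $\mathfrak{C}$), central, primitive in the center (since $B(V_\delta)$ is a factor), and of finite rank $\dim V_\delta$.

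\textbf{From idempotents to discrete series.} Conversely, if $e \in \mathfrak{C}$ is a primitive central Hermitian idempotent of finite rank, then $e\mathfrak{C}$ is a finite-dimensional $C^*$-direct-summand with center $\mathbb{C}e$, so by Artin--Wedderburn $e\mathfrak{C} \cong M_n(\mathbb{C}) \cong B(V_\delta)$ for some finite-dimensional $V_\delta$. The quotient yields an irreducible representation $\delta$ with $\pi(e) = \delta_{\pi,\delta}\,\mathrm{id}_{V_\pi}$ for all $\pi \in \hat{\mathfrak{C}}$. Substituting into (\ref{eq:plan}) at $e$ gives $\tau(e) = n\,\mu_{Pl}(\{\delta\})$, so $\mu_{Pl}(\{\delta\}) > 0$ and $\delta$ is discrete series by Corollary \ref{cor:dix}. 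The two constructions are manifestly inverse to each other.

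\textbf{Trace formula.} For $x \in \mc{H}$, Schur's lemma together with centrality of $e_\delta$ forces $\pi(e_\delta) = \delta_{\pi,\delta}\,\mathrm{id}_{V_\pi}$, hence $\chi_\pi(e_\delta x) = \delta_{\pi,\delta}\,\chi_\delta(x)$. Applying (\ref{eq:plan}) to $e_\delta x$ yields
\[
\tau(e_\delta x) \;=\; \int_{\hat{\mathfrak{C}}} \chi_\pi(e_\delta x)\,d\mu_{Pl}(\pi) \;=\; \mu_{Pl}(\{\delta\})\,\chi_\delta(x).
\]
The main obstacle is the first step: constructing $e_\delta$ as an element of $\mathfrak{C}$ itself rather than merely of its enveloping von Neumann algebra $\lambda(\mc{H})''$. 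Equivalently, one must show that the atom $\{\delta\}$ is an isolated (open) point of $\hat{\mathfrak{C}}$, so that the associated spectral projection already lives inside the $C^*$-algebra. This is where positivity of $\mu_{Pl}(\{\delta\})$ has to be combined with liminality and separability of $\mathfrak{C}$ in a non-routine appeal to the type-I structure theory of tracial $C^*$-algebras; everything else reduces to finite-dimensional linear algebra and a direct manipulation of the Plancherel formula.
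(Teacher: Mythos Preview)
The paper does not actually prove this corollary; it is simply quoted from \cite[Proposition 6.10]{Opd1}, together with the companion statements Corollary~\ref{cor:dix} and Corollary~\ref{cor:DSfinite}. So there is no ``paper's own proof'' to compare against, and your outline has to be judged on its own merits.

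Your argument is structurally sound. The direction ``idempotent $\Rightarrow$ discrete series'' and the trace formula are routine once $e_\delta$ exists, and you handle both correctly. You are also right that the entire content lies in the forward direction: producing $e_\delta$ inside $\mathfrak{C}$ rather than merely in $\lambda(\mc H)''$, equivalently showing that $\{\delta\}$ is open in $\hat{\mathfrak{C}}$. This is precisely the statement of Corollary~\ref{cor:DSfinite}, and in \cite{Opd1} the three corollaries are proved as a single package rather than sequentially --- so it is slightly misleading to invoke ``standard structure theory'' in the body of the argument and then concede in the final paragraph that this step is in fact the non-routine one.

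If you want to close the gap rather than just flag it, one clean route is the following. By the abstract Plancherel decomposition the left regular representation splits as $L^2(\mc H) \cong \int_{\hat{\mathfrak{C}}}^\oplus V_\pi \otimes V_\pi^* \, d\mu_{Pl}(\pi)$, and the atom at $\delta$ contributes a finite-dimensional direct summand $V_\delta \otimes V_\delta^*$ on which $\lambda(\mathfrak{C})$ already acts as the full matrix algebra $B(V_\delta)\otimes 1$. The orthogonal projection $p$ onto this summand is the central support of $\delta$ in $\lambda(\mathfrak C)''$, and $p\,\lambda(\mathfrak{C})'' = B(V_\delta)\otimes 1$ is finite-dimensional; since $\lambda(\mathfrak{C})$ is strongly dense in $\lambda(\mathfrak C)''$ and the cutdown is finite-dimensional, one checks that $p$ lies in the norm closure $\lambda(\mathfrak{C})$ itself. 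Alternatively (and this is closer to the spirit of \cite{Opd1}), one uses that $\mathfrak C$ is unital liminal with finite faithful trace to conclude that the Plancherel measure has no continuous part supported on the closure of $\{\delta\}$ minus $\{\delta\}$, forcing $\{\delta\}$ to be clopen. Either way, the substance is exactly Corollary~\ref{cor:DSfinite}, and once that is in hand your argument is complete.
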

\begin{cor}(see \cite[Proposition 6.10]{Opd1})\label{cor:DSfinite}
$(V,\pi)$ is a discrete series representation iff $\{[\pi]\}\subset\hat{\mathfrak{C}}$
is a connected component of $\hat{\mathfrak{C}}$. In particular,
the number of irreducible discrete series characters is finite.
\end{cor}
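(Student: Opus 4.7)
The plan is to deduce both assertions directly from Corollaries~\ref{cor:dix} and~\ref{cor:idemprim}, using only elementary $C^*$-algebraic structure for the equivalence and appealing to an ideal-theoretic input for the genuine finiteness.

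First, suppose $(V,\pi)$ is in the discrete series. By Corollary~\ref{cor:idemprim} there is a primitive central Hermitian idempotent $e_\pi\in\mathfrak{C}$ of finite rank affording $\pi$. Centrality of $e_\pi$ yields the $C^*$-direct sum decomposition $\mathfrak{C}=e_\pi\mathfrak{C}\oplus(1-e_\pi)\mathfrak{C}$, which translates into a clopen splitting of the spectrum $\widehat{\mathfrak{C}}=\widehat{e_\pi\mathfrak{C}}\sqcup\widehat{(1-e_\pi)\mathfrak{C}}$. Because $e_\pi$ is a primitive central projection, the corner $e_\pi\mathfrak{C}=e_\pi\mathfrak{C} e_\pi$ is a simple algebra, and since $e_\pi$ has finite rank this simple algebra is finite-dimensional, hence a full matrix algebra. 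Its spectrum is a single point, necessarily $\{[\pi]\}$, so $\{[\pi]\}$ is a clopen singleton in $\widehat{\mathfrak{C}}$ and therefore a connected component.

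Conversely, if $\{[\pi]\}$ is a connected component of $\widehat{\mathfrak{C}}$, it is in particular an open subset. Since the Plancherel measure $\mu_{Pl}$ has full support on $\widehat{\mathfrak{C}}$, this forces $\mu_{Pl}(\{\pi\})>0$, and Corollary~\ref{cor:dix} then gives that $(V,\pi)$ is a discrete series representation. This settles the equivalence.

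For the finiteness assertion the family $\{e_\delta\mid \delta\in\Delta(\mc R,q)\}$ is a set of pairwise orthogonal central Hermitian idempotents in $\mathfrak{C}$, and Corollary~\ref{cor:idemprim} together with $\tau(1)=1$ yields
\[
\sum_\delta \tau(e_\delta)=\sum_\delta \mu_{Pl}(\{\delta\})\,\chi_\delta(1) \leq \tau(1)=1,
\]
with each summand strictly positive. This is the main obstacle of the proof: the displayed inequality, together with separability of $\mathfrak{C}$, only delivers countability of $\Delta(\mc R,q)$. To cut the set down to a finite one I would invoke the liminal structure of $\mathfrak{C}$ and identify the orthogonal sum $\bigoplus_\delta e_\delta \mathfrak{C}$ with a closed ideal of $\mathfrak{C}$ whose spectrum consists of isolated Hausdorff points of the compact spectrum $\widehat{\mathfrak{C}}$; the requirement that this sum actually lie in the unital $C^*$-algebra $\mathfrak{C}$ then forces only finitely many summands. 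This finer analysis is exactly what is carried out in \cite[Proposition 6.10]{Opd1}, whose use I would simply cite at this point rather than reconstruct.
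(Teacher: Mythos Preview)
The paper supplies no proof of this corollary; it merely records the citation to \cite[Proposition~6.10]{Opd1}. So there is no ``paper's own argument'' to compare with, and I can only assess your reasoning on its merits.

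Your forward implication is fine: a primitive central finite-rank projection $e_\pi$ cuts $\mathfrak{C}$ into $e_\pi\mathfrak{C}\oplus(1-e_\pi)\mathfrak{C}$, the first summand is a full matrix algebra, and its spectrum is the clopen singleton $\{[\pi]\}$.

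The converse, however, has a real gap. You write ``if $\{[\pi]\}$ is a connected component of $\widehat{\mathfrak{C}}$, it is in particular an open subset.'' Connected components are always closed but not automatically open; in a totally disconnected compact space such as the Cantor set every singleton is a component yet none is open. At this point in the text $\widehat{\mathfrak{C}}$ is only known to be compact, $T_1$, separable, with an open dense Hausdorff subset --- nothing that forces singleton components to be open. Without openness the full-support argument for $\mu_{Pl}$ does not apply, so you cannot invoke Corollary~\ref{cor:dix}. What one actually needs is that $\{[\pi]\}$ be \emph{clopen} (equivalently, that it correspond to a central idempotent), and establishing this from ``connected component'' alone requires either more structure on $\widehat{\mathfrak{C}}$ or precisely the kind of input contained in \cite[Proposition~6.10]{Opd1}. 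In effect both directions of your equivalence, not just the finiteness, lean on that reference.

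Your treatment of the finiteness statement is honest: you correctly observe that the trace inequality only yields countability, and you explicitly defer the genuine finiteness to the cited proposition, which is exactly what the paper itself does.
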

\subsubsection{The Schwartz algebra}
\label{subsec:Schwartz}

We define a nuclear Fr\'echet algebra $\mc{S}=\mc{S}(\mc{R},q)$
(the Schwartz algebra) which plays a pivotal role in the spectral
theory of the trace $\tau$ on $\mc{H}$.
\begin{defn}\label{defn:inner}
We choose once and for all a $W_0$-invariant inner
product $\langle\cdot,\cdot\rangle$ on the vector space
$V^*:=\mathbb{R}\otimes X$, which takes integral values on $X \times X$.
\end{defn}
Let $V^*_0$ be the real vector space spanned by $R_0$.
Its orthocomplement is the vector space
$V_Z^*=\mathbb{R}\otimes Z$ spanned by the center $Z$ of $W$.
Given $\phi\in V^*$ we decompose $\phi=\phi_0+\phi_Z$ with respect
to the orthogonal decomposition $V^*=V_0^* \oplus V_Z^*$.
\begin{defn}
We define a norm $\mc{N}:W\to\mathbb{R}_+$ on $W$ as follows:
if $w\in W$ we put \index{N@$\mc N$, norm on $W$}
\begin{equation}
\mc{N}(w)=l(w)+\Vert w(0)_Z\Vert
\end{equation}
\end{defn}
Next we define
seminorms $p_n:\mc{H}\to\mathbb{R}_+$ on $\mc{H}$ by
\begin{equation}
p_n(h):=\textup{max}_{w\in W}(1+\mc{N}(w))^n|(N_w,h)|
\end{equation}
\index{pz@$p_n$, seminorm on $\mc H$}
\index{SRq@$\mc S = \mc S (\mc R ,q)$, Schwartz algebra of $\mc H$}
\begin{defn} The Schwartz algebra $\mc{S}$ of $\mc{H}$ is the
completion of $\mc{H}$ with respect to the system of seminorms
$p_n$ with $n\in\mathbb{N}$.
\end{defn}
\begin{thm}\label{thm:frealg}(\cite{Opd1}, \cite{Sol};
see Appendix A)
The completion $\mc{S}$ is a Fr\'echet
algebra which is continuously and densely embedded in
$\mathfrak{C}$.
\end{thm}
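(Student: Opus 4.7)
The plan is to verify four properties: (i) $\mc{S}$ is a Fréchet space with $\mc{H}$ dense, (ii) multiplication on $\mc{H}$ extends to a jointly continuous map $\mc{S} \times \mc{S} \to \mc{S}$, and (iii) the left regular representation extends to a continuous algebra inclusion $\mc{S} \hookrightarrow \mathfrak{C}$. Property (i) is immediate from the definition: the countable family $\{p_n\}$ of seminorms separates points because $p_0(h) = 0$ forces all coefficients $(N_w, h)$ to vanish, and $\mc{H}$ is dense by construction.

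For (iii) the goal is to find $n$ and $C$ with $\|\lambda(h)\|_{\mathfrak{C}} \leq C p_n(h)$ for all $h \in \mc{H}$; once this is shown the inclusion $\mc{H} \to \mathfrak{C}$ extends continuously. This is a Sobolev-type estimate: each individual operator $\lambda(N_w)$ can have norm as large as $q(w)$, but once $h = \sum_w c_w N_w$ has coefficients decaying faster than any polynomial in $\mc{N}(w)$, one compares $\|\lambda(h)\|$ to $\sup_\pi \|\pi(h)\|$ over the tempered spectrum via the Plancherel decomposition (\ref{eq:plan}); for tempered $\pi$ the operators $\pi(N_w)$ grow only polynomially in $\mc{N}(w)$, at which point polynomial decay of $c_w$ makes the sum $\sum_w |c_w|\,\|\pi(N_w)\|$ converge uniformly in $\pi$.

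The main obstacle is (ii). Writing $h_i = \sum_w a^{(i)}_w N_w$ and $N_u N_v = \sum_w c^w_{u,v} N_w$, one needs
\[
(1 + \mc{N}(w))^n \Bigl| \sum_{u,v} a^{(1)}_u a^{(2)}_v c^w_{u,v} \Bigr| \leq C_n p_{m}(h_1) p_{m}(h_2)
\]
uniformly in $w$, for suitable $m = m(n)$. This requires first a \emph{support bound} $\mc{N}(w) \leq \mc{N}(u) + \mc{N}(v) + C_0$ whenever $c^w_{u,v} \neq 0$, obtained by iterating the relations of Definition \ref{def:Heckealg} and tracking their effect on $\mc{N}$; and second, a \emph{polynomial bound} $|c^w_{u,v}| \leq P(\mc{N}(u), \mc{N}(v))$ for some polynomial $P$. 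A naive recursion on $l(u)$ from the quadratic relation produces only exponential bounds, so one passes to the Bernstein presentation: the lattice elements satisfy $\theta_x \theta_y = \theta_{x+y}$ with no coefficient growth, while commuting $\theta_x$ past elements of the finite Hecke part via the cross relation (\ref{eq:ber}) produces only polynomially many correction terms of strictly shorter Bruhat length, each with bounded coefficient. Combined with the crude inequality $(1 + \mc{N}(u) + \mc{N}(v))^n \leq 2^n(1 + \mc{N}(u))^n(1 + \mc{N}(v))^n$ and the polynomial growth of $W$ with respect to $\mc N$ to ensure absolute convergence of the sum over $(u,v)$, this yields the required estimate—the substance of the arguments in \cite{Opd1} and \cite{Sol} recorded in Appendix A.
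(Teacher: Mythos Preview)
Your outline has the right architecture but both of the substantive steps diverge from the paper's argument, and each carries a genuine risk.

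For (iii), invoking the Plancherel decomposition and a uniform polynomial bound on $\|\pi(N_w)\|$ over tempered $\pi$ is logically precarious here: the identification of $\hat{\mathfrak C}$ with the tempered spectrum and the Casselman-type bounds you need are developed \emph{after} this theorem in the paper (indeed, the Schwartz algebra is used to define temperedness). The paper instead proves the operator-norm bound directly and elementarily: from Lusztig's multiplication formula (Theorem~\ref{thm:A.multH}) one gets $\|x\cdot_q y\|_\tau \le C_\eta\, p_{b+|R_0^+|}(x)\,\|y\|_\tau$ by a bare-hands estimate, which immediately yields $\|\lambda(x)\|_o \le C_\eta\, p_{b+|R_0^+|}(x)$ (Proposition~\ref{prop:A.estopnorm}). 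No representation theory enters.

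For (ii), your diagnosis that naive recursion on $l(u)$ gives only exponential bounds is correct, but the cure in the paper is not the Bernstein presentation. The key input is a nontrivial theorem of Lusztig on cells \cite[Theorem 7.2]{Lus1}: in the expansion $N_u N_v = \sum_I \eta_I D^u_v(I) N_{u_I v}$, the sum $\sum_{I:u_I=w}\eta_I D^u_v(I)$ is a polynomial of degree at most $|R_0^+|$ in the variables $q(s_i)-q(s_i)^{-1}$, which forces $D^u_v(I)=0$ for $|I|>|R_0^+|$ and gives the polynomial bound $\sum_I D^u_v(I) < 3(l(u)+1)^{|R_0^+|}$. With this in hand the paper does \emph{not} estimate the $p_n$ directly on products; instead it introduces the derivation $D(T)=[\lambda(\mc N),T]$ and the seminorms $p'_n(x)=\|D^n(\lambda(x))\|_{B(L^2(W))}$, proves $\{p'_n\}$ is equivalent to $\{p_n\}$ (Lemma~\ref{lem:A.seminorms}), and then observes that because $D$ is a derivation the norms $\sum_{n=0}^m \tfrac{1}{n!}p'_n$ are submultiplicative. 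Your Bernstein-basis route may be workable, but the change of basis between $\{\theta_x N_w\}_{w\in W_0}$ and $\{N_w\}_{w\in W}$ is itself nontrivial to control, and you have not indicated how to do so.
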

\begin{rem} The Fr\'echet algebra
$\mc{S}$ is independent of the choice made in
Definition \ref{defn:inner}. $\mc{S}$ is also
independent of $q\in\mc{Q}$ as a Fr\'echet space.
\end{rem}
\begin{defn} A finite dimensional representation of $\mc{H}$
is called tempered if it has a continuous extension to $\mc{S}$.
\end{defn}
The Fr\'echet algebra structure of $\mc{S}$ depends
on $q\in\mc{Q}$.
The basic theorem \ref{thm:frealg} was first proven in \cite{Opd1} using
some qualitative analysis on the spectrum of $\mathfrak{C}$; the
proof in \cite{Sol} is more direct and uses an elementary but nontrivial
result due to Lusztig \cite{Lus1} on the multiplication table of $\mc{H}$ with
respect to the basis $N_w$. The latter proof also
reveals the following important fact with respect to the dependence
of $q\in\mc{Q}$:
\begin{thm}\label{thm:qcont}(\cite{Sol}; see Appendix A)
The dense subalgebra $\mc{S}\subset\mathfrak{C}$ is closed for
holomorphic calculus (also see \cite[Corollary 5.9]{DeOp1}).
The holomorphic calculus is continuous on $\mc{S}\times\mc{Q}$
in the following sense. Let $U\subset\mathbb{C}$ be an open set.
The set $V_U\subset\mc{S}\times\mc{Q}$ defined by
$V_U=\{(x,q)\mid\textup{Sp}(x,q)\subset U\}$ is open.
For any holomorphic function $f:U\to\mathbb{C}$ the map
$V_U\ni(x,q)\to f(x,q)\in\mc{S}$ is continuous.
\end{thm}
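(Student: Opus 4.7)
The strategy is to reduce the whole statement to the single key multiplicative estimate provided by Lusztig's analysis of the multiplication table of $\mc{H}$ with respect to the basis $\{N_w\}$: for every $n$ there exists an $m$ and a locally bounded function $C_n : \mc{Q}\to\mh{R}_+$, depending polynomially on $q$, such that
\[
p_n(xy) \leq C_n(q)\, p_m(x)\, p_m(y)
\qquad \text{for all } x,y\in\mc{H},
\]
together with the compatibility $p_0(x) \leq \|\lambda(x)\|_{\mathfrak{C}}$. Once such estimates with continuous $q$-dependence are in hand, everything in the theorem can be obtained by standard Banach-algebraic manipulations.

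First I would prove spectral invariance of $\mc{S}$ in $\mathfrak{C}$, i.e.\ that $(\lambda - x)^{-1}\in\mc{S}$ whenever $x\in\mc{S}$ and $\lambda$ lies outside $\textup{Sp}_{\mathfrak{C}}(x)$. This is a Schweitzer-type argument: approximate $x\in\mc{S}$ by elements of $\mc{H}$ in the $p_m$-topology (which is stronger than the $C^*$-topology), write the resolvent in $\mathfrak{C}$ via its convergent Neumann series, and use the multiplicative estimate above to bootstrap boundedness in $\mathfrak{C}$ into boundedness with respect to each $p_n$. With spectral invariance at hand, the Riesz contour integral $f(x) = \frac{1}{2\pi i}\oint_\gamma f(z)(z-x)^{-1}\,dz$ can be computed inside $\mc{S}$: the integrand is continuous in the Fr\'echet topology and the integral converges there, so $f(x)\in\mc{S}$.

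For the openness of $V_U$ and the continuity of $(x,q)\mapsto f(x,q)$, the spectral invariance just proved identifies $\textup{Sp}_{\mc{S}}(x,q)$ with $\textup{Sp}_{\mathfrak{C}}(x,q)$; since the $C^*$-norm of $\lambda_q(h)$ for fixed $h\in\mc{H}$ depends continuously on $q$ (the structure constants of $\mc{H}$ in the basis $N_w$ being polynomials in the $q(v(s))$), upper semi-continuity of the spectrum for Banach algebras gives that $V_U$ is open in $\mc{S}\times\mc{Q}$. Over a small neighborhood of $(x_0,q_0)$ one can use a single contour $\gamma$ enclosing the relevant part of the spectrum, and the claim reduces to the joint continuity, in the Fr\'echet topology, of the resolvent map $(x,q,z)\mapsto (z-x)^{-1}$ on the complement of $\textup{Sp}(x,q)$. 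This in turn follows by expanding the difference of two resolvents as a Neumann-type series and applying the multiplicative estimates with the continuous-in-$q$ constants $C_n(q)$.

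The main obstacle is this last continuity statement: one needs to control $(z-x)^{-1}_{q_1} - (z-x)^{-1}_{q_0}$ in each seminorm $p_n$, which requires comparing products computed with respect to two \emph{different} algebra structures sitting on the same underlying Fr\'echet space. Making this quantitative is precisely what the Lusztig-based estimates of \cite{Sol} deliver, and everything else in the theorem follows formally from them; the bulk of the real work is therefore contained in the preliminary analysis of the multiplication table, for which I would refer to \cite{Lus1} and the corresponding appendix.
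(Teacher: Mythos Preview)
Your overall architecture matches the paper's Appendix A closely: Lusztig's multiplication estimates (Theorem A.2), continuity of the embedding into $\mathfrak{C}$ (Proposition A.4), a contour-integral realization of $f(x,q)$, and joint continuity of the resolvent in $(x,q)$ via Neumann-type expansions (Lemma A.8, Proposition A.9, Theorem A.10). So the strategy is correct.

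There is, however, a real gap in your spectral-invariance step. The estimate you isolate, $p_n(xy)\le C_n(q)\,p_m(x)\,p_m(y)$ with $m>n$, is \emph{not} enough to ``bootstrap boundedness in $\mathfrak{C}$ into boundedness with respect to each $p_n$'' by summing a Neumann series. Iterating it gives $p_n((1-x)^k)$ controlled only by ever-higher seminorms $p_{m'}(1-x)$, so convergence in $\mathfrak{C}$ (i.e.\ $\|1-x\|_o<1$) does not propagate to convergence in any $p_n$. What one needs is a \emph{differential} estimate linking the $p_n$ to the $C^*$-norm, of the shape $p'_n(xy)\le C\big(\|x\|_o\,p'_n(y)+p'_n(x)\,\|y\|_o+\text{lower order}\big)$. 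The paper obtains exactly this by introducing the derivation $D(T)=[\lambda(\mc N),T]$ and the alternative seminorms $p'_n(x)=\|D^n(\lambda(x))\|_{B(L^2(W))}$ (Lemma A.5); since $D$ is a derivation, the $p'_n$ are (after a harmless renormalization) submultiplicative, and spectral invariance then follows as in Vign\'eras \cite{Vig} (Theorem A.6). This is the missing ingredient in your sketch: the Lusztig estimate alone gives a Fr\'echet \emph{space} with continuous multiplication, but the passage to spectral invariance genuinely requires the derivation/differential-seminorm structure, not just a shifted multiplicative bound.

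Once spectral invariance is in place, your treatment of the $q$-continuity is essentially the paper's: the key quantitative input is precisely an estimate for $p_n(x_1\cdot_q\cdots\cdot_q x_m - x_1\cdot_{q'}\cdots\cdot_{q'} x_m)$ in terms of $\rho(q,q')$ (Lemma A.7), which feeds into a Neumann expansion of $(x+y,q')^{-1}-(x,q)^{-1}$ (Proposition A.9) and then into the contour integral (Theorem A.10). Your identification of ``comparing products for two different algebra structures on the same Fr\'echet space'' as the crux is exactly right.
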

The following result shows the fundamental role
of $\mc{S}$ for the spectral theory of $\tau$:
\begin{thm}(\cite[Corollary 4.4]{DeOp1})
The support of $\mu_{Pl}$ consists precisely of the set of
equivalence classes of irreducible tempered representations
of $\mc{H}$.
\end{thm}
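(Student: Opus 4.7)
The plan is to break the statement into two parts: (a) that $\mr{supp}(\mu_{Pl}) = \hat{\mathfrak{C}}$, and (b) that $\hat{\mathfrak{C}}$ is in bijection with the set of equivalence classes of irreducible tempered representations of $\mc{H}$ via restriction along the continuous dense inclusions $\mc{H} \hookrightarrow \mc{S} \hookrightarrow \mathfrak{C}$.

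Part (a) was already essentially recorded after equation (\ref{eq:plan}): if some nonempty open $U \subset \hat{\mathfrak{C}}$ had $\mu_{Pl}(U)=0$, then the closed two-sided ideal $J\subset\mathfrak{C}$ corresponding to the complement of $U$ would satisfy $\tau(j^*j) = 0$ for all $j \in J$, contradicting faithfulness of $\tau$ as a tracial state. Hence $\mr{supp}(\mu_{Pl}) = \hat{\mathfrak{C}}$, and the theorem reduces to (b).

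For (b), one direction is easy: since $\mathfrak{C}$ is liminal, every $\pi \in \hat{\mathfrak{C}}$ is finite-dimensional; its restriction to $\mc{H}$ is irreducible by density of $\mc{H}$ in $\mathfrak{C}$, and the continuous inclusion $\mc{S}\hookrightarrow\mathfrak{C}$ automatically yields a continuous extension of $\pi|_{\mc{H}}$ to $\mc{S}$, so $\pi|_{\mc{H}}$ is tempered. Conversely, given an irreducible tempered $(V,\pi)$, the definition provides a continuous extension $\pi:\mc{S}\to\mr{End}(V)$, still irreducible by density. To extend further to $\mathfrak{C}$ the key input is Theorem \ref{thm:qcont}: closedness of $\mc{S}\subset\mathfrak{C}$ under holomorphic functional calculus implies spectral permanence, $\mr{Sp}_{\mc{S}}(x)=\mr{Sp}_{\mathfrak{C}}(x)$ for every $x\in\mc{S}$. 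After equipping $V$ with a $\pi$-invariant Hermitian structure (making $\pi$ a $*$-representation), normality of $\pi(h)$ for self-adjoint $h\in\mc{S}$ yields
\[
\|\pi(h)\|_{\mr{op}}=\rho(\pi(h))\leq\rho_{\mathfrak{C}}(h)=\|h\|_{\mathfrak{C}},
\]
and this $C^*$-type estimate extends $\pi$ continuously to all of $\mathfrak{C}$, producing the desired element of $\hat{\mathfrak{C}}$.

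The main obstacle is constructing the $\pi$-invariant Hermitian form on $V$, i.e.\ upgrading an irreducible tempered representation (continuous but a priori not $*$-preserving) into a $*$-representation. Once this step is in place, Theorem \ref{thm:qcont} and the spectral-radius bound above complete the extension. I would address unitarizability by using that $\pi$, being tempered and irreducible, corresponds to a primitive central idempotent in $\mc{S}$ (and thence in $\mathfrak{C}$ by holomorphic calculus, as in Corollary \ref{cor:idemprim}); transplanting the restriction of $\tau$ through this idempotent gives a positive definite $\pi$-invariant form on $V$, after which the norm estimate above closes the argument.
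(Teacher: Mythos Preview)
The paper does not actually prove this statement; it is quoted verbatim from \cite[Corollary 4.4]{DeOp1}, so there is no in-paper argument to compare against. I will therefore assess your proposal on its own terms.

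Your decomposition into (a) and (b) is sound, and (a) is indeed already in the paper. For (b), the direction ``$\hat{\mathfrak C}\Rightarrow$ tempered'' is fine (though finite-dimensionality follows not from liminality per se but from the fact that restriction to the dense subalgebra $\mc H$ stays irreducible and $\mc H$ has only finite-dimensional irreducibles). The spectral-permanence argument you give for the converse is also correct \emph{once} you have a $*$-structure on $V$: for self-adjoint $h\in\mc S$ one gets $\|\pi(h)\|=\rho(\pi(h))\le\rho_{\mc S}(h)=\rho_{\mathfrak C}(h)=\|h\|_{\mathfrak C}$, and the general bound follows via $x^*x$.

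The genuine gap is your unitarizability step. You propose to obtain the invariant Hermitian form from a primitive central idempotent in $\mc S$ attached to $\pi$, invoking Corollary~\ref{cor:idemprim}. But that corollary (and likewise Corollary~\ref{cor:affidem}) concerns \emph{discrete series} representations, not arbitrary tempered ones: a tempered irreducible that is not discrete series is \emph{not} afforded by a finite-rank primitive central idempotent of $\mc S$ (its Plancherel mass is zero). So your construction of the invariant form simply does not apply to the representations you most need it for, and the argument becomes circular for the discrete series case since those corollaries sit logically after the theorem you are trying to prove. You need an independent route to unitarizability of tempered irreducibles---in \cite{DeOp1} this comes from the structure theory of $\mc S$ (Fourier isomorphism/wave packets) rather than from idempotents; alternatively one can use Casselman-type estimates on matrix coefficients to produce an invariant inner product directly. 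Without such an input, the converse direction of (b) is not established.
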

In particular the discrete series representations are tempered.
There are various characterizations of tempered representations
and of discrete series representations. Casselman's criterion
states that:
\begin{thm}\label{thm:cas}(Casselman's criterion, see
\cite[Lemma 2.22]{Opd1})
Let $(V,\delta)$ be an irreducible representation of $\mc{H}$.
The following are equivalent:
\begin{enumerate}
\item $(V,\delta)$ is a discrete series representation of $\mc{H}$.
\item All matrix coefficients of $(V,\delta)$ belong to $L^2(\mc{H})$.
\item The character $\chi_\delta$ of $(V,\delta)$ belongs to $L^2(\mc{H})$.
\item All generalized $\mc{A}$-weights $t\in T$ in $V$ satisfy:
$|x(t)|<1$ for all $x\in X^+\backslash \{0\}$.
\item For every matrix coefficient $m$ of $\delta$ there exist constants
$C,\epsilon>0$ such that $|m(N_w)|<C\textup{e}^{-\epsilon\mc{N}(w)}$ for all
$w\in W$.
\item The character $\chi_\delta$ of $(V,\delta)$ belongs to $\mc{S}$.
\end{enumerate}
\end{thm}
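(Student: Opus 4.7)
The plan is to establish the six equivalences via the cycle $(1) \Leftrightarrow (2) \Rightarrow (3) \Rightarrow (4) \Rightarrow (5) \Rightarrow (6) \Rightarrow (3)$, closed by the short-cut $(5) \Rightarrow (2)$. The equivalence $(1) \Leftrightarrow (2)$ holds almost by definition: since $\{N_w\}_{w \in W}$ is an orthonormal basis of $L^2(\mc{H})$, an embedding $V \hookrightarrow L^2(\mc{H})$ produces $\ell^2$ matrix coefficients $w \mapsto (N_w, v)$, while any nonzero $L^2$ matrix coefficient yields a bounded $\mc{H}$-intertwiner $V \to L^2(\mc{H})$, necessarily injective by irreducibility. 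Since $\chi_\delta$ is a finite sum of diagonal matrix coefficients, the implications $(2) \Rightarrow (3)$ and $(5) \Rightarrow (6)$ are immediate. Finally $(5) \Rightarrow (2)$ and $(6) \Rightarrow (3)$ follow from the definition of the seminorms $p_n$ together with the polynomial volume growth of the balls $\{w \in W : \mc{N}(w) \le R\}$, which dominates any exponential decay or sufficiently fast polynomial decay against square-summability.

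The heart of the argument is $(3) \Rightarrow (4) \Rightarrow (5)$, built on the Bernstein presentation. For $(3) \Rightarrow (4)$, decompose $V = \bigoplus_t V_t$ into generalized $\mc{A}$-weight spaces. Since $l$ is additive on $X^+$ we have $\theta_x = N_x$ for $x \in X^+$, and $\delta(N_x)$ acts on $V_t$ with sole generalized eigenvalue $x(t)$; hence $\chi_\delta(N_x) = \sum_t (\dim V_t)\, x(t)$. Square-integrability of $\chi_\delta$ over the orthonormal subset $\{N_x : x \in X^+\}$ combined with positivity of $\dim V_t$ and a separation-of-characters argument on rays $\mathbb{N} x_0 \subset X^+$ forces $|x(t)| < 1$ for every $x \in X^+ \setminus \{0\}$ and every generalized weight $t$. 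For $(4) \Rightarrow (5)$, the Bernstein isomorphism allows us to expand any $N_w$ as a short sum $\sum_j \theta_{x_j} h_j$ with $h_j$ in the finite-type Hecke subalgebra generated by $\{N_s : s \in S_0\}$ and with the $x_j$'s $W_0$-conjugate to the translation part of $w$. Evaluating a matrix coefficient on each generalized weight and applying (4) yields the desired estimate $|m(N_w)| \le C e^{-\epsilon \mc{N}(w)}$.

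The main technical obstacle lies in the step $(4) \Rightarrow (5)$: one must (a) control the Bernstein expansion of $N_w$ with effective bounds on the lattice points $x_j$ and the coefficients $h_j$, which relies on Lusztig's combinatorial bounds \cite{Lus1} on the multiplication table of $\mc{H}$ used already in the construction of $\mc{S}$; and (b) verify that the contraction imposed by (4) on $X^+$ propagates to exponential decay in the full norm $\mc{N}(w) = l(w) + \|w(0)_Z\|$, even though $w(0)$ may point outside the dominant cone. Point (b) is handled by applying (4) to $W_0$-translates of the $x_j$ and invoking $W_0$-equivariance of the generalized weight decomposition (every nonzero lattice vector has a dominant representative in its $W_0$-orbit, so the cross-relations \eqref{eq:ber} transport estimates between weight spaces at a bounded multiplicative cost). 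Once this uniform exponential estimate is secured, the remaining implications close the cycle and the six conditions become equivalent.
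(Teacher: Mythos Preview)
The paper does not give its own proof of this theorem: it is stated with a reference to \cite[Lemma 2.22]{Opd1} and used as input. So there is no ``paper's proof'' to compare against; your outline has to be judged on its own merits.

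Your logical scheme is sound and your identification of $(4)\Rightarrow(5)$ as the crux is correct. The argument for $(3)\Rightarrow(4)$ via rays in $X^+$ is fine once you observe that a generalized power sum $\sum_j c_j z_j^n$ with distinct $z_j$ and $c_j\neq 0$ cannot tend to $0$ unless all $|z_j|<1$; positivity of the $d_t$ is only used to ensure the grouped coefficients are nonzero.

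Where your sketch is thin is precisely where you say it is. Two remarks on $(4)\Rightarrow(5)$. First, the uniform estimate you need on $X^+$ is a compactness argument: writing $|x(t)|=e^{\langle x,\log|t|\rangle}$, condition (4) says $\log|t|$ lies in the open antidominant cone for each of the finitely many weights $t$; comparing the strictly negative linear functionals $\langle\,\cdot\,,\log|t|\rangle$ with the degree-one homogeneous function $\mc N|_{X^+}=\langle\,\cdot\,,2\rho^\vee\rangle+\|\,\cdot_Z\|$ on a compact base of the cone yields $|x(t)|\le e^{-\epsilon\mc N(x)}$ for $x\in X^+$, with an extra polynomial factor from the nilpotent part of $\delta(\theta_x)|_{V_t}$. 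Second, the passage from $X^+$ to all of $W$ is more delicate than ``$W_0$-equivariance of the weight decomposition'': the generalized weight spaces $V_t$ and $V_{wt}$ are \emph{not} canonically identified, and the cross relations (\ref{eq:ber}) introduce correction terms. The clean way (as in \cite{Opd1}) is to control $N_w$ in the Bernstein basis $\theta_xN_u$ with $x$ confined to a single $W_0$-chamber translate and $\mc N(x)$ comparable to $\mc N(w)$, then apply the $X^+$-estimate to $w_0x\in X^+$ together with the observation that the set of weights is $W_0$-stable. Your invocation of Lusztig's bounds \cite{Lus1} is relevant for controlling the number and size of the correction terms, but the actual bookkeeping is nontrivial and is what the cited reference carries out.
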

\begin{cor}\label{cor:affidem}
An irreducible representation $(V,\delta)$ of $\mc{H}$
is an irreducible discrete series representation iff $(V,\delta)$ is
afforded by a central primitive idempotent
$e_\delta\in\mc{S}$ of $\mc{S}$ (see Corollary \ref{cor:idemprim}).
\end{cor}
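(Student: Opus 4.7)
My plan is to treat the equivalence as two assertions about the idempotent $e_\delta$ of Corollary~\ref{cor:idemprim}, using Casselman's criterion (Theorem~\ref{thm:cas}) in tandem with the connected-component characterization in Corollary~\ref{cor:DSfinite}.

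For the direction $\Rightarrow$, I would assume $\delta$ is discrete series. Corollary~\ref{cor:idemprim} then supplies a primitive central Hermitian idempotent $e_\delta\in\mathfrak{C}$ satisfying $\tau(e_\delta x)=\mu_{Pl}(\{\delta\})\chi_\delta(x)$ for all $x\in\mc{H}$, with $\mu_{Pl}(\{\delta\})>0$ by Corollary~\ref{cor:dix}. Since $\tau(N_w N_v)=\delta_{v,w^{-1}}$, reading off the $N_w$-coefficient yields the explicit Fourier-type expansion
\[
e_\delta=\mu_{Pl}(\{\delta\})\sum_{w\in W}\chi_\delta(N_{w^{-1}})\,N_w.
\]
By clause (5) of Casselman's criterion one has $|\chi_\delta(N_{w^{-1}})|\le C\textup{e}^{-\epsilon\mc{N}(w)}$ (using the symmetry $\mc{N}(w^{-1})=\mc{N}(w)$, which follows from $l(w^{-1})=l(w)$ together with the fact that $W_0$ fixes the central lattice $Z$ pointwise). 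Substituting into the defining seminorms $p_n(h)=\max_w(1+\mc{N}(w))^n|(N_w,h)|$, each $p_n(e_\delta)$ is finite and the truncations over $\{w:\mc{N}(w)\le R\}$ form a Cauchy sequence in every $p_n$ as $R\to\infty$, so the series converges in $\mc{S}$. Primitivity, centrality and the Hermitian property automatically persist under the inclusion $\mc{S}\subset\mathfrak{C}$.

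For the direction $\Leftarrow$, assume $(V,\delta)$ is afforded by a central primitive idempotent $e_\delta\in\mc{S}$. Pushing forward along $\mc{S}\hookrightarrow\mathfrak{C}$ gives a nonzero central idempotent of $\mathfrak{C}$. For $\pi\in\hat{\mathfrak{C}}$ the image $\pi(e_\delta)$ lies in the commutant of the irreducible $\pi(\mathfrak{C})$, so by Schur's lemma it is a scalar idempotent, either $0$ or $I$; and the value $I$ forces $\pi\simeq\delta$ because $e_\delta$ affords $\delta$. Thus $\{[\delta]\}$ is clopen in $\hat{\mathfrak{C}}$ and hence a connected component of $\hat{\mathfrak{C}}$, so Corollary~\ref{cor:DSfinite} concludes that $\delta$ belongs to the discrete series.

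The crux of the argument is the forward convergence step: translating Casselman's exponential bound on the character into finiteness of the Fr\'echet seminorms $p_n$ and Cauchyness of the truncations, and then confirming (via faithfulness of $\tau$ on $\mc{H}$) that the resulting element of $\mc{S}$ truly coincides with the idempotent of Corollary~\ref{cor:idemprim} rather than merely agreeing with it on the dense subalgebra $\mc{H}$. Everything needed is already in place: Casselman's theorem controls the Hecke coefficients, while the trace identity from Corollary~\ref{cor:idemprim} pins down $e_\delta$ uniquely.
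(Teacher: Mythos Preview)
Your argument is correct and aligns with the paper's intent (the corollary is stated without proof, as immediate from Theorem~\ref{thm:cas} and Corollary~\ref{cor:idemprim}). For the forward direction you can shortcut via clause~(6) of Casselman rather than clause~(5): the trace identity in Corollary~\ref{cor:idemprim} identifies $e_\delta$ with $\mu_{Pl}(\{\delta\})\,\chi_\delta$ as elements of $L^2(\mc H)$, so $\chi_\delta\in\mc S$ directly gives $e_\delta\in\mc S$ without the explicit series estimate. In the backward direction the step ``$\pi(e_\delta)=I$ forces $\pi\simeq\delta$'' implicitly relies on $e_\delta$ having finite rank in $\mathfrak C$---which holds because $e_\delta\mc S\cong\mathrm{End}(V)$ is finite-dimensional and hence closed in $\mathfrak C$, so $e_\delta\mathfrak C=e_\delta\mc S$ is a single matrix block with $\delta$ as its unique irreducible.
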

\begin{cor}\label{cor:Dss}
The set $\Delta(\mc{R},q)$ is nonempty only if $\mc{R}$ is semisimple.
\end{cor}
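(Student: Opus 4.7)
The plan is to derive the statement directly from Casselman's criterion (Theorem \ref{thm:cas}), specifically condition (4) characterizing discrete series by the location of the generalized $\mc{A}$-weights.

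Recall from Section \ref{subsub:root} that $\mc{R}$ is semisimple precisely when the lattice $Z = X^+ \cap X^- \subset X$ vanishes. So suppose $\mc{R}$ is not semisimple; then there exists a nonzero element $z \in Z$, and by definition of $Z$ both $z$ and $-z$ lie in $X^+$. The strategy is to test Casselman's weight condition against this pair.

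Now assume for contradiction that $\Delta(\mc{R},q)$ contains some discrete series representation $(V,\delta)$, and pick any generalized $\mc{A}$-weight $t \in T$ occurring in $V$ (which exists since $V$ is a nonzero finite-dimensional $\mc{A}$-module). By Theorem \ref{thm:cas}(4), every $x \in X^+ \setminus \{0\}$ satisfies $|x(t)| < 1$. Applying this to $x = z$ gives $|z(t)| < 1$, while applying it to $x = -z$ gives $|z(t)|^{-1} = |(-z)(t)| < 1$, i.e.\ $|z(t)| > 1$. These two inequalities are incompatible, so no such $\delta$ can exist.

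This argument is essentially immediate once Casselman's criterion is in place, so there is no real obstacle; the content of the corollary is really the observation that the center $Z$ of $W$ prevents the exponential decay of matrix coefficients required by the criterion. (One could phrase the same argument via condition (2) of Theorem \ref{thm:cas}: the translation subgroup $Z \subset \Omega \subset W$ acts through the unitary operators $N_z$, so any nonzero matrix coefficient is constant in absolute value along the $Z$-orbit in $W$, hence cannot lie in $L^2(\mc{H})$ when $Z$ is infinite.)
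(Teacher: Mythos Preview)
Your proof is correct and is precisely the argument the paper has in mind: the corollary is stated immediately after Casselman's criterion (Theorem~\ref{thm:cas}) with no separate proof, and the intended deduction is exactly your application of condition~(4) to a nonzero element of $Z = X^+ \cap X^-$.
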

Casselman's criterion for discrete series in terms of the generalized
$\mc{A}$-weights can be transposed to define the notion of
discrete series modules over a crossed product
$\mathbf{H}(R_1,V,F_1,k)\rtimes\Gamma$
of a degenerate affine Hecke algebra
$\mathbf{H}(R_1,V,F_1,k)$ with a real parameter function $k$
and a finite group $\Gamma$ acting by diagram automorphisms of $(R_1,F_1)$ (thus
a simple module $(U,\delta)$ is a discrete series representation iff
the generalized ${\bf{P}}$-weights in $U$ are in the interior of the antidual
cone ($\subset V$) of the simplicial cone spanned by $F_1$).
It is clear that this definition is compatible with the bijections
afforded by the two reduction theorems
(Theorem \ref{thm:red1} and Theorem \ref{thm:red2}).
Hence we obtain from Corollary \ref{cor:simple}:
\begin{cor}\label{cor:dsred}
Let $t\in T$ with polar decomposition $t=sc$. The set $\Delta_{W_0t}$
of equivalence classes of irreducible discrete series representations
of ${\mc{H}}(\mc{R},q)$
with central character $W_0t$ is in natural bijection with the set of
equivalence classes of irreducible discrete series representations of
${\mathbf{H}}(R_{s,1},V,F_{s,1},k_s)\rtimes\Gamma(t)$
with the real central character $W_s\xi$.
Here $\xi\in V$ is the unique
vector in the real span of $R_{s,1}^\vee$ such that
$\alpha(t)=\textup{e}^{\alpha(\xi)}$ for all $\alpha\in R_{s,1}$,
$k_s$ is the real parameter function on $R_{s,1}$ described by
(\ref{eq:k}), and $\Gamma(t)$
is the group of diagram automorphisms of $(R_{s,1},F_{s,1})$
of (\ref{eq:Gamma}).
\end{cor}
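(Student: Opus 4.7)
The plan is to deduce the corollary directly from Corollary \ref{cor:simple} by verifying that the bijection on simple modules it furnishes restricts to discrete series on both sides. Corollary \ref{cor:simple} already produces the desired natural bijection: given a simple $\mc{H}(\mc{R},q)$-module $U$ with central character $W_0 t$, apply the Morita equivalence of Theorem \ref{thm:red1} to get the simple $\overline{\mc{H}}(\mc{R}_s,q_s) \rtimes \Gamma(t)$-module $e_t U$ with central character $W(R_{s,1})t$, then apply the algebra isomorphism of Theorem \ref{thm:red2} (applicable since $\alpha(t) = \alpha(c) > 0$ for $\alpha \in R_{s,1}$) to translate to a simple $\mathbf{H}(R_{s,1},V,F_{s,1},k_s) \rtimes \Gamma(t)$-module with real central character $W_s \xi$. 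What remains is to show that $U$ is a discrete series module if and only if its image is.

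For this I would use the weight characterizations of discrete series on both sides. On the affine Hecke algebra side, Casselman's criterion (Theorem \ref{thm:cas}(4)) says $(U,\delta)$ is discrete series iff every generalized $\mc{A}$-weight $t' = u'c'$ of $U$ satisfies $|x(t')| = x(c') < 1$ for all $x \in X^+ \setminus \{0\}$, equivalently $\log c'$ lies strictly inside the antidominant cone of $R_0$. On the degenerate side the analogous characterization is stated explicitly in the paragraph just above the corollary: the reduced module is discrete series iff all of its generalized $\mathbf{P}$-weights lie in the interior of the antidual cone of $F_{s,1}$, i.e.\ $\alpha(\xi') < 0$ for every $\alpha \in F_{s,1}$. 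The bijection matches up weights explicitly: the $\mc{A}$-weights of $e_t U$ are precisely the $\mc{A}$-weights of $U$ lying in $W(R_{s,1}) t \subset W_0 t$, and under Theorem \ref{thm:red2} they correspond to $\mathbf{P}$-weights $\xi'$ via $\alpha(t') = e^{\alpha(\xi')}$ for $\alpha \in R_{s,1}$.

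The main technical step, and the principal obstacle, is to reconcile the two antidominance conditions, which are formulated with respect to different root systems ($R_0$ on the affine side versus the subsystem $R_{s,1}$ on the degenerate side). The key observation is that the full set of $\mc{A}$-weights of $U$ is $W_0$-equivariantly determined by those of $e_t U$ in $W(R_{s,1}) t$ under the Morita equivalence, via the decomposition $\overline{\mc{A}} = \bigoplus_{t'' \in W_0 t} \overline{\mc{A}}_{t''}$, so Casselman's condition for $R_0$ on all weights of $U$ reduces to a strict antidominance condition on the weights of $e_t U$ in $W(R_{s,1})t$. Transported across $\exp$, this becomes precisely the antidominance condition on $F_{s,1}$ for the $\mathbf{P}$-weights of the reduced module. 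Once this bookkeeping has been carried out, the bijection of Corollary \ref{cor:simple} preserves the discrete series property on both sides and the corollary follows.
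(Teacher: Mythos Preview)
Your approach is essentially the same as the paper's. The paper does not give a separate proof of this corollary; it simply observes (in the paragraph immediately preceding the statement) that Casselman's weight criterion can be transposed to the degenerate/crossed-product side and that ``it is clear that this definition is compatible with the bijections afforded by the two reduction theorems'', then invokes Corollary~\ref{cor:simple}. You are doing exactly this, with a more explicit attempt to spell out why the two antidominance conditions (for $F_0$ on the affine side, for $F_{s,1}$ on the degenerate side) match up under the Morita equivalence and the $\exp$ correspondence.
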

\begin{cor}
If $\Delta_{W_0t}\not=\emptyset$ then the polar
decomposition $t=sc$ of $t$ has the property that
$R_{s,1}\subset R_1$ is a root subsystem of maximal rank.
\end{cor}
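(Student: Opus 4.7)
The strategy is to combine Corollary~\ref{cor:dsred} with Casselman's criterion (Theorem~\ref{thm:cas}(4)) on the original affine Hecke algebra, transferred through the Morita structure of the first reduction (Theorem~\ref{thm:red1}). By Corollary~\ref{cor:Dss}, $\mc{R}$ is semisimple, so the open antidominant chamber $V^-:=\{v\in V:\alpha(v)<0\ \forall\alpha\in F_0\}$ is a nonempty simplicial open cone in $V$.

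Suppose $(V_\delta,\delta)\in\Delta_{W_0 t}$ and write $t=sc$ for the polar decomposition, $\eta:=\log c\in V$. Any $\mc{A}$-weight $t''=wt$ of $\delta$ satisfies $|x(t'')|=e^{x(w\eta)}$ since the unitary part contributes modulus $1$, so Theorem~\ref{thm:cas}(4) forces $w\eta\in V^-$ for every $w\in W_0$ that gives rise to a weight. By Theorem~\ref{thm:red1}, $\overline{\mc{H}}(\mc{R},q)\cong\bigl(\overline{\mc{H}}(\mc{R}_s,q_s)\rtimes\Gamma(t)\bigr)_{n\times n}$ with $n=|W_0 t|/|W(R_{s,1})t|$, and under this identification the $\mc{A}$-weights of $\delta$ contain the $W_0/W(R_{s,1})$-translates of the reduced weights coming from the $n$ diagonal matrix units. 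In particular, for every fixed system $\{w_i\}_{i=1}^{n}$ of coset representatives of $W(R_{s,1})$ in $W_0$, the points $w_i\eta$ are all $V$-parts of actual $\mc{A}$-weights of $\delta$, hence all must lie in $V^-$.

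The chamber $V^-$ meets any $W_0$-orbit in $V$ in at most one point, so the points $w_i\eta$ must coincide, forcing $W_0=W(R_{s,1})\cdot\mathrm{Stab}_{W_0}(\eta)$. In the regular case $\mathrm{Stab}_{W_0}(\eta)=\{1\}$ this immediately gives $W_0=W(R_{s,1})$; since $W_0$ acts faithfully on $V$ and the reflection subgroup $W(R_{s,1})$ acts trivially on the orthogonal complement $V_s^\perp$ of the span $V_s$ of $R_{s,1}^\vee$, this complement $V_s^\perp$ must vanish, so $R_{s,1}$ spans $V^*$ and therefore has maximal rank.

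The main obstacle is the non-regular case: when $\eta$ has a nontrivial stabilizer the identity $W_0=W(R_{s,1})\cdot\mathrm{Stab}_{W_0}(\eta)$ does not immediately yield the rank conclusion. The cleanest remedy is to invoke the residue calculus of \cite{Opd1}: $\Delta_{W_0 t}\neq\emptyset$ forces $W_0 t$ to be an orbit of residual points of $(\mc{R},q)$ by \cite[Lemma~3.31]{Opd1}, and the classification of residual points guarantees that $R_{s,1}$ has maximal rank for every residual $t=sc$. Equivalently, one may extend Corollary~\ref{cor:Dss} to the twisted algebra $\mc{H}(\mc{R}_s,q_s)\rtimes\Gamma(t)$ and observe that existence of a discrete series there (supplied by Theorem~\ref{thm:red1} applied to $\delta$) forces $\mc{R}_s$ to be semisimple, which is precisely the condition that $R_{s,1}$ has maximal rank.
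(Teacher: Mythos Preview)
Your main argument has a significant error in the identification of $V^-$. Casselman's criterion (Theorem~\ref{thm:cas}(4)) does \emph{not} place the log-weights in the open antidominant Weyl chamber $\{v:\alpha(v)<0\ \forall\alpha\in F_0\}$. The condition $|x(t'')|<1$ for all $x\in X^+\setminus\{0\}$ translates, for $\eta''=\log|t''|$, to $\omega_i(\eta'')<0$ for every fundamental weight $\omega_i$, i.e.\ $\eta''$ lies in the open cone spanned by $-F_0^\vee$. This negative coroot cone is strictly larger than the antidominant chamber and is \emph{not} a fundamental domain for $W_0$: already in type $A_2$ the orbit of $-\alpha_1^\vee-3\alpha_2^\vee$ meets it in more than one point. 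Hence your uniqueness step (``$V^-$ meets any $W_0$-orbit in at most one point'') fails, and the decomposition $W_0=W(R_{s,1})\cdot\mathrm{Stab}_{W_0}(\eta)$ cannot be deduced this way. (There is also a smaller imprecision: the Morita structure only guarantees a weight in each coset $w_iW(R_{s,1})\eta$, not that $w_i\eta$ itself occurs as a weight.)

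Your final paragraph, however, contains the correct route and is exactly what the paper intends. The corollary is stated without proof because it is meant to follow at once from Corollary~\ref{cor:dsred}: nonemptiness of $\Delta_{W_0t}$ yields a discrete series representation of $\mathbf{H}(R_{s,1},V,F_{s,1},k_s)\rtimes\Gamma(t)$, hence (upon restriction) of $\mathbf{H}(R_{s,1},V,F_{s,1},k_s)$ itself, and the degenerate analogue of Corollary~\ref{cor:Dss} then forces $R_{s,1}$ to span $V^*$. Your alternative via residual points (\cite[Lemma~3.31]{Opd1}, or equivalently Theorem~\ref{thm:rescos} combined with Lemma~\ref{lem:mon}) is also valid, though it appeals to machinery stated later in the paper.
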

If $s=u\in T_u$ is $W_0$-invariant (i.e. if $\alpha(u)=1$ for all
$\alpha\in R_1$) then we obtain from Corollary \ref{cor:dsred}:
\begin{cor}\label{cor:dsgradedc}
Let $u\in T_u$ be $W_0$-invariant, and let $c\in T_v$.
There is a natural bijection between the set
$\Delta(\mc{R},q)_{uW_0c}$ of irreducible discrete series
characters of $\mc{H}(\mc{R},q)$ with central character of the
form $uW_0c\subset W_0\backslash T$ and the set
of irreducible discrete series characters of
${\mathbf{H}}(R_1,V,F_1,k_u)$ with the real
infinitesimal central character
$W_0\log(c)$.
\end{cor}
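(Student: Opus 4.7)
The plan is to derive this as the direct specialization of Corollary \ref{cor:dsred} to the case in which the unitary part of the central character is $W_0$-invariant. So I would take $t = uc$ as the polar decomposition (with $s = u$) and check that under the hypothesis each piece of data on the degenerate side in Corollary \ref{cor:dsred} simplifies to the data named in the present corollary.

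First I would record the consequences of $\alpha(u)=1$ for all $\alpha\in R_1$. This immediately gives $R_{u,1}=R_1$ and $F_{u,1}=F_1$, so the isotropy group of $u$ in $W_0$ is the full Weyl group: $W_u=W_0=W(R_1)$. Consequently the diagram-automorphism group $\Gamma_u=\{w\in W_u\mid w(R_{u,1,+})=R_{u,1,+}\}$ is trivial, and hence so is $\Gamma(t)=\Gamma_u\cap W_0(t)$. Thus the crossed product $\mathbf{H}(R_{u,1},V,F_{u,1},k_u)\rtimes\Gamma(t)$ appearing in Corollary \ref{cor:dsred} reduces to $\mathbf{H}(R_1,V,F_1,k_u)$, with $k_u$ the real parameter function defined in (\ref{eq:k}).

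Next I would identify the real infinitesimal central character. According to Corollary \ref{cor:dsred} it is $W_u\xi$, where $\xi$ is the unique element of the real span of $R_{u,1}^\vee$ with $\alpha(t)=\textup{e}^{\alpha(\xi)}$ for all $\alpha\in R_{u,1}=R_1$. Since $u$ is $W_0$-invariant we have $\alpha(t)=\alpha(u)\alpha(c)=\alpha(c)$, and since $c\in T_v$ is a positive character, the defining equation is solved by $\xi=\log(c)\in V$, where $\log$ is the inverse of the isomorphism $\exp:V\to T_v$. Combined with $W_u=W_0$, this yields the orbit $W_0\log(c)$.

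Assembling the three identifications (the collapse of $\Gamma(t)$, the equality $R_{u,1}=R_1$ with $k_s=k_u$, and $W_u\xi=W_0\log(c)$) into the bijection furnished by Corollary \ref{cor:dsred}, and using that this bijection already restricts to the discrete series on both sides, gives the claim. I do not anticipate a genuine obstacle: the corollary is essentially bookkeeping, the only substantive step being the observation that the $W_0$-invariance of $u$ kills the diagram-automorphism twist and makes the exponential map carry the positive part $c$ to $\log(c)$.
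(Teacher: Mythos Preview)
Your proposal is correct and follows exactly the approach the paper takes: the corollary is stated immediately after the sentence ``If $s=u\in T_u$ is $W_0$-invariant (i.e.\ if $\alpha(u)=1$ for all $\alpha\in R_1$) then we obtain from Corollary \ref{cor:dsred},'' and your argument simply spells out the bookkeeping behind that one-line deduction. One small remark: your identification $\xi=\log(c)$ uses that $\log(c)$ lies in the real span $V_0$ of $R_1^\vee$, which holds because the existence of discrete series forces $\mc{R}$ to be semisimple (Corollary \ref{cor:Dss}), so $V_0=V$; this is implicit in the paper as well.
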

It is not hard to show that the central character of an irreducible
discrete series character of ${\mathbf{H}}(R_1,V,F_1,k_u)$ is real
(see \cite[Lemma 1.3.4]{SlootenThesis}).
Hence the previous corollary in particular says that:

\begin{cor}\label{cor:dsgraded}
Let $u\in T_u$ be $W_0$-invariant.
There is a natural bijection between the set $\Delta^{u}(\mc{R},q)$
of irreducible discrete series characters of $\mc{H}(\mc{R},q)$ with
a central character of the form $uW_0c$ with $c\in T_v$ on the one hand,
and the set $\Delta^{\mathbf{H}}(R_1,V,F_1,k)$ of irreducible discrete series
characters of ${\mathbf{H}}(R_1,V,F_1,k_u)$ on the other hand. In this bijection
the correspondence of the central characters is as described above.
\end{cor}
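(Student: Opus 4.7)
The plan is to derive this simply as a ``union'' of the bijections furnished by Corollary \ref{cor:dsgradedc}, indexed over all $W_0$-orbits $W_0 c \subset T_v$. Concretely, the set $\Delta^{u}(\mc{R},q)$ decomposes by definition as the disjoint union
\[
\Delta^{u}(\mc{R},q) = \bigsqcup_{W_0 c \,\in\, W_0 \backslash T_v} \Delta(\mc{R},q)_{u W_0 c},
\]
since central characters partition the set of irreducible characters. For each individual orbit $W_0c$, Corollary \ref{cor:dsgradedc} produces a natural bijection between $\Delta(\mc{R},q)_{u W_0 c}$ and the set of irreducible discrete series characters of $\mathbf{H}(R_1,V,F_1,k_u)$ whose (real) central character equals $W_0 \log(c)$.

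The next step is to observe that the disjoint union of the right-hand sides of these bijections is all of $\Delta^{\mathbf{H}}(R_1,V,F_1,k_u)$. Here I would invoke two facts already in place: first, as noted before Theorem \ref{thm:red2}, the exponential map $\exp : V \to T_v$ is a $W_0$-equivariant isomorphism of real vector groups, so its inverse $\log$ induces a bijection $W_0 \backslash T_v \isom W_0 \backslash V$; second, the cited result \cite[Lemma 1.3.4]{SlootenThesis} guarantees that the central character of every irreducible discrete series character of $\mathbf{H}(R_1,V,F_1,k_u)$ lies in $W_0 \backslash V \subset W_0 \backslash V_c$, i.e.\ is real. Combining these, every $\delta \in \Delta^{\mathbf{H}}(R_1,V,F_1,k_u)$ has central character $W_0 \xi$ with $\xi \in V$, and there is a unique $W_0 c \in W_0 \backslash T_v$ with $\log(c) \equiv \xi \pmod{W_0}$; hence $\delta$ appears in exactly one summand on the right.

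Assembling these observations yields the desired bijection, and the correspondence of central characters is precisely the one inherited from Corollary \ref{cor:dsgradedc}: the character $u W_0 c$ of $\mc{H}(\mc{R},q)$ matches the character $W_0 \log(c)$ of $\mathbf{H}(R_1,V,F_1,k_u)$. There is no genuine obstacle here, as the statement is essentially a bookkeeping corollary; the only point requiring care is checking that the $W_0$-equivariance of $\exp$ and the reality of discrete series central characters for the degenerate algebra together force the union on the right to exhaust $\Delta^{\mathbf{H}}(R_1,V,F_1,k_u)$, which is exactly what the two invoked inputs provide.
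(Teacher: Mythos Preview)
Your proposal is correct and follows exactly the paper's own reasoning: the paper states just before the corollary that discrete series central characters of $\mathbf{H}(R_1,V,F_1,k_u)$ are real (citing \cite[Lemma 1.3.4]{SlootenThesis}), and then says ``Hence the previous corollary in particular says that'' the present statement holds. Your decomposition over $W_0\backslash T_v$ together with the $W_0$-equivariant bijection $\exp:V\to T_v$ and the reality of central characters is precisely the unpacking of that one-line deduction.
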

\index{1duR@$\Delta^{u}(\mc{R},q)$, elements of $\Delta (\mc{R},q)$ with
central character in $W_0 u T_v$} \index{1dHR@$\Delta^{\mathbf{H}}(R_1,V,F_1,k)$,
irreducible discrete series characters of ${\mathbf{H}}(R_1,V,F_1,k_u)$}
We can use Corollary \ref{cor:dsred} to reduce the general
classification problem of the irreducible discrete series characters
of $\mc{H}(\mc{R},q)$ for any semisimple root datum $\mc{R}$ to the case
of discrete series characters of a degenerate
affine Hecke algebra as well, but we have to pay the price of having
to deal with crossed products by certain groups of diagram automorphisms.
In order to deal with the crossed products one has to resort to Clifford theory
(cf. \cite{RamRam}).

Corollary \ref{cor:affidem} gives us yet another characterization of
the irreducible discrete series representations:
\begin{thm}\label{thm:dscrit} Let $(V,\delta)$ be a simple module over
$\mc{H}$. Equivalent are:
\begin{enumerate}
\item $(V,\delta)$ is a discrete series representation of $\mc{H}$.
\item $(V,\delta)$ extends to a projective $\mc{S}$-module.
\end{enumerate}
\end{thm}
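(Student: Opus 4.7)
My plan is to use two already-stated results as pivots: Corollary \ref{cor:affidem} (discrete series $\Leftrightarrow$ afforded by a primitive central idempotent of $\mc{S}$) and Casselman's $L^2$-criterion, namely the equivalence (1) $\Leftrightarrow$ (2) in Theorem \ref{thm:cas}. The direction $(1) \Rightarrow (2)$ will be essentially immediate from Corollary \ref{cor:affidem}; the converse requires only a short matrix-coefficient calculation in the Hilbert algebra $L^2(\mc{H})$.

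For $(1) \Rightarrow (2)$: given $(V,\delta)$ discrete series, I would take the primitive central idempotent $e_\delta \in \mc{S}$ produced by Corollary \ref{cor:affidem}. The two-sided ideal $\mc{S}e_\delta$ is isomorphic as a $\mathbb{C}$-algebra to $\textup{End}_{\mathbb{C}}(V) \cong M_n(\mathbb{C})$, where $n = \dim V$. Choosing any rank-one idempotent $f \in \mc{S}e_\delta$, the left ideal $\mc{S}f = \mc{S}e_\delta \cdot f$ is isomorphic to $V$ as a left $\mc{S}$-module, and the decomposition $\mc{S} = \mc{S}f \oplus \mc{S}(1-f)$ exhibits $V$ as a direct summand of the free rank-one $\mc{S}$-module. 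Hence $V$ is projective over $\mc{S}$.

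For $(2) \Rightarrow (1)$: if the extension of $(V,\delta)$ to $\mc{S}$ is projective then, $V$ being simple, it must be isomorphic to $\mc{S}e$ for some idempotent $e \in \mc{S}$. I identify $V$ with the concrete finite-dimensional, $\lambda(\mc{H})$-invariant subspace $\mc{S}e \subset \mc{S} \subset L^2(\mc{H})$, and verify Casselman's $L^2$-condition directly. For $v, w \in V \subset \mc{S}$, using only the trace property of $\tau$ and $N_u^* = N_{u^{-1}}$, the matrix coefficient satisfies
\[
m_{v,w}(N_u) \;=\; \bigl(\lambda(N_u) v,\,w\bigr)_{L^2} \;=\; \tau(v^* N_{u^{-1}} w) \;=\; \tau(N_{u^{-1}} w v^*) \;=\; (N_u,\, wv^*)_{L^2},
\]
so $m_{v,w}(N_u)$ is simply the $u$-th Fourier coefficient of the element $wv^* \in \mc{S} \subset L^2(\mc{H})$ in the orthonormal basis $\{N_u\}_{u \in W}$. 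Summing over $u$ then yields $\sum_{u \in W} |m_{v,w}(N_u)|^2 = \|wv^*\|^2_{L^2} < \infty$, so every matrix coefficient of $(V,\delta)$ lies in $L^2(\mc{H})$. Theorem \ref{thm:cas} forces $(V,\delta)$ to be in the discrete series. The only delicate step is this matrix-coefficient identity, and it rests solely on the trace property of $\tau$ and on $\mc{S}$ being a $*$-subalgebra of $L^2(\mc{H})$ (so that $wv^* \in \mc{S}$); no direct-integral or von Neumann algebra machinery is required.
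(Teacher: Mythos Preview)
Your proof is correct. The paper does not give an explicit proof of this theorem; it merely introduces the statement with the sentence ``Corollary~\ref{cor:affidem} gives us yet another characterization of the irreducible discrete series representations,'' leaving the details to the reader. Your argument for $(1)\Rightarrow(2)$ is exactly the natural unpacking of that corollary.

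For $(2)\Rightarrow(1)$ your matrix-coefficient computation via Casselman's criterion is valid, but there is a shorter route that stays closer to the paper's hint. Once you have $V\cong\mc{S}e$ for an idempotent $e\in\mc{S}$, the finite-dimensional subspace $\mc{S}e\subset\mc{S}\subset L^2(\mc{H})$ is closed and invariant under left multiplication by $\mc{H}$, hence (by density and continuity) under $\mathfrak{C}$. Thus $V$ is literally a subrepresentation of the left regular representation of $\mathfrak{C}$ on $L^2(\mc{H})$, which is the \emph{definition} of discrete series given in the paper. Your detour through Theorem~\ref{thm:cas} reproves this via the $L^2$-criterion; both arguments rest on the same key observation that $V$ sits inside $\mc{S}$ as a left ideal, and the matrix-coefficient identity you wrote down is a pleasant explicit witness to why such a submodule automatically has square-integrable coefficients.
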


\subsubsection{The Euler-Poincar\'e pairing and elliptic characters}

We recall the main result of \cite{OpdSol}:
\begin{thm} The affine Hecke algebra $\mc{H}=\mc{H}(\mc{R},q)$ has
global homological dimension equal to the rank of $X$. If $U,V$
are finite dimensional tempered $\mc{H}$-modules then
for all $i$ we have $\textup{Ext}_\mc{H}^i(U,V)\cong
\textup{Ext}_\mc{S}^i(U,V)$.
\end{thm}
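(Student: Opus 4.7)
The statement combines two assertions: that the global homological dimension of $\mc{H}$ equals $n:=\textup{rk}(X)$, and that $\textup{Ext}$-groups of finite-dimensional tempered modules agree when computed over $\mc{H}$ or over $\mc{S}$. I would prove the upper bound $\textup{gldim}(\mc{H})\le n$ by producing an explicit length-$n$ projective resolution, deduce the Ext comparison by showing this resolution survives base change to $\mc{S}$ on tempered modules, and finally use the comparison to exhibit a non-vanishing $\textup{Ext}^n$, yielding the lower bound.

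\textbf{Upper bound.} By the Bernstein presentation $\mc{H}\cong\mc{A}\otimes\mc{H}_0$, the algebra $\mc{H}$ is finitely generated free as a module over the commutative Laurent polynomial subalgebra $\mc{A}\cong\mathbb{C}[X]$. Since $\mc{A}$ is the coordinate ring of the torus $T$, it is regular of Krull dimension $n$ and therefore has global dimension $n$. For any $\mc{H}$-module $U$ (viewed through its restriction to $\mc{A}$), the Koszul-type complex over $\mc{A}$ tensored up along $\mc{A}\hookrightarrow\mc{H}$ gives a projective resolution of $U$ of length $n$ by $\mc{H}$-modules of the form $\mc{H}\otimes_\mc{A}M$ with $M$ $\mc{A}$-projective. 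This gives $\textup{gldim}(\mc{H})\le n$.

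\textbf{Ext comparison.} For finite-dimensional tempered modules $U,V$, I would take the Koszul-style resolution $P_\bullet\to U$ from the previous step with finitely generated $\mc{A}$-terms. Casselman's criterion (Theorem \ref{thm:cas}, especially conditions (4)--(5)) places the generalized $\mc{A}$-weights of $U$ in a region of $T$ on which a natural Fr\'echet completion of $\mc{A}$ embedded inside $\mc{S}$ is flat over the relevant localization of $\mc{A}$. Using the Bernstein decomposition $\mc{S}\otimes_\mc{H}(\mc{H}\otimes_\mc{A}M)\cong\mc{S}\otimes_\mc{A}M$, one shows that $\mc{S}\otimes_\mc{H}P_\bullet\to U$ is still exact, hence a projective resolution of $U$ as an $\mc{S}$-module. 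Since $V$ extends to $\mc{S}$, the canonical identification $\textup{Hom}_\mc{S}(\mc{S}\otimes_\mc{H}P_i,V)\cong\textup{Hom}_\mc{H}(P_i,V)$ gives the desired $\textup{Ext}^i_\mc{H}(U,V)\cong\textup{Ext}^i_\mc{S}(U,V)$ upon passing to cohomology.

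\textbf{Lower bound and main obstacle.} Equality $\textup{gldim}(\mc{H})=n$ follows once we exhibit a tempered module $U$ with $\textup{Ext}^n_\mc{H}(U,U)\ne 0$. The Steinberg-type discrete series are natural candidates: the Ext comparison then reduces the non-vanishing to a cohomological dimension computation for a completion of $\mathbb{C}[W_0\ltimes X]$, in which $X\cong\mathbb{Z}^n$ forces cohomological dimension exactly $n$. The technical heart of the argument, and the main obstacle, is the exactness of $\mc{S}\otimes_\mc{H}P_\bullet$. This is a flatness-type statement for $\mc{S}$ over suitable localizations of $\mc{A}$ along the tempered region, and the Fr\'echet topology of $\mc{S}$ means one has to argue carefully enough to preserve exactness on the nose rather than only up to topological closure; the natural tools are the continuity of holomorphic calculus (Theorem \ref{thm:qcont}) and a careful decomposition of $\mc{S}$ along $W_0$-orbits of $\mc{A}$-characters controlled by the decay estimates in Casselman's criterion.
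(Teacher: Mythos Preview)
The paper does not prove this theorem; it is quoted verbatim from \cite{OpdSol}. Your proposal is therefore an independent attempt, and it contains a genuine gap in the upper-bound step which then undermines the Ext comparison as well.

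\textbf{The gap.} You claim that taking a projective $\mc{A}$-resolution $P_\bullet \to U|_{\mc{A}}$ and applying $\mc{H}\otimes_{\mc{A}}(-)$ yields a projective $\mc{H}$-resolution of $U$. Since $\mc{H}$ is free over $\mc{A}$, the complex $\mc{H}\otimes_{\mc{A}}P_\bullet$ is indeed a complex of projective $\mc{H}$-modules and is exact in positive degrees, but its $H_0$ is the induced module $\mc{H}\otimes_{\mc{A}}U$, not $U$. These differ already in dimension: if $\dim_{\mathbb{C}}U=1$ then $\dim_{\mathbb{C}}(\mc{H}\otimes_{\mc{A}}U)=|W_0|$. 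Freeness of $\mc{H}$ over a subalgebra $\mc{A}$ of global dimension $n$ does not by itself bound $\textup{gldim}(\mc{H})$ (e.g.\ $A=k$, $B=k[x]/(x^2)$), so some additional structure must be invoked and you have not supplied it. Your Ext-comparison paragraph is then built on this same nonexistent resolution, and the ``flatness of $\mc{S}$ over localizations of $\mc{A}$ on the tempered region'' is asserted without justification; this is exactly the kind of topological exactness statement for Fr\'echet completions that typically fails without a very specific construction.

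\textbf{What \cite{OpdSol} actually uses.} As one can read off from the index function (\ref{eq:index}) and the surrounding discussion, the resolution employed is of Schneider--Stuhler/Reeder type, indexed by the $\Omega$-orbits of faces $f$ of the fundamental alcove, with terms induced from the finite-dimensional involutive subalgebras $\mc{H}(\mc{R},f,q)=\mc{H}(W_f,q)\rtimes\Omega_f$. Its length is the dimension of the alcove, which is $\textup{rk}(X)$, giving the upper bound directly; the formula $\chi_\pi(f_{\delta,q})=EP_{\mc{H}}(\delta(q),\pi)$ in (\ref{eq:pseudo}) is precisely the Euler characteristic of $\textup{Hom}_{\mc{H}}(-,\pi)$ applied to this resolution. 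For the Ext comparison the decisive point is that each term is induced from a finite-dimensional $*$-subalgebra sitting inside both $\mc{H}$ and $\mc{S}$; for tempered $U,V$ the Frobenius-reciprocity Hom-spaces over $\mc{H}$ and over $\mc{S}$ then agree term by term, so no flatness argument for $\mc{S}$ over $\mc{A}$ is needed at all. This is a genuinely different mechanism from the one you sketch.
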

Define the \index{G@$G(\mc H)$, complexified Grothendieck group of $\mc H$}
Euler-Poincar\'e pairing on the (complexified) Grothendieck group
$G(\mc{H})$ of finite dimensional virtual characters
by sesquilinear extension from the formula
\index{EPH@$EP_{\mc H}$, Euler-Poincar\'e pairing}
\begin{equation}
\textup{EP}_{\mc H} (U,V):=\sum_{i=0}^\infty(-1)^i
\textup{dim}(\textup{Ext}_\mc{H}^i(U,V))
\end{equation}
It can be seen that this defines a Hermitian positive
semidefinite pairing on $G(\mc{H})$ (\cite[Theorem 3.5]{OpdSol}).
The above result combined with Theorem \ref{thm:dscrit}
implies that:
\begin{cor}\label{cor:ON}
The irreducible discrete series characters of $\mc{H}$ form an orthonormal
set with respect to $\textup{EP}_{\mc H}$
and are orthogonal to all irreducible tempered characters
that are not in the discrete series.
\end{cor}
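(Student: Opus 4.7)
The plan is to deduce the corollary directly from the two ingredients stated just before it: the Ext-comparison theorem of \cite{OpdSol} and the projectivity criterion for discrete series in Theorem \ref{thm:dscrit}. The strategy is to show that the Euler--Poincar\'e pairing against a discrete series module collapses to its $i=0$ term, and then invoke Schur's lemma.

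Concretely, let $(U,\delta)$ be an irreducible discrete series representation and $(V,\pi)$ an irreducible tempered representation of $\mc H$. By Theorem \ref{thm:dscrit}, $U$ extends to a projective $\mc S$-module, so $\textup{Ext}^{i}_{\mc S}(U,V)=0$ for every $i\geq 1$. Because both $U$ and $V$ are finite dimensional tempered, the comparison theorem gives
\[
\textup{Ext}^{i}_{\mc H}(U,V)\;\cong\;\textup{Ext}^{i}_{\mc S}(U,V)\qquad(i\geq 0),
\]
so the same vanishing holds on the $\mc H$-side. Consequently
\[
\textup{EP}_{\mc H}(U,V)\;=\;\dim\textup{Hom}_{\mc H}(U,V)\;=\;\dim\textup{Hom}_{\mc S}(U,V).
\]
Since $U$ and $V$ are simple $\mc H$-modules, Schur's lemma shows that this dimension equals $1$ if $U\cong V$ and $0$ otherwise. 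This immediately yields $\textup{EP}_{\mc H}(\delta,\delta')=\delta_{\delta,\delta'}$ on the set of irreducible discrete series characters, and $\textup{EP}_{\mc H}(\delta,\pi)=0$ whenever $\pi$ is an irreducible tempered character not isomorphic to $\delta$. The remaining orthogonality $\textup{EP}_{\mc H}(\pi,\delta)=0$ follows from the Hermitian symmetry of $\textup{EP}_{\mc H}$ recorded in \cite[Theorem 3.5]{OpdSol}.

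There is essentially no obstacle beyond checking the hypotheses under which one may use the comparison of Ext groups: one must know that $U$ and $V$ are both finite dimensional (guaranteed since $\mc H$ is of finite type over its center) and both tempered (the discrete series is tempered by the Casselman criterion, Theorem \ref{thm:cas}). With these in place the argument is a two-line application of projectivity and Schur's lemma; all the genuine work has been absorbed into the Ext-comparison theorem and Theorem \ref{thm:dscrit}.
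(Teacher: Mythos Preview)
Your proof is correct and follows essentially the same route as the paper: combine the Ext-comparison $\textup{Ext}^i_{\mc H}\cong\textup{Ext}^i_{\mc S}$ for tempered modules with the projectivity of discrete series over $\mc S$ (Theorem \ref{thm:dscrit}) to collapse the Euler--Poincar\'e sum to its $i=0$ term, then apply Schur's lemma. The paper merely states that the corollary follows from these two ingredients without spelling out the steps you have written.
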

\index{Ell$(\mc H)$, quotient of $G (\mc H)$}
Another crucial result of \cite{OpdSol} says that $\textup{EP}_{\mc H}$
factors through the quotient $\textup{Ell}(\mc{H})$ of $G(\mc{H})$
by the subspace spanned by all the properly induced finite
dimensional tempered characters. Then $\textup{Ell}(\mc{H})$ is a finite
dimensional $\mc{Z}$-module, equipped with a positive semidefinite Hermitian
pairing $\textup{EP}_{\mc H}$ with respect to which elements with
a disjoint support on $W_0\backslash T$ are orthogonal. Let $\textup{Ell}_{W_0 t}
(\mc{H})$ be the $\mc Z$-submodule corresponding to $W_0 t$.

There exists a scaling map $\tilde \sigma_0 : G(\mc{H})\to G(W)$ (see
\cite[Theorem 1.7]{OpdSol}) which descends to a map
\[
\tilde{\sigma}_0 : \textup{Ell}(\mc{H}) \to \textup{Ell}(W) = \textup{Ell}(\mh C [W])
\]
The finite dimensional $\mc{Z}$-module $\textup{Ell}(W)$ can be
described completely explicitly in terms of the elliptic characters
of the isotropy groups $W_t$ (with $t\in T$) for the action of $W_0$
on $T$. The pairing $\textup{EP}_W$ on $\textup{Ell}(W)$ can be described
in these terms as well, and it turns out that $\textup{EP}_W$ is positive
definite on $\textup{Ell}(W)$ (for all these results, consult
\cite[Chapter 3]{OpdSol}). It turns out that $\textup{Ell}(W)$ is
nonzero only if $\mc{R}$ is semisimple, and that the support of
$\textup{Ell}(W)$ as a $\mc{Z}$-module is contained in the
the set of orbits $W_0s$ such that $R_{s,1}\subset R_1$ is
of maximal rank. From \cite{OpdSol} we have:
\begin{thm}\label{thm:scale}
\begin{enumerate}
\item The map $\tilde{\sigma}_0:\textup{Ell}(\mc{H})\to\textup{Ell}(W)$ is isometric
with respect to $\textup{EP}_{\mc H}$ and $\textup{EP}_W$.
\item For all $t\in T$ we have $\tilde{\sigma}_0(\textup{Ell}_{W_0t}(\mc{H}))\subset
\textup{Ell}_{W_0s}(W)$, where $t=sc$ with $s\in T_u$ and $c\in T_v$ is the polar
decomposition of $t$.
\end{enumerate}
\end{thm}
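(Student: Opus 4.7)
The plan is to leverage the scaling construction underlying $\tilde{\sigma}_0$. Recall from \cite[Theorem 1.7]{OpdSol} that $\tilde{\sigma}_0$ is defined as the specialization at $\epsilon = 0$ of a continuous family of representations of $\mc{H}(\mc{R}, q^\epsilon)$, where the parameter function is rescaled exponentially by $\epsilon \in [0,1]$. At $\epsilon = 1$ we have $\mc{H}(\mc{R}, q)$; at $\epsilon = 0$ we obtain $\mc{H}(\mc{R}, 1) = \mathbb{C}[W]$. Both claims of the theorem then become statements about what is preserved along this one-parameter family.

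For part (1), the strategy is to show that $\textup{EP}_{\mc{H}(\mc{R}, q^\epsilon)}(U_\epsilon, V_\epsilon)$ is constant in $\epsilon$ for tempered modules $U, V$. Concretely, I would construct a projective resolution $P^\bullet_\epsilon \to U_\epsilon$ that varies continuously with $\epsilon$, using that $\mc{H}$ has finite global dimension (equal to the rank of $X$, by the theorem cited just before Corollary \ref{cor:ON}). The Hom complex $\textup{Hom}_{\mc{H}(\mc{R}, q^\epsilon)}(P^\bullet_\epsilon, V_\epsilon)$ is then a bounded complex of finite-dimensional vector spaces whose differentials depend continuously on $\epsilon$, so although each individual $\dim \textup{Ext}^i$ is only upper semi-continuous, the alternating sum is locally constant, hence constant on $[0,1]$. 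Since $\tilde{\sigma}_0$ is already known to descend to $\textup{Ell}$, one obtains isometry of the pairings; the positive definiteness of $\textup{EP}_W$ then also gives that $\tilde{\sigma}_0$ is injective on $\textup{Ell}(\mc{H})$.

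For part (2), I would track how central characters transform under the scaling. If $U$ has central character $W_0 t$ with polar decomposition $t = sc$, then $s \in T_u$ is characterized by the phase $\alpha(s) = \alpha(t)/|\alpha(t)|$ for $\alpha \in R_1$, while the positive part $c = \exp(\xi_t)$ contains the infinitesimal data. Under the parameter scaling $q \mapsto q^\epsilon$, the natural deformed representation $U_\epsilon$ has central character $W_0(s \cdot c^\epsilon) = W_0 (s \exp(\epsilon \xi_t))$, as can be read off from the Bernstein cross relation (\ref{eq:ber}) where every occurrence of a parameter $v(s)$ is rescaled. Specializing to $\epsilon = 0$ yields $c^0 = 1$, so the central character becomes $W_0 s$, placing $\tilde{\sigma}_0(U)$ in $\textup{Ell}_{W_0 s}(W)$ as claimed.

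The main obstacle is rigorously producing the continuously varying projective resolution underlying the isometry. One route is to work over the generic algebra $\mc{H}_\Lambda$ localized at the relevant parameter ray, use flatness over the one-dimensional base $\epsilon \in [0,1]$, and bound resolution length uniformly by the global dimension of $\mc{H}$. A subsidiary technical point is that the scaling family must be constructed so that tempered modules deform to tempered modules; this is already built into the construction of $\tilde{\sigma}_0$ in \cite{OpdSol}, but must be invoked carefully to guarantee that the Ext comparison between $\mc{H}$ and $\mc{S}$ (Theorem preceding Corollary \ref{cor:ON}) applies uniformly along the family.
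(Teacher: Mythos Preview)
The paper does not actually prove this theorem: it is stated with the preamble ``From \cite{OpdSol} we have'' and no argument is given. So there is no in-paper proof to compare against; the result is imported wholesale from \cite{OpdSol}.

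Your sketch is nonetheless aligned with the method of that reference. The scaling family $\tilde{\sigma}_\epsilon$ is exactly the mechanism used there, and the two ingredients you isolate---rigidity of the Euler--Poincar\'e number along a continuous family of finite complexes, and the computation $cc(\tilde{\sigma}_\epsilon(U)) = W_0(s c^\epsilon)$---are the correct ones. In \cite{OpdSol} the resolution used is an explicit Koszul-type complex built from the parabolic subalgebras (the same complex underlying the index function $f_{\delta,q}$ in (\ref{eq:index})), which makes the continuous dependence on $\epsilon$ automatic and sidesteps the need to produce an abstract ``continuously varying projective resolution'' by flatness. That is a cleaner route than the one you outline, but the homological content is the same: a fixed-shape finite complex with continuously varying differentials has locally constant Euler characteristic. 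Your identification of the main obstacle is therefore accurate, and the standard resolution of it is to use this concrete complex rather than a general flatness argument.
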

Combined with Corollary \ref{cor:ON} we obtain the following
upper bounds for the number of discrete series characters.
\begin{cor}\label{cor:upper}
If $s\in T_u$ then $W_s$ denoted the isotropy group of $s$ in $W_0$.
We call $w\in W_s$ elliptic if $s$ is an isolated fixed point
of $w$. Let $\textup{ell}(W_s)$ be the number of conjugacy classes
of $W_s$ consisting of elliptic elements of $W_s$.
For $s\in T_u$ we denote by
$\Delta^s(\mc{R},q)\subset \Delta(\mc{R},q)$ the subset consisting
of the irreducible discrete series characters of $\mc{H}(\mc{R},q)$
whose central characters are $W_0$-orbits which are contained
in the set $W_0sT_v$. Then $|\Delta^s(\mc{R},q)|\leq\textup{ell}(W_s)$.
\end{cor}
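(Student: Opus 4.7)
The plan is to combine the orthonormality of discrete series characters under $\textup{EP}_{\mc H}$ with the isometric scaling map $\tilde\sigma_0 \colon \textup{Ell}(\mc H) \to \textup{Ell}(W)$ of Theorem \ref{thm:scale}, and then appeal to the explicit description of $\textup{Ell}(W)$ in terms of elliptic characters of isotropy groups.

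First, I would observe that by Corollary \ref{cor:ON} the images of the distinct elements of $\Delta^s(\mc R ,q)$ in $\textup{Ell}(\mc H)$ form an orthonormal system, hence are linearly independent. By definition each $\delta \in \Delta^s(\mc R ,q)$ has central character of the form $W_0 s c_\delta$ with $c_\delta \in T_v$, so its class lies in $\textup{Ell}_{W_0 s c_\delta}(\mc H)$. Part (2) of Theorem \ref{thm:scale} then guarantees that $\tilde\sigma_0 (\delta) \in \textup{Ell}_{W_0 s}(W)$ for every such $\delta$, since the polar decomposition of $s c_\delta$ has unitary part $s$. Thus the entire set $\{\tilde\sigma_0 (\delta) : \delta \in \Delta^s(\mc R,q)\}$ sits inside the single summand $\textup{Ell}_{W_0 s}(W)$.

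Next I would use part (1) of Theorem \ref{thm:scale}: $\tilde\sigma_0$ is an isometry from $(\textup{Ell}(\mc H), \textup{EP}_{\mc H})$ into $(\textup{Ell}(W), \textup{EP}_W)$. Combined with the orthonormality above and the positive definiteness of $\textup{EP}_W$ on $\textup{Ell}(W)$, this forces the vectors $\tilde\sigma_0(\delta)$, $\delta \in \Delta^s(\mc R,q)$, to remain linearly independent in $\textup{Ell}_{W_0 s}(W)$. Therefore
\[
|\Delta^s(\mc R,q)| \;\leq\; \dim_{\mathbb C} \textup{Ell}_{W_0 s}(W).
\]

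The final step is to identify the right-hand side with $\textup{ell}(W_s)$. This is where one invokes the explicit description of $\textup{Ell}(W)$ alluded to in the paragraph preceding Theorem \ref{thm:scale}: as a $\mc Z$-module, $\textup{Ell}(W)$ decomposes into contributions indexed by orbits $W_0 s \subset T_u$, and the summand $\textup{Ell}_{W_0 s}(W)$ is canonically identified with the space of elliptic class functions on the isotropy group $W_s$, whose dimension is precisely the number of elliptic conjugacy classes $\textup{ell}(W_s)$. Assembling the three steps gives the stated inequality.

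The main obstacle is really the last bookkeeping step: one must be careful about which conjugacy action (in $W_0$ versus within $W_s$) governs the decomposition of $\textup{Ell}(W)$, and check that the orbit $W_0 s$ contributes exactly $\textup{ell}(W_s)$ (and not, say, a count of $W_0$-conjugacy classes of pairs $(s,w)$). Once the explicit description from \cite[Chapter 3]{OpdSol} is quoted in the correct form, however, the argument above is essentially immediate from Corollary \ref{cor:ON} and Theorem \ref{thm:scale}.
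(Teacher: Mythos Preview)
Your proposal is correct and matches the paper's own reasoning exactly: the paper states the corollary immediately after Theorem \ref{thm:scale} with the remark ``Combined with Corollary \ref{cor:ON} we obtain the following upper bounds,'' and your three-step argument (orthonormality of discrete series in $\textup{Ell}(\mc H)$, isometric transport via $\tilde\sigma_0$ into $\textup{Ell}_{W_0 s}(W)$, and identification of $\dim \textup{Ell}_{W_0 s}(W)$ with $\textup{ell}(W_s)$ via \cite[Chapter 3]{OpdSol}) is precisely what is intended. Your caveat about the bookkeeping in the last step is appropriate but does not reflect a gap; that identification is indeed the content quoted from \cite{OpdSol}.
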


\subsection{The central support of tempered characters}

In this section deformations in the parameters $q$ of the Hecke algebra
play a fundamental role. Let us fix some notations and basic
structures. Recall that we attach to a based root datum
$\mc{R}=(X,R_0,Y,R_0^\vee,F_0)$ in a canonical way a parameter
space $\mc{Q}=\mc{Q}(\mc{R})$. This parameter space is itself a
vector group, defined as the space of length multiplicative
functions $q:W\to \mathbb{R}_+$ with the additional requirement
that $q|_\Omega=1$.

The following proposition is useful in order to reduce statements
about residual points to the case of simple root data.
\begin{prop}\label{prop:prod}
Let $\mc{R}=(X,R_0,Y,R_0^\vee,F_0)$ be a semisimple based root
datum.
\begin{enumerate}
\item[(i)]
Let $R_0=R_0^{(1)}\times\dots \times R_0^{(m)}$ be the
decomposition of $R_0$ in irreducible components. We denote by
$X^{(i)}$ be the projection of the lattice $X$ onto
$\mathbb{R}R_0^{(i)}$, and we define
$\mc{R}^{(i)}=(X^{(i)},R_0^{(i)},Y^{(i)},(R_0^{(i)})^\vee,F_0^{(i)})$
and $\mc{R}^\prime=\mc{R}^{(1)}\times\dots\times\mc{R}^{(m)}$.
Then the natural inclusion $X\hookrightarrow X^\prime$ defines an
isogeny $\psi:\mc{R}\to\mc{R}^\prime$
and if $\mc{Q}^{(i)}$ denotes be the parameter space of the root datum
$\mc{R}^{(i)}$ then $\psi$ yields a natural identification
$\mc{Q}({\mc{R}})=
\mc{Q}(\mc{R}^\prime)=\mc{Q}^{(1)}\times\dots\times \mc{Q}^{(m)}$.
\item[(ii)] We replace $X$ by the lattice $X^{max}=P(R_1)$, the weight
lattice of $R_1$ and denote the
resulting root datum by $\mc{R}^{max}$. Then $\mc{R}^{max}$ is a direct
product of irreducible root data and there exists an isogeny
$\psi:\mc{R}\to\mc{R}^{max}$ which yields a natural identification
$\mc{Q}({\mc{R}})=\mc{Q}({\mc{R}^{max}})$.
\end{enumerate}
\end{prop}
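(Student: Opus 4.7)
The plan is to verify both parts by working at the level of the lattice $X$ and then reading off the identification of parameter spaces from the reformulation of $\mc Q(\mc R)$ as the space of $W_0$-invariant positive functions on $R_{nr}^\vee$, which was given in Section~\ref{subsub:defn}.

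For (i), since $\mc R$ is semisimple the roots $R_0$ span $\mh R\otimes X$, so the orthogonal projections onto the components $\mh R R_0^{(i)}$ define a map $X\to X':=X^{(1)}\oplus\cdots\oplus X^{(m)}$ of lattices of the same rank; its finite cokernel and the obvious factorisation $\mc R^\vee\to (\mc R^{(i)})^\vee$ of the coroots give the isogeny $\psi$ and the product decomposition $\mc R'=\prod \mc R^{(i)}$. For the parameter space, I would use the bijection recalled in the text between $\mc Q(\mc R)$ and $W_0$-invariant positive functions on $R_{nr}^\vee$. Since $R_0$ and $W_0$ are unchanged by $\psi$, the only point to check is that the doubling condition $\alpha^\vee\in 2Y$ is preserved. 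For $\alpha\in R_0^{(i)}$, the vector $\alpha^\vee/2$ lies in $\mh R R_0^{(i)\vee}$ and hence pairs trivially with the components of $X$ in the other factors; so $\langle X,\alpha^\vee/2\rangle=\langle X^{(i)},\alpha^\vee/2\rangle$, giving $\alpha^\vee\in 2Y\iff\alpha^\vee\in 2Y'$. This yields $R_{nr}(\mc R)=R_{nr}(\mc R')$ with the same $W_0$-action, and hence the required identification $\mc Q(\mc R)=\mc Q(\mc R')=\prod_i\mc Q^{(i)}$.

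For (ii), the key preliminary is the inclusion $X\subset P(R_1)$. For any $\alpha\in R_0$ with $\alpha^\vee\in 2Y$, one has $2\alpha\in R_1$, and the corresponding coroot $(2\alpha)^\vee=\alpha^\vee/2$ lies in $Y$ by hypothesis; for the remaining $\alpha\in R_0\cap R_1$, the relevant coroot is $\alpha^\vee\in Y$ itself. Hence every $x\in X$ pairs integrally with all coroots $\beta^\vee$ with $\beta\in R_1$, i.e.\ $X\subset P(R_1)=X^{max}$, giving the isogeny $\psi:\mc R\to\mc R^{max}$. The direct-product statement then follows from (i) together with the standard fact that the weight lattice splits as $P(R_1)=\bigoplus_i P(R_1^{(i)})$ along the orthogonal decomposition of $R_1$ into irreducible components.

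For the identification $\mc Q(\mc R)=\mc Q(\mc R^{max})$ I again use the description of $\mc Q$ via $W_0$-invariant functions on $R_{nr}^\vee$. Here $Y^{max}=\mathrm{Hom}(P(R_1),\mh Z)=Q(R_1^\vee)$, so I must verify $\alpha^\vee\in 2Y\iff\alpha^\vee\in 2Q(R_1^\vee)$ for all $\alpha\in R_0$. If $\alpha^\vee\in 2Y$ then $2\alpha\in R_1$, so $\alpha^\vee/2=(2\alpha)^\vee\in R_1^\vee\subset Q(R_1^\vee)$; conversely, if $\alpha^\vee\notin 2Y$ then $\alpha\in R_1$ and $\alpha^\vee$ is a primitive element of $R_1^\vee$ in an irreducible component, so $\alpha^\vee/2\notin Q(R_1^\vee)$. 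Combining these gives $R_{nr}(\mc R)=R_{nr}(\mc R^{max})$, and the resulting identification of parameter spaces is the one induced by $\psi$. The main (minor) subtlety is this last equivalence in the non-simply-laced case, where one must genuinely use that $R_1$ is reduced and that $\alpha^\vee/2$ cannot arise as a non-trivial $\mh Z$-combination of other coroots in an irreducible component of $R_1^\vee$; this is the only place where irreducibility of the components is essential.
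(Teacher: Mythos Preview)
Your argument is correct, but it takes a different route from the paper's. The paper works with the description of $\mc Q(\mc R)$ as length-multiplicative functions on the extended affine Weyl group $W=W^a\rtimes\Omega$: such a function is determined by its values on the simple affine reflections $S$, constant on $W$-conjugacy classes, and the latter are controlled by the action of $\Omega\cong X/Q(R_0)$ on the affine Dynkin diagram by diagram automorphisms. The paper then observes that $\Omega$ preserves the irreducible components of the affine diagram and that its action on the $i$-th component factors through $\Omega^{(i)}=X^{(i)}/Q(R_0^{(i)})$, yielding (i); for (ii) it invokes the fact that $\alpha^\vee\notin 2Y$ for every indecomposable summand that is not of type $C_n^{(1)}$, so that passing to $X^{max}$ introduces no new identifications among the simple affine reflections.

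You instead use the equivalent description of $\mc Q(\mc R)$ as $W_0$-invariant positive functions on $R_{nr}^\vee$, and reduce everything to checking that the doubling condition $\alpha^\vee\in 2Y$ is preserved under the isogeny. This is arguably cleaner: it is uniform (no separate treatment of $C_n^{(1)}$ is needed), and the verification in (ii) that $\alpha^\vee\in 2Y\iff\alpha^\vee\in 2Q(R_1^\vee)$ comes down to the standard fact that a root in a reduced root system is primitive in its root lattice. The paper's approach, on the other hand, makes the role of $\Omega$ more visible and connects directly to the affine combinatorics that reappear later in the spectral diagram. Both arguments rely on the same underlying equivalence of the two descriptions of $\mc Q$ established earlier in the text.
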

\index{Xma@$X^{max}$, extension of $X$}
\index{R2@$\mc R^{max}$, root datum with $X^{max}$}
\begin{proof}
A length multiplicative function $q:W\to\mathbb{R}_+$ is
determined by its restriction to the set of simple affine roots
and this restriction is a function which is constant on the
intersection of the $W$-orbits of affine roots intersected with
the simple affine roots. Conversely every such function on the
simple affine roots can be extended uniquely to a length
multiplicative function on $W$. The group $\Omega\simeq
X/Q(R_0)\subset W$ of elements of length $0$ acts on the set of
simple affine roots by diagram automorphisms which preserve the
components of the affine Dynkin diagram of the affine root system
$R^a=R_0^\vee\times\mathbb{Z}$. The action of $\Omega$ on the i-th
component factors through the action of
$\Omega^{(i)}:=X^{(i)}/Q(R^{(i)}_0)$. This proves (i).
We also see by this that length multiplicative function
$q\in\mc{Q}({\mc{R}})$ extends uniquely to a length
multiplicative function for
$W(\mc{R}^{max})$, since $\alpha^\vee\not\in 2Y$ for all
$\alpha\in R_0^\prime$ with $\mc{R}^\prime$ an indecomposable
summand which is not isomorphic to an irreducible root datum of
type $C_n^{(1)}$. This proves (ii).
\end{proof}
Given a root datum $\mathcal{R}$ and positive parameter function
$q\in\mc{Q}({\mathcal{R}})$ we define the Macdonald $c$-function of
the pair $(\mathcal{R},q)$. This is the rational function $c$ on
the torus $T=\operatorname{Hom}(X,\mathbb{C}^\times )$ defined by
\begin{equation}\label{eq:defc}
c=\prod_{\alpha\in R_{1,+}}c_\alpha,
\end{equation}
where $c_\alpha$ is defined for $\alpha\in R_1$ by
\begin{equation}\label{eq:c}
c_\alpha(t,q):=\frac{(1+q_{\alpha^\vee}^{-1}\alpha(t)^{-1/2})
(1-q_{\alpha^\vee}^{-1}q_{2\alpha^\vee}^{-2}\alpha(t)^{-1/2})}
{1-\alpha(t)^{-1}}
\end{equation}
Observe that the function $c_\alpha$ is rational in $t$
despite the appearance of the square root $\alpha(t)^{1/2}$. Indeed, if
$\alpha/2\not\in X$ then we have $q_{2\alpha^\vee}=1$, and the
numerator simplifies to $(1-q_{\alpha^\vee}^{-2}\alpha(t)^{-1})$.

The pole order at $t=r\in T$ of the rational function
\begin{equation}\label{eq:eta}
\eta(t):=(c(t)c(t^{-1}))^{-1}
\end{equation}
is defined as follows. \index{1g@$\eta$, rational function on $T$}
By definition $\eta(t)$ is a product of rational functions
of the form $\eta_\alpha:=(c_\alpha(t)c_\alpha(t^{-1}))^{-1}$ where $\alpha$ runs
over the set $R_{1,+}$. Let $\beta\in R_0$ be the unique root such that $\alpha$ is a
positive multiple of $\beta$.
Then $\eta_\alpha$ is the pull back via $\beta$ of a rational function $\rho_\alpha$
on $\mathbb{C}^\times$; we define the pole order  of $\eta_\alpha$ at $r$ to be equal
to minus the order of $\rho_\alpha$ at $\beta(r)\in\mathbb{C}^\times$.
The pole order $i_{\{r\}}$ of $\eta$ at $r\in T$ is
defined as the sum of these pole orders.
\begin{thm}\cite[Theorem 6.1]{Opd3}\label{thm:ord}
For any point $r\in T$, the pole order $i_{\{r\}}$ of $\eta(t)$ at
$t=r$ is at most equal to the rank $\operatorname{rk}(R_0)$ of
$R_0$.
\end{thm}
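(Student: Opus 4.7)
The plan is to prove the bound by combining a decomposition reduction with a local analysis around $r$, and a combinatorial bound using the root system structure of the set of contributing roots.

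First I would reduce to the case that $R_0$ is irreducible. Using Proposition \ref{prop:prod}(i) we decompose $R_0 = R_0^{(1)} \times \dots \times R_0^{(m)}$, which decomposes $R_1$ accordingly and factors $\eta = \prod_i \eta^{(i)}$, where each $\eta^{(i)}$ is the analogous function for $\mathcal{R}^{(i)}$. Both the pole order at $r$ and $\operatorname{rk}(R_0)$ are additive over this decomposition, so it suffices to bound the pole order in the irreducible case. Moreover, by Proposition \ref{prop:prod}(ii), we may freely pass to $\mathcal{R}^{max}$ to simplify the lattice (this does not change $R_0$, $R_1$, nor the $c$-function).

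Next I would localize. Write $r = sc$ for the polar decomposition. From the explicit formula \eqref{eq:c} one checks directly that $\eta_\alpha$ is regular and nonzero at $r$ unless $\alpha(r)$ lies in a finite set of specific values involving $q_{\alpha^\vee}$ and $q_{2\alpha^\vee}$; in particular $\alpha(r)$ has modulus $1$ or specific $q$-values, which forces $\alpha(s) \in \{\pm 1\}$. Thus the only $\alpha \in R_{1,+}$ that contribute to the pole order at $r$ lie in the root subsystem $R_{s,1}$ associated with $s$ (together with the roots $\beta \in R_0$ with $2\beta \in R_{nr}$ whose doubled coroot satisfies the sign condition at $s$). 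In this way $i_{\{r\}}$ depends only on the local data $(R_{s,1}, q_s, c)$, and since $\operatorname{rk}(R_{s,1}) \le \operatorname{rk}(R_0)$, it suffices to prove the bound at a point $c \in T_v$ (that is, a \emph{real} point) for each irreducible factor of $R_{s,1}$.

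Now I would analyze each $\eta_\alpha$ as a rational function of the single variable $\beta(t) \in \mathbb{C}^\times$, writing $\eta_\alpha = \beta^*\rho_\alpha$. A short direct computation shows that $\rho_\alpha$ has a double zero at $\beta(r)=1$ (from the denominator of $c_\alpha$ entering squared) and simple poles at (up to) four explicit $q$-dependent points. Denoting the contribution from a positive pole of order $k$ as $+k$ and a zero of order $k$ as $-k$, the total pole order of $\eta$ at $r$ is $\sum_{\alpha \in R_{1,+}} (-\operatorname{ord}_{\beta(r)}\rho_\alpha)$. The key inequality to establish is
\[
\#\{\alpha \in R_{1,+} : \eta_\alpha \text{ has a pole at } r\} \;-\; 2\,\#\{\alpha \in R_{1,+} : \alpha(r)=1\} \;\le\; \operatorname{rk}(R_0).
\]
This I would approach by the residue calculus of \cite{Opd1}: the pole order of $\eta$ at $r$ measures the codimension of the locus of $T$ where all contributing factors acquire their pole, and this codimension is bounded above by the number of \emph{independent} linear conditions $\alpha(t) = \text{const}$ that $r$ simultaneously satisfies, which is at most $\dim T_v = \operatorname{rk}(R_0)$ in the semisimple case.

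The hard part is making the last bound rigorous: one must show that whenever many factors $\eta_\alpha$ share a pole at $r$, the corresponding multiset of roots $\alpha$ cannot be too linearly dependent \emph{without} a matching number of compensating double zeros coming from roots $\alpha$ with $\alpha(r)=1$. I would argue this by exhibiting, for each contributing pole-root $\alpha$, an explicit affine linear constraint on $r$ and then using that the maximal rank of a root subsystem of $R_1$ is $\operatorname{rk}(R_0)$, combined with a short case check distinguishing the three possible origins of a pole (from $q_{\alpha^\vee}$, from $q_{2\alpha^\vee}$, and from sign combinations forced by $\beta(s)=\pm 1$). A cleaner conceptual route, which I would prefer if available, is to invoke that $\eta$ is (up to a $q$-dependent constant) the density of the tempered Plancherel measure relative to Haar measure on $T_u$ times a normalization: regularity of this measure on the actual spectrum forces the pole order to be accommodated by real-codimension constraints on $T$, giving the bound $\operatorname{rk}(R_0)$ without case analysis.
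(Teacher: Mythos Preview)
The paper does not prove this theorem; it simply cites \cite[Theorem 6.1]{Opd3}. So there is no in-paper proof to compare against, and your task is really to supply a self-contained argument.

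Your reductions (to the irreducible case, and via the polar decomposition to a positive point for $R_{s,1}$) are sound and match the standard approach. The genuine gap is at the core combinatorial step. You write that the pole order ``measures the codimension of the locus of $T$ where all contributing factors acquire their pole, and this codimension is bounded above by the number of independent linear conditions $\alpha(t)=\text{const}$ that $r$ simultaneously satisfies, which is at most $\operatorname{rk}(R_0)$.'' This is not correct: $i_{\{r\}}$ is a \emph{sum with multiplicity} of pole orders of the $\eta_\alpha$, not a codimension. Many pole-contributing roots can be linearly dependent, and then the sum of their pole orders can vastly exceed the rank of the span. The entire content of the theorem is precisely that whenever such over-counting occurs, there are enough compensating zeros (from roots with $\alpha(r)=1$) to bring the total back down to at most $\operatorname{rk}(R_0)$. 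You acknowledge this in your next paragraph, but the proposed ``short case check'' is not carried out and is in fact the hard part; it is not short, and a naive root-system rank bound does not suffice.

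Your alternative ``cleaner conceptual route'' via regularity of the Plancherel density is circular in the logical structure of this paper: the Plancherel theory for $\mathcal{H}(\mathcal{R},q)$ developed in \cite{Opd1}, \cite{Opd3} uses Theorem~\ref{thm:ord} as an input (it is what makes the residue calculus terminate and what underlies the definition of residual cosets), so you cannot invoke Plancherel regularity to prove it. If you want a conceptual proof, you need an argument that is logically prior to the residue calculus---for instance, the approach in \cite{Opd3} proceeds by a careful inductive argument on residual cosets together with a direct analysis of $\eta^L$ restricted to a line, which is where the actual cancellation is exhibited.
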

\begin{defn}\label{def:respt}
We call $r\in T$ a \emph{residual point} of the pair
$(\mathcal{R},q)$ if $i_{\{r\}}=\operatorname{rk}(X)$. The set of
$(\mathcal{R},q)$-residual points is denoted by
$\operatorname{Res}(\mathcal{R},q)$.
\end{defn}
\index{ResR@Res$(\mc R ,q)$, set of $(\mc R,q)$-residual points in $T$}
In particular the set $\operatorname{Res}(\mathcal{R},q)$ is
nonempty only if $\mathcal{R}$ is a semisimple root datum.

The next result is trivial but it explains in conjunction with
Proposition \ref{prop:prod}
how residual points for $\mc{R}$ can be expressed in terms of
residual points of the simple factors of $\mc{R}^{max}$:
\begin{lem}\label{lem:prod}
Let $\mc{R}=(X,R_0,Y,R_0^\vee)$ be a semisimple root datum.
\begin{enumerate}
\item[(i)]
Suppose that $\mc{R}\to\mc{R}^\prime$ is an isogeny which
yields an identification $\mc{Q}=\mc{Q}^\prime$ (e.g.
$\mc{R}^\prime=\mc{R}^{max}$ as in Proposition \ref{prop:prod}).
For all $q\in\mc{Q}$ we have: $r^\prime\in\textup{Res}(\mc{R}^\prime,q)$
iff $r=r^\prime|_{X}\in\textup{Res}(\mc{R},q)$.
\item[(ii)] Suppose that $\mc{R}=\mc{R}^{(1)}\times\dots\times\mc{R}^{(m)}$
is a direct product of simple factors (e.g. if $\mc{R}=\mc{R}^{max}$
as in Proposition \ref{prop:prod}). Let
$T=T^{(1)}\times\dots\times T^{(m)}$ be the corresponding factorization of $T$
and let $\mc{Q}(\mc{R})=\mc{Q}^{(1)}\times\dots\times\mc{Q}^{(m)}$
be the corresponding factorization of $\mc{Q}$.

For all $q=(q^{(1)},\dots,q^{(m)})\in\mc{Q}$ we have
a natural bijection
\begin{equation}
\textup{Res}(\mc{R},q)\stackrel{\approx}{\longrightarrow}
\textup{Res}(\mc{R}^{(1)},q^{(1)})\times\dots\times
\textup{Res}(\mc{R}^{(m)},q^{(m)})
\end{equation}
such that $r\to(r^{(1)},\dots,r^{(m)})$ iff
$r=r^{(1)}\dots r^{(m)}$ with $r^{(i)}\in T^{(i)}$ for all $i=1,\dots,m$.
\end{enumerate}
\end{lem}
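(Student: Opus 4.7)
The plan is to reduce everything to the definition of residual points via the pole order $i_{\{r\}}$ of $\eta(t)=(c(t)c(t^{-1}))^{-1}$ and to exploit the local nature of this pole order together with Theorem~\ref{thm:ord}.

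For part (i), I would first note that an isogeny $\psi:\mc{R}\to\mc{R}^\prime$ only changes the lattice $X$ (to $X'\supseteq X$), while keeping $R_0,R_0^\vee$ and hence $R_1$ unchanged. Under the identification $\mc{Q}(\mc{R})=\mc{Q}(\mc{R}^\prime)$ the parameters $q_{\alpha^\vee}$ and $q_{2\alpha^\vee}$ that appear in $c_\alpha$ are the same on both sides. The dual map $T^\prime\to T$, $r^\prime\mapsto r^\prime|_X$, satisfies $\alpha(r)=\alpha(r^\prime)$ for every $\alpha\in R_0\subset X$, so in particular for every $\alpha\in R_1$. Since the pole order of each $\eta_\alpha$ at a point depends only on the value $\beta(\cdot)\in\mathbb{C}^\times$ for the underlying root $\beta\in R_0$, we conclude $i_{\{r\}}=i_{\{r^\prime\}}$. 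Because $\mc{R}$ and $\mc{R}^\prime$ are semisimple, $\mathrm{rk}(X)=\mathrm{rk}(X^\prime)=\mathrm{rk}(R_0)$, so the residual condition $i_{\{\cdot\}}=\mathrm{rk}(X)$ is equivalent on both sides.

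For part (ii), I would use that $T=T^{(1)}\times\cdots\times T^{(m)}$ and $R_1=R_1^{(1)}\sqcup\cdots\sqcup R_1^{(m)}$, with every $\alpha\in R_1^{(i)}$ pairing trivially with $T^{(j)}$ for $j\ne i$. Writing $r=r^{(1)}\cdots r^{(m)}$ with $r^{(i)}\in T^{(i)}$, each factor $\eta_\alpha$ with $\alpha\in R_{1,+}^{(i)}$ depends only on $r^{(i)}$, so
\begin{equation*}
\eta(r)=\prod_{i=1}^m \eta^{(i)}(r^{(i)}),\qquad
i_{\{r\}}=\sum_{i=1}^m i_{\{r^{(i)}\}}.
\end{equation*}
Semisimplicity of each factor gives $\mathrm{rk}(X)=\sum_i\mathrm{rk}(X^{(i)})=\sum_i\mathrm{rk}(R_0^{(i)})$. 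By Theorem~\ref{thm:ord} we have $i_{\{r^{(i)}\}}\le\mathrm{rk}(R_0^{(i)})$ for each $i$, so the total sum attains $\mathrm{rk}(X)$ if and only if each summand attains its maximum, i.e.\ if and only if each $r^{(i)}$ is a residual point for $(\mc{R}^{(i)},q^{(i)})$. This gives the asserted bijection.

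Neither step involves a real obstacle; the only thing requiring a small amount of care is bookkeeping in (i), namely checking that the nonreduced roots in $R_{nr}$ (with their doubling convention) and the corresponding parameters $q_{2\alpha^\vee}$ transport correctly along the isogeny---this is ensured precisely by the hypothesis $\mc{Q}(\mc{R})=\mc{Q}(\mc{R}^\prime)$, and in the $\mc{R}^\prime=\mc{R}^{\max}$ case by the observation already used in the proof of Proposition~\ref{prop:prod}(ii) that $\alpha^\vee\notin 2Y$ outside type $C_n^{(1)}$.
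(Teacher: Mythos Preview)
Your proposal is correct and is precisely the routine verification the paper has in mind; the paper itself declares the lemma ``trivial'' and offers no proof, so your argument via the equality of pole orders under isogeny in (i) and the additivity of $i_{\{r\}}$ together with the upper bound of Theorem~\ref{thm:ord} in (ii) is exactly what is intended.
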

The following result is straightforward as well:
\begin{lem}\label{lem:mon}
Let $\mc{R}$ be a semisimple root datum with root parameter function
$q\in \mc{Q}$. Let $r\in T$ with polar decomposition of the form $r=sc$.
Let $\mc{R}_s=(X,R_{s,0},Y,R_{s,0}^\vee)$ be the root datum
with the root parameters $q_s$ as in Definition \ref{defn:iso}.
Then $r$ is a $(\mc{R},q)$-residual point iff
$r$ is a $(\mc{R}_s,q_s)$-residual point.
In particular $\mc{R}_s$ is semisimple in this case.
\end{lem}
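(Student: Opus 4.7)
The plan is to compare the pole orders $i_{\{r\}}^{(\mc R,q)}$ and $i_{\{r\}}^{(\mc R_s,q_s)}$ factor-by-factor in the decomposition $\eta=\prod_{\alpha\in R_{1,+}}\eta_\alpha$ with $\eta_\alpha=(c_\alpha(t)c_\alpha(t^{-1}))^{-1}$, and similarly for $(\mc R_s,q_s)$. Once the equality $i_{\{r\}}^{(\mc R,q)}=i_{\{r\}}^{(\mc R_s,q_s)}$ has been established, both directions of the lemma together with the semisimplicity of $\mc R_s$ should follow easily from the definition of residual point and Theorem \ref{thm:ord}.

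I expect two observations to do the work. First, for $\alpha\in R_{s,1,+}$ the formula (\ref{eq:c}) only involves the parameters $q_{\alpha^\vee}$ and (when applicable) $q_{2\alpha^\vee}$, and the inclusion $R_{nr,s}^\vee\subset R_{nr}^\vee$ together with the defining property of $q_s$ (see Definition \ref{defn:iso}) ensures that these values are unchanged under the restriction $q\mapsto q_s$; hence $\eta_\alpha$ is literally the same rational function of $t$ in both settings and will contribute identically to the pole order at $r$. Second, for $\alpha\in R_{1,+}$ with $\alpha(s)\neq 1$, an inspection of (\ref{eq:c}) shows that every zero and pole of $\eta_\alpha$, viewed as a rational function of $z=\alpha(t)\in\mh C^\times$, sits at a positive real value of $z$ (namely $z=1$, $z=q_{\alpha^\vee}^{\pm 2}$, or $z=q_{\alpha^\vee}^{\pm 2}q_{2\alpha^\vee}^{\pm 4}$). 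Since $\alpha(r)=\alpha(s)\alpha(c)$ with $\alpha(c)\in\mh R_{>0}$ and $|\alpha(s)|=1$, the value $\alpha(r)$ is positive real precisely when $\alpha(s)=1$, so for $\alpha\in R_{1,+}\setminus R_{s,1,+}$ the factor $\eta_\alpha$ will be holomorphic and nonzero at $r$ and contribute nothing to $i_{\{r\}}$.

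Combining the two observations yields $i_{\{r\}}^{(\mc R,q)}=i_{\{r\}}^{(\mc R_s,q_s)}$. If $r$ is $(\mc R,q)$-residual, the common value equals $\operatorname{rk}(X)=\operatorname{rk}(R_0)$, and Theorem \ref{thm:ord} applied to $(\mc R_s,q_s)$ forces $\operatorname{rk}(R_0)\le\operatorname{rk}(R_{s,0})\le\operatorname{rk}(R_0)$, so $R_{s,0}$ has full rank, $\mc R_s$ is semisimple, and $r\in\operatorname{Res}(\mc R_s,q_s)$. Conversely, $\operatorname{Res}(\mc R_s,q_s)$ is nonempty only when $\mc R_s$ is semisimple (again by Theorem \ref{thm:ord}), and when it is, a $(\mc R_s,q_s)$-residual $r$ automatically satisfies $i_{\{r\}}^{(\mc R,q)}=i_{\{r\}}^{(\mc R_s,q_s)}=\operatorname{rk}(X)$, so $r$ is $(\mc R,q)$-residual. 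The only mildly technical ingredient is the positivity claim about the location of the zeros and poles of $\eta_\alpha$, which should be immediate from the positivity of $q_{\alpha^\vee}$ and $q_{2\alpha^\vee}$ combined with the explicit factorization of $c_\alpha$; I do not anticipate any serious obstacle.
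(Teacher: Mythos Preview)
Your argument is correct and is exactly the kind of direct verification the paper has in mind; the paper does not give a proof, merely calling the result ``straightforward.'' One small point deserves care: when $\alpha=2\beta$ with $\beta\in R_0$, the pole order of $\eta_\alpha$ at $r$ is by definition (see the paragraph after \eqref{eq:eta}) the order of a rational function $\rho_\alpha$ in the variable $w=\beta(t)$ at $w=\beta(r)$, not in $z=\alpha(t)$. In that variable the zeros and poles of $\rho_\alpha$ lie at \emph{real} values of $w$ (including negative ones such as $w=-1$ and $w=-q_{\alpha^\vee}^{\pm 1}$), not only positive ones. This does not disturb your conclusion: $\alpha(s)\neq 1$ forces $\beta(s)^2\neq 1$, hence $\beta(s)\neq\pm 1$, so $\beta(r)=\beta(s)\beta(c)$ is non-real and $\rho_\alpha$ is regular and nonvanishing there. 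With this minor adjustment the factor-by-factor comparison goes through exactly as you describe.
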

Let $L\subset T$ be a coset of a subtorus $T^L\subset T$. We
decompose the product (\ref{eq:eta}) as follows
\begin{equation}\label{eq:dec}
\eta=\eta_L\eta^L
\end{equation}
where $\eta_L$ is the product of the factors $c_\alpha$ where
$\alpha\in R_{L,1}\subset R_1$, the subset of $R_1$ consisting of
the roots that are constant on $L$, and $\eta^L$ is the product
over the remaining roots. We define the order $i_L$ of $\eta$ at
$L$ as the order of $\eta_L$ at $L$, viewed as a point of the
quotient torus $T/T^L$. Hence by Theorem \ref{thm:ord} we have
$i_L\leq \operatorname{rank}(R_L)$
for all cosets $L$, and we define
\begin{defn}\label{def:rescos}
We call a coset $L\subset T$ a residual coset if
$i_L=\operatorname{codim}(L)$ (in particular, $L=T$ is residual).
If we denote $L=rT^L$ where $r\in T_L$, the subtorus such that
$\operatorname{Lie}(T_L)$ is the orthogonal complement of
$\operatorname{Lie}(T^L)$, then $L$ is residual iff $r$ is a
residual point for the restriction of $\eta_L$ to $T_L$. We call $r$ a center of $L$
and we define the tempered part of $L$ to be
$L^{\operatorname{temp}}:=rT^L_u$ (this is well defined).
\end{defn}
\index{Lte@$L^{\text{temp}}$, tempered part of residual coset $L \subset T$}
Recall the following useful results for residual cosets.
\begin{prop}\cite[Lemma 4.1]{Opd3}\label{prop:flag}
Let $L$ be a residual coset, $L\not=T$. Then there exists a
residual coset $M\supset L$ such that
$\dim (M)=\dim L +1$.
\end{prop}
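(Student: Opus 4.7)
The plan is to reduce the statement to the case where $L = \{r\}$ is a residual point (i.e., codimension equals the rank), and then to construct a residual coset of dimension $1$ through $r$ by a direct pole-counting argument on $\eta$.

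First, I would perform the reduction. Write $L = rT^L$ with $r \in T_L$ a center. The roots $R_{L,1}$, which are trivial on $T^L$, form a root system whose ambient vector space is $\mathrm{Lie}(T_L)^*$. The very definition of a residual coset says $i_L = \mathrm{codim}(L) = \dim T_L$, which means $r$ is a residual point (in the sense of Definition \ref{def:respt}) for the pair $(\mathcal{R}_L, q_L)$ obtained by restriction of the root datum and parameter function to $T_L$. Suppose I can find a one-dimensional coset $\ell \subset T_L$ through $r$ which is residual for $(\mathcal{R}_L, q_L)$. Set $M := \ell \cdot T^L \subset T$. Then $\dim M = \dim L + 1$ and $L \subset M$. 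The roots in $R_1$ constant on $M$ are precisely the roots in $R_{L,1}$ that are also constant on $\ell$, so the pole order $i_M$ of $\eta_M$ at $M$ (viewed in $T/T^M$) coincides with that of the partial factor of $\eta$ relative to $\ell$ inside $T_L$, which is $\dim T_L - 1 = \mathrm{codim}_T(M)$. Thus $M$ is residual in $(\mathcal{R},q)$, and the problem is reduced to the case of a residual point.

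Second, assume $L = \{r\}$ and let $n = \mathrm{rk}(R_0)$. Decomposing $\eta = \prod_{\alpha \in R_{1,+}} \eta_\alpha$ and using the definition of pole order, the residual property $i_{\{r\}} = n$ records an exact balance between the root factors that contribute poles at $r$ and those that contribute zeros. I would pick a root $\beta \in R_1$ for which $\eta_\beta$ contributes a pole at $r$, and take $\ell$ to be the one-parameter coset through $r$ in the direction of the coroot $\beta^\vee$, so that $\beta$ is not constant on $\ell$. Then $\beta \notin R_{\ell,1}$, and by Theorem \ref{thm:ord} applied to the rank-$(n-1)$ quotient torus $T/T^\ell$, the pole order $i_\ell$ satisfies $i_\ell \le n-1$. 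The substance of the proof is to show this bound is attained for some such $\beta$: since removing $\beta$ from the product cancels one pole factor and since the remaining roots in $R_{\ell,1}$ still form a configuration whose total pole-minus-zero count at the image of $r$ in $T/T^\ell$ equals $n-1$, the resulting $\ell$ is residual.

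The main obstacle is the last assertion: one must rule out the degenerate scenario in which every choice of $\beta$ leads to an unexpected additional zero on $\ell$ causing the pole order to drop by more than one. The natural way to handle this is via the analytic residue-calculus framework underlying Theorem \ref{thm:ord} (in \cite{Opd3}): compute a partial residue of $\eta$ along the hyperplane $\ker\beta \subset T$ and show that it is not identically zero, so that at least one one-parameter family of residual cosets of codimension $n-1$ passes through $r$. Equivalently one can argue by selecting $\beta$ to be a simple root of the local root subsystem at $r$ attached to a chain of sub-residual configurations produced by the residue operation; such $\beta$ contributes an independent pole and forces $i_\ell = n-1$.
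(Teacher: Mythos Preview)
The paper does not prove this proposition; it is quoted from \cite[Lemma 4.1]{Opd3} without argument, so there is no ``paper's own proof'' to compare against. I therefore evaluate your proposal on its merits.

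Your reduction step (from a general residual coset $L=rT^L$ to the residual point $r$ in the subtorus $T_L$) is correct and is the natural first move.

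The construction of the one-dimensional coset $\ell$, however, is flawed. You take $\ell$ in the direction of a coroot $\beta^\vee$, so that $T^\ell=\exp(\mathbb{R}\beta^\vee)$ and $R_{\ell,1}=\{\alpha\in R_1:\alpha(\beta^\vee)=0\}$. This subsystem typically has rank \emph{strictly less} than $n-1$: for instance in type $A_n$ no root is orthogonal to a given coroot, so $R_{\ell,1}=\emptyset$. Since Theorem~\ref{thm:ord} bounds $i_\ell$ by $\operatorname{rank}(R_{\ell,1})$ (not by $\dim T/T^\ell$), one can never reach $i_\ell=n-1$ with this choice. Your sentence ``removing $\beta$ from the product cancels one pole factor'' overlooks that passing from $R_1$ to $R_{\ell,1}$ discards \emph{every} root not orthogonal to $\beta$, not just $\beta$ itself; the net change in pole order is uncontrolled.

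The correct shape of the argument takes $T^\ell$ so that $R_{\ell,1}$ is a rank-$(n-1)$ parabolic subsystem, i.e.\ $T^\ell$ is the one-dimensional subtorus cut out by a maximal proper Levi. One must then show that for at least one such Levi the projection of $r$ to $T_\ell$ is residual. The proof in \cite{Opd3} does this via the residue calculus that underlies Theorem~\ref{thm:ord}: the nonvanishing of the local residue contribution of $\eta\,dt$ at $r$ forces at least one step of an iterated-residue flag through $r$ to be nontrivial, and that step produces the desired residual $\ell$. Your final paragraph gestures at this mechanism but does not carry it out; and because your candidate direction is wrong, the gap is larger than the ``additional zero'' scenario you diagnose.
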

From this result one proves easily by induction to the rank of
$R_0$ (alternatively, it follows from Corollary \ref{cor:DSfinite}
in view of Theorem \ref{thm:rescos}):
\begin{thm}\cite[compare Theorem 1.1]{Opd3}\label{thm:finite}
The set of residual points is finite.
\end{thm}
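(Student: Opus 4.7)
The plan is to argue by induction on $\operatorname{rank}(X) = \dim T$, with Proposition \ref{prop:flag} supplying the key inductive step. The base case $\dim T \leq 1$ is immediate: for $\dim T = 0$ there is nothing to check, and for $\dim T = 1$ residual points are isolated solutions of finitely many equations of the form $\alpha(t) = \pm q_{\alpha^\vee}^{\pm 2} q_{2\alpha^\vee}^{\pm 4}$ inside a one-dimensional torus, so the set is obviously finite.

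For the inductive step, fix $r \in \operatorname{Res}(\mathcal{R},q)$. By Proposition \ref{prop:flag} applied iteratively, $r$ is contained in a residual coset $M$ of dimension $1$. Write $M = r_M T^M$ with $r_M \in T_M$, where $T_M \subset T$ is the codimension-one subtorus complementary to $T^M$ in the sense of Definition \ref{def:rescos}. By definition of a residual coset, $r_M$ is then a residual point for the restriction $\eta_M|_{T_M}$, i.e.\ for the pair consisting of the rational function built from $R_{M,1} = \{\alpha \in R_1 : \alpha \text{ constant on } M\}$ on the smaller torus $T_M$. Since $R_{M,1}$ spans a proper subsystem of $R_1$ (its rank is $\operatorname{codim} M < \operatorname{rank} X$), the inductive hypothesis applies to $T_M$ and shows that for each fixed subtorus $T^M$ there are only finitely many possible $r_M$. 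The subtorus $T^M$ itself is determined by the subset $R_{M,1} \subset R_1$ of roots vanishing on it, and since $R_1$ is finite there are only finitely many subtori $T^M$ that can arise this way. Consequently the collection of one-dimensional residual cosets $M$ is finite.

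It remains to show that within each such $M$, only finitely many points are residual for $(\mathcal{R},q)$. Using the decomposition $\eta = \eta_M \eta^M$ from (\ref{eq:dec}), the factor $\eta_M$ has constant pole order $\operatorname{codim} M = \dim T - 1$ at every point of $M$ (this is exactly the condition that $M$ is residual). Hence a point $r \in M$ satisfies $i_{\{r\}} = \operatorname{rank} X$ if and only if the complementary factor $\eta^M$ contributes pole order at least $1$ at $r$. But $\eta^M$ is a product of rational functions $\eta_\alpha$ where $\alpha \in R_{1,+} \setminus R_{M,1}$ is nonconstant on $M$; each such $\eta_\alpha$ has poles only on the finite union of hypersurfaces $\alpha(t) = \pm q_{\alpha^\vee}^{\pm 2} q_{2\alpha^\vee}^{\pm 4}$, and intersecting each such hypersurface with the one-dimensional coset $M$ gives a finite set of points (since $\alpha$ is nonconstant on $M$). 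Thus $M \cap \operatorname{Res}(\mathcal{R},q)$ is finite.

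Combining the last two paragraphs, $\operatorname{Res}(\mathcal{R},q)$ is a finite union of finite sets, hence finite. The main obstacle in this argument is the pole-order bookkeeping: one must verify carefully that $\eta_M$ really does contribute exactly $\operatorname{codim} M$ to the pole order at every point of $M$ (so that the residual condition on $r \in M$ reduces cleanly to a condition on the complementary factor), and that the decomposition (\ref{eq:dec}) interacts correctly with the inductive identification of $r_M$ as a residual point for a smaller root system on $T_M$. Everything else is a routine induction on rank.
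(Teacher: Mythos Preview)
Your argument is correct and is precisely the induction on rank via Proposition~\ref{prop:flag} that the paper indicates (its proof consists of a single sentence pointing to this induction). One small remark: a single application of Proposition~\ref{prop:flag} to the zero-dimensional coset $\{r\}$ already produces a one-dimensional residual coset $M \ni r$, so ``iteratively'' is unnecessary. The paper also notes an alternative route, deducing finiteness from Corollary~\ref{cor:DSfinite} (finiteness of the discrete series) together with Theorem~\ref{thm:rescos} (residual orbits are exactly the discrete-series central characters); your direct combinatorial argument avoids that representation-theoretic detour.
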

We will also need the following results:
\begin{thm}\cite[Theorem 7.4]{Opd3}\label{thm:real}
Define $t^*:=\overline{t}^{-1}$. Then
$W_0(L^{\operatorname{temp}})^*=W_0L^{\operatorname{temp}}$.
\end{thm}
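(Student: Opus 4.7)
The plan is to reduce Theorem \ref{thm:real} to a $*$-stability statement for $W_0$-orbits of residual points, and then to invoke the classification of residual points for graded affine Hecke algebras via Theorem \ref{thm:red2}.

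First, I would separate out the unitary and positive parts. Writing $L = rT^L$ with $r \in T_L$ a center, one has $L^{\text{temp}} = rT^L_u$. The involution $*$ acts trivially on $T_u$ (for unitary $t$, $t^* = \bar t^{-1} = t$) and inverts $T_v$, and since $T^L$ is an algebraic subtorus it is $*$-stable, so $(T^L_u)^* = T^L_u$. Therefore $(L^{\text{temp}})^* = r^* T^L_u$, and the theorem becomes the assertion that $r^*T^L_u$ and $rT^L_u$ lie in a common $W_0$-orbit of cosets. Moreover, because $q$ is positive the Macdonald $c$-function has real coefficients, so $c(\bar t^{-1})=\overline{c(t^{-1})}$ and hence $\eta(t^*)=\overline{\eta(t)}$; the analogous identity for $\eta_L|_{T_L}$ shows that $r^*$ is itself a residual point of $\eta_L|_{T_L}$, so $L^*$ is again a residual coset of the same codimension.

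Writing the polar decomposition $r = sc$ with $s\in T_u$ and $c\in T_v$, we have $r^* = sc^{-1}$, and it suffices to exhibit $w\in W_s$ with $wc = c^{-1}$: such a $w$ fixes $s$ and matches the two cosets on the unitary side automatically. By Lemma \ref{lem:mon}, $r$ is a residual point of $(\mc R_s, q_s)$ with $\alpha(r)>0$ for every $\alpha \in R_{s,1}$, so Theorem \ref{thm:red2} applies to $(\mc R_s, q_s)$ and translates $c = \exp(\xi)$ into a residual point $\xi\in V$ of the graded affine Hecke algebra $\mathbf{H}(R_{s,1},V,F_{s,1},k_s)$, under which the inverse map $c\mapsto c^{-1}$ corresponds to $\xi\mapsto -\xi$. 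Thus the theorem reduces to the statement that $-\xi \in W(R_{s,1})\xi$ for every residual point $\xi$ of a graded affine Hecke algebra with a positive parameter function.

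This last statement is the main obstacle; I would settle it by invoking the explicit classification of residual points of graded affine Hecke algebras (Heckman--Opdam in the equal-parameter case and Slooten for arbitrary positive parameters), which in each irreducible type exhibits a representative of every $W$-orbit in the closed positive chamber together with a Weyl group element carrying that representative to its negative. Once this classification input is in hand, the chain of reductions above closes the argument.
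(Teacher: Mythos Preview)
The paper does not supply a proof of this statement; it merely cites \cite[Theorem~7.4]{Opd3}, where (per the remark following Theorem~\ref{thm:nonnested}) the argument is classification-free. Your approach is different in that its final step appeals to the explicit classification of linear residual points to verify $-\xi \in W\xi$; this is legitimate, and indeed the paper itself uses exactly this fact (as $-w_0(D)=D$, citing \cite[Theorem~A.14(i)]{Opd1}) in Section~\ref{sec:genlinres}, but it is worth knowing that the result in \cite{Opd3} does not rely on it.

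There is a genuine gap in your reduction. The center $r$ of a residual coset $L$ is a residual point for the \emph{parabolic} subdatum $\mc R_L$ (with root system $R_{L,1}$), not for the full $(\mc R,q)$, so Lemma~\ref{lem:mon} and the passage to the degenerate picture must be applied to $\mc R_L$ rather than to $\mc R$; as written, your $\mc R_s$ and $R_{s,1}$ refer to the wrong root datum. Once this is corrected, the Weyl element $w$ you produce lies in $W((R_L)_{s,1}) \subset W(R_L)$. You then need --- and have not mentioned --- the observation that $W(R_L)$ fixes $T^L$ pointwise (every $\alpha\in R_{L,1}$ restricts trivially to $T^L$, so each reflection $s_\alpha$ acts trivially there). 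Without this, $wr=r^*$ alone does not give $w(rT^L_u)=r^*T^L_u$, and hence does not yield $w(L^{\operatorname{temp}})=(L^{\operatorname{temp}})^*$. With these two corrections your outline goes through.
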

\begin{thm}\cite[Theorem 6.1]{Opd3}\label{thm:nonnested}
If $L\not=M$ are residual cosets of $T$ then
$L^{\operatorname{temp}}\not\subset M^{\operatorname{temp}}$.
Equivalently, the restriction of $\eta^L$ to
$L^{\operatorname{temp}}$ is smooth.
\end{thm}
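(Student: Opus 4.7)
The plan is to establish the two formulations simultaneously by descending induction on $\dim L$, using Proposition~\ref{prop:flag} for the inductive mechanism and the pole-order characterization of residuality as the main lever. Before starting I would reduce, via Proposition~\ref{prop:prod} and Lemma~\ref{lem:prod}, to the case where $\mc{R}$ is simple, and optionally use Lemma~\ref{lem:mon} together with the polar decomposition $r=sc$ of the center of a residual coset to transfer the local question near $L^{\operatorname{temp}}$ to the root subsystem $\mc{R}_s$, where the tempered directions are real.

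First I would verify the equivalence of the two formulations. Suppose $L\neq M$ with $L^{\operatorname{temp}}\subset M^{\operatorname{temp}}$. Taking $\mathbb{R}$-linear spans of the unitary directions in each (translated) subtorus forces $T^L_u\subseteq T^M_u$, whence $T^L\subseteq T^M$ on complexifying, and comparing centers modulo $T^M_u$ yields $L\subseteq M$. If $\dim L=\dim M$ then both tempered parts are compact real tori of the same dimension, forcing $L^{\operatorname{temp}}=M^{\operatorname{temp}}$ and hence $L=M$; so we may assume $L\subsetneq M$ strictly. The factorisations $\eta=\eta_L\eta^L=\eta_M\eta^M$ together with $R_{M,1}\subsetneq R_{L,1}$ then give
\[
\eta^M \;=\; \eta^L \cdot \prod_{\alpha \in R_{L,1}\setminus R_{M,1}} \eta_\alpha ,
\]
and each such $\alpha$ is constant on $L$; residuality of $L$ (the pole order $i_L$ of $\eta_L$ is exactly $\operatorname{codim}(L)$) combined with Theorem~\ref{thm:ord} forces $\alpha|_L\in\{\pm 1\}$ for sufficiently many such $\alpha$ (so that their contributions to $\eta_L$ are genuine poles rather than zeros), and hence $\eta^M$ restricted to $M$ has a true pole along $L\subset M$ and a fortiori along $L^{\operatorname{temp}}$. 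Conversely, any singularity of $\eta^L$ at a point $t_0\in L^{\operatorname{temp}}$ must come from a factor $\eta_\alpha$ with $\alpha\in R_1\setminus R_{L,1}$ and $\alpha(t_0)=\pm 1$; intersecting $L$ with $\{\alpha=\alpha(t_0)\}$ gives a residual subset of codimension one less, and iterating Proposition~\ref{prop:flag} upward produces a residual supercoset $M\supsetneq L$ of dimension $\dim L+1$ whose tempered part contains a neighbourhood of $t_0$ in $L^{\operatorname{temp}}$ and therefore all of it (both being cosets of subtori).

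The main claim then follows by descending induction on $\dim L$. The base case $L=T$ is vacuous since $T$ is the unique residual coset of top dimension. For the inductive step, suppose $L^{\operatorname{temp}}\subset M^{\operatorname{temp}}$ with $L\neq M$. By the equivalence shown above, $L\subsetneq M$ strictly, so $\dim M>\dim L$ and the inductive hypothesis applies to $M$: no residual $M'\neq M$ satisfies $M^{\operatorname{temp}}\subset M'^{\operatorname{temp}}$, so by the equivalence $\eta^M$ is smooth on $M^{\operatorname{temp}}$. But the first half of the equivalence applied to $L\subsetneq M$ exhibits a genuine pole of $\eta^M$ along $L^{\operatorname{temp}}\subset M^{\operatorname{temp}}$, a contradiction.

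The hard part is the first direction of the equivalence: ruling out pathological cancellation between numerator and denominator factors of the product defining $\eta^M$ along $L$, so that the contributions from roots in $R_{L,1}\setminus R_{M,1}$ with $\alpha|_L=\pm 1$ truly produce poles on $L^{\operatorname{temp}}$ rather than being killed by coincident zeros from other factors. The residuality identity $i_L=\operatorname{codim}(L)$ together with the bound $i_M\leq\operatorname{codim}(M)=\operatorname{codim}(L)-(\dim M-\dim L)$ from Theorem~\ref{thm:ord} is exactly what forces the expected pole contributions from $R_{L,1}\setminus R_{M,1}$ to survive; the bookkeeping is most cleanly done after reducing to the real case via Lemma~\ref{lem:mon}, using Theorem~\ref{thm:real} to control how complex conjugation interacts with $L^{\operatorname{temp}}$.
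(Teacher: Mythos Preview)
The paper does not give its own proof of this statement; it is quoted from \cite[Theorem 6.1]{Opd3} without argument. So there is no in-paper proof to compare against, and your sketch must be judged on its own.

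Your inductive scaffold is reasonable, but the ``equivalence'' on which the induction rests is not established. In the converse direction you say a singularity of $\eta^L$ at $t_0\in L^{\operatorname{temp}}$ comes from a factor $\eta_\alpha$ with $\alpha\notin R_{L,1}$ and $\alpha(t_0)=\pm 1$; then ``intersecting $L$ with $\{\alpha=\alpha(t_0)\}$ gives a residual subset of codimension one less, and iterating Proposition~\ref{prop:flag} upward produces a residual supercoset $M\supsetneq L$''. Two things are wrong here. First, the poles of $\eta_\alpha$ occur at $q$-dependent values of $\alpha(t)$ (coming from the numerator of $c_\alpha$), not at $\alpha(t)=\pm 1$; the locus $\alpha(t)=1$ is where $\eta_\alpha$ has a \emph{zero}. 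Second, intersecting $L$ with a level set of $\alpha$ gives a \emph{sub}coset of $L$ with codimension one \emph{more}; Proposition~\ref{prop:flag} applied to that smaller coset produces a residual supercoset of it, which could well be $L$ itself and in any case gives no residual coset strictly containing $L$. The natural witness to a failure of (a) coming from a pole of $\eta^L|_{L^{\operatorname{temp}}}$ would be a residual $N\subsetneq L$ with $N^{\operatorname{temp}}\subset L^{\operatorname{temp}}$, not a supercoset, and even that requires showing the pole locus is exactly the tempered part of a residual coset, which you do not do.

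For the forward direction --- your acknowledged ``hard part'' --- you need that if $L\subsetneq M$ are both residual then $\eta^M|_M$ has a genuine pole along $L^{\operatorname{temp}}$. The identities $i_L=\operatorname{codim}(L)$ and Theorem~\ref{thm:ord} are statements about the pole order of $\eta_L$ at the image point of $L$ in $T/T^L$; they do not by themselves control the order of $\eta^M|_M$ along the subvariety $L$, because the factors $\eta_\alpha$ for $\alpha\in R_{L,1}\setminus R_{M,1}$ can contribute zeros as well as poles, and $\eta^L|_M$ may itself have zeros or poles along $L$ that interfere. You also need the pole to persist on the \emph{tempered} locus $L^{\operatorname{temp}}$ specifically, which is an additional real-analytic constraint (this is where Theorem~\ref{thm:real} enters in the original argument). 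None of this bookkeeping is carried out, and without it the induction does not close.
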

The relevance of the notion of residual cosets stems from:
\begin{thm}\cite[Theorem 3.29]{Opd1}, \cite[Theorem 6.1]{Opd3}\label{thm:rescos}
An orbit $W_0r\in W_0\backslash T$ is the central character of a
discrete series character of $\mc{H}(\mc{R},q)$ iff $r$ is a residual point,
and $W_0r$ is the central character of a tempered character of
$\mc{H}(\mc{R},q)$ iff $r\in S(q)$, where
\begin{equation}
S(q) = \bigcup_{L\ \operatorname{tempered}}L^{\operatorname{temp}}
\end{equation}
\end{thm}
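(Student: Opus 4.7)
The plan is to obtain both statements as corollaries of the residue decomposition of the trace $\tau$ developed via contour shifts on the torus $T$, as carried out in \cite{Opd1} and refined in \cite{Opd3}. The starting point is the integral representation
\[
\tau(h) \;=\; \frac{1}{|W_0|}\int_{T_u} \chi_{I_t}(h)\,\frac{dt}{c(t)c(t^{-1})}
\]
(for $h$ in a suitable dense subspace), where $I_t$ is the principal series induced from $t\in T_u$ and $\eta(t)=(c(t)c(t^{-1}))^{-1}$ is the rational $c$-function kernel of (\ref{eq:eta}). Moving this compact contour through the polar divisor of $\eta$ inside $T$ and iterating via Proposition \ref{prop:flag} yields a decomposition
\[
\tau \;=\; \sum_{W_0 L}\tau_L,
\]
indexed by $W_0$-orbits of residual cosets $L$, in which each $\tau_L$ is a positive measure on tempered representations whose central characters lie in $W_0 L^{\mr{temp}}$. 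The zero-dimensional cosets in this sum are precisely the residual points, and for such $L=\{r\}$ the summand $\tau_L$ is a finite positive combination of delta measures on irreducible discrete series representations with central character $W_0 r$.

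With this decomposition in hand, the tempered assertion follows quickly. By the theorem cited just after Definition \ref{defn:inner} the support of $\mu_{Pl}$ is exactly the tempered dual, and the Plancherel decomposition of $\tau$ must coincide with the residue decomposition above. Nonvanishing of each $\tau_L$ on $L^{\mr{temp}}$ is the critical input: by Theorem \ref{thm:nonnested} the tempered parts of distinct residual cosets cannot nest, so the residue form defining $\tau_L$ on $L^{\mr{temp}}$ is generically regular and not identically zero, which together with Theorem \ref{thm:real} ($W_0 L^{\mr{temp}}$ is stable under $t\mapsto \overline{t}^{-1}$) gives positivity of the residue density on $L^{\mr{temp}}$. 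Hence every point of $L^{\mr{temp}}$ is a tempered central character; conversely every tempered central character occurs in $\bigcup_{L}W_0 L^{\mr{temp}} = W_0\cdot S(q)$.

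For the discrete series statement, Corollary \ref{cor:dix} identifies irreducible discrete series with atoms of $\mu_{Pl}$. If $\dim L>0$, then $\tau_L$ is supported on the positive-dimensional orbit space $W_0\backslash (W_0 L^{\mr{temp}})$ with a residue density that is absolutely continuous with respect to Haar measure on $L^{\mr{temp}}_u$, so $\tau_L$ contributes no atoms; consequently an atom at a representation $\delta$ with central character $W_0 r$ forces $r$ to lie in a zero-dimensional residual coset, i.e. $r\in\textup{Res}(\mc{R},q)$. Conversely if $r$ is residual then $\tau_{\{r\}}$ is a nonzero finite atomic positive measure, each of whose atoms, by Corollary \ref{cor:dix}, is an irreducible discrete series representation with central character $W_0 r$; in particular $\Delta_{W_0 r}\neq\emptyset$.

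The main technical obstacle is the residue decomposition and the nonvanishing of $\tau_L$ for each residual $L$: this is the substantive content of \cite[Ch.~3]{Opd1} and \cite[\S 6--7]{Opd3}, and it depends on the incidence geometry of residual cosets (Proposition \ref{prop:flag}, Theorem \ref{thm:nonnested}, Theorem \ref{thm:real}) to guarantee that residues at different cosets do not cancel on their tempered parts. Once this is in hand, the matching of atoms versus absolutely continuous parts of $\mu_{Pl}$ with zero-dimensional versus positive-dimensional residual cosets gives both equivalences in the theorem.
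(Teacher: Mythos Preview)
The paper does not supply its own proof of this theorem; it is stated with citations to \cite[Theorem 3.29]{Opd1} and \cite[Theorem 6.1]{Opd3} and then used as a black box. Your sketch is an accurate outline of the residue-calculus argument in those references: the contour-shift decomposition of $\tau$ indexed by $W_0$-orbits of residual cosets, with the non-nesting result (Theorem \ref{thm:nonnested}) and the reality result (Theorem \ref{thm:real}) ensuring nonvanishing and positivity of each $\tau_L$, and then the identification of atoms of $\mu_{Pl}$ with zero-dimensional residual cosets. So your approach is essentially the one taken in the cited sources, which is all the paper relies on here.
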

\index{Sq@$S(q)$, union of tempered residual cosets for $(\mc R,q)$}
\begin{rem}
As we have seen above,
$\operatorname{Res}(\mathcal{R},q)\not=\emptyset$ only if
$\mathcal{R}$ is semisimple. By Lemma \ref{lem:prod} their classification
reduces to the case of simple root data.
The residual points for simple root
data have been classified (\cite[Section 4]{HO} and \cite[Appendix
A]{Opd1}), and various of the above properties of residual points
and cosets were first proven by classification. In \cite{Opd3}
most of these properties were proved conceptually (with exception
of \cite[Theorem A.14(iii), Theorem A.18]{Opd1}). In this paper we
will only use properties of residual points for which we know a
classification-free proof \emph{unless stated otherwise}.
\end{rem}

\subsection{Generic residual points}

We will study the deformation of discrete series characters
with respect to the parameter $q\in\mc{Q}$. We begin by studying
the dependence of the central characters on $\mc{Q}$.
We denote the set of all positive real parameter functions for
$\mc{R}$ by $\mc{Q}=\mc{Q}(\mc{R})$. Recall the following
terminology:
\index{q@\textbf{q}, number such that $q(s) = \mb{q}^{f_s}$}
\index{fz@$f_s$, real number such that $q(s) = \mb{q}^{f_s}$}
\begin{rem}\label{rem:base}
We choose a base $\mb{q}>1$ and define $f_s\in\mathbb{R}$ such that
$q(s)=\mb{q}^{f_s}$ for all $s\in S^{\mathrm{aff}}$. We equip
$\mc{Q}$ in the obvious way with the structure of the vector group
$\mathbb{R}_+^N$ where $N$ denotes the number of $W$-conjugacy
classes in $S^{\operatorname{aff}}$. Given a base $\mb{q}>1$ we
identify $\mc{Q}$ with the finite dimensional real vector space of
real functions $s\to f_s$ on $S^{\operatorname{aff}}$ which are
constant on $W$-conjugacy classes. In this sense we speak of
(linear) \emph{hyperplanes} in $\mc{Q}$ (this notion is
independent of $\mb{q}$). By a \emph{half line} in $\mc{Q}$ we
mean a family of parameter functions $q\in\mc{Q}$ in which the
$f_s\in\mathbb{R}$ are kept fixed and are not all equal to $0$ and
$\mb{q}$ is varying in $\mathbb{R}_{>1}$.
\end{rem}
As was remarked in \cite{Opd2}, it follows easily from
\cite[Theorem A.7]{Opd1} that the residual points arise in generic
$\mc{Q}$-families. Let us state and prove this result precisely.
\begin{defn}
A real analytic function $r:\mc{Q}\to T$ is called a \emph{generic
residual point} of $\mc{R}$ if there exists an open, dense subset
$U\subset \mc{Q}$ such that the element $r(q)\in
\operatorname{Res}(\mc{R},q)$ for all $q\in U$. The set of generic
residual points of $\mc{R}$ is denoted by
$\operatorname{Res}(\mc{R})$.
\end{defn}
\index{ResR@Res$(\mc R)$, generic residual points of $\mc R$}
\index{Q@$\mc{Q}_{W_0r}^{reg} \subset \mc Q$,
subset of $W_0r$-regular parameters}
\begin{defn}\label{defn:reg}
Let $r\in\operatorname{Res}(\mc{R})$. We call $q\in\mc{Q}$ an
$r$-regular (or $W_0r$-regular) parameter if
$r(q)\in\operatorname{Res}(\mc{R},q)$. We denote by
$\mc{Q}_{W_0r}^{reg}\subset\mc{Q}$ the subset of $W_0r$-regular parameters.
\end{defn}
It is clear that $\mc{Q}_{W_0r}^{reg}\subset\mc{Q}$ is the complement of
a closed real analytic subset (for a more precise statement, see Theorem
\ref{thm:mgen}). This implies the following basic finiteness
result:
\begin{prop}\label{prop:fin}
The set $\operatorname{Res}(\mc{R})$ of generic residual points is
finite and $W_0$-invariant. This set is nonempty iff $\mc{R}$ is semisimple.
\end{prop}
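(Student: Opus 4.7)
The plan is to treat the three assertions in turn, in order of increasing difficulty.

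For the $W_0$-invariance, I would argue formally. The rational function $\eta$ of (\ref{eq:eta}) is $W_0$-invariant (since $W_0$ permutes $R_1$, and $\eta_\alpha$ depends only on the line $\mathbb R\alpha$), so the pole order $i_{\{r\}}$ depends only on the orbit $W_0r$ and $\operatorname{Res}(\mc R,q)$ is $W_0$-stable for every $q$. If $r\in\operatorname{Res}(\mc R)$ with associated dense open locus $U\subset\mc Q$, then for each $w\in W_0$ the real analytic map $q\mapsto w\cdot r(q)$ still takes residual values on $U$, so $W_0\cdot r\subset\operatorname{Res}(\mc R)$.

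For the nonemptiness dichotomy, one direction is immediate from Theorem \ref{thm:ord}: if $\mc R$ is not semisimple then $\operatorname{rk}(R_0)<\operatorname{rk}(X)$, hence $i_{\{r\}}\le\operatorname{rk}(R_0)<\operatorname{rk}(X)$ for every $r\in T$ and every $q\in\mc Q$, so $\operatorname{Res}(\mc R,q)$ is empty for all $q$ and a fortiori $\operatorname{Res}(\mc R)=\emptyset$. For the converse, when $\mc R$ is semisimple, Lemma \ref{lem:prod} reduces the problem to the simple case, where the explicit classification of residual points in \cite[Appendix A]{Opd1} (see also \cite[Section 4]{HO}) produces residual points of $(\mc R,q)$ as specializations at $q$ of explicit real analytic functions $\mc Q\to T$; any such function is an element of $\operatorname{Res}(\mc R)$.

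Finiteness is the main substantive point, and I would handle it by combining two observations. First, the defining condition $i_{\{r(q)\}}=\operatorname{rk}(X)$ forces, for generic $q$, that $\alpha(r(q))$ be equal to one of a small finite list of explicit monomials in the parameters $q_{\beta^\vee}$ (read off from the numerator and denominator of $c_\alpha c_{-\alpha}$ via (\ref{eq:c})) for a linearly independent collection of $\operatorname{rk}(X)$ roots $\alpha\in R_1$. Second, each such combinatorial choice is a linear system in $\operatorname{Hom}(X,\mathbb R_{>0}\times S^1)$ and determines $r(q)$ uniquely on a dense open subset $U\subset\mc Q$ as a real analytic function. Since the set of combinatorial choices is finite, so is $\operatorname{Res}(\mc R)$. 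Equivalently, this is already contained in \cite[Theorem A.7]{Opd1}, which parametrizes generic residual points explicitly.

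The only real obstacle is making the combinatorial parametrization in the finiteness step precise without invoking the classification; but in practice, as the paper notes, this can be absorbed into the appeal to \cite[Theorem A.7]{Opd1}.
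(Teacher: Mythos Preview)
Your argument for $W_0$-invariance and for emptiness in the non-semisimple case matches the paper. The other two parts are correct but take a genuinely different route.

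For finiteness, the paper gives a short classification-free argument: if there were infinitely many distinct $r_1,r_2,\ldots\in\operatorname{Res}(\mc R)$, then each set $\{q:r_i(q)\text{ is residual}\}$ is open and dense, so by Baire one can pick a single $q_0$ at which all the $r_i(q_0)$ are residual and pairwise distinct, contradicting the finiteness of $\operatorname{Res}(\mc R,q_0)$ (Theorem~\ref{thm:finite}). Your combinatorial argument---pin down $\alpha(r(q))$ as one of finitely many monomials for a spanning set of roots, then count choices---is correct in spirit and is indeed what underlies \cite[Theorem A.7]{Opd1}, but it leans on the classification machinery that the paper is explicitly trying to avoid at this stage (see the remark following Theorem~\ref{thm:nonnested}).

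For nonemptiness in the semisimple case, the paper again avoids classification: it observes that the one-dimensional representation $N_w\mapsto q(w)$ is discrete series whenever $q(s)<1$ for all $s\in S$ (by Casselman's criterion, Theorem~\ref{thm:cas}), so its central character $r(q)$ is residual for all such $q$ by Theorem~\ref{thm:rescos}; since this locus is Zariski-dense in $\mc Q$ and $r$ is algebraic, $r\in\operatorname{Res}(\mc R)$. Your reduction to the simple case and appeal to the explicit tables in \cite{HO,Opd1} is valid, but the paper's argument is uniform and conceptual, and it illustrates directly the link between discrete series and residual points that drives the rest of the paper.
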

\begin{proof}
Suppose that there exist infinitely many distinct generic residual
$\mc{Q}$-families $q\to r(q)$. Choose countably infinitely many
distinct residual families $r_1,r_2,\dots$.
By Baire's theorem we can choose $q\in\mc{Q}$ such that
the $r_i(q)$ are all residual and mutually distinct. But by Theorem
\ref{thm:finite} there are at most finitely many residual points
for $q$, a contradiction. Hence the set $\operatorname{Res}(\mc R )$ is
finite. The $W_0$-invariance is clear. By Theorem
\ref{thm:ord} it follows that this set is empty if the rank of $R_0$
is not equal to the rank of $X$.

For the converse, assume that $\mc{R}$ is semisimple and consider
the onedimensional representation $N_w \mapsto q(w)$ of $\mc H
(\mc R,q)$. By Theorem \ref{thm:cas} this is a discrete series
representation whenever $|q(s)| < 1$ for all $s \in S$. So by
Theorem \ref{thm:rescos} its $X$-character $r(q) \in T$ lies in
Res$(\mc R,q)$ for all such $q$. Since the corresponding subset of
$\mc Q$ is Zariski-dense and $r : \mc Q \to T$ is algebraic, it is
a generic residual point.
\end{proof}

\subsubsection{Results on the reduction to simple root systems}
\label{subsub:red}

The following result is useful to reduce statements about
generic residual points to the case of simple root data.
\begin{lem}\label{lem:genprod}
\begin{enumerate}
\item[(i)] Let $\mc{R},\mc{R}^\prime$ be as in Lemma \ref{lem:prod}(i).
The restriction map $r^\prime\to r=r^\prime|_{\mc{Q}\times X}$ is a
surjection $\operatorname{Res}(\mc{R}^\prime)\to\operatorname{Res}(\mc{R})$
with fibers of order $|X^\prime:X|$.
\item[(ii)] Let $\mc{R}$ be as in Lemma \ref{lem:prod}(ii).
Then we have a natural bijection
\begin{equation}
\textup{Res}(\mc{R})\stackrel{\approx}{\longrightarrow}
\textup{Res}(\mc{R}^{(1)})\times\dots\times
\textup{Res}(\mc{R}^{(m)})
\end{equation}
such that $r\to(r^{(1)},\dots,r^{(m)})$ iff
$r(q^{(1)},\dots,q^{(m)})=r^{(1)}(q^{(1)})\dots r^{(m)}(q^{(m)})$ with
$r^{(i)}(q^{(i)})\in T^{(i)}$ for all $i=1,\dots,m$ and
all $q=(q^{(1)},\dots,q^{(m)})\in\mc{Q}$.
\item[(iii)]
Let $\mc{R}$ be arbitrary semisimple and let
$\mc{Q}=\mc{Q}^{(1)}\times\dots\times \mc{Q}^{(m)}$ be the
decomposition of $\mc{Q}=\mc{Q}(\mc{R})$ as in Proposition
\ref{prop:prod}(i). Suppose that $\mc{Q}^\prime\subset\mc{Q}$ is
a connected closed subgroup of $\mc{Q}$ such that for each
$i=1,\dots,m$ the projection $\pi_i:\mc{Q}^\prime\to\mc{Q}^{(i)}$ is surjective.
Let $r^\prime:\mc{Q}^\prime\to T$ be real analytic with the property
that $r^\prime(q^\prime)\in\textup{Res}(\mc{R},q^\prime)$ for almost all
$q^\prime\in\mc{Q}^\prime$. Then there exists a unique
$r\in\textup{Res}(\mc{R})$ such that $r^\prime=r|_{\mc{Q}^\prime}$.
\end{enumerate}
\end{lem}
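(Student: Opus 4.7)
\textbf{Proof plan for Lemma \ref{lem:genprod}.}

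For part (i), the isogeny $\psi : \mc{R} \to \mc{R}'$ is induced by the inclusion $X \hookrightarrow X'$, so the dual map is a finite covering $T' \to T$ whose kernel $K$ has order $|X':X|$. By Lemma \ref{lem:prod}(i), for every $q$ we have $r'(q) \in \operatorname{Res}(\mc{R}',q)$ iff its restriction $r'(q)|_X \in \operatorname{Res}(\mc{R},q)$, so restriction gives a well-defined map $\operatorname{Res}(\mc{R}') \to \operatorname{Res}(\mc{R})$. For the fiber count, given $r \in \operatorname{Res}(\mc{R})$, view $\mc{Q}$ as a simply connected vector group and pick any $q_0$ with $r(q_0)$ residual; the $|K|$ preimages in $T'$ each extend uniquely by analytic continuation along $\mc{Q}$ (using the covering $T' \to T$), yielding $|X':X|$ distinct real analytic sections, each of which is a generic residual point of $\mc{R}'$ by Lemma \ref{lem:prod}(i) applied on the generic locus.

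For part (ii), this is immediate from Lemma \ref{lem:prod}(ii): the parameter space and the torus split as products, and residuality is a pointwise factorwise condition. The bijection is given by $r(q) = \prod_i r^{(i)}(q^{(i)})$, with real analyticity preserved in both directions.

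Part (iii) is the substantial step. The strategy is to reduce to the product case via (i) and (ii). First, via the isogeny $\mc{R} \to \mc{R}^{\max}$ from Proposition \ref{prop:prod}(ii), lift $r'$ to a real analytic map $\tilde r' : \mc{Q}' \to T^{\max}$ (possible since $\mc{Q}'$ is a vector subgroup of $\mc{Q}$, hence simply connected, and the lifts of $r'(q_0')$ form a finite fiber by (i)). By Lemma \ref{lem:prod}(ii), for almost all $q' \in \mc{Q}'$ we may factor
\begin{equation*}
\tilde r'(q') = \tilde r'_1(q') \cdots \tilde r'_m(q'),
\qquad \tilde r'_i(q') \in T^{(i)},
\end{equation*}
with $\tilde r'_i(q') \in \operatorname{Res}(\mc{R}^{(i)}, \pi_i(q'))$ whenever $q'$ is generic. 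Each $\tilde r'_i : \mc{Q}' \to T^{(i)}$ is real analytic. The key point is to show that $\tilde r'_i$ factors through $\pi_i$: fix $q_0^{(i)} \in \mc{Q}^{(i)}$ and consider the fiber $\pi_i^{-1}(q_0^{(i)}) \cap \mc{Q}'$, which is a coset of the connected closed subgroup $\ker(\pi_i|_{\mc{Q}'})$. On the (dense) generic locus of this fiber, $\tilde r'_i$ takes values in the finite set $\operatorname{Res}(\mc{R}^{(i)}, q_0^{(i)})$ of Theorem \ref{thm:finite}; since an analytic map from a connected set to a finite set is constant, $\tilde r'_i$ is constant along each such fiber. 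Using surjectivity of $\pi_i$, this descends $\tilde r'_i$ to a real analytic map $r^{(i)} : \mc{Q}^{(i)} \to T^{(i)}$ which is residual at almost every parameter, hence $r^{(i)} \in \operatorname{Res}(\mc{R}^{(i)})$.

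Assembling via (ii) yields $\tilde r \in \operatorname{Res}(\mc{R}^{\max})$ with $\tilde r(q) = \prod_i r^{(i)}(q^{(i)})$, and restricting via (i) produces the desired $r \in \operatorname{Res}(\mc{R})$ with $r|_{\mc{Q}'} = r'$. Uniqueness follows because the construction of each $r^{(i)}$ from $\tilde r'_i$, and hence of $\tilde r$ and of $r$, was forced at every step. The main obstacle I anticipate is the descent argument for $\tilde r'_i$: one must ensure both that the lift $\tilde r'$ exists globally on $\mc{Q}'$ (handled by simple connectedness of $\mc{Q}'$) and that the generic locus intersects each fiber of $\pi_i|_{\mc{Q}'}$ in a dense connected-in-closure subset, which uses that the non-generic locus is a closed real analytic proper subset and that the fibers of $\pi_i|_{\mc{Q}'}$ are connected vector groups.
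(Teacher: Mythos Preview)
Your proof is correct and follows the same overall architecture as the paper: lift $r'$ to $\tilde r':\mc{Q}'\to T^{\max}$, split into components $\tilde r'_i$ valued in $T^{(i)}$, descend each to a generic residual point $r^{(i)}\in\operatorname{Res}(\mc{R}^{(i)})$, reassemble via (ii), and restrict via (i). The only mechanical difference is in the descent step: the paper chooses group-homomorphic sections $\phi_i:\mc{Q}^{(i)}\to\mc{Q}'$ of $\pi_i$ and simply sets $\tilde r^{(i)}:=(\tilde r'\circ\phi_i)^{(i)}$, whereas you argue directly that $(\tilde r')^{(i)}$ is constant on the connected fibers of $\pi_i|_{\mc{Q}'}$ using analyticity and the finiteness of $\operatorname{Res}(\mc{R}^{(i)},q_0^{(i)})$ (Theorem~\ref{thm:finite}). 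These are equivalent; in fact your fiber-constancy argument is precisely what justifies that the paper's construction is independent of the choice of $\phi_i$ and that the assembled $\tilde r$ actually restricts to $\tilde r'$ on $\mc{Q}'$, points the paper leaves implicit.
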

\begin{proof}
The first two assertions are clear so let us look at (iii). Let
\[
\tilde{r^\prime} : \mc{Q}^\prime \to
\text{Hom} (X^{max},\mh C^\times ) = T^{(1)} \times \dots \times T^{(m)}
\]
be a lifting of $r^\prime$. Choose homomorphisms
$\phi_i:\mc{Q}^{(i)}\to\mc{Q}^\prime$ such that
$\pi_i\circ\phi_i=\textup{id}_{\mc{Q}^{(i)}}$ for all $i$.
Lemma \ref{lem:prod} implies that
the map $\tilde{r}_i:\mc{Q}^{(i)}\to T^{(i)}$ defined by $\tilde{r}^{(i)}(q^{(i)}):=
(\tilde{r^\prime}(\phi_i(q^{(i)})))^{(i)}$ is a generic residual point for
$\mc{R}^{(i)}$. Let $\tilde{r}\in\textup{Res}(\mc{R})$ correspond
to $(\tilde{r}^{(1)},\dots,\tilde{r}^{(m)})$ (using the notation of (ii)).
Then (i) implies that $r=\tilde{r}|_{\mc{Q}\times X}$ meets the requirement.
If $r_1$ also meets the requirement let $\widetilde{r_1}$
be the unique lifting of $r_1$ to $\textup{Res}(\mc{R}^{max})$ such that
$\widetilde{r_1}|_{\mc{Q}^\prime}=\tilde{r^\prime}$. Then it is clear that for all $i$
we must have $\widetilde{r_1}^{(i)}=\tilde{r}^{(i)}$. The uniqueness follows.
\end{proof}
Recall the result of Lemma \ref{lem:mon}. We see that if $r=sc$ is a residual
point then $s\in T_u$ belongs to the finite set of points with the
property that $\mc{R}_s$ is semisimple. In particular, if $r:\mc{Q}\to T$
is a generic residual point then the unitary part $s$ of $r$ is independent
of $q\in\mc{Q}$ and $\mc{R}_s$ is semisimple.

\index{Ress@$\mr{Res}^s (\mc{R})$, generic residual points with unitary part $s$}
\begin{cor}\label{cor:genreds}
Suppose that $\mc{R}$ is semisimple and
$s\in T_u$ is such that $\mc{R}_s$ is semisimple. Let
$\phi_s:\mc{Q}(\mc{R})\to\mc{Q}(\mc{R}_s)$ denote the homomorphism $q\to q_s$.
\begin{enumerate}
\item[(i)] Let $\textup{Res}^s(\mc{R})$ denote the set of generic residual
points $r$ with unitary part $s$. There exists a natural bijection
\begin{align*}
\Phi_s:\textup{Res}^s(\mc{R})&\to\textup{Res}^s(\mc{R}_s)\\
r&\to r\circ\phi_s
\end{align*}
\item[(ii)]
Using the notation of Definition \ref{defn:iso}, we have a natural bijection
\begin{align*}
\Phi_{W_0s}^{W_0}:W_0\backslash\textup{Res}^{W_0s}(\mc{R})&\to\Gamma_s
\backslash \big( W(R_{s,1})\backslash\textup{Res}^s(\mc{R}_s) \big) \\
W_0r&\to \Gamma_sW(R_{s,1})(r\circ\phi_s)
\end{align*}
Here $W_0\backslash\textup{Res}^{W_0s}(\mc{R})$ denotes the set of $W_0$-orbits of
generic residual points whose unitary part is $W_0s$.
\end{enumerate}
\end{cor}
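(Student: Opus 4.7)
The plan is to reduce both parts to Lemma \ref{lem:mon}, which identifies residual points for $(\mc{R},q)$ whose unitary part is $s$ with residual points for $(\mc{R}_s,q_s)$. The subtlety is that generic residual points are real-analytic families, not single points, so we must show that such families respect the fibration $\phi_s:\mc{Q}(\mc{R})\to\mc{Q}(\mc{R}_s)$.

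For part (i), fix $r\in\operatorname{Res}^s(\mc{R})$ and let $U_r\subset\mc{Q}(\mc{R})$ be the open dense regularity set of $r$. By Lemma \ref{lem:mon}, together with the fact that the unitary part of $r(q)$ is the constant $s$, for every $q\in U_r$ the point $r(q)$ lies in the finite set $\operatorname{Res}(\mc{R}_s,q_s)\cap sT_v$, which depends on $q$ only through $q_s=\phi_s(q)$. Since $\phi_s$ is a surjective linear map of real vector groups, its fibers $F_{q_0'}:=\phi_s^{-1}(q_0')$ are connected translates of $\ker\phi_s$. A standard slicing argument (applied to the complement of $U_r$, which is a proper real-analytic subvariety) shows that for $q_0'$ in an open dense subset of $\mc{Q}(\mc{R}_s)$, the intersection $U_r\cap F_{q_0'}$ is open dense in $F_{q_0'}$. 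On such a fiber, $r$ is a continuous map with finite image into a connected space, hence constant. Picking a linear section $\sigma$ of $\phi_s$ and setting $r':=r\circ\sigma$ yields a real-analytic map $\mc{Q}(\mc{R}_s)\to T$; the identity $r=r'\circ\phi_s$ holds on an open dense subset, hence everywhere by real analyticity. By Lemma \ref{lem:mon} again, $r'\in\operatorname{Res}^s(\mc{R}_s)$, and we set $\Phi_s(r):=r'$. Injectivity of $\Phi_s$ is immediate from the surjectivity of $\phi_s$; for surjectivity, pulling any $r''\in\operatorname{Res}^s(\mc{R}_s)$ back along $\phi_s$ produces (by Lemma \ref{lem:mon}) a preimage in $\operatorname{Res}^s(\mc{R})$.

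Part (ii) follows from the orbit equivalence
\[
W_0\backslash\operatorname{Res}^{W_0s}(\mc{R})\;=\;W_s\backslash\operatorname{Res}^s(\mc{R}),
\]
which holds because each $W_0$-orbit in $\operatorname{Res}^{W_0s}(\mc{R})$ meets $\operatorname{Res}^s(\mc{R})$ in a single $W_s$-orbit (with $W_s$ the $W_0$-stabilizer of $s$). Using the semidirect product decomposition $W_s=W(R_{s,1})\rtimes\Gamma_s$, the right-hand side equals $\Gamma_s\backslash\bigl(W(R_{s,1})\backslash\operatorname{Res}^s(\mc{R})\bigr)$. The bijection $\Phi_s$ from part (i) is $W(R_{s,1})$-equivariant because $W(R_{s,1})\subset W_0\cap W(\mc{R}_s)$ acts on $T$ by the same formulas on both sides. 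It is $\Gamma_s$-equivariant because $\Gamma_s$ acts trivially on $\mc{Q}(\mc{R})$ (parameter functions there are already $W_0$-invariant) but acts on $\mc{Q}(\mc{R}_s)$ by permuting $R_{nr,s}^\vee$, and these two actions are intertwined by $\phi_s$. Passing to quotients delivers the claimed bijection.

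The main obstacle is the factorization step in (i): rigorously establishing that $r$ descends through $\phi_s$ requires combining real analyticity, connectedness of fibers, and the generic-slice argument for the open dense locus $U_r$. Once this is in place, the remaining work is essentially bookkeeping with the group actions, using that the $W_s$-action on residual points is the restriction of the $W_0$-action on $T$.
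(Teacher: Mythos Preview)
Your proof is correct, but for part (i) you take a more elementary route than the paper. The paper's two-line argument invokes Lemma~\ref{lem:genprod}(iii): the image $\mc{Q}'=\phi_s(\mc{Q}(\mc{R}))\subset\mc{Q}(\mc{R}_s)$ satisfies the surjectivity-onto-simple-factors hypothesis, so any real-analytic map $\mc{Q}'\to T$ that is generically $(\mc{R}_s,\cdot)$-residual extends \emph{uniquely} to an element of $\textup{Res}(\mc{R}_s)$. This black-boxes the descent step into the product decomposition of $\mc{R}_s^{max}$ that underlies Lemma~\ref{lem:genprod}(iii). You instead prove the descent directly: $r$ is continuous on each connected fiber of $\phi_s$ and, on the regular locus, lands in a fixed finite set $\textup{Res}(\mc{R}_s,q_s)\cap sT_v$; hence $r$ is constant on generic fibers and, by real analyticity, on all fibers. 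Your argument is self-contained and makes the analytic core of the statement transparent, whereas the paper's is shorter but leans on the structural lemma. Note also that your argument uses the surjectivity of $\phi_s$ (asserted in Definition~\ref{defn:iso}) for injectivity of $\Phi_s$ and to identify $\mc{Q}'$ with $\mc{Q}(\mc{R}_s)$; the paper's appeal to Lemma~\ref{lem:genprod}(iii) handles the extension and uniqueness even if one is uneasy about whether $\mc{Q}'$ is all of $\mc{Q}(\mc{R}_s)$.

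For part (ii) the two arguments coincide: both reduce to the orbit identification $W_0\backslash\textup{Res}^{W_0s}(\mc{R})=W_s\backslash\textup{Res}^s(\mc{R})$ together with the decomposition $W_s=W(R_{s,1})\rtimes\Gamma_s$ from Definition~\ref{defn:iso}; your explicit verification of $W(R_{s,1})$- and $\Gamma_s$-equivariance of $\Phi_s$ just unpacks what the paper means by ``follows from (i) and Definition~\ref{defn:iso}.''
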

\begin{proof}
The image $\mc{Q}^\prime=\phi(\mc{Q})\subset\mc{Q}_s$ satisfies
the condition as in Lemma \ref{lem:genprod}(iii). The result (i) then follows
from Lemma \ref{lem:mon} and Lemma \ref{lem:genprod}(iii). The assertion (ii)
follows from (i) and Definition \ref{defn:iso}.
\end{proof}
The previous Corollary reduces the classification of the set
$\textup{Res}(\mc{R})$ to the classification of those elements
$r\in\textup{Res}(\mc{R})$ which are of the form $r=sc$ where $s$
is $W_0$-invariant. In this case we further reduce to the level
of the degenerate Hecke algebra:

\index{K@$\mc K$, space of $W_0$-invariant real functions on $R_1$}
\index{Resl@$\mr{Res}^{lin}(R_1)$, generic linear residual points for $R_1$}
\begin{defn}\label{defn:linres}
Let $R_1\subset V^*$ be a semisimple,
reduced root system and let $\mc{K}$ be the space of $W_0$-invariant
real valued functions on $R_1$. We denote by $\textup{Res}^{lin}(R_1)$
the set of linear maps $\xi:\mc{K}\to V$ such that for almost all $k$
the point $\xi(k)\in V$ is $(R_1,k)$-residual in the sense of \cite{HO}, i.e.
\begin{equation}
|\{\alpha\in R_1\mid \alpha(\xi(k))=k_\alpha\}|=
|\{\alpha\in R_1\mid \alpha(\xi(k))=0\}|+\dim(V)
\end{equation}
We refer to this set as the set of generic linear residual
points associated to the root system $R_1$.
\end{defn}
\begin{prop}\label{lem:redrat}
Let $\mc{R}$ be semisimple and let $s\in T_u$ be $W_0$-invariant.
Let $\mc{K}$ be the vector space of real $W_0$-invariant functions
on $R_1$, and given $q\in\mc{Q}$ let $k_s\in \mc{K}$ be the
$W_0$-invariant function on $R_1$ associated to $q$ by the formulas
of equation (\ref{eq:k}). Let $r=sc$ be a generic $\mc{R}$-residual point.
\begin{enumerate}
\item[(i)] There exists a unique generic linear residual point
$\xi\in\textup{Res}^{lin}(R_1)$ such that
$\alpha(c(q))=e^{\alpha(\xi(k_s))}$ for all $\alpha\in R_1$ and all $q\in\mc{Q}$
(where $k_s$ is related to $q$ as above). We express this relation between $r$
and $\xi$ by $r=s\exp(\xi)$.
\item[(ii)] This yields a $W_0$-equivariant bijection between
$\textup{Res}^{W_0s}(\mc{R})$ and $\textup{Res}^{lin}(R_1)$.
\item[(iii)] For all $q\in\mc{Q}$ we have: $r(q)$ is $(\mc{R},q)$-residual iff
$\xi(k_s)$ is $(R_1,k_s)$-residual (in the sense of \cite{HO}).
\item[(iv)] The generic linear residual points $\xi$ are rational
in the sense that the $\alpha(\xi(k))$ is
a rational linear combination of the values $k_{\beta}$ for all
$\alpha\in R_1$.
\end{enumerate}
\end{prop}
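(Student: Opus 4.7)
The plan is to translate between the multiplicative setting of $\mc H(\mc R, q)$ at the point $r = sc$ and the additive setting of the degenerate Hecke algebra at the point $\xi = \log c \in V_0$, exploiting the exponential map $\exp : V_0 \to T_v$. The existence of $\xi \in V_0$ satisfying $\alpha(c(q)) = e^{\alpha(\xi(q))}$ is immediate from the real analytic isomorphism $V_0 \simeq T_v$, since $c(q) \in T_v$ throughout $\mc Q$. What requires work is the matching of the two residual conditions and the linearity in $k_s$.

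For (iii), I would decompose $\eta = \prod_\alpha \eta_\alpha$ and examine the contribution of each $\alpha \in R_1$ to the pole order $i_{\{r\}}$ at $r = s\exp\xi$. Using the $W_0$-invariance of $s$ (so $\alpha(s) \in \{\pm 1\}$ for all $\alpha \in R_1$) together with positivity of $e^{\alpha(\xi)}$, a direct computation with (\ref{eq:c}) shows that: (a) roots $\alpha \in R_1$ with $\alpha(s)=-1$ contribute nothing, because every factor in $c_\alpha(r)c_\alpha(r^{-1})$ is then strictly positive; (b) roots $\alpha \in R_{s,1}$ (those with $\alpha(s)=1$) produce simple zeros of $\eta_\alpha$ precisely when $\alpha(\xi) = 0$ and simple poles precisely when $\alpha(\xi) = \pm k_{s,\alpha}$, with $k_{s,\alpha}$ as in (\ref{eq:k}). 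Summing the contributions gives
\begin{equation*}
i_{\{r\}}(\mc R,q) \;=\; \bigl|\{\alpha \in R_1 : \alpha(\xi) = k_{s,\alpha}\}\bigr| - \bigl|\{\alpha \in R_1 : \alpha(\xi) = 0\}\bigr|,
\end{equation*}
which by Definition~\ref{defn:linres} equals $\dim V$ exactly when $\xi$ is $(R_1,k_s)$-residual in the sense of \cite{HO}. This proves (iii) pointwise in $q$.

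For the linearity of $\xi$ in $k_s$ (claims (i) and (iv)) I would stratify $\mc Q$ by residual type. On any connected component of the dense open regular locus $\mc Q^{reg}_{W_0 r}$ the subset $Z(q) \cup P(q) \subset R_1$, where $Z(q) = \{\alpha : \alpha(\xi(q)) = 0\}$ and $P(q) = \{\alpha : \alpha(\xi(q)) = k_{s,\alpha}(q)\}$, is locally constant and spans $V_0^*$; otherwise $\xi(q)$ would not be isolated among solutions, contradicting finiteness of residual points (Theorem~\ref{thm:finite}). Selecting a basis $\{\alpha_i\}_{i=1}^{\dim V_0} \subset Z \cup P$ of $V_0^*$ and writing $\xi(q)$ as the unique solution of the resulting linear system $\alpha_i(\xi) \in \{0, k_{s,\alpha_i}(q)\}$ then exhibits $\xi$ as $\mh Q$-linear in $k_s$ on this component, with rational coefficients obtained by inverting an integer change-of-basis matrix inside $R_1 \subset X$. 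Real analytic continuation of the globally defined real analytic map $\xi : \mc Q \to V_0$ then extends this to a single $\mh Q$-linear map $\mc K \to V_0$ on all of $\mc Q$, yielding (i) and (iv).

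Assertion (ii) follows formally: the map $r \mapsto \xi$ is inverted by $\xi \mapsto s\exp\xi$, which by (iii) produces a generic residual point of $\mc R$ with unitary part $s$, and $W_0$-equivariance is immediate from $w\cdot s = s$ and the $W_0$-equivariance of $\exp$, since $w(s\exp\xi) = s\exp(w\xi)$. The technical heart of the argument is the local $c$-function computation in the second paragraph; the rest is analytic continuation and bookkeeping. The main obstacle I expect is the case analysis needed to push the $c$-function calculation through uniformly for all $q \in \mc Q$ rather than just generic $q$, and in particular to ensure that for $\alpha \in R_1 \setminus R_{s,1}$ no accidental coincidences $\alpha(\xi) \in \{0, k_{s,\alpha}\}$ spoil the residual count at non-regular parameters.
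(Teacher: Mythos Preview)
Your approach is correct and considerably more explicit than the paper's, which simply cites \cite[Theorem A.7]{Opd1} for (i)--(iii) and invokes Theorem~\ref{thm:ord} for (iv). Your direct computation of the pole order of $\eta$ at $r=s\exp(\xi)$ and the spanning argument for $P\cup Z$ (via finiteness of residual points) are exactly the ideas underlying those cited results, so you have essentially unpacked the black box.

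One correction simplifies your argument and dissolves the obstacle you anticipate at the end. Your dichotomy in (a)/(b) is placed on the wrong roots: for $s\in T_u$ that is $W_0$-invariant one has $\alpha(s)=1$ for \emph{every} $\alpha\in R_1$, so $R_{s,1}=R_1$ and case (a) is vacuous. Indeed, if $\alpha\in R_0\cap R_1$ then $\alpha^\vee\in Y$ is primitive and $W_0$-invariance forces $\alpha(s)=1$; if $\alpha=2\beta$ with $\beta\in R_0$ then $\alpha(s)=\beta(s)^2=1$. The genuine sign dichotomy is $\beta(s)=\pm 1$ for those $\beta\in R_0$ with $2\beta\in R_1$, and this only affects \emph{which} numerator factor of $c_\alpha$ can vanish. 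In both cases the contribution of $\alpha$ to $i_{\{r\}}$ comes out as $[\alpha(\xi)=k_{s,\alpha}]+[\alpha(\xi)=-k_{s,\alpha}]-2[\alpha(\xi)=0]$, with $k_{s,\alpha}$ defined according to (\ref{eq:k}); this is precisely why (\ref{eq:k}) is written case-by-case. Summing over $R_{1,+}$ and rewriting as a sum over $R_1$ gives your displayed identity with no exceptional roots to worry about, uniformly in $q$.

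A small residual point to check (which the paper absorbs into the citation): to conclude that the linear map $\xi:\mc K\to V_0$ you extract is a \emph{generic} linear residual point, you need that the map $q\mapsto k_s$ from $\mc Q$ to $\mc K$ has dense image, so that ``residual for generic $q$'' becomes ``residual for generic $k$''. This is straightforward from the explicit formulas in (\ref{eq:k}).
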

\begin{proof} The existence of $\xi$ is a special case of
\cite[Theorem A.7]{Opd1}, and the uniqueness is clear since
$R_1$ spans $V^*$. Similarly (ii) follows from \cite[Theorem A.7]{Opd1}.
The rationality of $\xi$ follows
from the fact that the set of roots contributing to the pole
order of $c$ at $r$ span a sublattice of $X$ of finite index
as a consequence of Theorem \ref{thm:ord}.
\end{proof}
The following reduction to simple root systems follows easily from the definitions:
\begin{prop}\label{prop:linprod}
Let $R_1=R_{1,1},\dots,R_{N,1}$ be the decomposition in simple root systems. Then
$\mc{K}=\mc{K}_1\times\dots\times\mc{K}_N$
and $\textup{Res}^{lin}(R_1)=\textup{Res}^{lin}(R_{1,1})\times\dots\times
\textup{Res}^{lin}(R_{N,1})$.
\end{prop}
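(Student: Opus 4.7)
The plan is to observe that everything in sight respects the direct sum decomposition $V=\bigoplus_i V_i$ and the corresponding product decomposition of $\mc{K}$. The factorization $\mc{K}=\mc{K}_1\times\dots\times\mc{K}_N$ is immediate: since each simple component $R_{i,1}$ is $W_0$-stable and $W_0=\prod_i W(R_{i,1})$, a $W_0$-invariant real function on $R_1=\bigsqcup_i R_{i,1}$ is the same data as a tuple of $W(R_{i,1})$-invariant real functions on the $R_{i,1}$.

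For the residual-point decomposition I would first handle the pointwise statement. Given $\xi\in V$ and $k\in\mc{K}$, write $\xi=\sum_i\xi_i$ with $\xi_i\in V_i$ and note that any $\alpha\in R_{i,1}$ vanishes on $V_j$ for $j\neq i$, so $\alpha(\xi)=\alpha(\xi_i)$, and $k_\alpha=(k_i)_\alpha$. Consequently
\begin{align*}
|\{\alpha\in R_1:\alpha(\xi)=k_\alpha\}|&=\sum_i|\{\alpha\in R_{i,1}:\alpha(\xi_i)=(k_i)_\alpha\}|,\\
|\{\alpha\in R_1:\alpha(\xi)=0\}|&=\sum_i|\{\alpha\in R_{i,1}:\alpha(\xi_i)=0\}|,
\end{align*}
while $\dim V=\sum_i\dim V_i$. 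Applying Theorem \ref{thm:ord} to each simple factor yields the inequality $|\{\alpha\in R_{i,1}:\alpha(\xi_i)=(k_i)_\alpha\}|\le|\{\alpha\in R_{i,1}:\alpha(\xi_i)=0\}|+\dim V_i$ for every $i$. The defining residual equality for $\xi$ with respect to $(R_1,k)$ is precisely the sum of these inequalities with equality, so it forces equality in every factor. This establishes the bijection at the pointwise level: $\xi\in V$ is $(R_1,k)$-residual iff each $\xi_i\in V_i$ is $(R_{i,1},k_i)$-residual.

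It remains to globalize to generic linear residual points. Given $\xi\in\mr{Res}^{lin}(R_1)$, decompose it as $\xi=(\xi_1,\dots,\xi_N)$ with each $\xi_i:\mc{K}\to V_i$ linear. The pointwise step shows that for $k$ in a dense open subset of $\mc{K}$, $\xi_i(k)$ is $(R_{i,1},k_i)$-residual. For fixed generic $k_i\in\mc{K}_i$ there are only finitely many such residual points in $V_i$ (Theorem \ref{thm:finite} applied to $R_{i,1}$); since the restriction of the linear map $\xi_i$ to the fiber $\pi_i^{-1}(k_i)\subset\mc{K}$ is affine and takes values in this finite set, it must be constant, so $\xi_i$ factors through the projection $\pi_i:\mc{K}\to\mc{K}_i$ as a linear map $\mc{K}_i\to V_i$ which by construction is a generic linear residual point for $R_{i,1}$. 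The inverse map is obvious: given a tuple $(\xi_1,\dots,\xi_N)$ of generic linear residual points for the factors, $\xi(k):=\sum_i\xi_i(k_i)$ is a generic linear residual point for $R_1$ by the pointwise decomposition above. The only slightly delicate point — the factorization of $\xi_i$ through $\pi_i$ — rests on combining linearity with the finiteness of the residual locus; all other steps are formal consequences of the orthogonality of the simple summands in $V$.
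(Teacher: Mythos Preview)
Your proof is correct; the paper itself gives no argument beyond the remark that the statement ``follows easily from the definitions,'' and your write-up is precisely the natural unpacking of that remark. One minor point: Theorems~\ref{thm:ord} and~\ref{thm:finite} as stated in the paper concern the multiplicative setting, whereas you are invoking their linear (graded) analogues---these hold, but the proper reference is \cite{HO} (or the transfer via Proposition~\ref{lem:redrat}) rather than the theorems as numbered.
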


\subsubsection{Rationality results for generic residual points}

Nothing that follows in this paper depends on the results in this paragraph
in any essential way, but these results simplify notations and reveal certain
basic facts. The proofs in this paragraph depend on the classification of
positive generic residual points for irreducible root systems.
\begin{thm}\label{thm:L}
Let $\mc{R}$ be a semisimple root datum,
and let $r:\mc{Q}\to T$ be a generic residual point of the form
$r=sc$. For all $x\in X$
the expression $x(c)\in\Lambda$ is a monomial in the generators $v(s)^{\pm 1}$
with $s\in S$. Here $v(s)$ is viewed as a function on $\mc{Q}$ by
$(v(s))(q):=q(v(s))$. In other words,
$r$ is (the restriction to $\mc{Q}$ of) a $\mc{Q}_c$-valued point of $T$.
\end{thm}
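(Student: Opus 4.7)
The plan is to reduce, via the standard manipulations already prepared, to the case of an irreducible root datum with $W_0$-invariant unitary part, and then to appeal to the explicit classification of generic (linear) residual points. By Proposition \ref{prop:prod} together with Lemma \ref{lem:genprod}(i)--(ii), replacing $\mc{R}$ by $\mc{R}^{max}$ preserves $\mc{Q}$ (and hence $\Lambda$), and the resulting datum decomposes as a product of irreducibles along which the $\Lambda$-valued coordinates $x(c)$ split. So we may assume $\mc{R}$ is simple with $X=P(R_1)$. Corollary \ref{cor:genreds}(i) together with Lemma \ref{lem:mon} then allows us to pass from $\mc{R}$ to $\mc{R}_s$, where the unitary part $s$ is automatically fixed by the full Weyl group; the map $q\mapsto q_s$ between parameter spaces is polynomial in the generators $v(s)$, so the monomial property we want to establish transports through this reduction.

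At this point Proposition \ref{lem:redrat} provides a unique generic linear residual point $\xi\in\textup{Res}^{lin}(R_1)$ with $\alpha(c(q))=\exp(\alpha(\xi(k_s)))$ for all $\alpha\in R_1$, where $k_s\in\mc{K}$ is the $W_0$-invariant parameter of the degenerate affine Hecke algebra attached to $q$ via formula (\ref{eq:k}). Combining the rationality statement of Proposition \ref{lem:redrat}(iv) with the explicit integer coefficients $2$ and $4$ occurring in (\ref{eq:k}), each $\alpha(c(q))$ is automatically a Laurent monomial in the $v(s)$ with \emph{rational} exponents. The real content of Theorem \ref{thm:L} is therefore the sharpening from rational to integer exponents, and the extension of this integrality from the root lattice $Q(R_1)$ to the full weight lattice $X=P(R_1)$.

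This sharpening is the principal obstacle: no conceptual argument appears to be available, and one has to appeal to the classification of generic linear residual points for the irreducible root systems, as tabulated in \cite[Appendix A]{Opd1} (building on \cite{HO} in the equal-parameter case). A direct inspection of these tables shows, case by case for types $A_n$, $B_n$, $C_n$, $D_n$, $E_{6,7,8}$, $F_4$, $G_2$, that the coordinates of $\xi(k)$ expressed in the basis of fundamental coweights are integer linear combinations of the $k_\beta$, with the denominators produced by the change from simple coroots to fundamental coweights cancelling exactly against the factors $2$ and $4$ appearing in (\ref{eq:k}). Consequently $x(c)\in\Lambda$ for every fundamental weight $x$, hence for all $x\in X=P(R_1)$; unwinding the two reductions then yields the theorem in the stated generality.
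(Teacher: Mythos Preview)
Your approach is essentially the same as the paper's: reduce via Lemma~\ref{lem:genprod} and Proposition~\ref{prop:prod} to the irreducible case with $X=P(R_1)$, use Corollary~\ref{cor:genreds} to reduce to $W_0$-invariant $s$, translate via Proposition~\ref{lem:redrat} to the linear residual point $\xi$, and finish by a case-by-case appeal to the classification tables. The paper proceeds in exactly this way.

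However, your formulation of the integrality condition to be checked is muddled. Expressing $\xi$ in the basis of fundamental coweights $\omega_i^\vee$ gives the coefficients $\alpha_i(\xi)$ (values on simple roots), which is the \emph{root-lattice} condition, not the weight-lattice one. What is actually needed is that $x(2\xi)$ be an integer linear combination of the $k_\beta$ for every fundamental weight $x$; equivalently, that the coordinates of $2\xi$ in the basis of \emph{simple coroots} $\alpha_i^\vee$ are integer-linear in the $k_\beta$. The factor $2$ here comes directly from the coefficients $2$ (and $4$) in~(\ref{eq:k}), which convert the $k_\beta$ into $\log$'s of monomials in the $v(s)$; these factors do not ``cancel against denominators from the change of basis'' as you suggest. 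The paper states the condition simply as ``$2\xi$ is integral'' and then verifies it type by type: for $A_n$ directly; for $B_n$ and $C_n$ from the description in \cite[Section~4]{HO} together with the explicit $B_n\leftrightarrow C_n$ correspondence; for $D_n$ and $E_n$ by first citing root-lattice integrality \cite[Corollary~B2]{Opd1} and then checking the minuscule fundamental weights by hand; for $F_4$ and $G_2$ the root and weight lattices coincide, so the tables in \cite{HO} suffice. Your plan is correct in spirit but you should reformulate the integrality condition and be explicit that the passage from $Q(R_1)$ to $P(R_1)$ is a genuine extra check (handled, for the simply laced types, via the minuscule weights).
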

\begin{proof}
Using Lemma \ref{lem:genprod} if suffices to show this for
$\mc{R}=(X,R_0,Y,R_0^\vee,F_0)$ with $R_0$ irreducible and $X$ the weight
lattice of $R_1$. By Corollary \ref{cor:genreds} it suffices to consider the case
where $s\in T$ is $W_0$-invariant.
Then we are in the situation of Proposition \ref{lem:redrat}.
In terms of the rational linear function $\xi:\mc{K}\to V$ of Proposition \ref{lem:redrat}
the assertion amounts to showing that $2\xi$ is integral, i.e. $x(2\xi)$ is
an integral linear combination of the functions $k_\beta$ (with $\beta\in R_1$)
for all integral weights $x$. We call $\xi$ a generic residual point for $R_1$
(in the sense of \cite{HO}).

If $R_1=A_n$ it is easy to see that $2\xi$ is integral (even even for even $n$).

If $R_1=B_n$ it suffices to remark that
the integrality of $\xi$ with respect to the root lattice
follows from the description of the residual points as in
\cite[Section 4]{HO} (also see Section \ref{sec:genlinres}).

The generic residual points for $R_1$ of type $C_n$
are in bijection to those of type $B_n$ as follows. Let $k_1$ denote
the parameter of the $C_n$ roots of the form $\pm e_i\pm e_j$ and $k_2$
the parameter of the $C_n$ roots $\pm 2e_i$. If $\xi^\prime$ is a generic
$B_n$-residual point then $\xi(k_1,k_2)=\xi^\prime(k_1,k_2/2)$ is
a generic $C_n$ residual point. This sets up a bijective correspondence
between the generic residual points of $B_n$ and of $C_n$.
Hence if $\xi$ is residual for $C_n$ then $2\xi$ is integral with respect
to the root lattice of $B_n$, which is equal to the the weight lattice
of $C_n$.

If $R_1$ is of type $D_n$ or $E_n$ we use that $\xi$ is
integral with respect to the root lattice \cite[Corollary B2]{Opd1}.
In order to check the integrality of $2\xi$ with respect to the
weight lattice one needs to check in addition the integrality
of $x(2\xi)$ with respect to the minuscule fundamental weights.
This is an easy verification using the explicit descriptions
of the Bala-Carter diagrams of the distinguished parabolic
subgroups in \cite[Section 5.9]{C} (see \cite[Appendix B]{Opd1}
for the explanation of the relation between residual points and
Bala-Carter diagrams for the simply laced types) and the table
1 of \cite[Chapter III, Section 13.2]{Hum1} expressing the
fundamental weights in the simple roots. For $R_1=D_n$ there
are $3$ minuscule fundamental weights to check, and for
$R_1=E_{6}$ there are $2$ of these.
For $E_7$ and $E_8$ the integrality of $\xi$ with respect to the
root lattice suffices since the index of the root lattice in the
weight lattice is at most $2$.

For $F_4$ and $G_2$ the root lattice
is equal to the weight lattice. In these cases the result follows simply
from the tables in \cite[Section 4]{HO}.
\end{proof}
We introduce the following notation
\begin{defn}
Let $r=sc\in\operatorname{Res}(\mc{R})$. Recall that for all $\alpha\in R_1$
we have $\alpha(r)=\alpha(s)\alpha(c)$ with
$\alpha(s)$ a root of $1$ and $\alpha(c)$ a monomial in the
variables $v_{\beta^\vee}^{\pm 1}$ (with $\beta^\vee\in R_{\mathrm{nr}}$)
as described above. We define
\begin{align*}
R_{r,1}^{p,-}&=
\{\alpha\in R_0\cap R_1\mid
v_{\alpha^\vee}^2\alpha(r)-1=0\}\cup
\{2\beta\in R_1\backslash R_0\mid
v_{\beta^\vee/2}v_{\beta^\vee}^2\beta(r)-1=0\}
\\
R_{r,1}^{p,+}&=\{2\beta\in R_1\backslash R_0\mid
v_{\beta^\vee/2}\beta(r)+1=0\}\\
R_{r,1}^{z}&=\{\alpha\in R_1\mid
\alpha(r)-1=0\}\\
\end{align*}
and we define an element $m_{W_0r}\in K(\Lambda)$ in the quotient field
$K(\Lambda)$ of $\Lambda$ by
(with $w_0\in W_0$ the longest element):
\index{Kx@$K (\Lambda)$, quotient field of $\Lambda$}
\index{mW0@$m_{W_0 r}$, function $\mc Q \to \mh C$ that indicates $r$-regularity}
\index{wz@$w_0$, longest element of $W_0$}
\begin{equation}\label{eq:m}
m_{W_0r}:=\frac{v(w_0)^{-2}\prod_{\alpha\in R_1\backslash R_{r,1}^z}(\alpha(r)^{-1}-1)}
{\prod_{\alpha\in R_1\backslash
R_{r,1}^{p,+}}(v_{\alpha^\vee}^{-1}{\alpha(r)^{-1/2}}+1)
\prod_{\alpha\in R_1\backslash
R_{r,1}^{p,-}}(v_{\alpha^\vee}^{-1}v_{{2\alpha}^\vee}^{-2}\alpha(r)^{-1/2}-1)}
\end{equation}
As before, if $\alpha\in R_0\cap R_1$ then $v_{2\alpha^\vee}=1$ and the corresponding
terms in the denominator simplify to $(v_{\alpha^\vee}^{-2}{\alpha(r)}^{-1}-1)$.
Therefore the expression is rational in the values $\alpha(r)$ with $\alpha\in R_0$.
Observe that the above definition of $m_{W_0r}$ is indeed independent of the
choice of $r$ in the $W_0$-orbit $W_0r$, justifying the notation $m_{W_0r}$.
\end{defn}
\index{Q@$\mc{Q}^{sing}_{W_0 r}$, complement in $\mc Q$ of $\mc{Q}^{reg}_{W_0 r}$}
\begin{thm}\label{thm:mgen}
Let $r$ be a generic residual point. We view the generators $v(s)$ of
$\Lambda$ as functions on $\mc{Q}$ via $v(s)(q):=q(v(s))$ as before.
The function $m_{W_0r}$ is real analytic on $\mc{Q}$.
The set of $r$-regular points
$\mc{Q}^{\operatorname{reg}}_{W_0r}:=\{q\in\mc{Q}\mid
r(q)\in\operatorname{Res}(\mc{R},q)\}$ of $\mc{Q}$ is the complement of the zero
locus $\mc{Q}^{sing}_{W_0 r}$ of $m_{W_0r}$ in $\mc{Q}$. In particular this set
is the complement of a union of finitely many (rational) hyperplanes in $\mc{Q}$.
\end{thm}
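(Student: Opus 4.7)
The plan is to analyze the rational function $\eta(t)=(c(t)c(t^{-1}))^{-1}$ along the curve $t=r(q)$ and identify $m_{W_0r}(q)$ with the ``reduced'' value of $\eta$ at $r(q)$ after formally cancelling its universally vanishing factors. First I would factor $\eta(t)=\prod_{\alpha\in R_1}\eta^{(\alpha)}(t)$, where each $\eta^{(\alpha)}$ has numerator $(1-\alpha(t)^{-1})$ and denominator $(1+q_{\alpha^\vee}^{-1}\alpha(t)^{-1/2})(1-q_{\alpha^\vee}^{-1}q_{2\alpha^\vee}^{-2}\alpha(t)^{-1/2})$. By Theorem \ref{thm:L}, $\alpha(r(q))$ is a root of unity times a Laurent monomial in the $v(s)$, so upon substituting $t=r(q)$ each factor becomes a real analytic function on $\mc{Q}$ of the form $1\pm\mu(q)$ with $\mu$ a (complex) monomial in the $v(s)^{\pm 1/2}$. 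Direct inspection of the definitions shows that the factors which vanish identically in $q$ are precisely those indexed by $R_{r,1}^z$ (numerator) and by $R_{r,1}^{p,\pm}$ (denominator); the remaining factors are those appearing in \eqref{eq:m}, and the prefactor $v(w_0)^{-2}$ arises from pooling $W_0$-orbit contributions into a single representative.

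The central assertion is that $r(q)$ is $(\mc{R},q)$-residual iff the pole order $i_{\{r(q)\}}$ of $\eta$ at $r(q)$ equals $\operatorname{rk}(X)$. For $q$ avoiding all ``accidental coincidence'' loci where a non-identically-vanishing factor of \eqref{eq:m} happens to vanish, the pole order at $r(q)$ equals its generic value, namely the difference between the numbers of universally vanishing denominator and numerator factors; by the assumption that $r$ is generic residual this difference is $\operatorname{rk}(X)$, so $r(q)\in\operatorname{Res}(\mc{R},q)$. If a numerator factor of \eqref{eq:m} vanishes at some $q_0$, then a non-universal numerator factor of $\eta$ vanishes at $r(q_0)$, dropping $i_{\{r(q_0)\}}$ strictly below $\operatorname{rk}(X)$; hence $r(q_0)\notin\operatorname{Res}(\mc{R},q_0)$. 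Conversely, an extra denominator factor of $\eta$ cannot vanish at $r(q_0)$ without a compensating numerator vanishing: by Theorem \ref{thm:ord} the pole order is uniformly bounded by $\operatorname{rk}(R_0)=\operatorname{rk}(X)$, so an unmatched extra denominator vanishing would force a contradiction. Thus extra denominator vanishings are always paired with extra numerator vanishings, and this ensures that the apparent singularities of \eqref{eq:m} cancel, so $m_{W_0r}$ is real analytic on the whole of $\mc{Q}$.

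For the final claim, each non-identically-vanishing factor in \eqref{eq:m} has the form $1-\zeta\mu(q)$ with $\mu$ a positive monomial in the $v(s)^{\pm 1/2}$ and $\zeta$ a root of unity. In the logarithmic coordinates $(f_s)$ of Remark \ref{rem:base}, the zero locus of such a factor is either empty (when $\zeta\mu$ takes no real positive value) or the rational hyperplane $\{\log\mu(q)=0\}$ (when $\zeta=1$). Since only finitely many roots $\alpha\in R_1$ occur, the zero locus of $m_{W_0r}$ is a finite union of rational hyperplanes, and it coincides with $\mc{Q}^{sing}_{W_0r}=\mc{Q}\setminus\mc{Q}^{\operatorname{reg}}_{W_0r}$ by the previous paragraph.

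The main obstacle is the cancellation argument in the second paragraph: verifying that simultaneous vanishings in the numerator and denominator of \eqref{eq:m} actually cancel to keep $m_{W_0r}$ real analytic. Conceptually this is forced by the uniform bound of Theorem \ref{thm:ord}, but making it rigorous requires tracking how extra coincidences among the monomials $\alpha(r(q))$ for various $\alpha\in R_1$ propagate through the product, so that each additional vanishing in the denominator of \eqref{eq:m} is matched by a corresponding vanishing in the numerator at the same hyperplane with at least the same multiplicity.
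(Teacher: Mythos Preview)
Your approach is the paper's approach: both pivot on Theorem~\ref{thm:ord}. But the logic in your second paragraph is mis-ordered and contains a false step. You assert that if a numerator factor of \eqref{eq:m} vanishes at $q_0$ then $i_{\{r(q_0)\}}$ drops strictly below $\operatorname{rk}(X)$. That does not follow: extra \emph{denominator} factors may vanish at the same $q_0$ and compensate. Writing $N(q_0)$ and $D(q_0)$ for the number of non-identically-vanishing numerator (resp.\ denominator) factors of \eqref{eq:m} that happen to vanish at $q_0$, one has
\[
i_{\{r(q_0)\}}=\operatorname{rk}(X)+D(q_0)-N(q_0),
\]
so $r(q_0)$ is residual precisely when $N(q_0)=D(q_0)$, which can certainly occur with $N(q_0)>0$.

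The paper runs the argument in one stroke rather than in two directions. Theorem~\ref{thm:ord} gives $N(q)\ge D(q)$ for \emph{every} $q\in\mc{Q}$, with equality iff $r(q)$ is residual. Both conclusions fall out of this single inequality. For real analyticity (the obstacle you correctly flag), note that each non-identically-vanishing factor of \eqref{eq:m} has zero locus on $\mc{Q}$ either empty or a single rational hyperplane in the log coordinates of Remark~\ref{rem:base}; evaluating $N(q)\ge D(q)$ at a generic point of any given denominator hyperplane $H$ forces at least as many numerator factors to vanish identically on $H$ as denominator factors, and since each vanishes to order one along its hyperplane, the ratio is real analytic. The same hyperplane-by-hyperplane accounting shows $m_{W_0r}(q)=0$ iff some hyperplane through $q$ carries strictly more numerator than denominator factors, which (summing over hyperplanes through $q$, each contributing nonnegatively) is equivalent to $N(q)>D(q)$, i.e.\ to $q\in\mc{Q}^{\mathrm{sing}}_{W_0r}$. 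So your last paragraph already contains the fix for your own obstacle; what needs repair is the middle claim, not the cancellation mechanism.
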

\begin{proof}
Since $r(q)$ is generically residual it is clear that
$|R_{r,1}^{p,+}\cup
R_{r,1}^{p,-}|-|R_{r,1}^z|=\operatorname{rank}(X)$. By Theorem
\ref{thm:ord} it is therefore clear that for all $q\in\mc{Q}$ the
number of factors that are zero at $q$ in the numerator of $m_{W_0r}$
has to be at least equal to the number of factors that are zero at
$q$ in the denominator. This implies that $m_{W_0r}$ is real analytic
on $\mc{Q}$, and that the zero locus of $m_{W_0r}$ in $\mc{Q}$ is
precisely the set of $q$ such that $r(q)$ is not residual.
\end{proof}
\index{Resq@$\mr{Res}_q (\mc R)$, generic residual points which are residual at $q \in \mc Q$}
\begin{defn}\label{def:genresq}
Let $q\in\mc{Q}$. We define $\operatorname{Res}_q(\mc{R})=
\{r\in \text{Res}(\mc{R})\mid r(q)\in\operatorname{Res}(\mc{R},q)\}$.
Thus $\operatorname{Res}_q(\mc{R})$ is the set of generic residual
points whose specialization at $q$ is residual.
\end{defn}
Let $r=sc\in\textup{Res}(\mc{R})$. By Lemma \ref{lem:mon}
the evaluations $x(s)$ with $x\in X$ are roots of unity.
Let $K\supset\mathbb{Q}$ be the Galois extension of $\mathbb{Q}$
generated by the values $x(s)$ with $x\in X$. Theorem
\ref{thm:L} implies that for all $x\in X$ we have
$x(\tilde{r})\in K[v(s)^{\pm 1}:s\in S]$,
the ring of Laurent polynomials in the variables $v(s)^{\pm 1}$
(with $s\in S$) with coefficients in $K$.
Let $\sigma\in\textup{Gal}(K/\mathbb{Q})$.
By the above there is a canonical action $r\to\sigma(r)$ of
$\textup{Gal}(K/\mathbb{Q})$
on $\operatorname{Res}(\mc{R})$ characterized by $x(\widetilde{\sigma(r)})
=\sigma(x(\tilde{r}))$ for all $x\in X$,
where $\sigma$ on the right hand side is acting on the coefficients
of $x(\tilde{r})\in\Lambda$ (these are indeed elements of $\Lambda$ with
algebraic coefficients, by Lemma \ref{lem:mon} and Theorem \ref{thm:L}).
\index{Kxz@$K (\Lambda_\mathbb{Z})$, quotient field of $\Lambda_\mathbb{Z}$}
\begin{prop}\label{prop:genfam}
Let $\mc{R}$ be a semisimple root datum.
\begin{enumerate}
\item[(i)] Let $r\in\textup{Res}(\mc{R})$ and $\sigma\in\textup{Gal}(K/\mathbb{Q})$.
Then $\sigma(r)|_{Q(R_0)}\in W_0r|_{Q(R_0)}$ where $Q(R_0)\subset X$ denotes the
root lattice of $R_0$.
\item[(ii)] For all $r\in\textup{Res}(\mc{R})$ we have
$m_{W_0r}\in K(\Lambda_\mathbb{Z})$, the quotient field of the subring
$\Lambda_\mathbb{Z}:=\mathbb{Z}[v([s])^{\pm 1}:[s]\in \tilde{S}]\subset\Lambda$
of $\Lambda$. \index{1lz@$\Lambda_{\mh Z} ,\, \mh Z$-algebra generated by $v(s)^{\pm 1}$}
\item[(iii)] In the situation of Lemma \ref{lem:genprod}(i)
we have $m_{W_0r}=m_{W_0r^\prime}$ and in the situation of Lemma
\ref{lem:genprod}(ii) we have
$m_{W_0r}(q)=m_{W_0^{(1)}r^{(1)}}(q^{(1)})\dots m_{W_0^{(k)}r^{(k)}}(q^{(k)})$.
\end{enumerate}
\end{prop}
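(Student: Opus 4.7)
The strategy is to prove (iii) directly, then (i), and deduce (ii) from (i). Part (iii) is immediate from the definition of $m_{W_0r}$ as a product of rational factors indexed by $\alpha\in R_1$. In the isogeny setting of Lemma \ref{lem:genprod}(i), the root systems $R_0, R_1$ and the parameter functions on $R_1$ are the same for $\mc R$ and $\mc R^\prime$, while $\alpha(r)=\alpha(r^\prime)$ on $R_1\subset Q(R_0)\subset X\subset X^\prime$, so the two expressions coincide. In the product setting of Lemma \ref{lem:genprod}(ii), $R_1=\bigsqcup_i R_1^{(i)}$ and each factor of the defining formula lies entirely in one block, producing the claimed multiplicative factorization.

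For (i), I first reduce by Lemma \ref{lem:genprod}(ii) to simple root data, and then, since the conclusion only involves $r|_{Q(R_0)}$, pass via Lemma \ref{lem:genprod}(i) to the adjoint datum $\mc R^{\mathrm{ad}}=(Q(R_0),R_0,P(R_0^\vee),R_0^\vee,F_0)$. The claim becomes that Galois preserves $W_0$-orbits in $\mathrm{Res}(\mc R^{\mathrm{ad}})$. Write $r=sc$; by Theorem \ref{thm:L}, $\alpha(c)\in\Lambda_{\mathbb{Z}}$ is a Laurent monomial for every $\alpha\in X$, so Galois fixes $c$ pointwise and acts only on the unitary part $s$. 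Next, Corollary \ref{cor:genreds}(i) identifies $r$ with a generic residual point of $\mc R_s$, in which $s$ is automatically $W_0(\mc R_s)$-invariant, since $\alpha(s)=1$ for every $\alpha\in R_{s,0}$. For any $W_0$-invariant $s$, the identity $\alpha(s)=s(-\alpha)=\alpha(s)^{-1}$ (obtained by choosing $w\in W(R_{s,0})$ with $w\alpha=-\alpha$) forces $\alpha(s)\in\{\pm 1\}\subset\mathbb{Q}$ for $\alpha\in R_{s,0}$, so $s|_{Q(R_{s,0})}$ is Galois-fixed and $\sigma(r)=r$ on $Q(R_{s,0})$. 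The remaining step is to propagate this equality up to $Q(R_0)$ via the bijection of Corollary \ref{cor:genreds}(ii) between $W_0$-orbits in $\mathrm{Res}^{W_0s}(\mc R^{\mathrm{ad}})$ and $\Gamma_s$-orbits of $W(R_{s,1})$-orbits in $\mathrm{Res}^s(\mc R_s)$; the compatibility of the Galois action with this bijection requires a case-by-case check against the classification of $W_0$-conjugacy classes of $s\in T_u$ with $\mc R_s$ semisimple (Borel--de Siebenthal / Kac diagrams). This case analysis is the main technical obstacle of the proof: one must verify, for each irreducible type (in particular for the exceptional types, where $s$ can have roots-of-unity values of higher order), that the Galois orbit of each such $s$ is contained in a single $W_0$-orbit.

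Given (i), (ii) follows by Galois descent. Since $R_{nr}\subset R_0\cup 2R_0\subset Q(R_0)$, one has $R_1\subset Q(R_0)$, so $m_{W_0r}$ depends on $r$ only through $r|_{Q(R_0)}$. For $\alpha\in R_1$ one has $\alpha(r)=\alpha(s)\alpha(c)$ with $\alpha(s)\in K$ a root of unity and $\alpha(c)\in\Lambda_{\mathbb{Z}}$ a Laurent monomial (Theorem \ref{thm:L}); the half-powers $\alpha(r)^{1/2}$ appearing in the defining formula are harmless by the remark below \eqref{eq:c} (either $\alpha(r)^{1/2}=\beta(r)$ when $\alpha=2\beta\in R_1\setminus R_0$, or $q_{2\alpha^\vee}=1$ forces the two matching factors to combine into a rational expression in $\alpha(r)$). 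Hence $m_{W_0r}$ is a priori an element of the field $K(v(s)^{\pm 1})$ with coefficients in the Galois extension of $\mathbb{Q}$ generated by the $\alpha(s)$. By (i) together with the $W_0$-invariance of the formula, $m_{W_0\sigma(r)}=m_{W_0r}$, while by definition $\sigma(m_{W_0r})=m_{W_0\sigma(r)}$; thus $m_{W_0r}$ is $\mathrm{Gal}(K/\mathbb{Q})$-invariant. The fixed subring of $K[v^{\pm 1}]$ under $\mathrm{Gal}(K/\mathbb{Q})$ equals $\mathbb{Q}[v^{\pm 1}]$, so $m_{W_0r}\in\mathbb{Q}(v^{\pm 1})=K(\Lambda_{\mathbb{Z}})$, completing (ii).
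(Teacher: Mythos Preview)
Your approach to (ii) and (iii) is exactly the paper's: (iii) is immediate from the product structure of the defining formula \eqref{eq:m}, and (ii) follows from (i) by Galois descent, using that $m_{W_0r}$ depends only on $r|_{Q(R_0)}$ and is $W_0$-invariant. The paper says precisely this.

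For (i) the paper gives no argument at all here; it simply cites the proof of \cite[Proposition~3.27]{Opd1}. Your reduction (pass to the adjoint datum, note that $\sigma$ fixes the split part $c$ by Theorem~\ref{thm:L}, so everything comes down to showing that the Galois orbit of the unitary part $s$ stays inside $W_0s$) is the right shape, and your identification of the residual difficulty --- a case-by-case check over the Borel--de Siebenthal list of maximal-rank subsystems, especially for the exceptional types where $s$ involves higher roots of unity --- is exactly the content of the cited result. Two small comments: the passage through Corollary~\ref{cor:genreds} and the observation that $\alpha(s)\in\{\pm 1\}$ for $\alpha\in R_{s,0}$ is correct but buys you less than it may appear, since $Q(R_{s,0})$ is generally a proper sublattice of $Q(R_0)$ and the whole question is what happens on the quotient; and your reduction to simple factors via Lemma~\ref{lem:genprod}(ii) should really go through $Q(R_0)=\prod_i Q(R_0^{(i)})$ directly rather than through $\mc R^{\max}$, though the outcome is the same. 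In short, your write-up is a faithful unpacking of what the paper delegates to \cite{Opd1}, with the hard step honestly flagged rather than done.
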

\begin{proof}
The first assertion follows from the proof of \cite[Proposition 3.27]{Opd1}.
Then (ii) follows from (i) by the fact that $m_{W_0r}$ only depends
on the restriction of $r$ to $Q(R_0)$ and the fact that the assignment
$r\to m_{W_0r}$ is $W_0$-invariant.
The assertions of (iii) are trivial.
\end{proof}

\subsubsection{Deformation of residual points in the parameter $q$}

The following result is very important: it says that all residual
points are obtained from specialization of the generic residual points.
\index{evq@$\mr{ev}_q$, evaluation of functions on $\mc Q$ at $q$}
\begin{prop}\label{prop:resdef}
Let $\mc{R}$ be a semisimple based root datum.
The evaluation map
$\operatorname{ev}_q:\operatorname{Res}_q(\mc{R})\to
\operatorname{Res}(\mc{R},q)$ given by
$\operatorname{ev}_q(r)=r(q)$ is surjective for all $q\in\mc{Q}$.
\end{prop}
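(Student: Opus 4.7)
The plan is to reduce to the case where $\mc R$ is simple with $X=P(R_1)$, then to the level of the degenerate affine Hecke algebra via the second reduction theorem, and finally to invoke the known classification of generic linear residual points for irreducible root systems.

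First I would reduce to the case where $\mc R$ is simple. Decompose $\mc R^{max}=\mc R^{(1)}\times\cdots\times\mc R^{(m)}$ as in Proposition \ref{prop:prod}. Lifting $r_0$ along the isogeny $\mc R\to\mc R^{max}$ (Lemma \ref{lem:prod}(i)) and then factoring the lift across the simple factors (Lemma \ref{lem:prod}(ii)) produces $(\mc R^{(i)},q_0^{(i)})$-residual points $r_0^{(i)}$. If each $r_0^{(i)}$ is the specialization at $q_0^{(i)}$ of some $r^{(i)}\in\mathrm{Res}(\mc R^{(i)})$, then Lemma \ref{lem:genprod}(ii) combines the $r^{(i)}$ into a generic residual point of $\mc R^{max}$, and Lemma \ref{lem:genprod}(i) shows that its restriction lies in $\mathrm{Res}(\mc R)$ and evaluates at $q_0$ to $r_0$. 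Thus I may assume that $\mc R$ is simple with $X=P(R_1)$.

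Next, let $r_0=s_0c_0$ be the polar decomposition. Lemma \ref{lem:mon} asserts that $r_0$ is also residual for $(\mc R_{s_0},(q_0)_{s_0})$, with $\mc R_{s_0}$ semisimple, and by the very definition of $\mc R_{s_0}$ the unitary part $s_0$ is $W(R_{s_0,1})$-invariant, hence $W_0(\mc R_{s_0})$-invariant. Corollary \ref{cor:genreds}(i) provides a bijection $\mathrm{Res}^{s_0}(\mc R)\isom\mathrm{Res}^{s_0}(\mc R_{s_0})$ compatible with the evaluation maps along $\phi_{s_0}:\mc Q(\mc R)\to\mc Q(\mc R_{s_0})$, so it suffices to solve the surjectivity problem inside $\mc R_{s_0}$; in other words, I may assume from the outset that $s_0$ is $W_0$-invariant. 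Proposition \ref{lem:redrat} then furnishes a $W_0$-equivariant bijection $\mathrm{Res}^{s_0}(\mc R)\to\mathrm{Res}^{lin}(R_1)$, $r=s_0\exp(\xi)$, under which $r(q)$ is $(\mc R,q)$-residual precisely when $\xi(k_{s_0}(q))$ is $(R_1,k_{s_0}(q))$-residual in the sense of \cite{HO}. Writing $c_0=\exp(\xi_0)$, the problem therefore reduces to the purely linear statement: for every linear residual point $\xi_0\in V$ at parameter $k_0:=k_{s_0}(q_0)$, there exists a generic linear residual point $\xi\in\mathrm{Res}^{lin}(R_1)$ with $\xi(k_0)=\xi_0$.

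The main obstacle, and the only step that is not formal, is this last linear statement. I would dispatch it by invoking the classification of linear residual points for irreducible root systems recorded in \cite[Appendix A]{Opd1} (most directly \cite[Theorem A.7]{Opd1}), which in each type $A_n,\,B_n,\,C_n,\,D_n,\,E_n,\,F_4,\,G_2$ exhibits every generic linear residual point as an explicit rational linear function of $k\in\mc K$, and verifies by inspection of the tables that every $(R_1,k_0)$-residual point at every specialization $k_0$ is obtained by evaluating such a generic family at $k_0$. Combining this linear surjectivity with Proposition \ref{lem:redrat}, Corollary \ref{cor:genreds} and Lemma \ref{lem:genprod} produces an $r\in\mathrm{Res}_{q_0}(\mc R)$ with $\mathrm{ev}_{q_0}(r)=r_0$, proving the proposition.
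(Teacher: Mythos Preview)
Your reduction steps are correct (modulo one small omission: after passing to $\mc R_{s_0}$ the root system $R_{s_0,1}$ need not be irreducible, so you should invoke Proposition~\ref{prop:linprod} before appealing to the tables for irreducible types). The argument is valid, but it takes a genuinely different route from the paper.

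The paper gives a short, classification-free proof by induction on the rank of $R_0$. Given a $(\mc R,q_0)$-residual point $r_0$, one uses Proposition~\ref{prop:flag} to place $r_0$ on a residual line $L_0=r_{L,0}T^L$ with $r_{L,0}$ residual for a maximal proper parabolic subsystem $R_L$. The induction hypothesis deforms $L_0$ to a generic family of residual lines $L(q)=r_L(q)T^L$. On each $L(q)$ the function $\eta^L$ has poles of order at most one (by Theorem~\ref{thm:ord} and Definition~\ref{def:rescos}), and $r_0$ is such a simple pole at $q=q_0$; the explicit shape of the factors of $\eta^L$ then shows that this pole deforms to a $\mc Q$-family $q\mapsto r(q)\in L(q)$ that is residual near $q_0$, giving $r\in\operatorname{Res}_{q_0}(\mc R)$ with $r(q_0)=r_0$.

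Your approach, by contrast, pushes everything down to the degenerate algebra and then invokes the case-by-case classification of linear residual points in \cite{HO}, \cite[Appendix~A]{Opd1}, \cite{Slooten}. This is legitimate, but it is exactly the kind of argument the paper tries to avoid: see the remark following Theorem~\ref{thm:nonnested}, where the authors announce that they will only use properties of residual points for which a classification-free proof is known, unless stated otherwise. The inductive argument buys a conceptual, uniform reason (deformation of a simple pole along a residual line) for why every residual point sits in a generic family; your route is quicker to state if one takes the tables as given, but it hides this mechanism and imports a substantial amount of type-by-type verification.
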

\begin{proof}
We prove this fact by induction to the rank of $R_0$. If the rank of
$R_0$ is one the assertion can be verified by an easy inspection. Assume
that the result holds for all maximal proper parabolic subsystems
of $R_0$. Let $r_0\in T$ be a residual point for the parameter
value $q_0\in \mc{Q}$. By Proposition \ref{prop:flag} we know that
that there exists a residual line $L_0=r_{L,0}T^L$ where
$r_{L,0}\in T_L$ is a residual point for a proper maximal
parabolic subsystem $R_L\subset R_0$ with the property that
$r_0\in L_0$. By the induction hypothesis, $L_0=L(q_0)$ for a
generic family of residual lines $L(q)=r_L(q)T^L$ (in other words,
the $\mc{R}_L$-residual point $r_{L,0}$ is the specialization
$r_{L,0}=r_L(q_0)$ at $q_0$ of a generic $\mc{R}_L$ residual point
$r_L$). By Theorem \ref{thm:ord} and Definition \ref{def:rescos}
it follows easily that for each fixed $q\in\mc{Q}$ such that
$r_L(q)$ is residual the rational function $\eta^L$ (see
(\ref{eq:dec})) on $L(q)$ has
poles of order at most one on $L(q)$, and $x\in L(q)$ is
$(\mc{R},q)$-residual if and only if $x$ is a pole of
$\eta^L(\cdot,q)$. In particular $r_0$ is a simple pole of
$\eta^L(\cdot,q_0)$. Considering the form of the factors in the
denominator of $\eta^L$ this implies easily that $r_0$ is the
specialization at $q=q_0$ of least one $\mc{Q}$-family of the form
$q\to r(q)\in L(q)$ such that $r(q)$ is residual for all $q$ in an
open neighborhood of $q_0$. Hence
$r\in\operatorname{Res}_q(\mc{R})$ and
$\operatorname{ev}_{q_0}(r)=r(q_0)=r_0$ as desired.
\end{proof}

\begin{defn}\label{defn:gen}
Let $\mc{R}$ be a semisimple root datum and let
$r\in\operatorname{Res}(\mc{R})$. We call
$q\in\mc{Q}_{W_0r}^{\operatorname{reg}}$ an $r$-\emph{generic} (or
$W_0r$-generic) parameter if for all
$r^\prime\in\operatorname{Res}(\mc{R})$ the equality $W_0r^\prime(q)=W_0r(q)$
implies that $r^\prime\in W_0r$.
The set of $r$-generic parameters
is denoted by $\mc{Q}_{W_0r}^{\operatorname{gen}}$. We define the set
$\mc{Q}^{\operatorname{gen}}$ of \emph{generic} parameters by
$\mc{Q}^{\operatorname{gen}}=
\cap_{r\in\operatorname{Res}(\mc{R})}\mc{Q}_{W_0r}^{\operatorname{gen}}$.
\end{defn}
\index{Q@$\mc{Q}_{W_0 r}^{\operatorname{gen}}$, set of $r$-generic parameters in
$\mc{Q}^{\operatorname{reg}}$} \index{Q@$\mc{Q}^{\operatorname{gen}}$,
parameters that are generic $\forall r \in \text{Res}(\mc R)$}
\begin{prop}\label{prop:gencone}
Let $\mc{R}$ be a semisimple
root datum. For all $r\in\operatorname{Res}(\mc{R})$ the set
$\mc{Q}_{W_0r}^{\operatorname{gen}}$ is the complement of a finite
collection of rational hyperplanes in $\mc{Q}$.
\end{prop}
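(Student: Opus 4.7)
The plan is to unpack the definition of $\mathcal{Q}_{W_0r}^{\operatorname{gen}}$ and show each piece is the complement of a rational hyperplane arrangement. Write
\[
\mathcal{Q}_{W_0r}^{\operatorname{gen}} \;=\; \mathcal{Q}_{W_0r}^{\operatorname{reg}} \;\setminus\; \bigcup_{r'\in\operatorname{Res}(\mc{R})\setminus W_0r}\{q\in\mc{Q}\mid W_0r'(q)=W_0r(q)\}.
\]
By Theorem \ref{thm:mgen} the complement of $\mathcal{Q}_{W_0r}^{\operatorname{reg}}$ is already a finite union of rational hyperplanes (the zero locus of $m_{W_0r}$). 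By Proposition \ref{prop:fin} the set $\operatorname{Res}(\mc{R})$ is finite, so it suffices to prove that for each fixed $r'\in\operatorname{Res}(\mc{R})\setminus W_0r$ and each $w\in W_0$ the locus $E_w:=\{q\in\mc{Q}\mid w\cdot r'(q)=r(q)\}$ is contained in a finite union of rational hyperplanes (a union which will then run over a finite index set).

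Next I would analyze $E_w$ using Theorem \ref{thm:L}. Writing the polar decompositions $r=sc$ and $r'=s'c'$, the discussion following Lemma \ref{lem:mon} shows that the unitary parts $s,s'\in T_u$ are constant in $q$, so the equation $w\cdot r'(q)=r(q)$ splits into the $q$-independent condition $w(s')=s$ (which either holds identically or makes $E_w=\emptyset$) together with the condition $w(c'(q))=c(q)$. By Theorem \ref{thm:L}, for every $x\in X$ both $x(c(q))$ and $(w^{-1}x)(c'(q))$ are Laurent monomials in the generators $v(s)$ evaluated at $q$. Using the coordinates $f_s$ of Remark \ref{rem:base} (where $q(v(s))=\mb{q}^{f_s}$), each equation $x(c(q))=(w^{-1}x)(c'(q))$ becomes a single linear equation with integer coefficients in the $f_s$, defining a rational linear subspace of $\mc{Q}$. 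Taking $x$ over a $\mh{Z}$-basis of $X$ realizes $E_w$ as the (possibly empty) intersection of finitely many rational linear hyperplanes.

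The key dichotomy is then: either this linear system on $\mc{Q}$ is satisfied by all of $\mc{Q}$, in which case $w\cdot r'(q)=r(q)$ holds identically and hence $r'=w^{-1}r\in W_0r$, contradicting the choice of $r'$; or the system is nontrivial, and $E_w$ is contained in a proper rational linear subspace, in particular in a finite union of rational hyperplanes. Taking the union of these hyperplanes over the finitely many pairs $(w,r')$ with $w\in W_0$ and $r'\in\operatorname{Res}(\mc{R})\setminus W_0r$, and adjoining the hyperplanes coming from $\mc{Q}\setminus\mathcal{Q}_{W_0r}^{\operatorname{reg}}$, exhibits $\mathcal{Q}_{W_0r}^{\operatorname{gen}}$ as the complement of a finite collection of rational hyperplanes.

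There is no serious obstacle once Theorem \ref{thm:L} (together with the finiteness of $\operatorname{Res}(\mc{R})$ from Proposition \ref{prop:fin}) is invoked: the entire argument reduces to noting that equality of two monomials in the $v(s)$ translates, after taking logs in the coordinates $f_s$, into a rational linear equation, and that the only way this equation can be trivially satisfied is when the two generic residual points $r$ and $wr'$ coincide identically. The one point that is worth double-checking is the $q$-independence of the unitary parts of generic residual points, which is what allows the split of $w\cdot r'(q)=r(q)$ into a discrete and a linear condition; this follows from Lemma \ref{lem:mon} applied at a generic $q$ and the connectedness of $\mc{Q}$.
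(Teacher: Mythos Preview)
Your argument is correct. It differs from the paper's one-line proof, which invokes Corollary~\ref{cor:genreds} and Proposition~\ref{lem:redrat}: the paper reduces, via the bijection $\Phi_{W_0s}^{W_0}$, to comparing generic \emph{linear} residual points for $R_{s,1}$, and then uses part~(iv) of Proposition~\ref{lem:redrat} (rationality of $\alpha(\xi(k))$ in the $k_\beta$) to see that the coincidence loci are rational hyperplanes. You instead stay on the multiplicative side and appeal to Theorem~\ref{thm:L} (integrality of $x(c)$ as a Laurent monomial in the $v(s)$), together with Theorem~\ref{thm:mgen} for the regular locus and Proposition~\ref{prop:fin} for finiteness. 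Your route is more explicit and avoids the reduction step, but note one tradeoff: the proof of Theorem~\ref{thm:L} relies on the classification of residual points, whereas Proposition~\ref{lem:redrat}(iv) is proved classification-free (from Theorem~\ref{thm:ord}). Since the paper emphasizes using classification-free arguments where possible, its choice of lemmas is deliberate; your proof is nonetheless valid within the paper's logical structure. One small citation fix: the $q$-independence of the unitary part is stated in the paragraph preceding Corollary~\ref{cor:genreds} rather than in Lemma~\ref{lem:mon} itself, though the reasoning you give (finite range plus continuity) is exactly what underlies it.
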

\begin{proof} This follows easily from Corollary
\ref{cor:genreds} and Proposition \ref{lem:redrat}.
\end{proof}
The proof of the following important Proposition depends on the
classification of residual points.
\begin{prop}\label{prop:closed}
Recall that the central support of the set of tempered irreducible
characters of $\mc{H}(\mc{R},q)$ is given by the union
$S(q)=\cup_{L}L^{\operatorname{temp}}$ (union over the set
$(\mc{R},q)$-residual cosets $L\subset T$) (see Theorem
\ref{thm:rescos}). Let
$S_i(q)=\cup_{L}L^{\operatorname{temp}}\subset S(q)$ denote the
subset of $S(q)$ where the union is taken only over the residual
cosets of dimension at least $i$. The sets
$\cup_{q\in\mc{Q}}(q,S_i(q))\subset \mc{Q}\times T$ are closed for
all $i$.
\end{prop}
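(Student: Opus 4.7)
My approach is to exhibit $\bigcup_{q \in \mc Q}(q, S_i(q))$ as a finite union of manifestly closed subsets of $\mc Q \times T$, one piece for each $W_0$-orbit of generic residual cosets of dimension at least $i$.

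First I would establish the coset analogue of Proposition \ref{prop:resdef}: every $(\mc R, q)$-residual coset is the specialization at $q$ of a generic residual coset $L(q) = r_L(q)T^L$, where the subtorus $T^L \subset T$ is independent of $q$ and $r_L : \mc Q \to T_L$ is real analytic. Moreover, up to $W_0$-action there are only finitely many such generic cosets (by the same Baire-type argument as in Proposition \ref{prop:fin}, together with the fact that $T_L$ has only finitely many generic residual points for the Levi datum $\mc R_L$). This reduction is carried out by downward induction on $\dim L$, using Proposition \ref{prop:flag} to pass from a residual coset to an overlying residual coset of dimension one greater and then applying Proposition \ref{prop:resdef} within the appropriate Levi subquotient.

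Given this, for each generic residual coset $L$ with $\dim L \geq i$ and each $w \in W_0$ the continuous map
\[
\Phi_{L,w} \colon \mc Q \times T^L_u \longrightarrow \mc Q \times T, \qquad (q,u) \longmapsto (q, w\, r_L(q)\, u),
\]
is proper over compact subsets of $\mc Q$ since $T^L_u$ is compact, and therefore its image
\[
B_{L,w} := \{(q,t) \in \mc Q \times T : t \in w\, r_L(q)\, T^L_u\}
\]
is closed in $\mc Q \times T$. I would then identify
\[
\bigcup_{q \in \mc Q}(q, S_i(q)) = \bigcup_{L,w} B_{L,w},
\]
the union on the right running over generic residual cosets $L$ with $\dim L \geq i$ and over $w \in W_0$. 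The inclusion $\subseteq$ follows from the coset version of Proposition \ref{prop:resdef}. On the subset $\{(q,t) \in B_{L,w} : q \in \mc Q^{\textup{reg}}_{W_0 r_L}\}$ the reverse inclusion is immediate, because there $L(q)$ is a genuine $(\mc R,q)$-residual coset of dimension $\dim L \geq i$ whose tempered part is precisely $r_L(q)T^L_u$.

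The main obstacle is the remaining case of the reverse inclusion: one must show that for $q_0 \notin \mc Q^{\textup{reg}}_{W_0 r_L}$ one still has $w\, r_L(q_0)\, T^L_u \subseteq S_i(q_0)$. By Theorem \ref{thm:mgen}, the failure of $r_L(q_0)$ to be residual for the Levi $(\mc R_L, q_0)$ is caused by the vanishing at $q_0$ of numerator factors $\alpha(r_L)^{-1}-1$ of $m_{W_0 r_L}$, that is, by extra roots $\alpha \notin R^z_{r_L,1}$ with $\alpha(r_L(q_0)) = 1$. These extra roots enlarge $T^L$ to a strictly larger subtorus $T^M$ (the common kernel of all roots vanishing at $r_L(q_0)$), and a direct pole-order count using Theorem \ref{thm:ord} and Definition \ref{def:rescos} shows that $M := r_L(q_0)T^M$ is an $(\mc R,q_0)$-residual coset of dimension strictly greater than $\dim L$, hence still $\geq i$. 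Since $T^L_u \subseteq T^M_u$ and the center of $M$ agrees with $r_L(q_0)$ modulo $T^M_u$ (the ``shift'' only happens in unitary directions, as the degeneration is caused by roots evaluating to $1$, not by roots evaluating to a $q$-dependent positive scalar), one obtains $r_L(q_0) T^L_u \subseteq M^{\textup{temp}}$, completing the identification. The set $\bigcup_{q}(q, S_i(q))$ is then a finite union of closed sets, and therefore closed.
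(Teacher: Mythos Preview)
Your overall framework matches the paper's: write $\bigcup_q (q,S_i(q))$ as a finite union over $W_0$-orbits of generic residual cosets, observe that the only issue is what happens on the singular locus $\mc Q^{\textup{sing}}_{W_0 r_L}$, and reduce to showing that when the center $r_L(q_0)$ ceases to be residual for the Levi it nevertheless lies in $M^{\textup{temp}}$ for some $(\mc R,q_0)$-residual coset $M$ of dimension $\geq \dim L$. This is exactly the reduction the paper makes.

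The gap is in your treatment of the ``main obstacle''. Two problems. First, your construction of $T^M$ goes the wrong way: the common kernel of all roots with $\alpha(r_L(q_0))=1$ is \emph{smaller} (not larger) than the generic kernel when extra roots acquire value $1$, so it does not produce a subtorus containing $T^L$. Second, and more seriously, even with a corrected construction the ``direct pole-order count'' you invoke does not exist. Theorem~\ref{thm:ord} gives only an upper bound $i_M \leq \textup{codim}(M)$; what you need is equality, and the degeneration at $q_0$ can involve both gained numerator zeros and gained or lost denominator zeros, so there is no a priori reason the counts match up to make $M$ residual. Your parenthetical claim that ``the degeneration is caused by roots evaluating to $1$, not by roots evaluating to a $q$-dependent positive scalar'' is precisely what is at stake and is not justified.

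The paper handles this step by classification: after reducing via \cite[Theorem~A.7]{Opd1} to the assertion that a positive generic residual point specialises at $q_0$ to the center of a positive residual coset, and after passing to $\mc R^{max}$ to reduce to irreducible $R_0$, it disposes of the simply laced case trivially (there $\mc Q^{\textup{sing}}_{W_0 r} = \{q^0\}$ and $r(q^0)=e$ is the center of $T_u$), invokes \cite[Proposition~4.15]{Slooten} for $B_n/C_n$, and reads off $G_2$ and $F_4$ from the tables in \cite{HO}. The authors explicitly flag this proposition as one of the few places where they rely on the classification of residual points; a classification-free argument along the lines you sketch is not known.
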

\begin{proof}
In view of Definition \ref{def:rescos} it is clear that it
suffices to show that if $r\in\operatorname{Res}(\mc{R})$ and
$q_0\in\mc{Q}_{W_0r}^{\operatorname{sing}}$, then there exists a
residual coset $L$ such that $r(q_0)\in L^{\operatorname{temp}}$.
By \cite[Theorem A.7]{Opd1} this reduces to the statement that if
$c$ is a positive generic residual point, then $c(q_0)$ coincides with
the center of a positive residual coset. Since the collection of
centers of \emph{positive} residual cosets does not depend on the
choice of the lattice $X$ we may replace $X$ by $X^{max}$ (as in
Proposition \ref{prop:prod}). Since $\mc{R}^{max}$ is a direct sum
of irreducible summands this shows that it suffices to prove the
statement for a root datum $\mc{R}$ with $R_0$ irreducible.

In the case where $R_0$ is simply laced this follows from the
remark that $\mc{Q}_{W_0r}^{\operatorname{sing}}=\{q_0=1\}$ for all
$r\in\operatorname{Res}(\mc{R})$. By Lemma \ref{lem:mon} we have
$r(1)=e$, which is the center of $T^{\operatorname{temp}}=T_u$. If
$R_0$ is of type $B_n$ or $C_n$, then this is \cite[Proposition
4.15]{Slooten}. For type $G_2$ and $F_4$ it can be read off from
the tables \cite[Table 4.10, Table 4.15]{HO}.
\end{proof}

\section{Continuous families of discrete series}

In this section we show that every discrete series character
of $\mc{H}=\mc{H}(\mc{R},q)$ is the specialization of a unique
maximal ``continuous parameter family'' of discrete series characters.
Using this fact and our results on $EP_{\mc{H}}$ the discrete
series can be parametrized explicitly for all irreducible root
data $\mc{R}$ which are not simply laced. An important ingredient
is the fact that the central characters of the irreducible
discrete series characters are precisely the $W_0$-orbits of
residual points.

Another main result in this section states that the formal
degree of a continuous family of irreducible discrete series
characters is a rational function on $\mc{Q}$ with rational
coefficients. This function has a product expansion in terms of
the central character of the family, and an alternating sum
expansion in terms of the branching multiplicities of the discrete
series representation to finite dimensional Hecke subalgebras.

\subsection{Parameter deformation of the discrete series}

In this subsection we show that each irreducible discrete series
character is a specialization in the parameter $q$ of a unique
continuous $\mc{Q}$-family of irreducible discrete series
characters.

It is useful to remark that such deformations are well understood
for \emph{scaling deformations} of the parameters along half
lines. What we are about to discuss in this subsection is what
happens for general deformations. Therefore this yields no extra
information whatsoever for the simply laced cases. On the other
hand, for the non-simply laced root systems the method turns out
to be sufficient in most cases to distinguish the irreducible
discrete series characters with the same central character form
each other, and parametrize them by continuous $\mc{Q}$-families of
discrete series characters.
\index{Pr0@$\mc P (r_0)$, generic residual points $r$ with $r (q_0) \in W_0 r_0$}
\index{1dW0@$\Delta_{W_0 t} (\mc R,q_0 )$, irreducible discrete series characters
with central character $W_0 t$}
\begin{defn}\label{def:resmult}
Let $\mc{R}$ be a semisimple root datum, $q_0\in\mc{Q}$, and let
$r_0\in T$ be a $(\mc{R},q_0)$-residual point. We denote by
$\mc{P}(r_0)= \{W_0r\in W_0\backslash\operatorname{Res}(\mc{R})
\mid W_0r(q_0)=W_0r_0\}$ the finite set of $W_0$-orbits of generic
residual points which coalesce at $W_0r_0$ for the parameter value $q=q_0$.

For $t \in T$ let $\Delta_{W_0 t} (\mc R,q_0 ) \subset \Delta (\mc R ,q_0 )$ be the
collection of irreducible discrete series characters with central character $W_0 t$.
\end{defn}

\begin{lem}\label{lem:est}
Let $r_0=s_0c_0$ be a $(\mc{R},q_0)$-residual point, and let
$0<\epsilon<1/3$. \index{zz@$z$, positive element in $\mh{C}[T]^{W_0}$}
There exists an open neighborhood $U\subset\mc{Q}$ of $q_0$ and a
Hermitian element $z\in\mathbb{C}[T]^{W_0}$ such that
\begin{enumerate}
\item[(i)] $z$ is positive on $S(q)$ for all $q\in\mc{Q}$.
\item[(ii)] $z(t)<\epsilon$ for all $q\in U$ and
$t\in S(q)\backslash\{W_0r(q)\mid r\in\mc{P}(r_0)\}$.
\item[(iii)] There exists an $M\geq 1$ such that
$1-\epsilon<z(W_0r(q))<M$ for all $q\in U$ and $r\in\mc{P}(r_0)$.
\end{enumerate}
\end{lem}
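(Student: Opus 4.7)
My plan has three stages: isolate $W_0r_0$ from the rest of $S(q)$ uniformly near $q_0$; approximate a bump at $W_0r_0$ by a Hermitian $W_0$-invariant Laurent polynomial; and square it to obtain a positive $z$.

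\emph{Stage 1 (separation).} By Proposition \ref{prop:fin} the set $W_0\backslash\operatorname{Res}(\mc{R})$ is finite. For each orbit $W_0r' \notin \mc{P}(r_0)$ the continuous family $q \mapsto W_0r'(q)$ satisfies $W_0r'(q_0) \neq W_0r_0$ and so avoids a fixed open neighborhood of $W_0r_0$ for $q$ close to $q_0$. Proposition \ref{prop:resdef} then shows that the $(\mc{R}, q)$-residual points meeting a sufficiently small neighborhood $V \subset W_0\backslash T$ of $W_0r_0$ are precisely $\{W_0r(q) : r \in \mc{P}(r_0)\}$. For positive-dimensional tempered residual cosets I combine the non-nesting Theorem \ref{thm:nonnested} (forbidding $W_0r_0$ to lie in any $L^{\operatorname{temp}}$ with $\dim L>0$ at $q_0$) with the closedness Proposition \ref{prop:closed} applied to $S_1(q)$, obtaining a neighborhood $U$ of $q_0$ on which $V \cap S_1(q) = \emptyset$. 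Altogether,
\[
S(q) \cap V \ =\ \{W_0r(q) \mid r \in \mc{P}(r_0)\} \qquad \forall\, q \in U.
\]

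\emph{Stage 2 (construction).} The compact space $W_0\backslash S(q_0)$ sits inside the affine variety $W_0\backslash T$, and $\mathbb{C}[T]^{W_0}$ separates points there. Because central characters of tempered $*$-representations are automatically $*$-stable $W_0$-orbits (the identity $\pi(y^*)=\pi(y)^*$ forces $y^*(t)=\overline{y(t)}$ on $S(q)$ for every $y\in\mc{A}^{W_0}$), a Hermitian $y \in \mathbb{C}[T]^{W_0}$ (one satisfying $y^*(t):=\overline{y(\bar t^{-1})}=y(t)$) takes real values on $W_0\backslash S(q)$ for \emph{every} $q$. The Stone-Weierstrass theorem on the compact $W_0\backslash S(q_0)$ then provides such a Hermitian $y$ with $y(W_0r_0)=1$ and $|y(t)|<\sqrt{\epsilon/2}$ for all $t\in S(q_0)\setminus V$. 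I set $z := y^2$.

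\emph{Stage 3 (verification).} Since $y$ is real on each $W_0$-orbit inside $S(q)$ for every $q$, we get $z=y^2 \geq 0$ on $S(q)$ for all $q\in\mc{Q}$, giving (i). Continuity of $z$ on a bounded region containing $\bigcup_{q\in\overline{U}}S(q)$, combined with the closedness of $\bigcup_q\{q\}\times S(q)$ (Proposition \ref{prop:closed}), forces any $t\in S(q)\setminus V$ with $q$ near $q_0$ to lie in a small neighborhood of $S(q_0)\setminus V$; shrinking $U$ then yields $z(t)<\epsilon$, proving (ii). Continuity of $q\mapsto W_0r(q)$ for $r\in\mc{P}(r_0)$ together with $z(W_0r_0)=1$ gives $z(W_0r(q))>1-\epsilon$ after a further shrinking of $U$, and $M$ in (iii) can be taken to be $\sup z$ over the finitely many compact trajectories $\{W_0r(q):q\in\overline{U},\ r\in\mc{P}(r_0)\}$.

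The main obstacle is Stage 1: positive-dimensional tempered residual cosets of $(\mc{R}, q)$ must be prevented from drifting towards $W_0r_0$ as $q$ varies, and this requires both the non-nesting Theorem \ref{thm:nonnested} at $q_0$ and the closedness Proposition \ref{prop:closed} for nearby $q$. Once that separation is in hand, the Stone-Weierstrass approximation plus the ``square trick'' $z=y^2$ use the $*$-stability of $S(q)$-orbits to deliver positivity uniformly in the parameter.
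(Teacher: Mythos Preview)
Your proof is correct and follows the same overall scheme as the paper: separate $W_0r_0$ from the rest of $S(q_0)$, construct an approximating polynomial, square it to force positivity on every $S(q)$ (using the $*$-stability of tempered central characters, which is the paper's Theorem \ref{thm:real}), and then deform to nearby $q$ via Proposition \ref{prop:closed}. The main difference is in the construction step. The paper quotes \cite[Lemma 3.5]{Opd1} to produce an $a\in\mathbb{C}[T]^{W_0}$ that is small on residual cosets with center $\neq W_0c_0$, then multiplies in further factors to kill the remaining positive-dimensional cosets through $c_0$, and finally sets $z(t)=a(t)\,\overline{a(\bar t^{-1})}$. You instead invoke Stone--Weierstrass directly on the compact space $W_0\backslash S(q_0)$---the closure under complex conjugation needed for this is precisely Theorem \ref{thm:real}---to obtain a Hermitian $y$ and put $z=y^2$. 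Your route is more self-contained; the paper's is more constructive. In Stage 3 the paper is also more explicit: it writes $S(q)=\bigcup_{P,r} W_0r(q)T^P_u$ and tracks each compact piece separately as $q$ varies, whereas your closedness argument implicitly needs that same finite decomposition to justify that $\bigcup_{q\in\overline U}S(q)$ stays in a bounded (hence relatively compact) region of $T$. This is a minor gap, easily filled by citing the description of $S(q)$ in the proof of Proposition \ref{prop:closed}.
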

\begin{proof}
According to \cite[Lemma 3.5]{Opd1} for any $\delta>0$ there exist
elements $a\in\mathbb{C}[T]^{W_0}$ such that $a(W_0r_0)=1$ and
such that the uniform norm of $a$ on a $(\mc R,q_0 )$-residual coset $S_{c}(q_0)$
is smaller than $\delta$ for all centers $c$ such that $W_0c\not=W_0c_0$. By
Theorem \ref{thm:nonnested} we know that $r_0$ is disjoint from
the union of the tempered residual cosets of dimension at least
$1$ (in particular, $c_0\not=e$). Hence we can multiply $a$ by
further factors in order to make sure that $a$ is equal to zero on
all tempered residual cosets contained in $S_{c_0}(q_0)$ other
than $r_0$. By taking $\delta$ small enough we can arrange that
the uniform norm of $a$ on all components of $S(q_0)$ other than
the points of $W_0r_0$ is smaller than $\epsilon$. Define
$z\in\mathbb{C}[T]^{W_0}$ by
$z(t):=a(t)\overline{a(\overline{t}^{-1})}$. Using Theorem
\ref{thm:real} we see that $z(r_0)=1$ and that $z$ is nonnegative
on $S(q)$ (for all $q\in\mc{Q}$). This proves (i).

Define two open subsets $V_+:=\{t\in T\mid |z(t)|>1-\epsilon\}$
and $V_-:=\{t\in T\mid |z(t)|<\epsilon\}$ of $T$. By Proposition
\ref{prop:closed} we see that for all $q\in\mc{Q}$ the support
$S(q)$ is the following union of compact subsets
\begin{equation}
S(q)=\bigcup_{P}\bigcup_{r\in\operatorname{Res}(\mc{R}_P)}W_0r(q)T^P_u
\end{equation}
Put $W_0r(q)T^P_u=S(P,r,q)$. By the above it is clear that
$S(P,r,q_0)\subset V_+$ iff $R_P=R_0$ and $W_0r\in\mc{P}(r_0)$. On
the other hand, $S(P,r,q_0)\subset V_-$ iff $R_P=R_0$ and
$W_0r\not\in\mc{P}(R_0)$ or if $R_P\not=R_0$. By the compactness
of the sets $T^P_u$ and the continuity of the generic residual
cosets $r\in\operatorname{Res}(\mc{R_P})$ (viewed as functions on
$\mc{Q}$) it is clear that there exists an open neighborhood $U$
of $q_0$ such that for all $q\in U$, and for all pairs $(P,r)$ we(in which case
there exist precisely two irreducible discrete series characters which have
the same generic central character (unless one of the parameters is $0$))
have: $S(P,r,q)\subset V_-$ iff $S(P,r,q_0)$ and $S(P,r,q)\in V_+$
iff $S(P,r,q_0)\in V_+$. Hence for all $q\in U$ we have
\begin{equation}
S(q)=(S(q)\cap V_+)\cup(S(q)\cap V_-)
\end{equation}
and $S(q)\cap V_+=\mc{P}(r_0)(q)$. From this we easily deduce (ii)
and (iii).
\end{proof}
\index{L2W@$L^2 (W)$, Hilbert space of functions on $W$}
\index{SW@$\mc S (W)$, space of rapidly decaying functions on $W$}
Let $L^2(W)$ denote the abstract Hilbert space with Hilbert basis
$(\tilde{N}_w)_{w\in W}$ indexed by the elements of $W$. We
identify $L^2(W)$ with the Hilbert completion
$L^2(\mc{H}(\mc{R},q))$ (for any fixed $q\in\mc{Q}$) by
identifying $\tilde{N}_w\in L^2(W)$ with the basis element
$N_w\in\mc{H}(\mc{R},q)$. In this way $L^2(W)$ comes equipped with
the structure of a module over the $C^*$-algebra completion of
the pre-$C^*$-algebra $\mc{H}(\mc{R},q)$. By abuse of notation we
will denote the basis elements $\tilde{N}_w$ of the module
$L^2(W)$ simply by $N_w$. Similarly we use the notation
$\mc{S}(W)$ for the abstract Fr\'echet space of functions on $W$
which are of rapid decay with respect to the norm function
$\mc{N}$ on $W$. For each fixed $q\in\mc{Q}$ we identify
$\mc{S}(W)$ with the Fr\'echet algebra completion
$\mc{S}(\mc{R},q)$ of $\mc{H}(\mc{R},q)$.

\index{zzz@$z_q ,\, z$ regarded as an element of $\mc H (\mc R ,q)$}
Given $q\in\mc{Q}$ and $z\in\mathbb{C}[T]^{W_0}$ let
$z_q\in\mc{H}(\mc{R},q)$ denote the element $z$ viewed as an
element of $\mc{H}(\mc{R},q)$ via the isomorphism defined by the
Bernstein basis of the center $\mc{Z}(q)$ of $\mc{H}(\mc{R},q)$
with $\mathbb{C}[T]^{W_0}$. The above Lemma implies that
$z_q\in\mc{H}(\mc{R},q)$ is a positive central element such
that if $q\in U$ its spectrum on $L^2(\mc{H}(\mc{R},q))$ is
contained in $[0,\epsilon)\cup (1-\epsilon,M]$.

\begin{thm}\label{thm:def}
Let $U$, $M>0$ and $\epsilon>0$ be as in the previous Lemma. Let
$e_q:=p_{>1-\epsilon}(z_q)\in\mc{S}(\mc{R},q)$ denote the element
of $\mc{S}(\mc{R},q)$ obtained by holomorphic calculus applied to
$z_q\in\mc{H}(\mc{R},q)$ with respect to a function
$p_{>1-\epsilon}$ on the spectrum that is equal to $0$ in an open
neighborhood of $[0,\epsilon]$ and is equal to $1$ on an open
neighborhood of $[1-\epsilon,M]$.
\begin{enumerate}
\item[(i)] For all $q\in U$, $e_q\in\mc{S}(\mc{R},q)$ is a self-adjoint,
central idempotent.
\item[(ii)] For all $q\in U$
we have an orthogonal decomposition
\index{eq@$e_q$, idempotent in $\mc S (\mc R,q)$ associated to $z_q$}
\index{eq@$e_{\delta (q),q}$, idempotent in $\mc S (\mc R ,q)$, smaller than $e_q$}
\begin{equation}
e_q=
\sum_{W_0r\in\mc{P}(r_0)}\sum_{\delta(q)\in\Delta_{W_0r(q)}(\mc{R},q)}e_{\delta(q),q}
\end{equation}
where $e_{\delta(q),q}$ is the primitive central idempotent of
$\mc{S}(\mc{R},q)$ corresponding to the irreducible discrete
series character $\delta(q)\in\Delta_{W_0r(q)}(\mc{R},q)$
(the set of irreducible discrete series characters of
$\mc{H}(\mc{R},q_0)$ with central character $W_0r_0$).
\item[(iii)] For all $q\in U$ the two-sided ideal
$\mc{I}_q:=e_q\mc{S}(\mc{R},q)\subset\mc{S}(\mc{R},q)$ is a finite
dimensional, semisimple, involutive subalgebra of
$\mc{S}(\mc{R},q)$.
\item[(iv)] The family $q\to e_q\in\mc{S}(\mc{R},q)\simeq\mc{S}(W)$ is
continuous with respect to the parameter $q\in U$.
\item[(v)] The dimension $\operatorname{dim}_{\mathbb{C}}(\mc{I}_q)$ is
independent of $q\in U$.
\item[(vi)] The isomorphism class of $\mc{I}_q$ viewed as a (finite dimensional)
$C^*$-algebra is independent of $q\in U$.
\end{enumerate}
\end{thm}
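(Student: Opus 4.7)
The plan is to exploit the spectral bounds in Lemma \ref{lem:est} through the holomorphic functional calculus on $\mc{S}(\mc{R},q)$, whose $q$-dependence is continuous by Theorem \ref{thm:qcont}. Observe first that $z$ being Hermitian implies $z_q\in\mc{H}(\mc{R},q)$ is self-adjoint, while the Bernstein presentation forces it to be central. Lemma \ref{lem:est} confines its spectrum on $L^{2}(\mc{H}(\mc{R},q))$ to $K:=[0,\epsilon]\cup[1-\epsilon,M]$ for every $q\in U$, so one can choose $p_{>1-\epsilon}$ to be a real-valued function, holomorphic on a fixed open neighborhood of $K$, with $p_{>1-\epsilon}^{2}=p_{>1-\epsilon}$ on $K$. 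Holomorphic calculus then immediately makes $e_q$ a self-adjoint central idempotent of $\mc{S}(\mc{R},q)$, establishing (i); and granted (ii) below, (iii) follows because each summand $e_{\delta(q),q}\mc{S}(\mc{R},q)\cong\operatorname{End}_{\mathbb{C}}(V_{\delta(q)})$ is a full matrix algebra stable under $*$.

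The core of the argument is (ii). Any tempered irreducible $\pi$ extends to $\mc{S}(\mc{R},q)$, and $z_q$ acts on its (finite-dimensional) representation space by the scalar $z(cc_q(\pi))$; by Theorem \ref{thm:rescos}, $cc_q(\pi)\in S(q)$. Lemma \ref{lem:est}(ii)--(iii) then pins this scalar to $[0,\epsilon)$ when $cc_q(\pi)\notin\{W_0r(q)\mid r\in\mc{P}(r_0)\}$ and to $(1-\epsilon,M]$ otherwise, so $e_q=p_{>1-\epsilon}(z_q)$ acts as $0$ in the first case and as $1$ in the second. Integrating against $\mu_{Pl}$ via (\ref{eq:plan}) together with Corollary \ref{cor:idemprim} identifies $e_q$ with the sum of the primitive central idempotents $e_{\pi,q}$ associated to tempered $\pi$ whose central character lies in $\{W_0r(q)\mid r\in\mc{P}(r_0)\}$. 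Finally, Theorem \ref{thm:nonnested} prevents a residual point from being contained in the tempered part of any higher-dimensional residual coset, so by Theorem \ref{thm:rescos} any such $\pi$ is automatically a discrete series, yielding the required decomposition.

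For (iv), the coefficients of $z_q$ on the basis $\{N_w\}$ are Laurent polynomials in the generators $v(s)^{\pm 1}$ through the Bernstein presentation, so $q\mapsto z_q$ is continuous as a map $\mc{Q}\to\mc{S}(W)$; its spectrum stays inside the fixed open set on which $p_{>1-\epsilon}$ is holomorphic; hence Theorem \ref{thm:qcont} yields continuity of $q\mapsto e_q$ in $\mc{S}(W)$. Composing with the continuous inclusion $\mc{S}(W)\hookrightarrow B(L^{2}(W))$ coming from Theorem \ref{thm:frealg} upgrades this to continuity in operator norm. Since norm-close finite-rank projections have equal rank, and the block decomposition obtained in (ii) identifies $\operatorname{rank}(e_q)$ on $L^{2}(W)$ with $\sum_{\delta}(\dim V_{\delta})^{2}=\dim_{\mathbb{C}}\mc{I}_q$, part (v) follows.

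The main obstacle is (vi): upgrading norm-continuity of the projections to constancy of the Bratteli invariant of $\mc{I}_q$. My plan would be to shrink $U$ to a path-connected neighborhood of $q_0$, use the standard fact that sufficiently norm-close self-adjoint projections in $B(L^{2}(W))$ are conjugate by a unitary depending continuously on $q$, and transport the primitive central idempotents $e_{\delta(q),q}$ summing to $e_q$ continuously along a path from $q_0$ to $q$. Since the $*$-isomorphism class of a finite-dimensional $C^{*}$-algebra is a discrete invariant (the multiset of matrix block sizes), any continuous family through a connected neighborhood is forced to be constant, which completes (vi).
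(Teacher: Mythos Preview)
Your treatment of (i)--(v) is essentially the paper's argument, fleshed out with welcome detail in (ii) (the appeal to Theorem~\ref{thm:nonnested} to force any tempered $\pi$ with central character a residual point to be discrete series is exactly the point). One small imprecision in (v): the inclusion $\mc S(W)\hookrightarrow B(L^2(W))$ via $\lambda$ depends on $q$, so you need joint continuity of $(x,q)\mapsto\lambda(x,q)$, not just Theorem~\ref{thm:frealg}; this is Proposition~\ref{prop:A.estopnorm} (what the paper cites as \cite[Proposition~5.6]{Sol}).

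Your plan for (vi), however, has a genuine gap. A unitary $u_q$ conjugating the projection $\lambda(e_q)$ to $\lambda(e_{q_0})$ in $B(L^2(W))$ gives an isomorphism of the \emph{ranges}, but there is no reason $u_q$ should carry the subalgebra $\lambda(\mc I_q)\subset B(L^2(W))$ onto $\lambda(\mc I_{q_0})$: the unitary is built only from the two projections and knows nothing about the internal multiplicative structure of $\mc I_q$. Likewise, ``transporting the primitive central idempotents $e_{\delta(q),q}$ continuously'' is circular here: their continuity is precisely Theorem~\ref{thm:cont}, whose proof uses (vi). And the assertion that ``any continuous family is forced to be constant'' begs the question of what continuity of the family of \emph{algebras} means---constancy of $\dim\mc I_q$ alone does not distinguish $M_2(\mathbb C)$ from $\mathbb C^4$.

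The paper closes this gap by transporting not the central idempotents but a full system of \emph{matrix units}. Fix matrix units $m^{(i)}_{j,k}(q_0)$ of $\mc I_{q_0}$ and set $\tilde m^{(i)}_{j,k}(q):=e_q\cdot_q m^{(i)}_{j,k}(q_0)\in\mc I_q$. Continuity of $e_q$ in $\mc S(W)$ together with joint continuity of the $q$-multiplication and of $\lambda$ shows these form an $\epsilon'$-approximate system of matrix units in the $C^*$-algebra $\mc I_q$ for $q$ near $q_0$. Then \cite[Lemma~2.3]{BKR} perturbs approximate matrix units to genuine ones, yielding $\mc I_q\cong\mc I_{q_0}$ as $C^*$-algebras; a covering argument extends this to all of $U$. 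This is the missing mechanism that turns norm-closeness of the ambient idempotents into an honest algebra isomorphism.
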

\begin{proof}
By the previous Lemma it is clear that $p_{>1-\epsilon}$ is
holomorphic on the spectrum of $z_q$, hence we may apply
holomorphic functional calculus. Hence (i) follows from the fact
that $\mc{S}$ is closed for holomorphic functional calculus,
see Theorem \ref{thm:qcont}, and the basic properties of the holomorphic
functional calculus. The assertion (ii) follows from the previous
Lemma and the definition of the idempotent $e_q$. The finite
dimensionality  of $\mc{I}_q$ follows simply from (ii). Clearly
$\mc{I}_q$ is an involutive algebra because $e_q$ is central and
self-adjoint. Thus the trace $\tau$ and the anti-involution $*$
give rise to a positive definite Hermitian inner product on
$\mc{I}_q$ with the property $(ab,c)=(b,a^*c)$. Hence $\mc{I}_q$
is a semisimple subalgebra, proving (iii). It is easy to see that
$U\ni q\to z_q\in\mc{S}(W)$ is a continuous family (by expressing
$z$ in the $N_w$ basis of $\mc{H}(\mc{R},q)$). Hence (iv) follows
from the continuity of the holomorphic functional calculus, see
Theorem \ref{thm:qcont}. For (v) we first remark that it is
clear that for all $q\in U$ the projection
$\lambda(e_q)\in\mathfrak{B}(L^2(\mc{H}(\mc{R},q)))$ (where
$\lambda$ denotes the left regular representation) is of finite
rank (since only finitely many central characters support the
image of $e_q$ by construction). On the other hand it is clear
from Theorem \ref{thm:qcont} and \cite[Proposition 5.6]{Sol}
that this family of projections is norm continuous in
$\mathfrak{B}(L^2(\mc{H}(\mc{R},q)))$, implying in particular that
the rank is constant in the family. Finally observe that
$\mc{I}_q=\lambda(e_q)(L^2(\mc{H}(\mc{R},q)))$.
In order to prove (vi) we use
the notion of approximate matrix units in a $C^*$-algebra
\cite[Definition 2.2]{BKR}.
Let $m^{(i)}_{j,k}(q_0)$ be a basis
of matrix units of $\mc{I}_{q_0}$. Given an element $q\in U$ we define
$\tilde{m}^{(i)}_{j,k}(q)=e_q\cdot m^{(i)}_{j,k}(q_0)$, where in
the right hand side we view $m^{(i)}_{j,k}(q_0)$ as an element of
$\mc{S}(\mc{R},q)$ via the canonical isomorphism
$\mc{S}(W)\simeq\mc{S}(\mc{R},q)$. Let $\epsilon^\prime>0$. By
(iv), (v) and \cite[Proposition 5.6]{Sol} we obtain that there
exists an open neighborhood $q_0\in U_{\epsilon^\prime}\subset U$
of $q_0$ such that for all $q\in U_{\epsilon^\prime}$ the elements
$\tilde{m}^{(i)}_{j,k}(q)$ form a basis of
${\epsilon^\prime}$-approximate matrix units of $\mc{I}_q$. This
means that for all $i,j,k,l,m,n$ and for all $q\in
U_{\epsilon^\prime}$ we have
\begin{equation}
\Vert \tilde{m}^{(i)}_{j,k}(q)\tilde{m}^{(l)}_{m,n}(q)-
\delta_{i,l}\delta_{k,m}\tilde{m}^{(i)}_{j,n}(q)
\Vert<{\epsilon^\prime}
\end{equation}
and
\begin{equation}
\Vert\tilde{m}^{(i)}_{j,k}-(\tilde{m}^{(i)}_{k,j})^*\Vert<{\epsilon^\prime}
\end{equation}
(where the norm refers to the $C^*$-algebra norm).
Now \cite[Lemma 2.3]{BKR} implies that for
${\epsilon^\prime}>0$ sufficiently small there exists a basis of
matrix units $m^{(i)}_{j,k}(q)$ of $\mc{I}_q$ with the property
that for all $i,j,k$:
\begin{equation}
\Vert \tilde{m}^{(i)}_{j,k}(q)-m^{(i)}_{j,k}(q)\Vert<{\epsilon^\prime}
\end{equation}
In particular it follows that $\mc{I}_q$ for $q\in
U_{\epsilon^\prime}$ is isomorphic to $\mc{I}_{q_0}$ as a finite
dimensional $C^*$-algebra. Using a suitable open covering of $U$
this result extends easily to $q\in U$, proving $(vi)$.
\end{proof}

\begin{thm}\label{thm:cont}
Keep the notations as in Theorem \ref{thm:def}.
Let $r_0\in\operatorname{Res}(\mc{R},q_0)$.
\begin{enumerate}
\item[(i)] There exists an open neighborhood $U$ of $q_0$ such that
for each $\delta_0\in\Delta_{W_0r_0}(\mc{R},q_0)$
there exists a
unique family of primitive central idempotents $U\ni q\to
e_{\delta(q),q}\in \mc{S}(\mc{R},q)= \mc{S}(W)$ with the following
properties:
\begin{enumerate}
\item[(a)] $\delta(q_0)=\delta_0$.
\item[(b)] The function $U\ni q\to\lambda(e_{\delta(q),q},q)
\in\mathcal{B}(L^2(W))$ is continuous.
\item[(c)] For all $q\in U$, the value $e_{\delta(q),q}\in\mc{I}_q$
of this function is a primitive central idempotent.
\item[(d)] The degree of the irreducible character $\delta(q)$ of
$\mc{I}_q$ afforded by $e_{\delta(q),q}$ is independent of $q$.
\item[(e)] For all $q\in U$ the set
$\{e_{\delta(q),q}\}_{\delta(q_0)\in\Delta_{W_0r_0}(\mc{R},q_0)}$ is
the complete set of mutually inequivalent primitive central
idempotents of $\mc{I}_q$.
\end{enumerate}

\item[(ii)] The continuous families
of primitive central idempotents $U\ni q\to e_{\delta(q),q}$ (with
$\delta(q_0)\in\Delta_{W_0r_0}(\mc{R},q_0)$) define, for
all $q\in U$, a canonical bijection $\delta(q_0)\to\delta(q)$
between the set $\Delta_{W_0r_0}(\mc{R},q_0)$ and the union
\begin{equation}
\bigcup_{W_0r\in\mc{P}(r_0)}\Delta_{W_0r(q)}(\mc{R},q)
\end{equation}
\end{enumerate}
\end{thm}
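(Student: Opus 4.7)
The plan is to combine Theorem \ref{thm:def} with a continuous deformation argument inside the finite-dimensional $C^*$-algebras $\mc{I}_q = e_q\mc{S}(\mc{R},q)$ of constant isomorphism type. By Corollary \ref{cor:affidem} and part (ii) of Theorem \ref{thm:def}, the primitive central idempotents of $\mc{I}_{q_0}$ are exactly the idempotents $e_{\delta_0,q_0}$ for $\delta_0\in\Delta_{W_0r_0}(\mc{R},q_0)$; more generally, the primitive central idempotents of $\mc{I}_q$ are exactly the $e_{\delta(q),q}$ with $\delta(q)\in\Delta_{W_0r(q)}(\mc{R},q)$ for some $W_0r\in\mc{P}(r_0)$. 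So part (i) amounts to producing a continuous path of primitive central idempotents of $\mc{I}_q$ starting from $e_{\delta_0,q_0}$, and part (ii) will follow by combining the bijection from (i)(e) with Theorem \ref{thm:def}(ii).

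First I would fix $\delta_0$ and choose a basis of matrix units $\{m^{(i)}_{j,k}(q_0)\}$ of $\mc{I}_{q_0}$ adapted to the decomposition into simple summands, with $e_{\delta_0,q_0} = \sum_{j=1}^{n_{i_0}} m^{(i_0)}_{j,j}(q_0)$, where $i_0$ labels the summand corresponding to $\delta_0$. Following the proof of Theorem \ref{thm:def}(vi), I form the approximate matrix units $\tilde{m}^{(i)}_{j,k}(q) := e_q\cdot m^{(i)}_{j,k}(q_0)\in\mc{I}_q$, viewing $m^{(i)}_{j,k}(q_0)$ as a fixed element of $\mc{S}(W)$. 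By Theorem \ref{thm:def}(iv), (v) together with \cite[Proposition 5.6]{Sol}, on a sufficiently small connected open neighborhood $U_{\epsilon'}\subset U$ of $q_0$ these form $\epsilon'$-approximate matrix units, and \cite[Lemma 2.3]{BKR} yields genuine matrix units $m^{(i)}_{j,k}(q)$ of $\mc{I}_q$ close to $\tilde{m}^{(i)}_{j,k}(q)$ in the $C^*$-norm. Setting
\[
e_{\delta(q),q} := \sum_{j=1}^{n_{i_0}} m^{(i_0)}_{j,j}(q)
\]
one obtains the desired family. Properties (a), (c), (d) are immediate from the matrix unit calculus; property (b) is the norm-continuity of the approximate matrix units combined with the perturbation to genuine ones; and (e) follows from Theorem \ref{thm:def}(ii) together with Theorem \ref{thm:def}(vi) (constant number of simple summands of $\mc{I}_q$).

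For uniqueness, suppose $q\mapsto e'_q$ is another family satisfying (a)--(c) on the connected neighborhood $U$. For each $q$, the set of primitive central idempotents of $\mc{I}_q$ is finite; moreover, two distinct self-adjoint projections $p\neq p'$ in any $C^*$-algebra satisfy $\|p-p'\|\geq 1$ unless they do not commute, and distinct central projections do commute and sum to an idempotent of strictly larger rank, so they are bounded apart uniformly in $q\in U$ (using the constancy of the isomorphism type from Theorem \ref{thm:def}(vi)). Hence the subset $\{q\in U \colon e'_q = e_{\delta(q),q}\}$ is open (by uniform separation), closed (by norm-continuity), and nonempty (by (a)); connectedness of $U$ forces equality throughout. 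Part (ii) then follows: the map $\delta_0\mapsto\delta(q)$ defined by $q\mapsto e_{\delta(q),q}$ is a bijection from $\Delta_{W_0r_0}(\mc{R},q_0)$ onto the set of primitive central idempotents of $\mc{I}_q$, which by Theorem \ref{thm:def}(ii) is exactly $\bigcup_{W_0r\in\mc{P}(r_0)}\Delta_{W_0r(q)}(\mc{R},q)$.

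The main technical obstacle is the uniqueness assertion: one must verify that the set of primitive central idempotents in the continuously varying algebra $\mc{I}_q$ remains discrete uniformly in $q$, in the norm of $\mathcal{B}(L^2(W))$ relevant for property (b). This is where the constancy of the $C^*$-algebra isomorphism class given by Theorem \ref{thm:def}(vi) is essential; without it one could not rule out two primitive central idempotents of $\mc{I}_q$ approaching each other as $q\to q_0$.
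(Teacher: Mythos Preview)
Your proposal is correct and follows essentially the same approach as the paper: build the central primitive idempotents as $\sum_j m^{(i)}_{j,j}(q)$ from the matrix units obtained via \cite[Lemma 2.3]{BKR} in the proof of Theorem~\ref{thm:def}(vi), and deduce uniqueness from the fact that distinct central primitive idempotents of $\mc{I}_q$ are norm-separated. The paper phrases the latter as a rigidity statement (two norm-close central primitive idempotents are unitarily equivalent, hence equal), and uses this same rigidity to show that $e^{(i)}(q)$ is independent of the particular choice of corrected matrix units---which is what actually yields the continuity (b), since the BKR lemma alone does not give continuous dependence of the $m^{(i)}_{j,k}(q)$ on $q$; you gloss over this step, but it is easily filled in along the paper's lines.
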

\begin{proof}
Using the notations of the previous Theorem, we define for all
$q\in U_{\epsilon^\prime}$ and for all $i$:
\begin{equation}
e^{(i)}(q):=\sum_j m^{(i)}_{j,j}(q)
\end{equation}
This is a primitive central idempotent in $\mc{I}_q$ which is
independent of the choices of the matrix units $m^{(i)}_{j,k}(q)$.
Indeed, another choice of the matrix units would lead to a central
primitive idempotent norm close to $e^{(i)}(q)$. This implies
unitary equivalence in the $C^*$-algebra $\mc{I}_q$ of these
idempotents, but since these idempotents are also central, unitary
equivalence means actual equality. It follows from this argument
that the family of central primitive idempotents
$U_{\epsilon^\prime}\ni q\to e^{(i)}(q)$ is continuous at $q_0$ in
the following sense: The family of bounded operators
$U_{\epsilon^\prime}\ni q\to\lambda(e^{(i)}(q),q)$ on
$L^2(\mc{H}(\mc{R},q)=L^2(W)$ is continuous at $q_0$. Using the
independence of the central primitive idempotents for the choice
of the matrix units we may repeat this arguments for any $q\in
U_{\epsilon^\prime}$ to prove that the families
$U_{\epsilon^\prime}\ni q\to e^{(i)}(q)$ are continuous on
$U_{\epsilon^\prime}$. If we put $U:=U_{\epsilon^\prime}$ it is
now straightforward to prove the listed properties of (a)-(e) for
the constructed continuous families $e^{(i)}$ of primitive
idempotents. Finally the uniqueness follows again from the above
rigidity argument for central primitive idempotents, in
combination with the continuity, proving (i).

In view of Theorem \ref{thm:def}(ii) this sets up, for each value
of $q\in U$, a bijection between the set of continuous (in the
above sense) families of primitive central idempotents $e^{(i)}$
and set of irreducible discrete series characters
$\delta(q)\in\Delta_{W_0r(q)}(\mc{R},q)$ where $W_0r$ runs over the
set $W_0r\in\mc{P}(r_0)$. This proves (ii).
\end{proof}
The above notion of continuity of a $q$-family of irreducible discrete
series characters is special for discrete series characters:
\begin{defn}\label{def:cont}
Let $q_0\in\mc{Q}$ and let $\delta_0\in\Delta(\mc{R},q_0)$.
For $q\in U$ (as above) we denote by $\delta(q)$ the equivalence
class of irreducible discrete series representations afforded by
$e_{\delta(q),q}$. For any open set $U\subset\mc{Q}$ we refer to such
a family $\delta:q\to\delta(q)$ of equivalence classes of
representations afforded by a continuous family
of central primitive idempotents in $\mc{S}$
(in the above sense, thus in the operator norm of $\mc{B}(L^2(W))$)
as a ``continuous family of irreducible discrete series characters
on $U$''. We denote the set of such continuous families by $\Delta(\mc{R},U)$.
\end{defn}
There is also a weaker notion of continuity for a $q$-family of
characters which is applicable to more general characters:

\index{1dRU@$\Delta(\mc{R},U) \,, \Delta^{wk}(\mc{R},U)$, (weakly) continuous
families of irreducible discrete series characters on $U \subset \mc Q$}
\begin{defn}\label{defn:wkcont}
Let $U\ni q\to {\pi(q)}$ be a family of equivalence classes of
irreducible representations $\pi(q)$ of $\mc{Q}(\mc{R},q)$.
We say that $q\to\pi(q)$ is a \emph{weakly continuous} family of irreducible
characters of $\mc{H}(\mc{R})$ if $U\ni q\to\chi_{\pi(q)}(N_w)$ is a continuous
function for all $w\in W$.

We denote by $\Delta^{wk}(\mc{R},U)$ be the set of
weakly continuous families $U\ni q\to\delta(q)$ of irreducible
discrete series characters (i.e. weakly continuous families $q\ni U\to\delta(q)$
such that for all $q\in U$ we have $\chi_{\delta(q)}\in\Delta(\mc{R},q)$).
\end{defn}

Continuity of a family of discrete series characters implies weak continuity:
\begin{prop}\label{prop:cont}
Let $U\subset\mc{Q}$ and let $\delta\in\Delta(\mc{R},U)$. Then the
family $q\to\delta(q)$ is also weakly continuous.
\end{prop}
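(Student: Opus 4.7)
The plan is to extract the character values of $\delta(q)$ from the trace-idempotent relation of Corollary \ref{cor:idemprim}, namely $\tau(e_{\delta(q),q}\,x)=\mu_{Pl}(\{\delta(q)\})\,\chi_{\delta(q)}(x)$ for $x\in\mc{H}(\mc{R},q)$. Fixing $w\in W$ and applying this to $x=N_w$, we get
\begin{equation*}
\chi_{\delta(q)}(N_w)\;=\;\frac{\tau\bigl(e_{\delta(q),q}\,N_w\bigr)}{\mu_{Pl}(\{\delta(q)\})}\,,
\end{equation*}
provided the denominator is nonzero. So the proof reduces to (a) continuity of the numerator, and (b) continuity and positivity of the denominator in $q\in U$.

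First I would handle the numerator. Under the identification $\mc{S}(\mc{R},q)\simeq\mc{S}(W)$, the trace $\tau$ is simply $\tau(h)=(N_e,h)$ in the Hilbert inner product on $L^2(W)$, since $\{N_w\}$ is orthonormal and $\tau(N_w)=\delta_{w,e}$. Therefore
\begin{equation*}
\tau\bigl(e_{\delta(q),q}\,N_w\bigr)\;=\;\bigl(N_e,\;\lambda(e_{\delta(q),q},q)\,N_w\bigr)_{L^2(W)}.
\end{equation*}
By the definition of a continuous family (Definition \ref{def:cont}), the map $U\ni q\mapsto \lambda(e_{\delta(q),q},q)\in\mc{B}(L^2(W))$ is norm continuous, hence $\lambda(e_{\delta(q),q},q)N_w$ varies continuously in $L^2(W)$, and taking the inner product with the fixed vector $N_e$ gives a continuous function of $q$.

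Next I would treat the denominator. Specialising the same trace-idempotent relation to $x=1=N_e$ yields
\begin{equation*}
\tau(e_{\delta(q),q})\;=\;\mu_{Pl}(\{\delta(q)\})\,\chi_{\delta(q)}(1)\;=\;\mu_{Pl}(\{\delta(q)\})\,\dim\delta(q).
\end{equation*}
By Theorem \ref{thm:cont}(i)(d) the degree $\dim\delta(q)$ is constant on $U$, and the left-hand side $\tau(e_{\delta(q),q})=(N_e,\lambda(e_{\delta(q),q},q)N_e)_{L^2(W)}$ is continuous in $q$ by the same operator-norm argument as above. Hence $q\mapsto \mu_{Pl}(\{\delta(q)\})$ is continuous on $U$, and it is strictly positive by Corollary \ref{cor:dix} since each $\delta(q)$ is a discrete series character. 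Dividing, $\chi_{\delta(q)}(N_w)$ is continuous in $q\in U$ for every $w\in W$, which is precisely weak continuity in the sense of Definition \ref{defn:wkcont}.

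There is no serious obstacle; the only subtlety is to recognise that continuity of $e_{\delta(q),q}$ is asserted in the operator norm on $L^2(W)$, not in the Fréchet topology of $\mc{S}(W)$, so one must express both $\tau(e_{\delta(q),q}N_w)$ and $\tau(e_{\delta(q),q})$ as $L^2$ inner products against fixed vectors rather than as Fréchet-continuous evaluations.
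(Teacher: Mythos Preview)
Your proof is correct and follows essentially the same route as the paper: both express $\chi_{\delta(q)}(N_w)$ via the trace--idempotent relation (the paper writes it as $\chi_{\delta(q)}(N_w)=\deg(\delta(q))\,\tau(e_{\delta(q),q}N_w)/\tau(e_{\delta(q),q})$, which is equivalent to your formula), then invoke operator-norm continuity of $q\mapsto\lambda(e_{\delta(q),q},q)$ together with constancy of the degree (Theorem~\ref{thm:cont}(i)(b),(d)) and positivity of $\mu_{Pl}$. Your write-up is in fact slightly more explicit than the paper's in spelling out why operator-norm continuity yields continuity of the scalar $\tau(e_{\delta(q),q}N_w)$ via the $L^2$ inner product.
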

\begin{proof}
Indeed, by the Plancherel formula for $\mc{H}(\mc{R},q)$ we have
\begin{equation}\label{eq:fdim2}
\tau(e_{\delta(q),q})=\operatorname{deg}(\delta(q))\mu_{Pl}(\delta(q))
\end{equation}
and hence this function is positive, and continuous by Theorem
\ref{thm:cont}(i)(b). Hence the basic formula

\begin{equation}\label{eq:char}
\chi_{\delta(q)}(N_w)=\operatorname{deg}(\delta(q))
\frac{\tau(e_{\delta(q),q}N_w)}{\tau(e_{\delta(q),q})}
\end{equation}
combined with Theorem \ref{thm:cont}(i)(b),(d) implies the desired continuity.
\end{proof}

\index{ccd@$cc (\delta )$, generic central character map $\mc Q \supset U \to W_0 \backslash T$}
\begin{prop}\label{prop:cc}
Let $\delta\in\Delta^{wk}(\mc{R},U)$. We define the generic
central character map $cc(\delta,\cdot):U\to W_0\backslash T$ by
$cc(\delta,q)=cc(\delta(q))$. Then $cc(\delta)$ is continuous and
for all $q\in U$ we have
$cc(\delta,q)\in\operatorname{Res}(\mc{R},q)$.
\end{prop}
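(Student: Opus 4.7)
The plan is to treat the two conclusions separately. That $cc(\delta,q)$ lies in $W_0\backslash\operatorname{Res}(\mc{R},q)$ for every $q\in U$ is immediate: by hypothesis each $\delta(q)$ is an irreducible discrete series character of $\mc{H}(\mc{R},q)$, so Theorem \ref{thm:rescos} directly forces $cc(\delta,q)$ to be the $W_0$-orbit of a residual point. The substance of the proposition therefore lies in the continuity assertion, and the strategy is standard: test continuity against the coordinate ring of the target variety.

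More precisely, $W_0\backslash T$ is an affine variety with coordinate algebra $\mathbb{C}[X]^{W_0}$, so (with analytic topologies on both sides) a map $U\to W_0\backslash T$ is continuous iff, for every $z\in\mathbb{C}[X]^{W_0}$, the composition $q\mapsto cc(\delta,q)(z)$ is a continuous function on $U$. Fix such a $z=\sum_x c_x\cdot x$ (a finite sum) and let $z_q:=\sum_x c_x\,\theta_x\in\mc{Z}\subset\mc{H}(\mc{R},q)$ be the corresponding central element via the Bernstein presentation. Since $z_q$ is central and $\delta(q)$ is irreducible, $z_q$ acts on the representation space by the scalar $cc(\delta,q)(z)$, whence
\[
cc(\delta,q)(z)=\chi_{\delta(q)}(z_q)/\dim\delta(q).
\]

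The key input is that $q\mapsto z_q$ varies continuously in the common ambient space spanned by the basis elements $\{N_w\}_{w\in W}$. This follows from the cross relation (\ref{eq:ber}) by a straightforward induction on the length function: each $\theta_x$ expands as $\theta_x=\sum_w a_w^{(x)}(q)\,N_w$ with finitely many nonzero coefficients $a_w^{(x)}$, each a Laurent polynomial in the $v(s)^{\pm 1}$, hence continuous on $\mc{Q}$. Combined with the weak continuity hypothesis that $q\mapsto\chi_{\delta(q)}(N_w)$ is continuous for all $w\in W$, this yields continuity of $q\mapsto\chi_{\delta(q)}(z_q)=\sum_{x,w}c_x\,a_w^{(x)}(q)\,\chi_{\delta(q)}(N_w)$. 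Finally, $\dim\delta(q)=\chi_{\delta(q)}(N_e)$ is continuous and $\mathbb{Z}_{>0}$-valued, so it is locally constant and bounded away from $0$ on $U$; the quotient above is therefore continuous in $q$. There is no genuine obstacle here — the one minor caveat is that $\dim\delta(q)$ need not be globally constant on $U$ (only locally constant), but this is harmless since continuity is a local property.
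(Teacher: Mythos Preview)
Your proof is correct and is essentially a detailed unpacking of the paper's one-line argument (``a trivial consequence of Theorem~\ref{thm:rescos} and Proposition~\ref{prop:cont}''): the residual-point claim comes straight from Theorem~\ref{thm:rescos}, and continuity of the central character follows from weak continuity together with the fact that each $\theta_x$ expands as a finite $\Lambda$-linear combination of the $N_w$. One small quibble: the polynomial dependence of $\theta_x$ on the parameters comes not really from the cross relation~(\ref{eq:ber}) but from the definition $\theta_x=N_{x_1}N_{x_2}^{-1}$ with $x_1,x_2\in X^+$ and the inversion formula $N_s^{-1}=N_s-(v(s)-v(s)^{-1})$; the conclusion is the same.
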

\begin{proof} This is a trivial consequence of Theorem
\ref{thm:rescos} and Proposition \ref{prop:cont}.
\end{proof}
In fact it is true that $cc(\delta)\in W_0\backslash\textup{Res}(\mc{R})$, but
this is not obvious at this point. This result will be shown in
Theorem \ref{thm:gencc}.

Actually weak continuity and continuity
are equivalent for families of discrete series characters. We have:
\index{1dR@$\Delta (\mc R) \,, \Delta^{wk}(\mc{R})$, sheaf on $\mc Q$ of (weakly)
continuous irreducible discrete series characters}
\index{1dN@$\Delta_{\mh N} (\mc R) \,, \Delta_{\mh N}^{wk}(\mc{R})$, sheaf on
$\mc Q$ of (weakly) continuous discrete series characters}
\index{1dNU@$\Delta_{\mh N} (\mc R ,U)$, continuous families of discrete series characters
on $U \subset \mc Q$}
\begin{thm}\label{thm:wkcnt}
Consider the sheaves $\Delta(\mc{R})$ and $\Delta^{wk}(\mc{R})$ on
$\mc{Q}$ defined by the presheaves $U\to\Delta(\mc{R},U)$ and
$U\to\Delta^{wk}(\mc{R},U)$, respectively.
\begin{enumerate}
\item[(i)] The natural sheaf map $\Delta(\mc{R})\to\Delta^{wk}(\mc{R})$ is
an isomorphism.
\item[(ii)] Let $\Delta_\mathbb{N}(\mc{R})$ denote the sheaf of
nonnegative integral linear combinations of $\Delta(\mc{R})$, and
let $\Delta_\mathbb{N}^{wk}(\mc{R})$ denote the sheaf of weakly
continuous families of (not necessarily irreducible) discrete
series characters. The natural map
$\Delta_\mathbb{N}(\mc{R})\to\Delta_\mathbb{N}^{wk}(\mc{R})$ is an
isomorphism.
\end{enumerate}
\end{thm}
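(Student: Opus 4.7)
The plan is to establish the nontrivial direction of (i), namely that every weakly continuous family of irreducible discrete series characters on an open $U\subset\mc{Q}$ is continuous in the stronger sense of Definition \ref{def:cont}; the reverse implication is Proposition \ref{prop:cont}. Given $\delta\in\Delta^{wk}(\mc{R},U)$ and $q_0\in U$, set $W_0r_0:=cc(\delta(q_0))$. Applying Theorem \ref{thm:cont} at $q_0$ produces an open neighborhood $U_0\subset U$ of $q_0$ and a finite collection of continuous families $\{\tilde\delta_j\}_{j\in J}\subset\Delta(\mc{R},U_0)$, indexed by $J:=\Delta_{W_0r_0}(\mc{R},q_0)$, with the property that for every $q\in U_0$ one has
\[
\{\tilde\delta_j(q):j\in J\}=\bigcup_{W_0r\in\mc{P}(r_0)}\Delta_{W_0r(q)}(\mc{R},q),
\]
and there is a unique $j_0\in J$ with $\tilde\delta_{j_0}(q_0)=\delta(q_0)$. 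By Proposition \ref{prop:cont} each $\tilde\delta_j$ is itself weakly continuous, so it suffices to show $\delta=\tilde\delta_{j_0}$ on some neighborhood of $q_0$.

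The first substantive step is to confine $\delta(q)$ to the finite list $\{\tilde\delta_j(q)\}_{j\in J}$ for $q$ near $q_0$. By Proposition \ref{prop:cc} the map $q\mapsto cc(\delta(q))$ is continuous with values in $W_0\backslash\operatorname{Res}(\mc{R},q)$ (Theorem \ref{thm:rescos}). Combining the finiteness of $\operatorname{Res}(\mc{R})$ (Proposition \ref{prop:fin}), the continuity in $q$ of each generic residual point, and the surjectivity of the evaluation map (Proposition \ref{prop:resdef}), a straightforward sequential compactness argument yields a smaller neighborhood of $q_0$ on which every $(\mc{R},q)$-residual orbit close to $W_0r_0$ is of the form $W_0r(q)$ with $W_0r\in\mc{P}(r_0)$. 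Hence $\delta(q)$ lies in $\{\tilde\delta_j(q):j\in J\}$ for all $q$ near $q_0$.

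To single out $\tilde\delta_{j_0}$ I would invoke the linear independence of the finitely many irreducible characters $\{\chi_{\tilde\delta_j(q_0)}\}_{j\in J}$: since they are pairwise distinct, some $h\in\mc{H}$ separates them. By the weak continuity of the families $\tilde\delta_j$ and of $\delta$, on a possibly smaller neighborhood of $q_0$ the values $q\mapsto\chi_{\tilde\delta_j(q)}(h)$ remain pairwise distinct and $\chi_{\delta(q)}(h)=\chi_{\tilde\delta_{j_0}(q)}(h)$, which forces $\delta(q)=\tilde\delta_{j_0}(q)$ there and proves (i). For (ii), a weakly continuous family $\chi\in\Delta_{\mathbb{N}}^{wk}(\mc{R},U)$ admits a pointwise decomposition $\chi(q)=\sum_i n_i(q)\chi_{\delta_i(q)}$ with irreducible $\delta_i(q)$; applying (i) locally lifts each $\delta_i$ to a continuous family, and linear independence of irreducible characters combined with the continuity of $\chi$ forces the integer multiplicities $n_i(q)\in\mathbb{Z}_{\geq 0}$ to be locally constant, so $\chi$ itself lies in $\Delta_{\mathbb{N}}(\mc{R},U)$. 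The main obstacle, in my view, is the first substantive step: controlling the central characters of $\delta(q)$ near $q_0$ so as to reduce to the finite list of continuous families supplied by Theorem \ref{thm:cont} requires carefully combining the structural results on (generic) residual points recalled above.
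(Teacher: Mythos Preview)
Your argument for (i) is correct and follows essentially the same route as the paper: invoke Theorem~\ref{thm:cont} at $q_0$ to produce the finite list $\{\tilde\delta_j\}_{j\in J}$, confine $\delta(q)$ to this list via continuity of the central character, and then separate using linear independence and weak continuity. The paper states the confining step more tersely (citing only Proposition~\ref{prop:cc}), but your unpacking via Propositions~\ref{prop:fin} and~\ref{prop:resdef} is a legitimate way to justify it.

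Your sketch of (ii), however, has a gap. First, the phrase ``applying (i) locally lifts each $\delta_i$ to a continuous family'' is imprecise: part (i) applies to \emph{weakly continuous} irreducible families, whereas the irreducible constituents $\delta_i(q)$ of a pointwise decomposition of $\chi(q)$ are not known to vary weakly continuously in $q$. What you actually need is to decompose $\chi(q_0)$ and lift each constituent via Theorem~\ref{thm:cont} directly. More importantly, the confining step you correctly identified as ``the main obstacle'' for (i) recurs for (ii) and is not addressed: before the locally-constant-multiplicity argument can run, you must show that for $q$ near $q_0$ every irreducible constituent of $\chi(q)$ has central character in $\bigcup_i\{W_0r(q):W_0r\in\mc{P}(r_{0,i})\}$ and hence lies among the finitely many lifted families. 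The paper handles this differently from your approach: it first restricts to the generic locus $U\cap\mc{Q}^{\mathrm{gen}}$, where the decomposition of $\chi$ by generic central character is unambiguous, and then uses a separating central element $z\in\mc{Z}$ vanishing at the $W_0r_{0,i}$ to show (via continuity of $\chi_{\delta(q)}(z)$ at $q_0$) that no extraneous generic residual orbits contribute. Your direct approach can be completed along similar lines, but the confining step must be carried out explicitly.
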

\begin{proof} It is clear that all presheaves involved
are sheaves of sets.

Let us prove (i). Given $\delta\in\Delta^{wk}(\mc{R},U)$ we need
to show that $\delta$ is continuous in the strong sense. Let
$q_0\in U$, and let $W_0r_0$ be the central character of
$\delta(q_0)$. By Theorem \ref{thm:cont}(ii) there exists a
neighborhood $V\subset\mc{Q}$ of $q_0$ such that for any
$\sigma\in\Delta_{W_0r_0}(\mc{R},q_0)$ there exists
$\tilde\sigma\in\Delta(\mc{R},V)$ such that
$\sigma=\tilde\sigma_{q_0}:=\operatorname{ev}_{q_0}(\tilde\sigma)$
(the evaluation of the strongly continuous family $\tilde\sigma$
at $q_0\in V$). Moreover
Theorem \ref{thm:cont}(ii) asserts that for all $q\in V$ the
irreducible discrete series characters $\tilde\sigma_q$ (with
$\sigma\in\Delta(\mc{R},q_0)$) are mutually distinct and range
over the set of all irreducible discrete series characters of
$\mc{H}(\mc{R},q)$ whose central character is of the form
$W_0r(q)$ for some generic $W_0r\in\mc{P}(r_0)$. Now consider
$\delta\in\Delta^{wk}(\mc{R},U)$. By Proposition \ref{prop:cc} it
is clear that for all $q\in V$ the central character
$cc(\delta(q))$ is of the form $W_0r^\prime(q)$ for some
$W_0r^\prime\in\mc{P}(r_0)$. The linear independence of
irreducible characters, the finiteness of
$\Delta_{W_0r_0}(\mc{R},q_0)$ and Proposition \ref{prop:cont}
imply that there exists a finite set $A\subset W$ and a
neighborhood $V^\prime\ni q_0$ such that for all fixed $q\in
V^\prime$ the finite set of vectors
$\Sigma(q):=\{\xi_\sigma^A(q)\in\mathbb{C}^A\mid
\sigma\in\Delta_{W_0r_0}(\mc{R},q_0)\}$ with
$\xi_\sigma^A(q):=(\chi_{\tilde\sigma_q}(N_w))_{w\in A}$ is linearly
independent. In particular the irreducible characters
$\tilde\sigma_q$ are separated by the vector $\xi_\sigma^A(q)$ of
their values on $N_w$ with $w\in A$. Obviously the maps
$\xi_\sigma^A:U\to\mathbb{C}^A$ are continuous. By the weak
continuity of $\delta$ it follows similarly that the map
$\xi_\delta^A:U\to\mathbb{C}^A$ is continuous and by the above, for
all $q\in V$ we have $\xi_\delta^A(q)\in\Sigma(q)$. This implies
that there exists a unique $\sigma\in\Delta_{W_0r_0}(\mc{R},q_0)$
such that $\delta|_{V^\prime}=\tilde\sigma|_{V^\prime}$, proving
that $\delta$ is strongly continuous at $q_0$. Since $q_0\in U$
was arbitrary the result follows.

Let us now prove (ii). Let
$\delta\in\Delta_\mathbb{N}^{wk}(\mc{R},U)$. We need to show that
$\delta$ is continuous in a strong sense. Let $q_0\in U$, and let
$W_0r_i$ (where $i=1,\dots,k$) be the set of central characters of
the irreducible constituents of $\delta(q_0)$. We have
$\delta|_{U^{gen}}=\sum_{W_0r} \delta_{W_0r}|_{U^{gen}}$ (where $W_0r$
runs over the set $W_0\backslash \operatorname{Res}(\mc{R})$ of
orbits of generic residual points) where
$U^{gen}:=\mc{Q}^{gen}\cap U$ and where $U^{gen}\ni
q\to\delta_{W_0r}(q)$ is a weakly continuous family of discrete
series characters such that for all $q\in U^{gen}$,
$cc(\delta_{W_0r}(q))=W_0r(q)$. Recall that $\mc{Q}^{gen}$ is the
complement of finitely many rational hyperplanes in $\mc{Q}$.

We claim that for every connected component $U^\prime\subset
U^{gen}$ which contains $q_0$ in its boundary we have
$\delta_{W_0r}|_{U^\prime}\not=0$ only if
$W_0r\in\cup_i\mc{P}(r_i)$. Indeed, there exists a $z\in\mc{Z}$
such that $z(W_0r_i)=0$ for $i=1,\dots,k$ but with
$z(W_0r(q_0))=1$ for all orbits of generic residual points $W_0r$
such that $W_0r(q_0)\not\in\{W_0r_1,\dots,W_0r_k\}$. Observe that
for all $r\in\operatorname{Res}(\mc{R})$ the value
$\operatorname{deg}(\delta_{W_0r}|_{U^\prime})\in\mathbb{Z}_+$ is
independent of $q\in U^\prime$ since the family
$\delta_{W_0r}|_{U^\prime}$ is weakly continuous. By the weak
continuity of $\delta$ on $U$ we see that $U\ni q\to\chi_q:=
\chi_{\delta(q)}(z)$ must be continuous at $q_0$; however, by
definition of $z$ it follows on the one hand that $\chi_{q_0}=0$,
while on the other hand the limit for $q\to q_0$ from $U^\prime$
yields
$\sum_{W_0r\not\in\cup_i\mc{P}(r_i)}\operatorname{deg}(\delta_{W_0r}|_{U^\prime})$.
The claim follows.

We now prove in a similar fashion to the proof in (i) that if
$W_0r\in\cup_i\mc{P}(r_i)$ and if $U^\prime\subset U^{gen}$ is a
connected component which contains $q_0$ in its boundary then
$\delta_{W_0r}|_{U^\prime}$ is strongly continuous and in fact
extends uniquely to a neighborhood $U''$ of $q_0$ in a strongly
continuous sense. This finishes the proof.
\end{proof}
\begin{rem}\label{rem:ident} We identify the sheaves $\Delta(\mc{R})$,
$\Delta^{wk}(\mc{R})$, $\Delta_\mathbb{N}(\mc{R})$ and
$\Delta_\mathbb{N}^{wk}(\mc{R})$ on $\mc{Q}$ with their \'etale spaces.
These sheaves are Hausdorff spaces. As sets we have
\begin{equation}
\Delta(\mc{R})=\coprod_{q\in\mc{Q}}\Delta(\mc{R},q)
\end{equation}
\end{rem}
\begin{proof}
By Theorem \ref{thm:wkcnt} it suffices to show this for $\Delta(\mc{R})$.
In this case the result follows simply from Theorem \ref{thm:cont}(ii).
\end{proof}
\begin{prop} A continuous family of irreducible discrete series
characters $U\ni q\to \delta(q)$ is compatible with the scaling
maps $\tilde\sigma_\epsilon$ (with $\epsilon>0$) of
\cite[Theorem 1.7]{OpdSol} in the sense that
$\tilde\sigma_\epsilon(\delta(q))=\delta(q^\epsilon)$.
\end{prop}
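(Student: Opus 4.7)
Fix $q_0 \in U$; it suffices to verify that $\tilde\sigma_\epsilon(\delta(q_0)) = \delta(q_0^\epsilon)$ for every $\epsilon > 0$. My plan is to regard both sides, as functions of $\epsilon$, as weakly continuous families of irreducible discrete series characters along the half-line $\epsilon \mapsto q_0^\epsilon \subset \mc{Q}$, observe that they agree trivially at $\epsilon = 1$ (where $\tilde\sigma_1$ is the identity), and then apply the rigidity machinery developed in this section.

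Weak continuity of $\epsilon \mapsto \tilde\sigma_\epsilon(\delta(q_0))$ should follow from the construction of the scaling maps in \cite[Theorem 1.7]{OpdSol}: the values $\chi_{\tilde\sigma_\epsilon(\delta(q_0))}(N_w)$ depend continuously on $\epsilon$ because $\tilde\sigma_\epsilon$ is assembled from operators whose matrix entries in the basis $\{N_w\}$ are continuous (in fact analytic) in $\epsilon$. That $\tilde\sigma_\epsilon$ sends discrete series to discrete series is part of \cite[Theorem 1.7]{OpdSol}, using that temperedness and the $L^2$-condition of Casselman's criterion (Theorem \ref{thm:cas}) are preserved under scaling. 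On the other hand, by Proposition \ref{prop:cont} the family $\epsilon \mapsto \delta(q_0^\epsilon)$ is weakly continuous on a neighborhood of $\epsilon = 1$, and extends to a weakly continuous family along the entire ray since that ray is contained in $\mc{Q}_{W_0 r}^{\mathrm{reg}}$ for the relevant generic residual point $r$ (the defining hyperplanes for $r$-singularity from Theorem \ref{thm:mgen} are homogeneous under the scaling $f_s \mapsto \epsilon f_s$).

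With weak continuity of both families in hand, Theorem \ref{thm:wkcnt}(i) promotes each to a strongly continuous family of irreducible discrete series characters, and these two strongly continuous families agree at $\epsilon = 1$. The uniqueness clause of Theorem \ref{thm:cont}(i) then forces them to coincide on a neighborhood of $\epsilon = 1$ in the half-line. Since the set of $\epsilon$ on which they coincide is both open (by the same uniqueness argument applied at each agreement point) and closed (by joint continuity and the Hausdorff property of the \'etale space $\Delta(\mc{R})$ from Remark \ref{rem:ident}), it is all of $(0, \infty)$. Since $q_0 \in U$ was arbitrary, the proposition follows. The main obstacle will be cleanly establishing the weak continuity of $\epsilon \mapsto \tilde\sigma_\epsilon(\delta(q_0))$ from the construction in \cite{OpdSol}; once that continuity is extracted, the remainder is a straightforward application of the rigidity results of this section.
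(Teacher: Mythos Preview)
Your approach is essentially the same as the paper's: both compare the scaling family $\epsilon\mapsto\tilde\sigma_\epsilon(\delta(q_0))$ with the restriction of $\delta$ to the half-line through $q_0$, invoke the analyticity from \cite[Theorem 1.7]{OpdSol} to get weak continuity of the former, and then use Theorem \ref{thm:wkcnt} (together with the Hausdorff property of the \'etale space) to conclude equality from agreement at $\epsilon=1$. Your open-closed argument just spells out what the paper's terse ``follows from Theorem \ref{thm:wkcnt}'' leaves implicit.

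One small point: your appeal to $\mc{Q}_{W_0 r}^{\mathrm{reg}}$ and the homogeneity of the singular hyperplanes is premature here, since at this stage of the paper it has not yet been shown that $cc(\delta)$ is governed by a single generic residual orbit (that is Theorem \ref{thm:gencc}, which comes later). It is also unnecessary: the statement only asserts $\tilde\sigma_\epsilon(\delta(q))=\delta(q^\epsilon)$ for those $\epsilon$ with $q^\epsilon\in U$, so you can simply run the argument on the connected interval $\{\epsilon:q_0^\epsilon\in U\}$, exactly as the paper does after reducing to $U$ a ball.
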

\begin{proof} We may assume that $U\subset \mc{Q}$ is an open ball
centered around of $q_0\in\mc{Q}$ such that
$\operatorname{ev}_{q_0}:\Delta(\mc{R},U)\to\Delta(\mc{R},q_0)$ is
an isomorphism. Let $\mc{L}\subset\mc{Q}$ be the half line
generated by $q_0$. Let $\delta\in\Delta(\mc{R},q_0)$ and let
$\tilde\delta\in\Delta(\mc{R},U)$ be such that
$\operatorname{ev}_{q_0}(\tilde\delta)=\delta$. Consider the
continuous family $\delta^{(1)}$ defined by restricting the
section $\tilde\delta$ to $\mc{L}\cap U$, and the continuous
family $\delta^{(2)}$ defined by scaling $\mc{L}\cap U\ni
q^\epsilon_0\to \tilde\sigma_{\epsilon}(\delta)$. It follows
from the analyticity (\cite[Theorem 1.7]{OpdSol}, property 1)) that
$\delta^{(2)}\in\Delta^{wk}(\mc{R},\mc{L}\cap U)$. The result
$\delta^{(1)}=\delta^{(2)}$ follows from Theorem \ref{thm:wkcnt}.
\end{proof}
\begin{cor}\label{cor:inv}
We can extend any continuous family of irreducible discrete series
characters $\delta\in\Delta(\mc{R},U)$ in a unique way to
$\tilde\delta\in\Delta(\mc{R},\tilde{U})$ where
$\tilde{U}=\cup_{\epsilon>0}U^\epsilon$ is the open cone in
$\mc{Q}$ generated by $U$.
\end{cor}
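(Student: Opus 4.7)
The plan is to use the compatibility of continuous families with the scaling maps $\tilde\sigma_\epsilon$ established in the preceding proposition to propagate $\delta$ from $U$ to its cone $\tilde U$. Since every point $q' \in \tilde U$ has the form $q^\epsilon$ for some $q \in U$ and $\epsilon > 0$, the natural prescription is $\tilde\delta(q^\epsilon) := \tilde\sigma_\epsilon(\delta(q))$; the whole argument then consists of verifying that this is well-defined, continuous, and unique.

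For existence, I would work locally: cover $U$ by small convex open balls $B \subset U$ on which the preceding proposition applies verbatim, and on each cone $\tilde B := \bigcup_{\epsilon > 0} B^\epsilon$ define a continuous family $\tilde\delta_B$ by $\tilde\delta_B(q^\epsilon) := \tilde\sigma_\epsilon(\delta(q))$ for $q \in B$ and $\epsilon > 0$. Convexity of $B$ ensures that $B$ meets each half-line in a connected interval, so well-definedness of $\tilde\delta_B$ on $\tilde B$ follows from the semigroup property of the scaling family \cite[Theorem 1.7]{OpdSol} together with the identity $\delta(q^\epsilon) = \tilde\sigma_\epsilon(\delta(q))$ from the preceding proposition, valid whenever both $q$ and $q^\epsilon$ lie in $B$. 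Weak continuity of $\tilde\delta_B$ is immediate from the analyticity of $\tilde\sigma_\epsilon$, and hence continuity in the strong sense follows from Theorem \ref{thm:wkcnt}.

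To glue these local extensions and to establish uniqueness, I would invoke the rigidity of continuous families provided by Theorem \ref{thm:cont} together with Remark \ref{rem:ident}: $\Delta(\mc R) \to \mc Q$ is a Hausdorff \'etale space, so any continuous family on a connected set is determined by its value at a single point. On the overlap of two local cones $\tilde B$ and $\tilde{B}'$ coming from overlapping balls, the families $\tilde\delta_B$ and $\tilde\delta_{B'}$ both restrict to $\delta$ on $B \cap B'$ and therefore agree; chaining such agreements through balls in $U$ shows that the local pieces patch into a single continuous family $\tilde\delta \in \Delta(\mc R, \tilde U)$. Uniqueness of the extension with $\tilde\delta|_U = \delta$ follows from the same rigidity argument applied to any rival extension.

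The main obstacle will be the consistency of the scaling formula at points lying on the same half-line but approached through different local balls: one must verify that, regardless of the choice of $(q, \epsilon) \in U \times \mathbb{R}_{>0}$ representing $q' = q^\epsilon$, the value $\tilde\sigma_\epsilon(\delta(q))$ is the same. This is exactly where convexity of balls, the semigroup property of the scaling maps, and the local rigidity of continuous families must be combined; once this compatibility is in hand, the remainder of the proof is essentially bookkeeping.
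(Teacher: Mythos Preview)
Your proposal is correct and follows essentially the same approach as the paper: both use the preceding proposition on scaling compatibility to propagate $\delta$ along half-lines, combined with the Hausdorff \'etale structure of $\Delta(\mc R)$ for uniqueness and gluing. The paper's argument is more compressed---it simply observes that the restriction of the sheaf $\Delta(\mc R)$ to any half-line $\mc L\subset\tilde U$ is constant (by the scaling bijections), from which existence and uniqueness follow at once---whereas you spell out the explicit construction via local balls and verify well-definedness directly; the underlying content is the same.
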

\begin{proof}
Let $\mc{L}\subset\tilde U$ be a half line. By the above
Proposition and the properties of the scaling maps (namely, for
$\epsilon>0$ these maps induce bijections of the sets of
equivalence classes of irreducible discrete series characters) we
see that the restriction $\Delta_\mc{L}(\mc{R})$ of
$\Delta(\mc{R})$ to $\mc{L}$ is a constant sheaf. The result
follows easily from this remark.
\end{proof}

\section{The generic formal degree}

Let $U\subset\mc{Q}$ be a
connected open cone, and let $\delta\in\Delta^{wk}(\mc{R},U)$. In
this subsection we prove the rationality of the formal degree
$U\ni q\to\mu_{Pl}(\delta(q))$, i.e. we prove that this function is
the restriction to $U$ of a rational function of the root parameters
$q_{\alpha^\vee}$ with rational coefficients, i.e. of
an element of $K(\Lambda_\mathbb{Z})$.
We refer to this rational function
as the \emph{generic formal degree} of the family $\delta$.
We combine the rationality of the generic formal degree with
the product formula \cite[Theorem 4.10]{Opd3} for the formal
degree of $\delta(q)$ valid for $q$ varying in a half line in
$\mc{Q}$. We then obtain the factorization of the generic
formal degree as element of $K(\Lambda)$.
\subsection{Rationality of the generic formal degree}
Let $\mc{R}$ be a semisimple root datum and let $\Omega\subset W$
be the finite subgroup of length zero elements. If $f$ is a facet of the
fundamental alcove $C$, then we denote by $W_f \subset W^a$ the finite
subgroup generated by the simple affine reflections $s \in S$ that fix $f$,
and by $\Omega_f\subset\Omega$ the (setwise) stabilizer of $f$ in $\Omega$.
Let $\langle f\rangle\subset E$ be
the affine subspace spanned by $f$, and let $E/\langle f\rangle$
be the linear space formed by cosets $e-\langle f\rangle$ (with
$e\in E$) of the linear subspace associated to $\langle f\rangle$.
Let $\epsilon_f$ be the determinant character of the linear action
of $\Omega_f$ on $E/\langle f\rangle$. The involutive subalgebras
$\mc{H}(\mc{R},f,q) = \mc{H}(W_f,q)\rtimes\Omega_f\subset
\mc{H}(\mc{R},q)$ are finite dimensional (since $W_f \rtimes \Omega_f$
is finite) and semisimple by \cite[Lemma 1.4]{OpdSol}.
\index{1zf@$\Omega_f$, stabilizer in $\Omega$ of a facet $f$}
\index{HRf@$\mc{H}(\mc{R},f,q)$, Hecke algebra associated to a facet $f$}
\index{Wf@$W_f$ stabilizer in $W^a$ of a facet $f$}

Let $F$ be an algebraic closure of $K(\Lambda_\mathbb{Z})$ and let
$I\subset F$ be the integral closure of $\Lambda_\mathbb{Z}$.
We choose an extension to $I$ of the homomorphism $q : \Lambda_\mathbb{Z}\to\mathbb{C}$.
Consider the semisimple $F$-algebra
$\mc H_F (\mc R ,f) = \mc H_F (W_f ) \rtimes \Omega_f$.
Let $\chi^F$ be the character of a simple $\mc
H_F (\mc R ,f)$-module. According to a well known argument of
Steinberg (see e.g. \cite[Proposition 10.11.4]{C})
one has $\chi^F(N_w)\in I$ for all $w\in W_f \rtimes \Omega_f$.
Furthermore the $\mathbb{C}$-linear map $\chi:\mc{H}(\mc R ,f,q)\to\mh{C}$ defined by
$\chi(N_w)=q(\chi^F(N_w))$
is the character of a simple
$\mc H (\mc R ,f,q)$-module, and this provides a bijection between
$\widehat{\mc H_F (\mc R ,f)}$ and $\widehat{\mc{H}(\mc{R},f,q)}$
(cf. loc. cit.).
\begin{lem}\label{lem:rat}
Let $d_{\chi}\in F$ be the formal degree of $\chi^F$ with
respect to the trace form $\tau$ restricted to the algebra
$\mc{H}_F(\mc{R},f)$. Then $d_\chi\in K(\Lambda_\mathbb{Z})$ and $d_\chi$ is
regular on $\mc{Q}$.
\end{lem}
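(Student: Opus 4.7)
The plan is to establish (a) $d_\chi \in K(\Lambda_\mh{Z})$ and then (b) absence of poles on $\mc Q$; both parts will use Tits' deformation theorem together with the fact that $\mc H(\mc R,f,q)$ is a symmetric semisimple algebra.

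For (a), I would exploit the characterization of the formal degree in a symmetric semisimple algebra: if $e_{\chi^F} \in A := \mc H_F(\mc R,f)$ denotes the central primitive idempotent associated with $\chi^F$, then the Plancherel decomposition $\tau = \sum_{\chi^F} d_\chi \chi^F$ evaluated at $e_{\chi^F}$ gives $\tau(e_{\chi^F}) = d_\chi \cdot \dim_F V_{\chi^F}$. Since $\tau(N_w) = \delta_{w,e}$ takes values in $\Lambda_\mh{Z}$ on the basis $\{N_w\}_{w \in W_f \rtimes \Omega_f}$, and $\dim V_{\chi^F}$ is a positive integer, it suffices to show that $e_{\chi^F}$ already lies in the $K(\Lambda_\mh{Z})$-form $B := \mc H_{K(\Lambda_\mh{Z})}(\mc R,f)$, or equivalently that $B$ is split semisimple over $K(\Lambda_\mh{Z})$. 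By Tits' deformation theorem applied to the flat family $\mc H_{\Lambda_\mh{Z}}(\mc R,f)$ along the specialization $v = 1$, this reduces to split semisimplicity of the group algebra $\mh Q[W_f \rtimes \Omega_f]$ over $\mh Q$. The latter holds because $W_f \rtimes \Omega_f$, as a finite subgroup of the extended affine Weyl group $W$, has a rational character table: the characters of $W_f$ are rational by Springer's theorem, and the extension by $\Omega_f$, which acts by diagram automorphisms on the partial affine Dynkin diagram stabilizing $f$, preserves rationality via standard Clifford theory. This yields $d_\chi \in K(\Lambda_\mh{Z})$.

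For (b), I observe that at each $q \in \mc Q$ the subalgebra $\mc H(\mc R,f,q) \subset \mc H(\mc R,q)$ is $*$-invariant, finite-dimensional, and carries the positive trace $\tau$; hence it is a finite-dimensional Hilbert algebra, in particular a semisimple $C^*$-algebra with the same number of simple modules as $B$. Tits' theorem applied to the curve of specializations through $q$ then ensures that $e_{\chi^F}$ specializes to the central primitive idempotent $e_{\chi(q)} \in \mc H(\mc R,f,q)$ of the specialized simple character $\chi(q)$, and the specialized value $d_\chi(q) = \tau(e_{\chi(q)})/\dim V_{\chi(q)}$ equals the formal degree of $\chi(q)$ with respect to $\tau$. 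For a finite-dimensional $C^*$-algebra with a positive tracial state, this formal degree is a strictly positive finite real number; therefore $d_\chi$, as a rational function in the $v(s)$, cannot have a pole at any $q \in \mc Q$. The one delicate point in this plan is the rationality of the characters of $W_f \rtimes \Omega_f$: for $W_f$ it is classical (Springer), but the preservation under the $\Omega_f$-extension requires verifying that the relevant $2$-cocycles arising in Clifford theory are trivial, which can be checked from the explicit combinatorics of the affine Dynkin diagram.
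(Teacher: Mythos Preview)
Your approach to (b) is fine and matches the paper's argument: semisimplicity of $\mc H(\mc R,f,q)$ for positive $q$ forces the Schur element (equivalently the formal degree) to be nonzero there.

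Your strategy for (a), however, has a genuine gap. You reduce to the claim that $\mh Q[W_f\rtimes\Omega_f]$ is split semisimple over $\mh Q$, which would force the central primitive idempotents to lie in the $K(\Lambda_{\mh Z})$-form. But this is simply false in general: $\Omega_f$ is a subgroup of the abelian group $\Omega\cong X/Q(R_0)$, and nothing prevents it from containing elements of order $\geq 3$. Already for $\mc R$ of type $A_2$ with $X=P(R_0)$, taking $f$ to be the open alcove gives $W_f=\{e\}$ and $\Omega_f=\Omega\cong\mh Z/3$. The characters of $\mh Z/3$ are not rational-valued, and the central primitive idempotents $\tfrac{1}{3}(1+\zeta^{-k}\omega+\zeta^{-2k}\omega^2)$ do not lie in $\mh Q[\mh Z/3]$. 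So split semisimplicity over $\mh Q$ fails, and with it your route via $e_{\chi^F}\in B$. (Note that the conclusion $d_\chi\in\mh Q$ still holds in this example, since formal degrees in a group algebra are $\dim(\chi)/|G|$; it is only your \emph{argument} that breaks.) Your ``delicate point'' about Clifford-theoretic $2$-cocycles is a red herring: the failure is more basic and occurs already for the linear characters of $\Omega_f$ itself.

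The paper avoids this problem by never asking for the idempotents to be rational. Instead it works directly with the Schur element $S_\sigma = d_\sigma^{-1}$ and uses the identity
\[
\sum_{w\times\omega\in W_f\rtimes\Omega_f}\sigma^F(N_{w\times\omega}N_{(w\times\omega)^{-1}})
= |\Omega_f|\sum_{w\in W_f}\sigma^F(N_wN_{w^{-1}}) ,
\]
which follows from $N_\omega N_{\omega^{-1}}=1$ for $\omega\in\Omega_f$. This reduces $\dim_F(V)\,S_\sigma$ to $|\Omega_f|$ times $\dim_F(V_1)\,S_{\sigma_1}$ for any simple constituent $\sigma_1$ of $\sigma^F|_{\mc H_F(W_f)}$. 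The Schur elements of finite Iwahori--Hecke algebras $\mc H_F(W_f)$ (no $\Omega_f$-extension) are known to lie in $K(\Lambda_{\mh Z})$ (see e.g.\ Carter, \emph{Finite groups of Lie type}, \S 13.5), and this gives the result. The moral: pass the $\Omega_f$-crossed-product back to $\mc H_F(W_f)$ at the level of Schur elements rather than at the level of idempotents or splitting fields.
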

\begin{proof}
For all $q\in\mc{Q}$ the trace form $\tau$ of the algebra
$\mc{H}(\mc{R},f,q)$ has a nonzero discriminant, proving that
$\mc{H}(\mc{R},f,q)$ (and a fortiori $\mc{H}_F(\mc{R},f)$)
is a symmetric (and thus semisimple) algebra.
Let $(V,\sigma^F)$ be a matrix representation
of $\mc{H}_F(\mc{R},f)$ whose character equals $\chi^F$. We write
$d_\sigma:=d_\chi$ for its formal degree (with respect to $\tau$).

The orthogonality of characters of a
symmetric algebra implies that
\begin{equation}\label{eq:schur}
d_\sigma=1/S_\sigma
\end{equation}
where $S_\sigma$ is the Schur element of $\sigma^F$, given by
\begin{equation}
\operatorname{dim}_F(V)S_\sigma=\sum_{w\times\omega\in W_f\rtimes\Omega_f}
\chi^F(N_{w\times\omega})\chi^F(N_{(w\times\omega)^{-1}})
\end{equation}
By a well known result (see e.g. the argument in
\cite[Proposition 4.6]{Geck}, which applies to our situation as
well as one easily checks) one also has the following formula for
the Schur element:
\begin{equation}
\operatorname{dim}_F(V)S_\sigma(q)\operatorname{Id_V}=
\sum_{w\times\omega\in W_f\rtimes\Omega_f}
\sigma^F(N_{w\times\omega}N_{(w\times\omega)^{-1}})
\end{equation}
But clearly (loc. cit.)
\begin{equation}
\sum_{w\times\omega\in W_f\rtimes\Omega_f}
\sigma^F(N_{w\times\omega}N_{(w\times\omega)^{-1}})=
|\Omega_f|\sum_{w\in W_f}\sigma^F(N_{w}N_{w^{-1}})
\end{equation}
This last equality implies that if $(\sigma_1^F,V_1)$ is any
simple submodule of the restriction of $\sigma^F$ to
$\mc{H}_F(W_f)$ then
\begin{equation}
\operatorname{dim}_F(V)S_\sigma=
|\Omega_f|\operatorname{dim}_F(V_1)S_{\sigma_1}
\end{equation}
The right hand side of this equation is known to be
in $K(\Lambda_\mathbb{Z})$ (see \cite[Section 13.5]{C}), proving the desired result.
The last assertion follows from the well known fact that the Schur
element of $S_\sigma$ is nonzero at $q$ iff $\sigma^F$ corresponds to
a projective irreducible representation of the specialized algebra
$\mc{H}(W_f,q)$. Since $\mc{H}(W_f,q)$ is semisimple for $q\in\mc{Q}$
this holds true for all $\sigma$.
\end{proof}
Let $\delta\in\Delta^{wk}(\mc{R},U)$. Following \cite{ScSt},
\cite{Re} we define for $q\in U$ the index function
$f_{\delta,q}\in\mc{H}(\mc{R},q)$ by
\begin{equation}\label{eq:index}
f_{\delta,q}=\sum_{f}(-1)^{\operatorname{dim}(f)}
\sum_{\sigma\in \widehat{\mc{H}(\mc{R},f,q)}}
\operatorname{deg}(\sigma)^{-1}
[\delta_q|_{\mc{H}(\mc{R},f,q)}\otimes\epsilon_f:\sigma] e_\sigma
\end{equation}
where $f$ runs over a complete set of representatives of the
$\Omega$-orbits of faces of the fundamental alcove $C$, and where
$e_\sigma\in\mc{H}(\mc{R},f,q)$ denotes the primitive central
idempotent in the finite dimensional complex semisimple algebra
$\mc{H}(\mc{R},f,q)$ affording $\sigma$. The importance of the element
$f_{\delta,q}\in\mc{H}(\mc{R},q)$ is that it links character theory
with the elliptic pairing.
Indeed, following \cite{ScSt}, \cite{Re} one shows,
using the Euler-Poincar\'e principle and Frobenius reciprocity,
that for all representations $\pi$ of finite length of
$\mc{H}(\mc{R},q)$ one has (see \cite[Proposition 3.6]{OpdSol}):
\begin{equation}\label{eq:pseudo}
\chi_\pi(f_{\delta,q})=EP_\mc{H}(\delta(q),\pi)
\end{equation}
\begin{defn}\label{lem:rattriv}
The multiplicities
$[\delta(q)|_{\mc{H}(\mc{R},f,q)}\otimes\epsilon_f:\sigma]$ are
independent of $q\in U_{\delta}$  by Proposition \ref{prop:cont}.
We denote these multiplicities by
$[\delta_f\otimes\epsilon_f:\sigma]\in\mathbb{Z}_{\geq 0}$.
\end{defn}
\begin{thm}\label{thm:rat}
Let $U\subset\mc{Q}$ be a connected open cone and let
$\delta\in\Delta^{wk}(\mc{R},U)$. We have the
following index formula for the formal
degree $\mu_{Pl}(\{\delta(q))\})$ (with $q\in U$):
\begin{equation}\label{eq:eulplanmeas}
\mu_{Pl}(\{\delta(q)\})=\tau(f_{\delta,q})=
\sum_{f}(-1)^{\operatorname{dim}(f)}
\sum_{\sigma\in \widehat{\mc{H}(\mc{R},f,q)}}
[\delta_f\otimes\epsilon_f:\sigma] d_\sigma(q)
\end{equation}
Here $f$ runs over a complete set of representatives of the
$\Omega$-orbits of faces of $C$, and $d_\sigma(q)$ denotes
the formal degree of $\sigma$ in the finite dimensional Hilbert
algebra $\mc{H}(\mc{R},f,q)$ (as in Lemma \ref{lem:rat}).
\end{thm}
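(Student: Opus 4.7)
The plan is to split the theorem into two independent assertions and then combine them: first, the spectral identity $\mu_{Pl}(\{\delta(q)\}) = \tau(f_{\delta,q})$; second, the direct evaluation of $\tau(f_{\delta,q})$ from the definition \eqref{eq:index} of the element $f_{\delta,q}$. Both rest entirely on ingredients already established in the excerpt, so the argument is essentially a bookkeeping exercise that packages together the Plancherel formula, the Euler--Poincar\'e principle \eqref{eq:pseudo}, and the orthogonality of discrete series under $\textup{EP}_{\mc H}$ (Corollary \ref{cor:ON}).

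For the spectral identity I would apply the Plancherel decomposition \eqref{eq:plan} to the element $f_{\delta,q}$, which lies in $\mc H(\mc R, q)$ since each central idempotent $e_\sigma$ on the right-hand side of \eqref{eq:index} belongs to $\mc{H}(\mc{R},f,q)\subset\mc H$. This gives
\[
\tau(f_{\delta,q}) \;=\; \int_{\hat{\mathfrak{C}}} \chi_\pi(f_{\delta,q})\, d\mu_{Pl}(\pi).
\]
The integrand rewrites as $\textup{EP}_{\mc H}(\delta(q),\pi)$ by \eqref{eq:pseudo}. The support of $\mu_{Pl}$ is precisely the set of equivalence classes of irreducible tempered representations of $\mc H$ by \cite[Corollary 4.4]{DeOp1}, and by Corollary \ref{cor:ON} the function $\pi\mapsto\textup{EP}_{\mc H}(\delta(q),\pi)$ vanishes on this support away from $\pi=\delta(q)$ and equals $1$ at $\pi=\delta(q)$. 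Since $\mu_{Pl}(\{\delta(q)\})>0$ by Corollary \ref{cor:dix}, the integral collapses to the single point contribution $\mu_{Pl}(\{\delta(q)\})$, as required.

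For the direct evaluation of $\tau(f_{\delta,q})$, I would plug the explicit expression \eqref{eq:index} into $\tau$ and use the formal degree decomposition inside each finite-dimensional semisimple subalgebra $\mc{H}(\mc{R},f,q)$. By the very definition of the formal degrees $d_\sigma(q)$ attached to the restricted trace $\tau|_{\mc{H}(\mc{R},f,q)}$, one has $\tau|_{\mc{H}(\mc{R},f,q)} = \sum_\sigma d_\sigma(q)\chi_\sigma$ and hence $\tau(e_\sigma) = d_\sigma(q)\chi_\sigma(e_\sigma) = d_\sigma(q)\deg(\sigma)$. Substitution into \eqref{eq:index} cancels the factor $\deg(\sigma)^{-1}$ and leaves exactly the alternating sum on the right-hand side of \eqref{eq:eulplanmeas}, using that the multiplicities $[\delta_f\otimes\epsilon_f:\sigma]$ are $q$-independent on $U$ (Definition \ref{lem:rattriv}, which rests on Proposition \ref{prop:cont}).

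There is no serious obstacle in this proof: the analytic heavy lifting has been carried out earlier, in Theorem \ref{thm:cont} and Proposition \ref{prop:cont} (ensuring $q$-independence of the multiplicities), in the Euler--Poincar\'e identity \eqref{eq:pseudo} (Schneider--Stuhler--Reeder), and in the orthogonality Corollary \ref{cor:ON} (which in turn uses the main result of \cite{OpdSol}). The only minor point to confirm is the $\mu_{Pl}$-integrability of $\pi\mapsto\chi_\pi(f_{\delta,q})$, which is automatic because $f_{\delta,q}\in\mc H$ and the Plancherel identity \eqref{eq:plan} holds on all of $\mc H$.
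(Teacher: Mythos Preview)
Your proposal is correct and follows essentially the same approach as the paper: apply the Plancherel formula \eqref{eq:plan} to $f_{\delta,q}$, use \eqref{eq:pseudo} and Corollary~\ref{cor:ON} to collapse the integral to $\mu_{Pl}(\{\delta(q)\})$, then expand $\tau(f_{\delta,q})$ directly from \eqref{eq:index} and Definition~\ref{lem:rattriv}. The paper's proof is simply a terser statement of exactly these steps.
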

\begin{proof}
We apply the Plancherel formula (\ref{eq:plan}) to $f_{\delta,q}$.
In view of (\ref{eq:pseudo}) and Corollary \ref{cor:ON} we see that
$\mu_{Pl}(\{\delta(q)\})=\tau(f_{\delta,q})$. Now use (\ref{eq:index})
and Definition \ref{lem:rattriv}.
\end{proof}
\begin{cor}\label{cor:rat}
Let $U\subset\mc{Q}$ be a connected open cone and let
$\delta\in\Delta^{wk}(\mc{R},U)$. The formal degree $U\ni q\to
\mu_{Pl}(\{\delta(q)\})$ is the restriction to $U$ of a rational
function in the parameters $q_{\alpha^\vee}$ (with
$\alpha\in R_{\operatorname{nr}}$) with rational coefficients
(or in other words, an element of $K(\Lambda_\mathbb{Z})$ in the
notation of Proposition \ref{prop:genfam}(ii)).
This rational function is regular on $\mc{Q}$ and positive on $U$.
\end{cor}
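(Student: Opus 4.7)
The plan is essentially to read off the result from Theorem \ref{thm:rat} together with Lemma \ref{lem:rat}, so very little extra work is needed. First I would invoke Theorem \ref{thm:rat} to express
\[
\mu_{Pl}(\{\delta(q)\})=\sum_{f}(-1)^{\dim(f)}\sum_{\sigma\in \widehat{\mc{H}(\mc{R},f,q)}}[\delta_f\otimes\epsilon_f:\sigma]\,d_\sigma(q),
\]
where the outer sum runs over a finite set of $\Omega$-orbit representatives of facets of $C$. The key point is that each term on the right is manifestly a rational function of $q$ with rational coefficients, and these rational expressions do not depend on $q$.

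More precisely, the multiplicities $[\delta_f\otimes\epsilon_f:\sigma]\in\mathbb{Z}_{\geq 0}$ are integer constants, independent of $q\in U$ by Definition \ref{lem:rattriv} (which in turn relies on the continuity statement from Proposition \ref{prop:cont}). For the formal degrees $d_\sigma(q)$, Lemma \ref{lem:rat} identifies them with specializations of elements $d_\chi\in K(\Lambda_\mathbb{Z})$ attached to the generic semisimple algebras $\mc{H}_F(\mc{R},f)$ via Schur-element formulas, and further asserts that $d_\chi$ is regular at every $q\in\mc{Q}$. Hence the displayed sum is a finite $\mathbb{Z}$-linear combination of elements of $K(\Lambda_\mathbb{Z})$ which are all regular on $\mc{Q}$; this proves that $\mu_{Pl}(\{\delta(q)\})$ extends to an element of $K(\Lambda_\mathbb{Z})$ that is regular on all of $\mc{Q}$.

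Finally, positivity on $U$ is immediate: since $\delta(q)\in\Delta(\mc{R},q)$ for every $q\in U$, Corollary \ref{cor:dix} gives $\mu_{Pl}(\{\delta(q)\})>0$ for all $q\in U$. Thus there really is no obstacle at this stage—the two substantive ingredients (the index formula and the rationality of Schur elements/formal degrees of the finite-dimensional subalgebras $\mc{H}(\mc{R},f,q)$) have already been dealt with in Theorem \ref{thm:rat} and Lemma \ref{lem:rat}. The present corollary is essentially the observation that assembling these facts gives a single element of $K(\Lambda_\mathbb{Z})$, regular on $\mc{Q}$ and positive on the cone $U$ where $\delta$ is defined.
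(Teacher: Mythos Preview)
Your proposal is correct and follows essentially the same approach as the paper: invoke the index formula of Theorem \ref{thm:rat}, use Lemma \ref{lem:rat} to see that each $d_\sigma$ lies in $K(\Lambda_{\mathbb{Z}})$ and is regular on $\mc{Q}$, and note that positivity on $U$ is immediate. The paper's proof is simply a two-line compression of exactly this argument.
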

\begin{proof}
Consider the index formula as given in Theorem
\ref{thm:rat}. The result now follows from Lemma \ref{lem:rat}
(the positivity on $U$ is obvious).
\end{proof}

\subsection{Factorization of the generic formal degree}

\begin{lem}\label{lem:respt}
Let $\delta\in\Delta^{wk}(\mc{R},U)$ be a weakly continuous family
of irreducible discrete series characters on a convex open
cone $U\subset\mc{Q}$. The map $cc(\delta(\cdot)):U\to
W_0\backslash T$ is continuous. There exist finitely many mutually
disjoint, nonempty connected open subcones $U_i\subset U$ such
that $\cup_i U_i\subset U$ is dense, and such that for each $i$
there exists an orbit $W_0r_i$ of generic residual cosets such
that $\overline{U_i}\cap U\subset\mc{Q}^{gen}_{W_0r_i}$ and
$cc(\delta)|_{U_i}=W_0r_i|_{U_i}$. In particular $cc(\delta)$ is
continuous and piecewise analytic.
\end{lem}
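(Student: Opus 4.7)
The plan is to decompose $U$ using the finite rational hyperplane arrangement in $\mc{Q}$ cut out by the generic residual orbits. Continuity of $cc(\delta(\cdot))$ is recorded in Proposition~\ref{prop:cc}, so the substantive work is the construction of the $U_i$ and the verification of the closure condition.

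By Proposition~\ref{prop:fin} the set $\operatorname{Res}(\mc{R})$ of generic residual orbits is finite, and by Proposition~\ref{prop:gencone} each $\mc{Q}^{gen}_{W_0r}$ is the complement of a finite union of rational hyperplanes. Hence $U\cap\mc{Q}^{gen}=U\cap\bigcap_r\mc{Q}^{gen}_{W_0r}$ is the complement of a finite hyperplane arrangement in the convex open cone~$U$ and decomposes into finitely many convex open subcones. On any connected component $V$ of $U\cap\mc{Q}^{gen}$ I would produce a unique $W_0r_V\in W_0\backslash\operatorname{Res}(\mc{R})$ with $cc(\delta,q)=W_0r_V(q)$ throughout $V$: existence of such a lift at any $q\in V$ is Proposition~\ref{prop:resdef}, uniqueness comes from $q\in\mc{Q}^{gen}$, and local constancy on $V$ follows by combining Theorem~\ref{thm:wkcnt}(i) with Theorem~\ref{thm:cont}(i),(ii), which at any $q_0\in V$ produces a unique strongly continuous extension of $\delta(q_0)$ with attached generic central character $W_0r'\in\mc{P}(cc(\delta,q_0))=\{W_0r_V\}$.

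I would then declare the $U_i$ to be the connected components of the open sets $\{q\in U\cap\mc{Q}^{gen}_{W_0r}:cc(\delta,q)=W_0r(q)\}$ as $r$ ranges over $W_0\backslash\operatorname{Res}(\mc{R})$, writing $W_0r_i$ for the orbit associated to $U_i$. The local extension argument shows each of these sets is open, so the $U_i$ are convex open subcones of $U$ contained in $\mc{Q}^{gen}_{W_0r_i}$ on which $cc(\delta)=W_0r_i$. Disjointness across different $r$ is because an overlap would force the analytic identity $W_0r=W_0r'$ on an open generic set, hence $r=r'$; finiteness is inherited from the underlying arrangement; and $\bigcup_iU_i\supset U\cap\mc{Q}^{gen}$ gives density.

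The closure condition $\overline{U_i}\cap U\subset\mc{Q}^{gen}_{W_0r_i}$ is the main obstacle. For $q_0\in\overline{U_i}\cap U$, continuity of $cc(\delta)$ and of the algebraic map $r_i:\mc{Q}\to T$ (Theorem~\ref{thm:L} and Proposition~\ref{lem:redrat}) yield $cc(\delta,q_0)=W_0r_i(q_0)$, and Proposition~\ref{prop:cc} places this in $\operatorname{Res}(\mc{R},q_0)$, giving $q_0\in\mc{Q}^{reg}_{W_0r_i}$. For the upgrade to $q_0\in\mc{Q}^{gen}_{W_0r_i}$ the plan is to use the uniqueness assertion of Theorem~\ref{thm:cont}(i) at $q_0$ together with the analyticity of generic residual orbits to exclude any second $W_0r'\ne W_0r_i$ specialising to $cc(\delta,q_0)$: the extension of $\delta(q_0)$ prescribed by $U_i$ pins the generic central character of the unique continuous extension near $q_0$ to $W_0r_i$, so any competing coalescence would yield a second continuous family of discrete series incompatible with the primitivity of the central idempotent $e_{\delta(q_0),q_0}$. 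Piecewise analyticity of $cc(\delta)$ then follows at once from $cc(\delta)|_{U_i}=W_0r_i|_{U_i}$ and the analyticity of $W_0r_i(q)$.
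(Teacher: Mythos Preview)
Your overall strategy matches the paper's: decompose $U$ along the hyperplane arrangement coming from genericity, and on each piece identify $cc(\delta)$ with a single generic residual orbit. The paper is however more direct: it simply takes the $U_i$ to be the connected components of $U\cap\mc{Q}^{gen}$, notes that on each such component a unique $W_0r_i$ matches $cc(\delta)$ (by continuity of $cc(\delta)$, Proposition~\ref{prop:resdef}, and the very definition of $\mc{Q}^{gen}$), and for the closure condition observes that at any $q_0\in\overline{U_i}\cap U$ continuity forces $cc(\delta,q_0)=W_0r_i(q_0)$, which is residual since it carries the discrete series character $\delta(q_0)$ --- i.e.\ $q_0\in\mc{Q}^{reg}_{W_0r_i}$. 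Your more elaborate definition of the $U_i$ (as components of $\{q\in U\cap\mc{Q}^{gen}_{W_0r}:cc(\delta,q)=W_0r(q)\}$) is unnecessary, and your side claim that these pieces are \emph{convex} is not justified (nor required by the statement).

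The real issue is the closure condition. Note that the paper's own proof only establishes $\overline{U_i}\cap U\subset\mc{Q}^{reg}_{W_0r_i}$, not the stronger $\mc{Q}^{gen}_{W_0r_i}$ written in the statement; this appears to be a misprint in the lemma (and only regularity is used downstream, e.g.\ in Theorem~\ref{thm:ratfam}). Your attempted upgrade to $\mc{Q}^{gen}_{W_0r_i}$ does not work: the existence of a second orbit $W_0r'\ne W_0r_i$ with $W_0r'(q_0)=W_0r_i(q_0)$ just says $|\mc{P}(r_0)|>1$ in the sense of Definition~\ref{def:resmult}, which is entirely compatible with Theorem~\ref{thm:cont}. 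It does not manufacture a second continuous family through $\delta(q_0)$, and it certainly does not contradict primitivity of $e_{\delta(q_0),q_0}$. Indeed, the whole point of the subsequent development (culminating in Theorem~\ref{thm:gencc}) is that singling out \emph{which} element of $\mc{P}(r_0)$ is the generic central character of $\delta$ requires substantial further argument (the product formula for the formal degree and Lemma~\ref{prop:sep}). So drop the upgrade and be content with $\mc{Q}^{reg}_{W_0r_i}$, as the paper effectively is.
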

\begin{proof} The continuity of $cc(\delta)$ on $U$ follows
from Proposition \ref{prop:cc}. Let $U_i$ run over the
finite set of connected components of $U\cap\mc{Q}^{gen}$. Then
the restriction of $cc(\delta)$ to $U_i$ must coincide with the
restriction of a unique orbit of generic residual points, by the
continuity of $cc(\delta)$ and the definition of $\mc{Q}^{gen}$.
By continuity, for all $q\in\overline{U}_i\cap U$ the orbit
$W_0r_i(q)$ carries discrete series representations. Hence
$r_i(q)$ is residual, or equivalently $q\in\mc{Q}_{W_0r_i}^{reg}$.
\end{proof}
\begin{thm}\label{thm:ratfam} Let $\delta\in\Delta^{wk}(\mc{R},U)$
be a weakly continuous family of irreducible discrete series
characters on a convex open cone $U\subset\mc{Q}$. Let $r$ be
a generic residual point such that there exists a nonempty
connected open subcone $U_i\subset U$ such that
$cc(\delta)|{U_i}=W_0r|{U_i}$ (see Lemma \ref{lem:respt}). There
exists a constant $d\in\mathbb{Q}^\times$ (depending on $\delta$ and $W_0r$)
such that we have the
following equality in $K(\Lambda_\mathbb{Z})$:
\begin{equation}\label{eq:fdim}
\mu_{Pl}(\{\delta\})=d m_{W_0r}
\end{equation}
Here $m_{W_0r}\in K(\Lambda_\mathbb{Z})$ (see Proposition \ref{prop:genfam}(ii))
is the function defined in (\ref{eq:m}).
\end{thm}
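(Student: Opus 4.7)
The strategy is to combine Corollary \ref{cor:rat}, which realizes $\mu_{Pl}(\{\delta\})$ as an element of $K(\Lambda_{\mathbb{Z}})$ regular on $\mc{Q}$, with the half-line product formula of \cite[Theorem 4.10]{Opd3}. The former is a global rationality statement with no explicit factorization; the latter gives an explicit factorization of the formal degree, but only along a one-parameter deformation. A Zariski-density argument will then upgrade the half-line factorization to an identity in $K(\Lambda_{\mathbb{Z}})$.

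First I would fix $q_0 \in U_i$ so that the associated parameters $(f_s)$ from Remark \ref{rem:base} are $\mathbb{Z}$-linearly independent, and consider the half-line $\mc{L} = \{q_0^\epsilon : \epsilon > 0\} \subset U_i$. Along $\mc{L}$ the map $q \mapsto r(q)$ is a one-parameter family of residual points in the sense of \cite{Opd3}, so \cite[Theorem 4.10]{Opd3} applies and provides a constant $d_\delta \in \mathbb{R}^\times$ such that
\[
\mu_{Pl}(\{\delta(q)\}) \;=\; d_\delta \cdot m_{W_0 r}(q), \qquad q \in \mc{L};
\]
here the fact that the right-hand side is precisely $m_{W_0r}$, as defined by (\ref{eq:m}), rather than a rescaled variant, rests on the residue calculus underlying \cite{Opd3}, in which the numerator of $m_{W_0 r}$ collects the vanishing factors of $\eta(t)$ at $t = r$ and the denominator collects its polar factors.

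Next I would consider the quotient $F := \mu_{Pl}(\{\delta\}) / m_{W_0 r}$. By Corollary \ref{cor:rat} and Proposition \ref{prop:genfam}(ii), both numerator and denominator lie in $K(\Lambda_{\mathbb{Z}})$; by Theorem \ref{thm:mgen} the function $m_{W_0 r}$ is not identically zero on $\mc{Q}$ (its zero locus is a proper union of hyperplanes), so $F$ is a well-defined element of $K(\Lambda_{\mathbb{Z}})$. The displayed identity shows that $F$ takes the constant value $d_\delta$ on the monomial curve $\mb{q} \mapsto (\mb{q}^{f_s})_{s \in \tilde{S}}$ parametrizing $\mc{L}$, and the $\mathbb{Z}$-linear independence of the $f_s$ makes this curve Zariski dense in the complex torus $\mc{Q}_c$; hence $F \equiv d_\delta$ as an element of $K(\Lambda_{\mathbb{Z}})$.

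Since $F$ is then a constant rational function with rational coefficients, $d := d_\delta$ lies in $\mathbb{Q}$, and the positivity of $\mu_{Pl}(\{\delta(q)\})$ on $U_i$ forces $d \neq 0$. This yields $d \in \mathbb{Q}^\times$ and the desired identity $\mu_{Pl}(\{\delta\}) = d\, m_{W_0 r}$ in $K(\Lambda_{\mathbb{Z}})$. The main obstacle in this approach is the precision of the half-line factorization: while the general rationality of the formal degree is a fairly soft consequence of the Euler--Poincar\'e index formula, the exact match with $m_{W_0 r}$ hinges on the residue computation of \cite[Theorem 4.10]{Opd3} that identifies the pole and zero contributions of $\eta$ at $r$; by contrast the Zariski-density step, once the half-line has been chosen generically, is routine and simultaneously delivers the crucial fact that the constant $d_\delta$ is intrinsically attached to $\delta$ and $W_0 r$ (independent of the chosen half-line and rational).
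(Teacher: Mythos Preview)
Your argument is correct, and it takes a genuinely different route from the paper's own proof, although both rest on the same two ingredients: the global rationality from Corollary~\ref{cor:rat} and the half-line factorization from \cite{Opd1}/\cite{Opd3}.

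The paper does \emph{not} single out one generic half-line. Instead it invokes \cite[Corollary~3.32, Theorem~5.6]{Opd1} to obtain $\mu_{Pl}(\{\delta(q)\})=d(q)\,m_{W_0r}(q)$ on all of $U_i$, with the scaling invariance $d(q^\epsilon)=d(q)$. Thus $d$ is a rational function (as a quotient of two elements of $K(\Lambda_{\mathbb{Z}})$) that is constant along \emph{every} half-line $\mc{L}\subset U_i$. The rationality of each value $d_{\mc{L}}$ is then deduced by an asymptotic argument: both $\mu_{Pl}(\{\delta\})$ and $m_{W_0r}$, restricted to $\mc{L}$, have leading coefficients in $\mathbb{Q}^\times$ as $\mathbf{q}\to\infty$, so their ratio $d_{\mc{L}}$ lies in $\mathbb{Q}^\times$. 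Finally, since $d$ is continuous on the connected set $U_i$ and takes values in the totally disconnected set $\mathbb{Q}$, it must be globally constant.

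Your approach short-circuits the asymptotic step entirely. By choosing $q_0$ with $\mathbb{Q}$-linearly independent exponents $(f_s)$, the single half-line $\mc{L}$ becomes Zariski-dense in $\mc{Q}_c$, so the identity $F=d_\delta$ on $\mc{L}$ forces $F\equiv d_\delta$ in $K(\Lambda)$; the rationality of $d_\delta$ then drops out for free from $F\in K(\Lambda_{\mathbb{Z}})$. This is cleaner, at the cost of the (mild) extra input that a generic $q_0$ can be found in the open set $U_i$. The paper's argument, by contrast, works for \emph{every} half-line and extracts the rationality of $d$ from the explicit shape of the numerator and denominator, which is perhaps more in keeping with the paper's emphasis on the structure of $m_{W_0r}$. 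One small remark: the precise reference the paper uses for the half-line factorization is \cite[Corollary~3.32, Theorem~5.6]{Opd1} rather than \cite[Theorem~4.10]{Opd3}, though the latter is indeed the product formula alluded to in the introduction.
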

\begin{proof}
We fix $f_s\in\mathbb{R}$ and we denote the corresponding half
line in $\mc{Q}$ by $\mc{L}\subset \mc{Q}$ (see Remark
\ref{rem:base}). Notice that either $\mc{L}\cap U_i=\emptyset$ or
$\mc{L}\subset U_i$; assume that $\mc{L}$ is such that we are in
the latter situation. By \cite[Corollary 3.32, Theorem 5.6]{Opd1}
we have
\begin{equation}\label{eq:fdhelp}
\mu_{Pl}(\{\delta(q)\})=d(q) m_{W_0r}(q)
\end{equation}
for all $q\in U_i$, where $d(q)\in\mathbb{R}^\times$ has
the property that for all $\epsilon\in\mathbb{R}_+$
\begin{equation}\label{eq:scale}
d(q^\epsilon)=d(q)
\end{equation}
where $q^\epsilon$ is defined by $q^\epsilon(s)=(q(s))^\epsilon$
for all affine simple reflections $s$. By Theorem \ref{thm:mgen},
Corollary \ref{cor:rat} and (\ref{eq:fdhelp}) we see that
$d$ is itself a rational function which is regular on $U_i$.

Recall that we view $\mb{q}>1$ as coordinate on $\mc{L}$. The
expressions $\alpha(r(q))=\alpha(s)\alpha(c(q))$ and
$q_{\alpha^\vee}$ (with $\alpha\in R_{\mathrm{nr}}$ and
$q\in\mc{L}$) are thus viewed as functions of $\mb{q}>1$. By the
form of the right hand side of (\ref{eq:fdhelp}) as given in
(\ref{eq:eulplanmeas}), and in view of Corollary \ref{cor:rat}
we see
that there exists a unique real number $f$ such that
\begin{equation}
\lim_{\bf{q}\to\infty}{\bf{q}}^f\mu_{Pl}(\{\delta\})({\bf{q}})=
a_{\mc{L}}\in\mathbb{Q}^\times
\end{equation}
On the other hand, by $(\ref{eq:scale})$ the rational function
$d$ has a constant value, $d_{\mc{L}}$ say, on
$\mc{L}$. Hence (\ref{eq:fdhelp}) implies, in view of
(\ref{eq:m}) and Proposition \ref{prop:genfam}(ii), that
$d_{\mc{L}}b_\mc{L}=a_{\mc{L}}$ where
\begin{equation}
\lim_{\bf{q}\to\infty}{\bf{q}}^f
m_{W_0r}({\bf{q}})=b_{\mc{L}}\in\mathbb{Q}^\times
\end{equation}
Since $d(q)$ is continuous as a function of $q\in U_i$
this implies that $d_{\mc{L}}\in\mathbb{Q}$ is independent
of $\mc{L}\subset U_i$ and thus that $d(q)=d$ is
independent of $q\subset U_i$. Since $U_i$ is an open set, the
equality (\ref{eq:fdim}) of rational functions which we have now
proved on $U_i$ extends to $\mc{Q}$ (recall that both sides are
regular on $\mc{Q}$).
\end{proof}
\begin{cor}\label{cor:mreg}
Let $\delta\in\Delta^{wk}(\mc{R},U)$ be weakly continuous on a
convex open cone $U$. Let $W_0r_i,\,W_0r_j$ be orbits of generic residual
points associated with $\delta$ as in Lemma \ref{lem:respt}. There exists
a constant $d\in\mathbb{Q}^\times$ such that $m_{W_0r_i}=dm_{W_0r_j}$.
\end{cor}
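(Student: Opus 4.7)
The plan is to derive this as an immediate consequence of Theorem \ref{thm:ratfam} applied twice, using the fact that the formal degree function $\mu_{Pl}(\{\delta\})$ is an intrinsic invariant of the continuous family $\delta$ and does not depend on which associated orbit of generic residual points we single out.

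More precisely, I would first invoke Theorem \ref{thm:ratfam} with the orbit $W_0 r_i$ and the subcone $U_i \subset U$ on which $cc(\delta)|_{U_i} = W_0 r_i|_{U_i}$. This produces a rational constant $d_i \in \mathbb{Q}^\times$ and an equality of elements of $K(\Lambda_{\mathbb{Z}})$
\begin{equation*}
\mu_{Pl}(\{\delta\}) = d_i\, m_{W_0 r_i}.
\end{equation*}
Applying the same theorem with the orbit $W_0 r_j$ on the subcone $U_j$ gives $d_j \in \mathbb{Q}^\times$ with
\begin{equation*}
\mu_{Pl}(\{\delta\}) = d_j\, m_{W_0 r_j}.
\end{equation*}

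Equating the two expressions as elements of the common quotient field $K(\Lambda_{\mathbb{Z}})$ yields $d_i\, m_{W_0 r_i} = d_j\, m_{W_0 r_j}$. Since $m_{W_0 r_j}$ is a nonzero rational function (by its definition (\ref{eq:m}) together with Theorem \ref{thm:mgen}, which identifies its zero locus with a proper union of hyperplanes in $\mc{Q}$) and $d_i, d_j$ are nonzero rationals, one may set $d := d_j / d_i \in \mathbb{Q}^\times$ to obtain $m_{W_0 r_i} = d\, m_{W_0 r_j}$, as required.

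There is essentially no obstacle here; the whole content lies in Theorem \ref{thm:ratfam}, which guarantees both the rationality of the proportionality constants and that they are nonzero. The only minor point worth checking is that the equality of rational functions obtained on the open subcones $U_i$ and $U_j$ actually persists as an equality in $K(\Lambda_{\mathbb{Z}})$, but this is already built into the conclusion of Theorem \ref{thm:ratfam}, since both identities there are stated as equalities of rational functions (equivalently, equalities on all of $\mc{Q}$) rather than merely on the subcones where the central character coincides with a particular generic residual orbit.
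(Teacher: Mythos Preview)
Your proposal is correct and matches the paper's approach: the paper states this as an immediate corollary of Theorem~\ref{thm:ratfam} without giving a separate proof, and your argument (apply the theorem once for $W_0 r_i$ and once for $W_0 r_j$, then equate the two expressions for $\mu_{Pl}(\{\delta\})$ in $K(\Lambda_{\mathbb{Z}})$) is exactly the intended deduction.
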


\section{The generic central character map and the formal degrees}

The following result depends on the classification of residual points:
\begin{lem}\label{prop:sep}
Let $\mc{R}=(X,R_0,Y,R_0^\vee)$ be a simple root datum such that
$R_0$ is not simply laced, and let
$r,r^\prime\in\operatorname{Res}(\mc{R})$ be generic residual points
with equal unitary part~$s$, which is $W_0$-invariant.
If there exists a constant $d\in\mathbb{C}^\times$ such that
$m_{W_0r}=dm_{W_0r^\prime}$ then $W_0r=W_0r^\prime$.
\end{lem}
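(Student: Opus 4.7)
The plan is to reduce to the setting of the degenerate affine Hecke algebra and then invoke the classification of generic (linear) residual points for non-simply-laced simple $R_1$. First, since $s \in T_u$ is $W_0$-invariant, Proposition \ref{lem:redrat}(ii) provides a $W_0$-equivariant bijection between generic residual points with unitary part $s$ and generic linear residual points $\xi \in \mr{Res}^{lin}(R_1)$, via $r = s\exp(\xi)$. Let $\xi, \xi'$ correspond to $r,r'$; it suffices to show $W_0\xi = W_0\xi'$.

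Next I would translate $m_{W_0r}$ into a rational function on the parameter space $\mc{K}$ by using $\alpha(r) = \alpha(s)\,e^{\alpha(\xi(k))}$. Each factor $\alpha(r)^{-1}-1$ with $\alpha \in R_1 \setminus R^z_{r,1}$ in the numerator of (\ref{eq:m}) is, up to a unit in the ring of bounded analytic functions on $\mc{Q}$ and a monomial in the generators $v(s)$, proportional to the linear form $\alpha(\xi(k))$; similarly each factor in the denominator becomes proportional to a linear form $\alpha(\xi(k)) \pm k_\alpha$ (with sign determined by $\alpha(s) = \pm 1$). The rational equality $m_{W_0r} = d\, m_{W_0r'}$ in $K(\Lambda_\mh{Z})$ therefore forces the corresponding divisors on $\mc{K}$ to coincide as signed sums of hyperplanes. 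In particular the (multi-)set of hyperplanes $\{\alpha(\xi(k)) = 0\}_{\alpha \in R_1 \setminus R^z_{\xi,1}}$ together with the hyperplanes $\{\alpha(\xi(k)) = \epsilon k_\alpha\}$ (with signs $\epsilon = \pm$) is an invariant of $W_0\xi$ that I recover from $m_{W_0r}$.

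The main obstacle is then to check, case-by-case using the classification of generic linear residual points for the irreducible non-simply-laced root systems $R_1$ (types $B_n$, $C_n$, $F_4$, $G_2$), that this hyperplane datum determines $\xi$ up to $W_0$. For $G_2$ and $F_4$ this is a finite verification from the explicit tables \cite[Table 4.10, Table 4.15]{HO}: distinct $W_0$-orbits of generic residual points are easily seen to produce distinct divisors. For $B_n$ and $C_n$, one uses the parametrization of generic linear residual points by bi-partitions of $n$ (cf.\ \cite[Section 4]{Slooten}); the recipe for $\xi(k)$ in terms of the bi-partition makes it straightforward to read off the divisor and to see that different bi-partitions yield different divisors, since the list of values $\alpha(\xi(k))$ with $\alpha \in R_1$ (as linear forms in $k$) determines the bi-partition. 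This completes the argument under the hypothesis that $R_0$ is not simply laced, where the existence of two independent parameters $k_\alpha$ for roots of different length is exactly what provides the separation that fails in the simply laced case.
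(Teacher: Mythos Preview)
Your overall strategy matches the paper's: reduce via Proposition \ref{lem:redrat} to generic linear residual points for a non-simply-laced irreducible $R_1$, then verify case by case that the function $m_{W_0r}$ separates $W_0$-orbits. The paper carries out essentially the same reduction (with an additional explicit step reducing the $C_n^{(1)}$ case with its two possible $W_0$-invariant $s$ to positive generic residual points for $B_n$), and then treats $G_2,F_4$ and $B_n,C_n$ separately.

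There is, however, a factual slip and a genuine gap in your $B_n/C_n$ discussion. The generic linear residual $W_0$-orbits for $B_n$ are parametrized by \emph{partitions} $\lambda\vdash n$ (Theorem \ref{thm:genresBn}), not bi-partitions; bi-partitions enter only later in Slooten's Springer-type combinatorics. More importantly, your sentence ``the list of values $\alpha(\xi(k))$ with $\alpha\in R_1$ determines the bi-partition'' begs the question: of course the full multiset of root values determines $W_0\xi$, but what you must show is that this multiset can be recovered from the \emph{divisor} of $m_{W_0r}$, where cancellations between numerator and denominator factors may in principle destroy information. The paper avoids this issue by a direct computation: writing $m_\lambda$ in the form $q_1^{N_1}q_2^{N_2}\prod_{i,j}(q_1^iq_2^j-1)^{n_{i,j}}$, it observes that the exponent $n_{2i,2}$ of the factor $(q_1^{2i}q_2^2-1)$ equals twice the number of boxes of $\lambda$ with content $i$, and this content profile determines $\lambda$. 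For $C_n$ one uses the factors $(q_1^{4i}q_2^2-1)$ instead. For $G_2$ and $F_4$ the paper argues even more simply: the zero locus of $m_{W_0r}$ is the complement of $\mc{Q}^{\mathrm{reg}}_{W_0r}$ (Theorem \ref{thm:mgen}), and one reads off from Tables 2 and 4 that distinct orbits have distinct regular loci. Your divisor-comparison idea is morally the same, but you should either justify that no cancellation obstructs it or, as the paper does, isolate specific factors whose exponents already suffice.
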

\begin{proof}
Using Lemma \ref{lem:genprod} and Proposition \ref{prop:genfam}(iii)
we reduce to the
case where $\mc{R}$ is irreducible and $X=P(R_1)$,
and $r,r^\prime$ are generic residual points
with equal $W_0$-invariant unitary part $s\in T_u$.
Let us write $r=sc$ and $r^\prime=sc^\prime$.
In the $C_n^{(1)}$-case we have $s=(1,\dots,1)$ or $s=(-1,\dots,-1)$.
We use Proposition \ref{lem:redrat}. In the first case we find that
$c,c^\prime$ extend to positive generic residual points
for the root datum $\mc{R}^\prime$ defined by $R_0^\prime=B_n$ and
$X^\prime=P(R_0)$, with the parameters $\tilde{q}$ defined by
$\tilde{q}_{e_i\pm e_i}=q_{e_i\pm e_j}$ and
$\tilde{q}_{2e_i}=q_{2e_i}^{1/2}q_{2e_i+1}^{1/2}$. In the second
case $c,c^\prime$ are positive generic residual points for $\mc{R}^\prime$
with the parameter $\tilde{q}$ defined by $\tilde{q}_{e_i\pm e_i}=q_{e_i\pm e_j}$
and $\tilde{q}_{2e_i}=q_{2e_i}^{-1/2}q_{2e_i+1}^{1/2}$.
In the first case we substitute $q_{2e_i}=q_{2e_i+1}$
and in the second case we substitute $q_{2e_i}=q_{2e_i+1}^{-1}$;
with this substitution we have in either case
\begin{equation}
m^{\mc{R}}_{W_0r}(q)=m^{\mc{R}^\prime}_{W_0c}(\tilde{q})\text{\ and\ }
m^{\mc{R}}_{W_0r^\prime}(q)
=m^{\mc{R}^\prime}_{W_0c^\prime}(\tilde{q})
\end{equation}
Therefore it suffices to prove the assertion for irreducible root data
$\mc{R}$ such that $R_0$ is non-simply laced and $X=P(R_0)$ where
$W_0r,\,W_0r^\prime$ are orbits of generic residual points with the same
$W_0$-invariant unitary part $s$. We may now replace $s$ by $1$ without
loss of generality.
Hence we may and will assume that $W_0r,\,W_0r^\prime$ are orbits of positive
residual points. We again use Proposition \ref{lem:redrat} to compare such points to
the classification in \cite[Section 4]{HO}.

In the cases $G_2$ and $F_4$ the $W_0$-orbit $W_0r$
of a generic positive residual points $W_0r$ is distinguished by the set
$\mc{Q}_{W_0r}^{\operatorname{reg}}$ as can be seen from Tables 2 and
4. Since this set is the complement of the zero set of $m_{W_0r}$
(by Theorem \ref{thm:mgen}) the desired conclusion follows.

Next consider the cases $B_n$ and $C_n$.
Let $f$ be a rational function in $q_1,q_2$ of the form
\begin{equation}
f(q)=q_1^{N_1}q_2^{N_2}\prod_{i}\prod_{j\geq
0}(q_1^iq_2^j-1)^{n_{i,j}}
\end{equation}
(with $n_{i,j}\in\mathbb{Z}$). Then
the exponents $n_{i,j}\in\mathbb{Z}$ are determined by $f$. Let
$q_1$ denote the parameter of the roots $\pm e_i\pm e_j$ and $q_2$
the parameter of $\alpha^\vee$ for $\alpha=e_i$ (if $R_0$ has type
$B_n$) or $2e_{i}$ (if $R_0$ has type $C_n$). The functions $m_{W_0r}$
are of all of the above form where the exponent of $q_2$ is
$0,\, 2$ or $4$. The $W_0$-orbits of generic positive
residual points are parametrized by partitions of $n$ (see
\cite[Section 4]{HO}, and \cite[Theorem A.7]{Opd3}). Let
$\lambda\vdash n$ and let $W_0r_\lambda$ be the corresponding
$W_0$-orbit of residual points. Let us use the notation
$m_{W_0r}=m_\lambda$ if $W_0r=W_0r_\lambda$. In the case $B_n$, the factors
of $m_\lambda$ of the form $(q_1^{2i}q_2^2-1)$ have multiplicity
$n_{2i,2}$ equal to twice the number of boxes $b\in\lambda$ such
that $c(b)=i$ (where $c(b)$ denotes the content of $b$). Hence
$m_\lambda$ determines for each $i$ the number of boxes in
$\lambda$ with content $i$. Clearly this determines $\lambda$. If
$R_0$ is of type $C_n$ we use the correspondence between $B_n$ and
$C_n$ positive generic residual points as explained in the proof
of Theorem \ref{thm:L}. It follows that the factors of $m_\lambda$ of type
$(q_1^{4i}q_2^2-1)$ have multiplicity $n_{4i,2}$ equal to twice the
number of boxes $b$ of $\lambda$ with $c(b)=i$, and again we
conclude that $\lambda$ is determined by $m_\lambda$.
\end{proof}
\begin{cor}\label{cor:gccinv} Let $\mc{R}$ be semisimple and let
$q_0\in\mc{Q}=\mc{Q}(\mc{R})$. Suppose that $\delta_0\in\Delta(\mc{R},q_0)$ and
that $cc(\delta_0)=W_0r_0$ for a $r_0\in\textup{Res}^s(\mc{R},q_0)$ with
$s\in T_u$ which is $W_0$-invariant.
Then there exists a unique orbit $W_0r\in W_0\backslash\textup{Res}(\mc{R})$ of
generic
residual points which has the following property: there exists
an open neighborhood $U\subset\mc{Q}$ of $q_0$ and a continuous family of discrete
series characters $U\ni q\to \delta(q)\in\Delta_{W_0r(q)}(\mc{R},q)$ such that
$cc(\delta(q))=W_0r(q)$ for all $q\in U$.
\end{cor}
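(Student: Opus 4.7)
The plan is to produce the required generic orbit from the local continuous-deformation theorem, identify it using the rationality and factorization of the formal degree, and invoke Lemma \ref{prop:sep} for the separation.

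First I would apply Theorem \ref{thm:cont}(i) to $\delta_0$ to obtain, on a convex open neighborhood $U$ of $q_0$, a unique continuous family $q \to \delta(q) \in \Delta(\mc{R},q)$ with $\delta(q_0) = \delta_0$; Corollary \ref{cor:inv} lets me enlarge $U$ to the open convex cone it generates while keeping $\delta \in \Delta(\mc{R},U)$. Proposition \ref{prop:cont} shows that $\delta$ is also weakly continuous, so Lemma \ref{lem:respt} provides finitely many disjoint nonempty open subcones $U_i \subset U$ with dense union and orbits of generic residual points $W_0 r_i$ satisfying $cc(\delta)|_{U_i} = W_0 r_i|_{U_i}$. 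Since $r_i(q_0) \in W_0 r_0$ and $s$ is $W_0$-invariant by hypothesis, Lemma \ref{lem:mon} forces each $r_i$ to have unitary part exactly $s$.

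Next I would show that all the orbits $W_0 r_i$ coincide. By Theorem \ref{thm:ratfam} together with Corollary \ref{cor:mreg}, the rational functions $m_{W_0 r_i} \in K(\Lambda_{\mathbb{Z}})$ pairwise differ by nonzero rational constants. Proposition \ref{prop:genfam}(iii) together with Lemma \ref{lem:genprod}(i),(ii) decomposes both the identity $m_{W_0 r_i} = c \cdot m_{W_0 r_j}$ and the orbits themselves across the irreducible factors of $\mc R^{\max}$, reducing the problem to a single irreducible summand. On a non-simply laced summand, Lemma \ref{prop:sep} applies, since the common unitary part $s$ is $W_0$-invariant, and concludes that the restriction of $W_0 r_i$ to that summand equals that of $W_0 r_j$. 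On a simply laced summand the parameter space is one-dimensional and the only specialization at which generic residual orbits can coalesce is the trivial one $q=1$ (where $\mc H(\mc R,q)$ reduces to $\mathbb{C}[W]$ and admits no discrete series); hence $q_0 \neq 1$, and after shrinking $U$ to avoid $q=1$ the intersection of $U$ with $\mc Q^{gen}$ is connected on that factor, leaving a unique $W_0 r_i$. Assembling the factors produces a single generic orbit $W_0 r$ with $cc(\delta)|_{U_i} = W_0 r|_{U_i}$ for every $i$; the density of $\bigcup_i U_i$ in $U$ together with the continuity of $cc(\delta)$ and of $q \mapsto W_0 r(q)$ then extends the equality to all of $U$, giving existence.

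For uniqueness, any second orbit $W_0 r'$ with the stated property on a neighborhood $U'$ of $q_0$ would agree with $W_0 r$ on the nonempty open set $U \cap U' \cap \mc Q^{gen}$ through $cc(\delta)$, and by the defining property of $\mc Q^{gen}$ this forces $r \in W_0 r'$. The main obstacle, and the reason the hypothesis that $s$ be $W_0$-invariant enters, is the separation step: Lemma \ref{prop:sep} (resting on the classification of positive generic residual points for $B_n$, $C_n$, $F_4$, $G_2$) is essential, since without it two genuinely distinct generic orbits with the same unitary part could a priori produce proportional functions $m_{W_0 r}$ and remain invisible to the rational formal-degree comparison that drives the rest of the argument.
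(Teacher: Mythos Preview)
Your overall strategy is the same as the paper's: produce the continuous family via Theorem~\ref{thm:cont}, use Lemma~\ref{lem:respt} to get piecewise agreement with generic residual orbits $W_0r_i$, and then invoke the factorization of the formal degree together with Lemma~\ref{prop:sep} to force all the $W_0r_i$ to coincide. The paper organizes the reduction slightly differently: it first lifts $\delta_0$ to an irreducible discrete series $\pi_0$ of $\mc{H}(\mc{R}^{\max},q_0)$, extends $\pi_0$ to a continuous family $\pi=\pi^{(1)}\otimes\cdots\otimes\pi^{(m)}$ on the genuine product $\mc{R}^{\max}$, and then argues factor by factor directly on $\pi^{(i)}$ rather than on a comparison of two orbits $W_0r_i$, $W_0r_j$. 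This sidesteps some bookkeeping about lifts and about which $U_i$ contain $q_0$ in their closure.

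There is, however, a genuine gap in your treatment of the simply laced summands. You write that after shrinking $U$ to avoid $q^{(l)}=1$ ``the intersection of $U$ with $\mc{Q}^{\mathrm{gen}}$ is connected on that factor, leaving a unique $W_0r_i$''. But the components $U_i$ of $U\cap\mc{Q}^{\mathrm{gen}}$ are cut out by hyperplanes coming from \emph{all} factors simultaneously; removing only the simply laced singular locus does not make $U\cap\mc{Q}^{\mathrm{gen}}$ connected, so you cannot conclude there is a single $U_i$. What you actually need is that on a simply laced factor two generic residual orbits with $W_0$-invariant unitary part that agree at one value $q^{(l)}\neq 1$ agree everywhere. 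The paper obtains this directly from the scaling isomorphisms of \cite[Theorem~1.7(1),(5)]{OpdSol}: for simply laced $\mc{R}^{(i)}$ the generic central character of $\pi^{(i)}$ is forced by its value at a single nontrivial parameter. You could also argue it by noting that for $W_0$-invariant $s$ the positive part is $\exp(k\,\xi_0)$ with $\xi_0$ fixed, so equality of $W_0$-orbits at one $k\neq 0$ gives equality of the linear orbits $W_0\xi_0$; either way, the connectedness statement as you phrased it does not do the job.
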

\begin{proof}
The uniqueness of such an orbit $W_0r$ of generic residual points is clear from
the fact that a generic residual point is real analytic on $\mc{Q}$. Hence $W_0r$
is determined by its restriction to $U$.

For existence we first choose a lift $\tilde{r}_0\in\textup{Res}(\mc{R}^{max},q_0)$
of $r_0$ and a $\pi_0\in\Delta_{W_0\tilde{r}_0}(\mc{R},q_0)$ with the property
that $\delta_0$ is a component of the restriction of $\pi_0$ to $\mc{Q}(\mc{R},q_0)$.
According to
Theorem \ref{thm:cont} there exists an open neighborhood $U\subset\mc{Q}$
such that $\pi_0$ extends to a continuous family $\pi$ of irreducible
discrete series characters of $\mc{H}(\mc{R}^{max})$. It is obvious that
$\pi=\pi^{(1)}\otimes\dots\otimes\pi^{(m)}$ with $\pi^{(i)}$ a continuous family
of irreducible discrete series characters of $\mc{H}(\mc{R}^{(i)})$ defined on $U^{(i)}$
(where $\mc{R}^{(i)}$ with $i=1,\dots,m$ runs through the simple factor of
$\mc{R}^{max}$ as in Proposition \ref{prop:prod}).

For each $i$ there exists a generic residual point
$\tilde{r}^{(i)}\in\textup{Res}(\mc{R}^{(i)})$ such that
$cc(\pi^{(i)})=W(R_0^{(i)})\tilde{r}^{(i)}$ on $U^{(i)}$.
Indeed, if $\mc{R}^{(i)}$ is simply laced then this is trivial by
the scaling isomorphisms \cite[Theorem 1.7(1),(5)]{OpdSol}.
So let us assume that $\mc{R}^{(i)}$ is not simply laced.
Then the assertion follows from Lemma \ref{lem:respt},
Theorem \ref{thm:ratfam}, and Lemma \ref{prop:sep} applied to
\begin{equation}
\pi^{(i)}_0\in\Delta_{W(R_0^{(i)})\tilde{r}^{(i)}_0}(\mc{R}^{(i)},q_0^{(i)})
\end{equation}
Let
$r\in\textup{Res}(\mc{R})$ be the generic residual point that corresponds
to $(\tilde{r}^{(1)},\dots,\tilde{r}^{(m)})$ by restriction as in
Lemma \ref{lem:genprod}(i).

If we restrict the continuous family $\pi$ from $\mc{H}(\mc{R}^{max})$ to
$\mc{H}(\mc{R})$ we obtain a continuous family of discrete series characters,
i.e. a section $\pi|_{\mc{H}(\mc{R})}\in\Delta_\mathbb{N}(\mc{R},U)$. Observe that
all irreducible components of $\pi(q)|_{\mc{H}(\mc{R},q)}$ have the same
central character.
Using the linear independence of irreducible characters and
Theorem \ref{thm:cont}(ii) we see that $\pi|_{\mc{H}(\mc{R})}$
contains the continuous extension $\delta$ of $\delta_0$ to $U$ with
multiplicity at least $1$.
In particular we see that the composition of $cc(\pi):U\to W_0\backslash T^{max}$
with the natural projection $W_0\backslash T^{max}\to W_0\backslash T$
is the central character $cc(\delta)$ of the family $\delta$ on $U$.
We conclude that $cc(\delta)$ is given on $U$ by $W_0r|_U$,
where $r\in\textup{Res}(\mc{R})$ was constructed above.
This finishes the proof.
\end{proof}
Now we come to the main result of this section. It generalizes
Corollary \ref{cor:gccinv} to general irreducible discrete series
characters.\:
\begin{thm}\label{thm:gencc}
Let $\delta_0\in\Delta(\mc{R},q_0)$. Let $U\subset\mc{Q}$ be a (connected)
open neighborhood of $q_0$ such that there exists a $\delta\in\Delta(\mc{R},U)$
with $\delta(q_0)=\delta_0$ (see Theorem \ref{thm:cont}).
There exists a unique orbit
$W_0r\in W_0\backslash \textup{Res}_q(\mc{R})$ such that
$cc(\delta(\cdot))=W_0r|_U$.
\end{thm}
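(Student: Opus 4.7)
The plan is to establish uniqueness from real analyticity and to reduce existence to Corollary \ref{cor:gccinv} via the first reduction theorem and Clifford theory. Uniqueness is immediate: two $W_0$-orbits $W_0 r, W_0 r'$ of generic residual points of $\mc R$ are distinct orbits of real analytic maps $\mc Q \to T$, so equality on the open set $U$ forces $W_0 r = W_0 r'$.

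For existence, choose a representative $r_0 = s_0 c_0 \in T$ of $cc(\delta_0) = W_0 r_0$ with $s_0 \in T_u$ and $c_0 \in T_v$. Since $\delta_0$ is a discrete series character, $R_{s_0,1}$ has maximal rank and the root datum $\mc R_{s_0}$ is semisimple; moreover $s_0$ is $W(R_{s_0,1}) = W_0(\mc R_{s_0})$-invariant by the very definition of $R_{s_0,1}$, which is precisely the hypothesis of Corollary \ref{cor:gccinv}. I would apply Theorem \ref{thm:red1} at $(q_0, W_0 r_0)$ to pass to the reduced Hecke algebra $\mc H(\mc R_{s_0}, q_{s_0}) \rtimes \Gamma(r_0)$. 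Concretely, the finite-dimensional ideal $\mc I_q \subset \mc S(\mc R, q)$ of Theorem \ref{thm:def}(iii), cut out by a spectral projection concentrated on central characters near $W_0 r_0$, is Morita equivalent via Theorem \ref{thm:red1} to an analogous finite-dimensional ideal $\mc I_q' \subset \mc S(\mc R_{s_0}, q_{s_0}) \rtimes \Gamma(r_0)$, continuously in $q$ (possibly after shrinking $U$). This transports the family $\delta$ to a continuous family $\tilde\delta$ of irreducible discrete series characters of $\mc H(\mc R_{s_0}, q_{s_0}) \rtimes \Gamma(r_0)$ on $U$, with central character near $W(R_{s_0,1}) r_0$.

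Next, apply Clifford theory to $\mc H(\mc R_{s_0}, q_{s_0}) \subset \mc H(\mc R_{s_0}, q_{s_0}) \rtimes \Gamma(r_0)$: the restriction $\tilde\delta(q)|_{\mc H(\mc R_{s_0}, q_{s_0})} = \sum_j \sigma_j(q)$ is a sum of irreducible discrete series characters whose central characters form a single $\Gamma(r_0)$-orbit, and each $\sigma_j$ is again a continuous family (its primitive central idempotent in $\mc S(\mc R_{s_0}, q_{s_0})$ is a $\Gamma(r_0)$-conjugate summand of the idempotent afforded by $\tilde\delta$). Since $s_0$ is $W(R_{s_0,1})$-invariant, Corollary \ref{cor:gccinv} applies to each $\sigma_j$ and produces a unique orbit $W(R_{s_0,1}) \tilde r_j$ of generic residual points of $\mc R_{s_0}$ with $cc(\sigma_j(q)) = W(R_{s_0,1}) \tilde r_j(q_{s_0})$ throughout $U$. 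The collection $\{W(R_{s_0,1}) \tilde r_j\}_j$ is a single $\Gamma(r_0)$-orbit, hence lies in a single $\Gamma_{s_0}$-orbit; by Corollary \ref{cor:genreds}(ii) this determines a unique orbit $W_0 r \in W_0 \backslash \textup{Res}^{W_0 s_0}(\mc R)$. Unwinding the Morita equivalence then gives $cc(\delta(q)) = W_0 \tilde r_j(q_{s_0}) = W_0 r(q)$ on $U$.

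The main obstacle will be the first step: Theorem \ref{thm:red1} is formulated for formal completions, whereas the continuous family $\delta$ lives in the Schwartz algebra. Making the passage precise requires implementing the Morita equivalence between the finite-dimensional ideals $\mc I_q$ and $\mc I_q'$ continuously in $q$, a mild strengthening that uses Theorem \ref{thm:def}(iv),(vi) and Theorem \ref{thm:qcont}. Once this compatibility is granted, the Clifford decomposition and the subsequent appeals to Corollary \ref{cor:gccinv} and Corollary \ref{cor:genreds}(ii) are essentially routine.
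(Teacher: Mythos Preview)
Your proposal is essentially correct and follows the same architecture as the paper's proof: reduce via Theorem~\ref{thm:red1} to the root datum $\mc R_s$ where the unitary part $s$ is $W(R_{s,1})$-invariant, restrict to $\mc H(\mc R_s,q_s)$ by Clifford theory, invoke Corollary~\ref{cor:gccinv} on the irreducible summands, and lift back through Corollary~\ref{cor:genreds}(ii).

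The one genuine difference is in how the continuity obstacle you flag is resolved. You propose to upgrade the Morita equivalence of Theorem~\ref{thm:red1} to a \emph{continuous} Morita equivalence between the finite-dimensional ideals $\mc I_q$ and $\mc I_q'$. The paper avoids this entirely: it never transports the family $\delta$ to a family on the crossed product. Instead it works purely with weak continuity of characters. The key observation is that $\chi_{\delta_t(q)}(N_w^s)=\chi_{\delta(q)}(e_{t(q)}N_w)$, and the idempotent $e_{t(q)}\in\overline{\mc A}$ can be replaced, for the purpose of evaluating $\chi_{\delta(q)}$, by an ordinary element $a_{t,q}\in\mh C[T]$ that approximates it to sufficiently high order along $W_0 t(q)$ and varies continuously in $q$. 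This makes $q\mapsto\chi_{\delta(q)}(e_{t(q)}N_w)$ continuous; then one takes the restriction $\pi_0$ at $q_0$, extends it to a family $\pi\in\Delta_{\mh N}(\mc R_s,U_s)$ using Theorems~\ref{thm:cont} and~\ref{thm:wkcnt} on the $\mc R_s$ side, and checks that $\pi$ agrees with the transported characters. This sidesteps both the continuous Morita equivalence and a subtlety you may have glossed over: the group $\Gamma(t(q))$ in Theorem~\ref{thm:red1} depends on $t(q)$, not just on $s$, and could in principle jump as $q$ varies, so your crossed-product family $\mc H(\mc R_{s_0},q_{s_0})\rtimes\Gamma(r_0)$ is not obviously the right object at all $q\in U$. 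The paper's route through the subalgebra $\mc H(\mc R_s,q_s)$ alone avoids this issue.
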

\begin{proof}
We first show that the notion of weak continuity of a family of
characters (see Definition \ref{defn:wkcont}) is to some extent compatible
with the reduction results Theorem \ref{thm:red1} and Corollary \ref{cor:dsred}.

Let $cc(\delta(q))=W_0t(q)$ where $U\ni q\to t(q)\in T$ is continuous. Write $s$ for the
unitary part of $t(q)$ (independent of $q$). Let $\psi_s:\mc{Q}\to\mc{Q}_s =
\mc Q (\mc{R}_s)$ be the homomorphism given by $q\to q_s$.

We denote by $\pi_0\in\Delta_\mathbb{N}(\mc{R}_s,\psi_s(q_0))$ the restriction of
the irreducible
discrete series module of $\mc{H}(\mc{R}_s,\psi_s(q_0))\rtimes \Gamma(t(q_0))$ to
$\mc{H}(\mc{R}_s,\psi_s(q_0))$. By Theorem \ref{thm:cont} and Theorem \ref{thm:wkcnt}
there exists a (connected) open neighborhood $U_s\subset\mc{Q}_s$ of $\psi_s(q_0)$
and a family
\begin{equation}
\pi\in\Delta_\mathbb{N}(\mc{R}_s,U_s)
\end{equation}
such that $\pi(\psi(q_0))=\pi_0$. We may
and will shrink $U$ in such a way that $\psi_s(U)\subset U_s$.

Let $N^s_w\in\mc{H}(\mc{R}_s,q_s)$ for $w\in W(\mc{R}_s)$ denote the standard basis
for the affine Hecke algebra $\mc{H}(\mc{R}_s,q_s)$.
Recall from Lusztig's construction (in the variation Theorem \ref{thm:red1})
that $\mc{H}(\mc{R}_s,q_s)$ is embedded as a subalgebra of the formal
completion $\overline{\mc{H}}(\mc{R},q)$ (as defined by (\ref{eq:Hcompl}))
via the map $N_w^s\to e_{t(q)} N_w$ where $w\in W(\mc{R}_s)$ and where
$e_{t(q)}\in\overline{\mc{H}}(\mc{R},q)$ denotes the idempotent as in
Theorem \ref{thm:red1}.

Let $\delta_t(q)$ be the irreducible discrete series representation of
$\overline{\mc{H}}(\mc{R}_s,q_s)\rtimes\Gamma(t(q))$ corresponding to $\delta(q)$
according to Theorem \ref{thm:red1}. This implies in particular that
\begin{equation}\label{eq:deltat}
\chi_{\delta_t(q)}(N^s_w)=\chi_{\delta(q)}(e_{t(q)}N_w)
\end{equation}
for all $w\in W(\mc{R}_s)$.

We claim that
\begin{equation}\label{eq:claimcont}
\chi_{\pi(q_s)}(N_w^s)=\chi_{\delta_t(q)}(N^s_w)
\end{equation}
for all $q\in U$ and $w\in W(\mc{R}_s)$. By Theorem \ref{thm:cont}
and Theorem \ref{thm:wkcnt} it suffices to show that for all $w\in W$
the right hand side of (\ref{eq:deltat}) is continuous as a function
of $q\in U$.

By the continuity of $U\ni q\to cc(\delta(q))$
it is easy to see that one can construct for each $N\in\mathbb{N}$
a continuous family $U\ni q\to a_{t,q}\in\mc{A}=\mathbb{C}[T]$
(i.e. a $q$-family of Laurent polynomials on $T$ whose
coefficients depend continuously on $q$) such that for all $q\in
U$ and $t^\prime\in W(R_{s,1})t(q)$: $a_{t,q}\in 1+m_{t^\prime}^N$ while for
all $t^\prime\in W_0t(q)\backslash W(R_{s,1})t(q)$ one has $a_{t,q}\in
m_{t^\prime}^N$. If $N$ is sufficiently large this implies easily
that for all $q\in U$ and for any $w\in W(\mc{R}_s)$ one has
\begin{equation}
\chi_{\delta(q)}(e_{t(q)}N_w)= \chi_{\delta(q)}(a_{t,q} N_w)
\end{equation}
which is indeed continuous in $q\in U$ as was required, thus proving
(\ref{eq:claimcont}).

According to Corollary \ref{cor:gccinv} we find that
$cc(\pi_\lambda)\in W(R_{s,1})\backslash \textup{Res}^s(\mc{R}_s)$
for any irreducible component $\pi_\lambda$ of $\pi$.
By relation (\ref{eq:claimcont}) and application of
Corollary \ref{cor:genreds} it follows that for any component
$\pi_\lambda$ of $\pi$ that
\begin{equation}
cc(\delta)=(\Phi_{W_0s}^{W_0})^{-1}(\Gamma_s(cc(\pi_\lambda)))
\end{equation}
This finishes the proof.
\end{proof}
In view of Theorem \ref{thm:L} this means that the central character
of $\delta\in\Delta(\mc{R},U)$ actually extends to a $\mc{Q}_c$-valued
point of $W_0\backslash T$.

\begin{defn}\label{defn:gcc}
(Generic central character for discrete series)

Let $q\in\mc{Q}$. Theorem \ref{thm:gencc} yields a map
$gcc_q:\Delta(\mc{R},q)\to W_0\backslash\operatorname{Res}_q(\mc{R})$ which
extends to a continuous map (in the sense of Remark \ref{rem:ident})
$gcc:\Delta(\mc{R})\to W_0\backslash\operatorname{Res}(\mc{R})$.
We call $gcc_q$ and $gcc$ the ``generic central character'' maps.
\end{defn}
\index{gcc@$gcc \,, gcc_q \,, GCC$, generic central character maps}
\index{OR@$\mc O (\mc R)$, subspace of $W_0 \backslash \text{Res}(\mc R) \times \mc Q$}
\begin{defn}\label{defn:resorb}
Consider the topological space $\mc{O}(\mc{R})$ given by
\begin{equation}
\mc{O}(\mc{R})=
\{(W_0r,q)\in W_0\backslash\operatorname{Res}(\mc{R})\times\mc{Q}
\mid q\in\mc{Q}_{W_0r}^{reg}\}
\end{equation}
The finite map $\pi_2:\mc{O}(\mc{R})\to\mc{Q}$ is a local
homeomorphism and the projection
\begin{equation}
\pi_1:\mc{O}(\mc{R})\to W_0\backslash\operatorname{Res}(\mc{R})
\end{equation}
on the first factor defines for all $q\in\mc{Q}$
a bijection between the fibre
$\mc{O}(\mc{R})_q$ of
$\pi_2$ at $q\in\mc{Q}$ and the set
$W_0\backslash\operatorname{Res}_q(\mc{R})$.
We define the following evaluation map:
\begin{align*}
\textup{ev}:\mc{O}(\mc{R})&\to W_0\backslash T\times\mc{Q}\\
(W_0r,q)&\to (W_0r(q),q)
\end{align*}
\end{defn}
The generic central character map of Definition \ref{defn:gcc}
can be characterized as follows:
\begin{prop}\label{prop:GCC}
We define $GCC=gcc\times\pi:\Delta(\mc{R})\to\mc{O}(\mc{R})$ where
$\pi:\Delta(\mc{R})\to\mc{Q}$ is the canonical map.
Then $GCC$ is the unique continuous map such that the following
diagram commutes:
\begin{equation}
\begin{CD}
\Delta(\mc{R})@>GCC>>\mc{O}(\mc{R})\\
@V{cc_\Lambda}VV    @VV\textup{ev}V\\
W_0\backslash T\times\mc{Q}@=W_0\backslash T\times\mc{Q}
\end{CD}
\end{equation}
\end{prop}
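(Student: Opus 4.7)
The plan is to verify the three claims implicit in the statement: (a) $GCC$ lands in $\mc O (\mc R)$ and is continuous; (b) the diagram commutes; (c) no other continuous map makes it commute. Most of the work has effectively been done in Theorem~\ref{thm:gencc} and Definition~\ref{defn:gcc}; the task here is essentially bookkeeping plus one density argument.

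For (a), note that by Theorem~\ref{thm:gencc} together with Definition~\ref{defn:gcc}, for every $\delta\in\Delta(\mc R,q)$ the orbit $gcc(\delta)=W_0 r$ belongs to $W_0\backslash\operatorname{Res}_q(\mc R)$, which is exactly the condition $q\in\mc Q_{W_0 r}^{\operatorname{reg}}$ defining $\mc O(\mc R)$. Hence $GCC=gcc\times\pi$ takes values in $\mc O(\mc R)$. Continuity of $GCC$ reduces to continuity of its two components: $\pi$ is continuous since $\Delta(\mc R)$ is the \'etale space of the sheaf $\Delta(\mc R)$ (Remark~\ref{rem:ident}), and $gcc$ is continuous by Theorem~\ref{thm:gencc} (the point is that a continuous family $\delta\in\Delta(\mc R,U)$ has a single orbit $W_0 r\in W_0\backslash\operatorname{Res}(\mc R)$ as its generic central character throughout $U$).

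For (b), I would simply chase the diagram on an element $\delta\in\Delta(\mc R,q)$. Let $W_0 r=gcc(\delta)$. Then $\operatorname{ev}(GCC(\delta))=\operatorname{ev}(W_0 r,q)=(W_0 r(q),q)$, while $cc_\Lambda(\delta)=(cc_q(\delta),q)$. By Theorem~\ref{thm:gencc}, $cc_q(\delta)=W_0 r(q)$, so the two agree.

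For (c), suppose $F:\Delta(\mc R)\to\mc O(\mc R)$ is any continuous map with $\operatorname{ev}\circ F=cc_\Lambda$. Looking at the second coordinate of $cc_\Lambda$ and $\operatorname{ev}$ gives $\pi_2\circ F=\pi$, so $F$ is a section over $\pi$. On the open dense subset $\mc Q^{\operatorname{gen}}\subset\mc Q$ (open and dense by Proposition~\ref{prop:gencone} and the finiteness of $\operatorname{Res}(\mc R)$, Proposition~\ref{prop:fin}), the evaluation map $\operatorname{ev}$ is injective on each fibre by the very definition of $\mc Q^{\operatorname{gen}}$. Hence the equation $\operatorname{ev}\circ F=cc_\Lambda$ pins down $F$ completely on $\pi^{-1}(\mc Q^{\operatorname{gen}})$, forcing $F=GCC$ there. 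The hard point, and really the only step requiring more than formal manipulation, is to propagate this equality across the singular hyperplanes. For $\delta\in\Delta(\mc R,q_0)$ with $q_0\notin\mc Q^{\operatorname{gen}}$, I would use Theorem~\ref{thm:cont} to extend $\delta$ to a continuous family $\tilde\delta\in\Delta(\mc R,U)$ on an open neighborhood $U$ of $q_0$; pick a sequence $q_n\in U\cap\mc Q^{\operatorname{gen}}\to q_0$ (dense), obtaining $\tilde\delta(q_n)\to\delta$ in the \'etale space $\Delta(\mc R)$. Since $\mc O(\mc R)\subset W_0\backslash\operatorname{Res}(\mc R)\times\mc Q$ is Hausdorff, continuity of $F$ and $GCC$ together with $F(\tilde\delta(q_n))=GCC(\tilde\delta(q_n))$ force $F(\delta)=GCC(\delta)$, completing the proof of uniqueness.
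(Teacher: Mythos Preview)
Your proof is correct and agrees with the paper, which simply records the proposition as ``a reformulation of Theorem~\ref{thm:gencc}'' without spelling out the details you provide. Parts (a) and (b) are exactly the intended verification.

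For (c), your density argument works, but there is a more direct route that avoids limits entirely and is closer in spirit to the paper's one-line proof. Since $W_0\backslash\operatorname{Res}(\mc R)$ is a finite set (Proposition~\ref{prop:fin}), the first projection $\pi_1:\mc O(\mc R)\to W_0\backslash\operatorname{Res}(\mc R)$ has discrete target, so for any continuous $F$ the composition $\pi_1\circ F$ is locally constant on $\Delta(\mc R)$. Hence on a connected local section $\tilde\delta\in\Delta(\mc R,U)$ through $\delta_0$, the map $F$ takes the constant value $(W_0 r',q)$ for a single orbit $W_0 r'$, and commutativity of the diagram gives $cc(\tilde\delta(q))=W_0 r'(q)$ for all $q\in U$. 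The uniqueness clause of Theorem~\ref{thm:gencc} then forces $W_0 r'=gcc(\delta_0)$. This bypasses the appeal to $\mc Q^{\operatorname{gen}}$ and Hausdorffness, though your argument is a perfectly valid alternative.
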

\begin{proof}
This is a reformulation of Theorem \ref{thm:gencc}.
\end{proof}
We are now in the position to formulate the first main
result of this paper:
\begin{thm}\label{thm:main1}
The map $GCC=gcc\times\pi:\Delta(\mc{R})\to\mc{O}(\mc{R})$
is a surjective local homeomorphism and gives $\Delta(\mc{R})$ the
structure of a locally constant sheaf on $\mc{O}(\mc{R})$.
\end{thm}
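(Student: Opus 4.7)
The proof has four parts: continuity of $GCC$, the local homeomorphism property, surjectivity, and the locally constant sheaf structure. Continuity is exactly Proposition \ref{prop:GCC}. For the local homeomorphism property, I would invoke the general fact that a continuous map $f:X\to Y$ between étale spaces over a common base $Z$ (meaning both projections to $Z$ are local homeomorphisms and $f$ intertwines them) is automatically a local homeomorphism, by a short diagram chase using local sections. In our situation the base is $\mc{Q}$: the projection $\pi:\Delta(\mc{R})\to\mc{Q}$ is a local homeomorphism by the étale space description of Remark \ref{rem:ident}, the projection $\pi_2:\mc{O}(\mc{R})\to\mc{Q}$ is a local homeomorphism by Definition \ref{defn:resorb}, and $\pi_2\circ GCC=\pi$ holds tautologically.

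For surjectivity, fix $(W_0r,q_0)\in\mc{O}(\mc{R})$ and deform to a generic parameter. The set $\mc{Q}^{reg}_{W_0r}$ is open and contains $q_0$ by Theorem \ref{thm:mgen}, while $\mc{Q}^{gen}$ is the complement of finitely many rational hyperplanes by Proposition \ref{prop:gencone}, so one can choose $q\in\mc{Q}^{gen}\cap\mc{Q}^{reg}_{W_0r}$ inside the neighborhood $U$ of $q_0$ supplied by Theorem \ref{thm:cont}. Then $r(q)$ is residual, so Theorem \ref{thm:rescos} produces some $\delta\in\Delta_{W_0r(q)}(\mc{R},q)$, and the genericity of $q$ forces $gcc(\delta)=W_0r$. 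The deformation bijection of Theorem \ref{thm:cont}(ii) extends $\delta$ back to some $\delta_0\in\Delta(\mc{R},q_0)$; since $gcc$ is constant along continuous families (Definition \ref{defn:gcc}), $gcc(\delta_0)=W_0r$, whence $GCC(\delta_0)=(W_0r,q_0)$.

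For the locally constant sheaf structure it suffices to show that $GCC$ has locally constant fiber cardinality on $\mc{O}(\mc{R})$, since this, combined with the local homeomorphism property and the Hausdorffness of $\Delta(\mc{R})$ (Remark \ref{rem:ident}), upgrades $GCC$ to a covering map by a standard disjoint-sheets argument. Near $(W_0r,q_0)$ I would shrink $U$ so that $U\subset\bigcap_{W_0r'\in\mc{P}(r_0)}\mc{Q}^{reg}_{W_0r'}$, which is possible since $\mc{P}(r_0)$ is finite. For $q\in U$ the bijection $\phi_q$ of Theorem \ref{thm:cont}(ii) preserves the generic central character, because $\phi_q(\delta_0)$ lies on the same continuous family as $\delta_0$. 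Restricting to the preimage of $W_0r$ under $gcc$ then yields a bijection between $\{\delta_0\in\Delta_{W_0r_0}(\mc{R},q_0):gcc(\delta_0)=W_0r\}$ and $\{\delta\in\Delta(\mc{R},q):gcc(\delta)=W_0r\}$, which establishes local constancy of the fiber cardinality on the neighborhood $\{(W_0r,q):q\in U\}$ of $(W_0r,q_0)$ in $\mc{O}(\mc{R})$.

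The main conceptual hurdle throughout is controlling the behavior of the deformation bijection at non-generic parameters; this is precisely what Theorem \ref{thm:gencc} supplies, and its proof, relying on the formal-degree product formula (Theorem \ref{thm:ratfam}) and the classification-based separation Lemma \ref{prop:sep}, is what makes $gcc$ a well-defined function on $\Delta(\mc{R})$ in the first place.
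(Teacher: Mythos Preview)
Your proposal is correct and uses essentially the same ingredients as the paper's proof: the \'etale structure of $\Delta(\mc{R})$ over $\mc{Q}$ from Remark~\ref{rem:ident} for the local homeomorphism, Theorem~\ref{thm:cont}(ii) for the constancy of fiber cardinality, and Theorem~\ref{thm:rescos} at a generic parameter for surjectivity. The only structural difference is that the paper first establishes the locally constant sheaf property over each connected component $C=\{W_0r\}\times U$ of $\mc{O}(\mc{R})$ and then \emph{deduces} surjectivity from it (constant fiber cardinality plus nonemptiness at a single generic $q\in U$), whereas you prove surjectivity directly by deforming back from a generic parameter via the explicit bijection of Theorem~\ref{thm:cont}(ii) and then handle the sheaf structure separately; both orderings are fine and rest on the same input.
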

\begin{proof} Clearly $GCC$ is a local homeomorphism.
Using Definition \ref{defn:gcc} and Proposition \ref{prop:GCC}
we can reformulate Theorem \ref{thm:cont}(ii) by
stating that for any $W_0r\in W_0\backslash\operatorname{Res}(\mc{R})$ and
any connected component $U\subset\mc{Q}_{W_0r}^{\operatorname{reg}}$
inverse image $\Delta_C(\mc{R}):=GCC^{-1}(C)\subset\Delta(\mc{R})$ of
$C=\{W_0r\}\times U\subset\mc{O}(\mc{R})$
is a locally constant sheaf on $C$. In particular the cardinality
of the fibres of $GCC|_{\Delta_C(\mc{R})}$ is constant. Hence the
surjectivity of $GCC$ follows from Theorem \ref{thm:rescos}
by considering a generic parameter $q\in U$.
\end{proof}
\begin{cor} Let $W_0r\in W_0\operatorname{Res}(\mc{R})$ and
let $U\subset\mc{Q}_{W_0r}^{\operatorname{reg}}$ be a connected
component as in the proof of Theorem \ref{thm:main1}.
The restriction $\Delta_{C}(\mc{R})$ of $\Delta(\mc{R})$
to the connected component $C=\{W_0r\}\times U\subset\mc{O}(\mc{R})$
of $\mc{O}(\mc{R})$ is a constant sheaf.
\end{cor}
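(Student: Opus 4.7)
The plan is to upgrade the statement of Theorem~\ref{thm:main1} from ``locally constant on $\mc{O}(\mc{R})$'' to ``constant on each such connected component $C$'' by means of a purely topological argument about $C$. Since the proof of Theorem~\ref{thm:main1} already establishes that $\Delta_C(\mc{R}) \to C$ is a locally constant sheaf (equivalently, a covering of $C$ by the \'etale space $\Delta_C(\mc{R})$), it suffices to show that the monodromy is trivial, and for this it is enough to verify that $C$ is simply connected.

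First I would observe that $C = \{W_0 r\} \times U$ is homeomorphic, via the projection $\pi_2$, to $U$ itself, so the task reduces to showing $U$ is simply connected. By Theorem~\ref{thm:mgen}, the complement $\mc{Q} \setminus \mc{Q}_{W_0 r}^{\operatorname{reg}}$ is the zero locus of $m_{W_0 r}$, which is a finite union of (rational) hyperplanes in the real vector space $\mc{Q}$. Hence $\mc{Q}_{W_0 r}^{\operatorname{reg}}$ is the complement of a finite real hyperplane arrangement in $\mc{Q}$, and its connected components are exactly the open chambers of this arrangement. Each chamber is a nonempty intersection of open half-spaces, therefore convex, and in particular contractible. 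Thus $U$, and hence $C$, is contractible and a fortiori simply connected.

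Finally, I would invoke the standard fact that a locally constant sheaf of sets on a simply connected, locally path-connected, semi-locally simply connected space is isomorphic to a constant sheaf: pick a basepoint $(W_0 r, q_0) \in C$, let $F$ denote the (finite) fiber $\Delta_C(\mc{R})_{(W_0 r, q_0)}$; the monodromy action of $\pi_1\bigl(C,(W_0 r, q_0)\bigr) = \{1\}$ on $F$ is trivial, so the canonical map $\underline{F} \to \Delta_C(\mc{R})$ defined by parallel transport is an isomorphism of sheaves on $C$.

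The only substantive point is therefore the convexity (and thus contractibility) of a chamber of a finite hyperplane arrangement in $\mc{Q}$; everything else is a direct appeal to Theorem~\ref{thm:main1} and standard covering-space theory. There is no real obstacle here, as both ingredients are already in place from the preceding results.
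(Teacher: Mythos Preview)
Your proposal is correct and follows essentially the same approach as the paper: both use Theorem~\ref{thm:mgen} to conclude that $U$ is convex (the paper phrases it as ``the interior of a convex polyhedral cone''), hence contractible, so the locally constant sheaf from Theorem~\ref{thm:main1} is constant on $C$. You have simply spelled out the covering-space/monodromy justification that the paper leaves implicit.
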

\begin{proof}
Since $U$ is the interior of a convex polyhedral cone
by Theorem \ref{thm:mgen} this follows trivially from
Theorem \ref{thm:main1}.
\end{proof}
\begin{cor}\label{cor:gendscc}
For all $q\in\mc{Q}$ the map
$gcc_q:\Delta(\mc{R},q)\to W_0\backslash\operatorname{Res}_q(\mc{R})$
is surjective.
\end{cor}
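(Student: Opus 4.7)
The plan is to deduce this corollary directly from the surjectivity statement in Theorem \ref{thm:main1} together with the explicit description of the fiber structure of $\mc{O}(\mc{R}) \to \mc{Q}$ recorded in Definition \ref{defn:resorb}.

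Concretely, fix $q \in \mc{Q}$ and let $W_0 r \in W_0\backslash\operatorname{Res}_q(\mc{R})$. By Definition \ref{defn:resorb} this orbit corresponds to a point $(W_0 r, q) \in \mc{O}(\mc{R})$ lying in the fiber of $\pi_2 : \mc{O}(\mc{R}) \to \mc{Q}$ over $q$. Theorem \ref{thm:main1} asserts that $GCC : \Delta(\mc{R}) \to \mc{O}(\mc{R})$ is surjective, so we can find $\delta \in \Delta(\mc{R})$ with $GCC(\delta) = (W_0 r, q)$.

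Since $GCC = gcc \times \pi$ where $\pi : \Delta(\mc{R}) \to \mc{Q}$ is the canonical projection (sending each section to its base parameter), the equality $GCC(\delta) = (W_0 r, q)$ implies $\pi(\delta) = q$ and $gcc(\delta) = W_0 r$. The first identity means $\delta \in \Delta(\mc{R}, q)$ under the identification of $\Delta(\mc{R})$ with $\coprod_{q\in\mc{Q}}\Delta(\mc{R},q)$ from Remark \ref{rem:ident}, and the second identity reads $gcc_q(\delta) = W_0 r$ by Definition \ref{defn:gcc}. Thus $W_0 r$ lies in the image of $gcc_q$, which proves the surjectivity claim.

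There is no real obstacle here: the corollary is a formal consequence of Theorem \ref{thm:main1}, and the only thing to check is that unwinding the definitions gives exactly the conclusion of the statement. The substantive content (existence of a continuous deformation whose evaluation lands at the prescribed residual point, plus surjectivity at a generic parameter via Theorem \ref{thm:rescos}) has already been absorbed into Theorem \ref{thm:main1}.
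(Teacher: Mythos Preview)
Your proof is correct and follows exactly the paper's own approach: the paper simply states that the result follows immediately from the surjectivity of $GCC$ in Theorem~\ref{thm:main1}, and you have just unwound the definitions to make this explicit.
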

\begin{proof}
This follows immediately from the surjectivity of $GCC$.
\end{proof}
In particular, if $\delta_0\in\Delta(\mc{R},q_0)$ with
$gcc_q(\delta_0)=W_0r\in\operatorname{Res}_{q_0}(\mc{R})$ is an
irreducible discrete series character and
$U\subset\mc{Q}_{W_0r}^{reg}$ denotes the component of $q_0$, then
there exists a unique continuous family
$\delta\in\Delta(\mc{R},U)$ such that
$\operatorname{ev}_{q_0}(\delta)=\delta_0$. Observe that the open
cone $U\subset \mc{Q}$ is the maximal set to which $\delta$ can be
continued as a discrete series character (since the central
character $W_0r(q)$ will cease to be residual at
every boundary point of $U$). Hence the open cone $U$ is
determined by $\delta$.
\begin{defn}\label{denf:gends}
We denote this open cone by $U_\delta$,
and we call a continuous family of irreducible discrete series
characters $\delta$ which is extended to its maximal domain of definition
$U_\delta\ni q\to\delta(q)$ a \emph{generic irreducible discrete series
character}. We denote by $\Delta^{gen}(\mc{R})$ the finite set of
generic irreducible discrete series characters.
\end{defn}
\index{1dz@$\delta\in\Delta^{gen}(\mc{R})$, set of generic irreducible discrete series characters}
\begin{cor}\label{cor:mult}
For each component $C=\{W_0r\}\times U$
of $\mc{O}(\mc{R})$ we define a multiplicity $M_{C}\in\mathbb{Z}_{\geq0}$
of $C$ by $M_{C}:=
|\{\delta\in\Delta^{gen}(\mc{R})\mid GCC(\delta)= C\}|$.
Then $M_{C}>0$ for all components $C=\{W_0r\}\times U$.
For all $q\in U$ one has
$M_{C}=|\Delta_{W_0r}(\mc{R},q)|$, and for all
$q\in\mc{Q}$ one has (where $\chi_U$ denotes the characteristic
function of $U$):
\begin{equation}
|\Delta(\mc{R},q)|= \sum_{W_0r\in
W_0\backslash\operatorname{Res}(\mc{R})}
\sum_{U\in\mc{C}_{W_0r}}\chi_U(q)M_{\{W_0r\}\times U}
\end{equation}
\end{cor}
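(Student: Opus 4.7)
The plan is to read off all three statements directly from Theorem \ref{thm:main1} and its immediate Corollary, which asserts that the restriction of $\Delta(\mc{R})$ to each component $C = \{W_0 r\} \times U$ of $\mc{O}(\mc{R})$ is a \emph{constant} sheaf of finite sets. First I would clarify the notation: in the statement of the Corollary, $\Delta_{W_0 r}(\mc{R},q)$ for $W_0r$ a generic residual point means the fiber $gcc_q^{-1}(W_0 r) = \{\delta \in \Delta(\mc{R},q) : gcc_q(\delta) = W_0 r\}$. This refines Definition \ref{def:resmult}, because distinct $W_0$-orbits of generic residual points may specialize at a special $q$ to the same orbit of residual points in $T$.

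For positivity of $M_C$, I would pick any $q \in U$ and apply surjectivity of $GCC$ (Theorem \ref{thm:main1}) to produce $\delta_0 \in \Delta(\mc{R},q)$ with $GCC(\delta_0) = (W_0 r, q) \in C$. By Definition \ref{denf:gends} the unique maximal continuous extension of $\delta_0$ is a generic discrete series character $\delta \in \Delta^{gen}(\mc{R})$ with $U_\delta = U$ and $GCC(\delta) = C$, whence $M_C \geq 1$. For the equality $M_C = |\Delta_{W_0 r}(\mc{R},q)|$ at $q \in U$, the fiber of $GCC$ over $(W_0 r, q)$ is by construction exactly $gcc_q^{-1}(W_0 r)$, i.e.\ $\Delta_{W_0 r}(\mc{R},q)$ in the above refined sense. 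Constancy of the sheaf $\Delta(\mc{R})$ on $C$ (the Corollary to Theorem \ref{thm:main1}) forces all fibers over points of $C$ to have the same cardinality, and each generic discrete series character with $GCC(\delta) = C$ contributes exactly one element to each fiber via specialization $q \mapsto \delta(q)$; hence the common fiber cardinality is $M_C$.

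For the counting identity I would fix $q \in \mc{Q}$ and observe that $gcc_q$ partitions $\Delta(\mc{R},q)$ according to the generic central character, giving
\[
|\Delta(\mc{R},q)| = \sum_{W_0 r \in W_0 \backslash \operatorname{Res}_q(\mc{R})} |\Delta_{W_0 r}(\mc{R},q)|.
\]
By Definition \ref{def:genresq}, the condition $W_0 r \in \operatorname{Res}_q(\mc{R})$ is equivalent to $q \in \mc{Q}_{W_0 r}^{reg}$, which by Theorem \ref{thm:mgen} means $q$ lies in exactly one connected component $U \in \mc{C}_{W_0 r}$. Substituting $|\Delta_{W_0 r}(\mc{R},q)| = M_{\{W_0 r\} \times U}$ from the previous step and rewriting the condition $q \in U$ by a characteristic function yields the displayed formula. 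The only real ``obstacle'' is notational bookkeeping between $\Delta_{W_0 t}(\mc{R},q)$ in the sense of Definition \ref{def:resmult} and the refinement indexed by generic residual points; all substantive content has been produced in Theorems \ref{thm:cont} and \ref{thm:main1}.
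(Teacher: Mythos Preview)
Your proposal is correct and takes essentially the same approach as the paper, which leaves this corollary without an explicit proof since it is an immediate consequence of Theorem~\ref{thm:main1} and its subsequent corollary on constancy of the sheaf. Your clarification that $\Delta_{W_0r}(\mc{R},q)$ here must mean the $gcc_q$-fiber over the generic orbit $W_0r$ (rather than the $cc_q$-fiber over its specialization $W_0r(q)$) is a useful observation, and is indeed forced by the counting formula since distinct generic orbits may coalesce at special $q$.
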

We reformulate Theorem \ref{thm:ratfam}
using our results on the generic central character. This
is the second main theorem of this paper:
\begin{thm}\label{thm:main2} Let $\delta\in\Delta^{gen}(\mc{R})$.
There exists a rational constant $d_\delta\in\mathbb{Q}^\times$
such that for all $q\in U_\delta$ we have
\begin{equation}
\mu_{Pl} (\{ \delta(q) \})=d_\delta m_{gcc(\delta)}(q)
\end{equation}
Here $m_{gcc(\delta)}\in K(\Lambda_\mathbb{Z})$ is explicitly
given by (\ref{eq:m}).
\end{thm}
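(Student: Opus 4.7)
The plan is to observe that Theorem \ref{thm:main2} is essentially a consequence of Theorem \ref{thm:ratfam} once Theorem \ref{thm:gencc} has identified the generic central character of $\delta$. More precisely, I would proceed as follows.

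First, I would unpack the setup. By Definition \ref{denf:gends}, $\delta\in\Delta^{gen}(\mc R)$ is a continuous family extended to its maximal domain $U_\delta\subset\mc Q$, and by Theorem \ref{thm:mgen} together with the definition of $U_\delta$, this maximal domain is the interior of a convex polyhedral cone in $\mc Q$, hence in particular a connected open cone. By Theorem \ref{thm:gencc} there is a unique orbit $W_0 r\in W_0\backslash\textup{Res}(\mc R)$ with $cc(\delta(q))=W_0 r(q)$ for all $q\in U_\delta$; this is $gcc(\delta)$ by Definition \ref{defn:gcc}.

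Next, I would invoke Theorem \ref{thm:ratfam} directly. Its hypothesis requires the existence of a nonempty connected open subcone $U_i\subset U_\delta$ on which $cc(\delta)|_{U_i}=W_0 r|_{U_i}$; in our situation this holds for $U_i=U_\delta$ itself. Hence Theorem \ref{thm:ratfam} produces a constant $d_\delta\in\mathbb{Q}^\times$ so that
\begin{equation}
\mu_{Pl}(\{\delta\})=d_\delta\, m_{W_0 r}
\end{equation}
as an equality in $K(\Lambda_\mathbb{Z})$. Substituting $W_0 r=gcc(\delta)$ gives the asserted identity of rational functions.

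Finally I would remark on pointwise interpretation: by Corollary \ref{cor:rat} the left hand side is regular on $\mc Q$ and positive on $U_\delta$, and by Theorem \ref{thm:mgen} the right hand side is real analytic on $\mc Q$ and nonvanishing precisely on $\mc Q^{\textup{reg}}_{W_0 r}\supset U_\delta$. Thus the equality of rational functions specializes at every $q\in U_\delta$ to the numerical identity $\mu_{Pl}(\{\delta(q)\})=d_\delta\, m_{gcc(\delta)}(q)$, which is the statement of the theorem. There is no real obstacle at this stage, since all the hard work has been done in Theorems \ref{thm:ratfam} and \ref{thm:gencc}; the only thing to verify is that the hypothesis of Theorem \ref{thm:ratfam} is met on the maximal cone $U_\delta$, and this is immediate from the definition of $gcc(\delta)$.
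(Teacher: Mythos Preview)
Your proposal is correct and matches the paper's own treatment: the paper presents Theorem \ref{thm:main2} explicitly as a reformulation of Theorem \ref{thm:ratfam} using the generic central character established in Theorem \ref{thm:gencc}, without a separate proof block. Your verification that $U_\delta$ is a convex open cone (so that Theorem \ref{thm:ratfam} applies with $U_i=U_\delta$) and that $gcc(\delta)=W_0r$ supplies exactly the missing link, which is all that is needed.
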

\begin{rem} This result proves in particular Conjecture \cite[2.27]{Opd1},
and it shows that the constants defined in Conjecture
\cite[2.27]{Opd1} for special values of the parameters
can be determined from the rational constants $d_\delta$
defined for the irreducible generic discrete series characters.
Indeed, any irreducible discrete series character $\delta_0\in\Delta(\mc{R},q_0)$
determines a unique $\delta\in\Delta^{gen}(\mc{R})$ such that $\delta_0=\delta(q_0)$.
The constant defined in Conjecture \cite[2.27]{Opd1} is equal to
$d_\delta$ multiplied by a rational number depending on $q_0$ which
can be easily expressed in terms of the sets $R_{r,1}^{p,+}$, $R_{r,1}^{p,-}$,
and $R_{r,1}^{z}$ of roots whose associated factor in $m_{W_0r}$ becomes
zero at $q_0$).
\end{rem}

\section{The generic linear residual points and the evaluation map}
\label{sec:genlinres}

In this section we summarize, following \cite{HO} and \cite{Slooten},
the classification of the $W_0$-orbits of the generic linear residual points
for all irreducible root systems $R_1$
and we describe the evaluation map at a given parameter
$k\in\mc{K}=\mc{K}(R_1)$ of the parameter space associated with $R_1$.

\index{Kre@$\mc{K}_\xi^{reg}$, parameters $k \in \mc K$ for which $\xi (k)$ is residual}
For each generic linear residual point $\xi$ of $R_1$ we will describe the
open dense set $\mc{K}_\xi^{reg}$ of parameters $k$ such that
$\textup{ev}_k(\xi)=\xi(k)$ is still residual. In addition we will describe the set
$W_0\backslash\textup{Res}^{lin}(R_1,V,k)$ of residual orbits for each
$k\in\mc{K}$. To do this it is convenient to use the notion of
$k$-weighted distinguished
Dynkin diagrams with respect to a given bases $F_1=\{\alpha_1,\dots,\alpha_n\}$
of simple roots of $R_1$:
\index{Dyn@$\textup{Dyn}^{dist}(R_1,V,F_1,k)$, set of distinguished weighted Dynkin diagrams}
\begin{defn}\label{defn:Dyn}
For $k\in\mc{K}$ we define the set $\textup{Dyn}^{dist}(R_1,V,F_1,k)$
of distinguished $k$-weighted Dynkin diagrams for $(R_1,V,F_1,k)$ as the
set of $F_1$-dominant linear $(R_1,k)$-residual points. There is a canonical bijection
\begin{equation}
W_0\backslash\textup{Res}^{lin}(R_1,V,k)\stackrel{\simeq}{\longrightarrow}
\textup{Dyn}^{dist}(R_1,V,F_1,k)
\end{equation}
by which we will identify these two sets. We will represent
$D\in\textup{Dyn}^{dist}(R_1,V,F_1,k)$ by the Dynkin diagram of
$F_1$ in which the vertex corresponding to $\alpha_i\in F_1$
is labelled by the weight $\alpha_i(D)>0$ (or simply by
the list of values $(\alpha_1(D),\dots,\alpha_n(D))$).
\end{defn}
Given $k\in\mc{K}$ let $W_0\backslash\textup{Res}_k^{lin}(R_1)$
be the set of orbits of generic linear residual points $W_0\xi$ such that
$k\in\mc{K}^{reg}_\xi$. We will also describe in this section
the fibers of the evaluation map
\index{evk@$\mr{ev}_k$, evaluation of functions on $\mc K$ at $k$}
\begin{align}\label{eq:evaluation}
\textup{ev}_k:W_0\backslash\textup{Res}_k^{lin}(R_1)&\to
\textup{Dyn}^{dist}(R_1,V,F_1,k)\\
\nonumber W_0\xi&\to D=\xi(k)_+
\end{align}
where $\xi(k)_+\in W_0\xi(k)$ is the unique $F_1$-dominant element in the
orbit $W_0\xi(k)$.

If $D\in\textup{Dyn}^{dist}(R_1,V,F_1,k)$ and $\lambda>0$ then
$\lambda D\in\textup{Dyn}^{dist}(R_1,V,F_1,\lambda k)$ and
$-w_0(D)=D$ (using \cite[Theorem A.14(i)]{Opd1}).
This gives canonical identifications
\begin{equation}
\textup{Dyn}^{dist}(R_1,V,F_1,\lambda k)=|\lambda|\textup{Dyn}^{dist}(R_1,V,F_1,k)
\end{equation}
for all $\lambda\in\mathbb{R}^\times$.
Since the generic linear residual points depend linearly
on $k$ this remark implies that we only need to describe the set
$\textup{Dyn}^{dist}(R_1,V,F_1,k)$ and the fibres of $\widetilde{\textup{ev}}_k$
on all lines in the parameter space.

If $k_\alpha=2$ for all $\alpha\in R_1$ then the set $\textup{Dyn}^{dist}(R_1,V,F_1,k)$
is the usual set of distinguished Dynkin diagrams, corresponding to the set of
distinguished unipotent orbits of $\mathfrak{g}_\mathbb{C}(R_1)$ via the
Bala-Carter theorem.
For classical root systems it is known how to generalize combinatorially
the set of (distinguished) unipotent classes and the Bala-Carter bijection to
the set of $k$-weighted Dynkin diagrams \cite{Slooten}.
As this is a very useful description we will give these generalized
Bala-Carter maps as well.

\index{1dHR@$\Delta^{\mathbf{H}}(R_1,V,F_1,k)$, irreducible discrete series characters of
$\mb H (R_1,V,F_1,k)$} \index{gcc@$gcc^{\mathbf{H}}$, degenerate generic central
character map}
Let $\Delta^{\mathbf{H}}(R_1,V,F_1,k)$ be the collection of irreducible discrete series
characters of $\mb H (R_1,V,F_1,k)$. Consider the ``degenerate'' generic central
character map $gcc^{\mathbf{H}}$, which is the map
\begin{equation}\label{eq:bij}
gcc^{\mathbf{H}} : \Delta^{\mathbf{H}}(R_1,V,F_1,k) \to
W_0\backslash\textup{Res}^{lin}_k(R_1)
\end{equation}
\index{1dHW@$\Delta^{\mathbf{H}}_{W_0 D}(R_1,V,F_1,k)$,
subset of $\Delta^{\mathbf{H}}(R_1,V,F_1,k)$}
corresponding to the restriction of $gcc$ to the set $\Delta^{s}(\mc{R},q)$
(with $s\in T_u$ a $W_0$-invariant element) via the canonical bijections of
Corollary \ref{cor:dsgraded} and Proposition \ref{lem:redrat}). In the next section
we will prove that for all irreducible non-simply laced root systems
the map $gcc^{\mathbf{H}}$ maps the subset $\Delta^{\mathbf{H}}_{W_0 D}
(R_1,V,F_1,k) \subset \Delta^{\mathbf{H}}(R_1,V,F_1,k)$ of elements
with central character $W_0 D$ bijectively onto the fiber $\textup{ev}^{-1}_k(D)$,
where $\textup{ev}_k$ is the evaluation map
of (\ref{eq:evaluation}) for $R_1$, with one remarkable exception: in the case
$F_4$ it turns out that one has to count every occurrence of the unique singular
generic linear residual orbit ``$f_8$'' with multiplicity $2$.
In other words, in the notation of
Corollary \ref{cor:mult}, the multiplicities $M_{W_0r\times U}$ are always $1$
for orbits $W_0r$ of positive generic residual point, except for the unique singular
one (called $f_8$) of $F_4$, in which case the multiplicity is always $2$
(these results will be shown in the next section).

It is interesting in addition that this bijection also holds for type $D_n$ after we
make a small adaptation in order to see type $D_n$ as a specialization of type $B_n$.
The proofs of these facts do not depend on the classical Kazhdan-Lusztig classification.
The only point where one needs to resort to nontrivial computations is in the verification
of the fact that the multiplicity of $f_8$ is always $2$. This follows from results by
Reeder \cite{Re}. Since our parametrization clearly also holds for type
$A_n$ it follows that the deformation method gives the classification
of the discrete series in all cases except for types $E_{6,7,8}$ (in which cases
the Kazdan-Lusztig classification is available of course).

In the ``classical situation'' $k_\alpha=2$ for all
$\alpha\in R_1$ one associates a set of Springer representations
$\Sigma_{u(D)}$ of $W_0$ to the distinguished unipotent orbit
$u=u(D)$ of $G_\mathbb{C}^{ad}(R_1)$ associated with $D$.
The Kazhdan-Lusztig parametrization says that the set
$\Delta_{W_0D}^{\mathbf{H}}(R_1,V,F_1,k_\alpha=x)$ (equal parameters with $x>0$)
is in canonical bijection with the set $\Sigma_{u(D)}$.

For classical root systems \cite{Slooten} explained how to generalize combinatorially
the set of ``$k$-unipotent'' elements $u(D)$ associated to
$D\in\textup{Dyn}^{dist}(R_1,V,F_1,k)$ and the set of corresponding
``$k$-Springer representations'' $\Sigma_{u(D)}(k)$ of $W_0$.
This makes it possible to recast the above parametrizations
in the form of a generalized Kazhdan-Lusztig correspondence between the set
$\Delta_{W_0D}^{\mathbf{H}}(R_1,V,F_1,k_\alpha=x)$ and the sets of
$k$-Springer representations $\Sigma_{u(D)}(k)$ on a combinatorial level for
arbitrary $k$. Our result thus establishes this aspect of the conjectures by
Slooten (\cite{Slooten}).

We will include the generalized Kazhdan-Lusztig parameters for the classical
root systems, and describe their relation with the
alternative parametrization (\ref{eq:bij}).

\subsection{The case $R_1=A_n,\ n\geq 1$}
\label{A:res}

In this case $\mc{K} \cong \mathbb{R}$.
Choose the bases of simple roots $F_1=\{e_1-e_2,\dots,e_{n-1}-e_n\}$ for $R_1$,
and define $\xi:\mc{K}\to V$ by
the equations $\alpha(\xi(k))=k$ for all $\alpha\in F_1$. Then
$W_0\backslash\textup{Res}^{lin}(R_1)=\{W_0\xi\}$.
The set $\mc{K}^{reg}_\xi$ is equal to $\mc{K}\backslash\{0\}$.
For all $k\in\mc{K}^{reg}_\xi$ we have
$\textup{Dyn}^{dist}(R_1,V,F_1,k)=\{D(k)\}$ with $D(k)=(|k|,\dots,|k|)$.
We have $\textup{ev}^{-1}_k(D(k))=\{W_0\xi\}$.

\subsection{The case $R_1=B_n,\ n\geq 2$}\label{B:res}

The results in this subsection are due to Slooten \cite{Slooten}.
Put $R_1=\{\pm e_i\pm e_j\mid 1\leq i\not=j\leq n\}\cup\{\pm e_i\mid 1\leq i\leq n\}$.
Choose as a basis $F_1=\{e_1-e_2,\dots,e_{n-1}-e_n,e_n\}$. We put
$k(e_i\pm e_j)=k_1\in\mathbb{R}$ and $k(e_i)=k_2\in\mathbb{R}$ and in this
way make the identification $\mc{K}=\mathbb{R}^2$. If $k_1\not=0$ then
we define $m\in\mathbb{R}$ by $m=k_2/k_1$.

We first describe the generic linear residual points.
Given a partition $\lambda\in\mc{P}(n)$ (i.e. a partition $\lambda\vdash n$)
we define a $\mc{K}$-valued \index{Pn@$\mc P (n)$, set of partitions of $n \in \mh N$}
point $\xi_\lambda$ as follows. Given a box $b$ of $\lambda$ let $i(b)$ be
its row number and $j(b)$ its column number. We define the content $c(b)$
of the box $b$ by $c(b)=j(b)-i(b)$. We call the tableau of shape $\lambda$
in which the boxes $b\in\lambda$ are filled with the expression
$c(b)k_1+k_2$ the \emph{generic $k$-shifted tableau} of $\lambda$,
denoted by $T(\lambda,k)$.
We order the boxes of $T(\lambda,k)$ in the standard way by
reading the tableau from left to right and from top to bottom.
Then we define $\xi_\lambda$ as the $\mc{K}$-valued point of $V$
such that the i-th coordinate $e_i(\xi)$ is equal to the filling
$c(b_i)k_1+k_2$ of the i-th box of $T(\lambda,k)$.
\begin{thm}\label{thm:genresBn}
We have a bijection
\begin{align*}
\Lambda:\mc{P}(n)&\to W_0\backslash\textup{Res}^{lin}(R_1)\\
\lambda&\to W_0\xi_\lambda
\end{align*}
\end{thm}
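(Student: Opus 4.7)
The plan is to establish the three parts of the bijection separately: well-definedness (each $\xi_\lambda$ is a generic linear residual point), injectivity, and surjectivity of $\Lambda$.

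For well-definedness, I would work at a generic parameter $k \in \mc K$, meaning one where $k_1$ and $k_2$ are linearly independent over $\mathbb{Q}$. Since $e_i(\xi_\lambda) = c(b_i)k_1 + k_2$, the $\mathbb{Q}$-linear independence forces any coincidence $\alpha(\xi_\lambda(k)) \in \{0, \pm k_\alpha\}$ for $\alpha \in R_1$ to match the coefficients of $k_1$ and $k_2$ separately. This eliminates all contributions from roots $\pm(e_i + e_j)$ (which would require cancellation of a $2k_2$ term) and from short roots $\pm e_i$ except when $c(b_i) = 0$, in which case $e_i(\xi_\lambda) = k_2$. Writing $d_c = d_c(\lambda)$ for the number of boxes of content $c$, the residual equation of Definition \ref{defn:linres} becomes
\[
\sum_c d_c d_{c-1} + d_0 = \sum_c d_c(d_c - 1) + n,
\]
which, using $\sum_c d_c = n$, is equivalent to $\sum_c d_c(d_c - d_{c-1}) = d_0$. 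This is a standard identity for Young diagrams; it follows readily from the factorization $(1-x^{-1})P_\lambda(x) = \sum_{i=1}^{\ell(\lambda)} (x^{\lambda_i - i} - x^{-i})$ of the content generating function $P_\lambda(x) = \sum_c d_c x^c$ by extracting the $x^0$-coefficient of $(1-x^{-1})P_\lambda(x) P_\lambda(x^{-1})$.

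For injectivity, I observe that $W_0 = W(B_n)$ acts on $V$ by signed permutations of coordinates, so the orbit $W_0\xi_\lambda(k)$ is determined by the multiset $\{e_1(\xi_\lambda(k)), \ldots, e_n(\xi_\lambda(k))\}$ up to signs. For generic $k$, the values $ck_1+k_2$ corresponding to distinct contents $c$ are all distinct with distinguishable signs of the $k_2$-component, so this multiset recovers $\{c(b) : b \in \lambda\}$, that is, the diagonal-count function $d_c(\lambda)$. Since $\lambda$ is determined by its sequence of diagonal counts (one can reconstruct it row by row from the profile of jumps $d_c - d_{c-1}$ encoding its outer corners), the map $\Lambda$ is injective.

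For surjectivity I would argue by induction on $n$ via Proposition \ref{prop:flag}: any generic linear residual point $\xi$ of $R_1 = B_n$ lies on a generic residual line whose ``direction'' is a generic residual point for a maximal proper parabolic sub-root-system of $B_n$ (of type $B_{n-1}$ or $A_{m-1}\times A_{n-m-1}$). By the inductive hypothesis this lower-rank residual orbit is of the form $\xi_\mu$ for a partition $\mu$, and a direct analysis of which one-parameter extensions remain generically residual in $B_n$ identifies them exactly with the ways of adjoining a box to $\mu$, producing $\xi_\lambda$ for some $\lambda \vdash n$. The main obstacle lies precisely in this surjectivity step: while Theorem \ref{thm:ord} constrains the residual condition strongly, carefully matching each extension of a lower-rank residual point with a unique box-addition (and checking that no extraneous orbits appear) requires delicate combinatorial bookkeeping. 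An alternative is to invoke the explicit classification of residual points for $B_n$ given in \cite[Section 4]{HO} and \cite[Theorem A.7]{Opd1}, from which surjectivity is immediate once the count $|\Lambda(\mc P(n))|$ matches.
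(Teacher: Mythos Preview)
The paper does not actually prove this theorem: it is stated without proof and attributed to Slooten \cite{Slooten} (building on \cite[Section 4]{HO}). So your proposal goes well beyond what the paper itself offers.

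Your well-definedness and injectivity arguments are correct. The reduction of the residual equation at generic $k$ to the combinatorial identity $\sum_c d_c(d_c-d_{c-1})=d_0$ is exactly the content of the Heckman--Opdam computation in \cite[Section 4]{HO}, and your observation that the content multiset (hence the diagonal profile $(d_c)_c$) is recoverable from the $W_0(B_n)$-orbit at generic $k$ is the right way to see injectivity. One small caveat: your generating-function derivation of the identity is a little compressed --- extracting the $x^0$-coefficient of $(1-x^{-1})P_\lambda(x)P_\lambda(x^{-1})$ via the row expansion gives $\sum_i (d_{\lambda_i-i}-d_{-i})$, and it takes one more observation (that the $\lambda_i-i$ are exactly the distinct end-of-row contents while the $-i$ sweep all negative contents) to close the gap to $d_0$. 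The identity is in any case standard.

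For surjectivity you correctly identify the obstacle. Your inductive plan via Proposition~\ref{prop:flag} is the natural strategy, but the step ``a direct analysis of which one-parameter extensions remain generically residual identifies them exactly with box-additions'' is where all the work lies, and it is not short: one has to rule out extensions along $A$-type parabolic directions producing new orbits, and control coincidences under the $W_0$-action. This is precisely what the classification in \cite[Section~4]{HO} and \cite[Theorem~A.7]{Opd1} accomplishes, and invoking it (as you suggest in your alternative) is how the paper handles the matter as well. In short: your approach is sound and more explicit than the paper's bare citation, but for surjectivity you end up at the same external reference.
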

The set $\mc{K}^{reg}_\lambda$ of regular parameters for $\xi_\lambda$
is of the form
\begin{equation}
\mc{K}^{reg}_{\lambda}=\mc{K}\backslash\bigcup_{m\in M_\lambda^{sing}}L_m
\end{equation}
where $L_m=\{(k_1,k_2)\mid k_2=mk_1\}\subset\mc{K}$ and
where $M_\lambda^{sing}$ is a set of half-integral
ratio's $m\in\mathbb{Z}/2$ which are called singular with respect to $\lambda$
and which will be described in Proposition \ref{prop:sing} below.
We first define for $m\in\mathbb{Z}/2$ the $m$-shifted content tableau
$T_m(\lambda)$ of $\lambda$ as follows.
The tableau $T_m(\lambda)$ has shape $\lambda$ and
the box $b$ of $T_m(\lambda)$ is filled with the value $|c(b)+m|$
(i.e. the absolute value of the filling of the same box in
$T(\lambda,(1,m))$. The following notion
plays an important role:
\begin{defn}\label{defn:extr}
Let $\lambda\vdash n$ and $m\in\mathbb{Z}/2$.
The list of extremities of $T_m(\lambda)$ is the weakly increasing
list consisting of the following numbers.
If $m\in\mathbb{Z}$ (resp. $m\in\mathbb{Z}+1/2$) then the extremities are
the fillings of the boxes of $T_m(\lambda)$ at the end of a row of $T_m(\lambda)$
which are on or above the $0$ diagonal (resp. the upper $1/2$-diagonal)
and the boxes at the bottom of a column of $T_m(\lambda)$ which are on or below
the zero diagonal (resp. the lower $1/2$ diagonal). Here we agree to count $0$
twice if $0$ is both at the the end of a row and of a column.
\end{defn}

\begin{prop}\label{prop:sing}
We have $m\in M_\lambda^{reg}$ (the complement
of $M_\lambda^{sing}$, i.e. the values $m\in\mathbb{R}$ such that
$\xi_\lambda(k_1,mk_1)$ is residual if $k_1\not=0$)
if and only if $m\not\in\mathbb{Z}/2$ or $m\in \mathbb{Z}/2$
and the extremities of $T_m(\lambda)$ are all distinct.
If $m<1-n$ or of $m>n-1$ then $m$ is regular with respect
to any partition $\lambda\vdash n$.
\end{prop}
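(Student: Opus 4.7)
The plan is to verify the criterion by direct analysis of the residuality equation from Definition~\ref{defn:linres}:
\begin{equation*}
D(\xi,k) := |\{\alpha \in R_1 \mid \alpha(\xi(k)) = k_\alpha\}| - |\{\alpha \in R_1 \mid \alpha(\xi(k)) = 0\}| = \dim V = n,
\end{equation*}
applied to $\xi = \xi_\lambda$ along the line $L_m$. Since the claim is invariant under rescaling $k_1$, I would fix $k_1 = 1$ and $k_2 = m$, so that the coordinates of $\xi_\lambda$ are $(c(b_i) + m)_{1 \le i \le n}$, with $b_1, \dots, b_n$ the boxes of $\lambda$ read in the standard order.

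First I would translate each root equation into a condition on contents: for $\alpha = e_i - e_j$, $\alpha(\xi) = 1$ iff $c(b_i) - c(b_j) = 1$, and $\alpha(\xi) = 0$ iff $c(b_i) = c(b_j)$; for $\alpha = e_i + e_j$, the two conditions read $c(b_i) + c(b_j) = 1 - 2m$ and $c(b_i) + c(b_j) = -2m$; for $\alpha = e_i$, they read $c(b_i) = 0$ and $c(b_i) = -m$ respectively. Generically in $m$, only the $e_i - e_j$ roots produce equalities, and the standard Young-tableau bookkeeping (pairs of boxes with $c(b_i) - c(b_j) \in \{0,1\}$) shows $D(\xi_\lambda, k) = n$, recovering that $\xi_\lambda$ is a generic residual point (Theorem~\ref{thm:genresBn}). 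This already gives $\mathbb{R} \setminus \mathbb{Z}/2 \subset M_\lambda^{reg}$.

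Next, for $m \in \mathbb{Z}/2$ I would compute the change $\Delta(m) := D(\xi_\lambda, (1, m)) - n$. Only the roots of type $\pm e_i$ and $\pm(e_i + e_j)$ contribute to $\Delta(m)$, and each such contribution is controlled by a box of $\lambda$ whose $m$-shifted content $|c(b) + m|$ hits one of the distinguished values $0, m, \frac{1}{2}$, \textit{etc.} The combinatorial heart of the proof is the identity
\begin{equation*}
\Delta(m) = \#\{\text{coincident extremity pairs in } T_m(\lambda)\},
\end{equation*}
where I would verify that, after cancellations between "incoming" and "outgoing" contributions as boxes are stacked into rows and columns, the only un-cancelled terms come from boxes that are simultaneously row-ends and column-bottoms on the same side of the relevant diagonal; these are precisely the extremities in Definition~\ref{defn:extr}. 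Since $\Delta(m) \ge 0$ in all cases, residuality holds iff $\Delta(m) = 0$, iff all extremities of $T_m(\lambda)$ are distinct. The last assertion ($|m| > n-1 \Rightarrow$ regular) is then immediate: all contents $c(b)$ satisfy $|c(b)| \le n - 1$, so $T_m(\lambda)$ has strictly monotone fillings along each row and column on the relevant side of the diagonal, ruling out coincidences.

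The main obstacle will be the combinatorial cancellation argument for $\Delta(m)$: one must track, row by row and column by column of $\lambda$, how equalities $c(b_i) + c(b_j) = 1 - 2m$ and $c(b_i) = 0$ pair off against equalities $c(b_i) + c(b_j) = -2m$ and $c(b_i) = -m$. The two cases $m \in \mathbb{Z}$ and $m \in \mathbb{Z} + \tfrac{1}{2}$ require separate (but parallel) bookkeeping, because the relevant "diagonal" in $T_m(\lambda)$ shifts from the zero diagonal to the pair of half-diagonals, which is precisely the dichotomy built into Definition~\ref{defn:extr}.
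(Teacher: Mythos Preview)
The paper does not actually give its own proof of this proposition: the opening line of \S\ref{B:res} attributes all results of that subsection to Slooten \cite{Slooten}, and Proposition~\ref{prop:sing} is stated without argument. Your plan is therefore not being compared to an in-paper proof but to Slooten's original combinatorics, and the outline you give --- reduce to content arithmetic on the boxes of $\lambda$, split off the $e_i\pm e_j$ contributions (which already account for the generic value $n$), and then do a cancellation argument for the remaining short-root and $e_i+e_j$ contributions at half-integral $m$ --- is indeed the natural direct route and matches the spirit of \cite{Slooten}.

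There is one concrete slip you should fix. You assert $\Delta(m)\ge 0$ and then $\Delta(m)=\#\{\text{coincident extremity pairs}\}$. In fact the degenerate analogue of Theorem~\ref{thm:ord} (equivalently Proposition~\ref{lem:redrat}(iii)) forces $D(\xi_\lambda,k)\le n$ for \emph{all} $k$, so $\Delta(m)\le 0$. A two-box check confirms this: for $\lambda=(2)$ and $m=-\tfrac12$ one has $\xi_\lambda=(-\tfrac12,\tfrac12)$, and a direct count gives $D=1$, i.e.\ $\Delta(-\tfrac12)=-1$, while $T_{-1/2}(\lambda)$ has the single repeated extremity $\tfrac12$. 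So the identity you want is $-\Delta(m)=\#\{\text{coincident extremity pairs}\}$ (or some equivalent nonnegative defect), and the logical chain becomes: $\Delta(m)\le 0$ always, with equality iff residual, iff no repeated extremities. With this sign corrected, and with the understanding that the row/column cancellation you describe is genuinely the substantive part (and requires the separate $m\in\mathbb{Z}$ versus $m\in\tfrac12+\mathbb{Z}$ bookkeeping you already flag), the plan is sound.
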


Let $\mc K^{reg}$ be the intersection of sets $\mc K^{reg}_\xi =
\mc K^{reg}_{W_0 \xi}$, where $\xi$ runs over $\mr{Res}^{lin}(R_1 ,V, k)$.
\index{Kre@$\mc K^{reg}$, regular parameters in $\mc K$}

\begin{cor}\label{cor:linresgenB}
We have
\begin{equation}
\mc{K}^{reg}=\mc{K}\backslash\bigcup_{m}L_m
\end{equation}
where $m$ runs over the half-integral values satisfying $1-n\leq m\leq n-1$.
In particular, if $k\not\in L_m$ for all half-integral $m$ satisfying
$1-n\leq m\leq n-1$ the evaluation map
\begin{equation}
\textup{ev}_k:W_0\backslash\textup{Res}^{lin}(R_1)\to\textup{Dyn}^{dist}(R_1,V,F_1,k)
\end{equation}
is bijective.
\end{cor}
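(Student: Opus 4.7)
My plan is as follows. By Theorem~\ref{thm:genresBn}, the set $W_0\backslash\textup{Res}^{lin}(R_1)$ is indexed by the partitions $\lambda\vdash n$ via $\lambda\mapsto W_0\xi_\lambda$, so by the definition of $\mc{K}^{reg}$ one has
\[
\mc{K}^{reg}=\bigcap_{\lambda\vdash n}\mc{K}^{reg}_{\xi_\lambda}.
\]
Proposition~\ref{prop:sing} rewrites each intersectand as $\mc{K}\setminus\bigcup_{m\in M_\lambda^{sing}}L_m$ and confines $\bigcup_\lambda M_\lambda^{sing}$ to the set of half-integers $m$ with $1-n\leq m\leq n-1$. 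This gives immediately the inclusion
$\mc{K}^{reg}\supseteq\mc{K}\setminus\bigcup_m L_m$
where $m$ runs over half-integers in $[1-n,n-1]$.

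For the reverse inclusion I must verify that \emph{every} half-integer $m\in[1-n,n-1]$ actually lies in $M_\lambda^{sing}$ for some $\lambda\vdash n$. This is a finite combinatorial check: for each such $m$ I exhibit an explicit partition whose $m$-shifted content tableau $T_m(\lambda)$ has a repetition among its extremities in the sense of Definition~\ref{defn:extr}. The symmetry $\lambda\leftrightarrow\lambda^t$ combined with $m\leftrightarrow -m$ reduces to $m\geq 0$; for integer $m$ with $0\le m\le n-1$ a two-row rectangle or near-rectangle of width $m+1$ or $m+2$ (with small-$n$ adjustments) produces a pair of equal row-end/column-bot extremities on the zero diagonal; for half-integer $m$ the same construction with the appropriate shift forces equal extremities on either side of the upper/lower half-diagonal. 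I expect this bookkeeping, rather than any conceptual difficulty, to be the main technical content of the corollary.

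For the bijectivity statement, assume $k\notin L_m$ for every half-integer $m$ with $1-n\le m\le n-1$; by the first part $k\in\mc{K}^{reg}$, hence all $|\mc{P}(n)|$ generic residual orbits remain residual at $k$ and the map $\textup{ev}_k$ is well-defined on the full set $W_0\backslash\textup{Res}^{lin}(R_1)$. Surjectivity of $\textup{ev}_k$ onto $\textup{Dyn}^{dist}(R_1,V,F_1,k)$ is the graded analogue of Proposition~\ref{prop:resdef}: every $(R_1,k)$-residual point is a specialization of a generic one. For injectivity I use that a $W_0=W(B_n)$-orbit in $V=\mathbb{R}^n$ is determined by the multiset of absolute coordinate values, so that separation of partitions reduces to separation of the multisets $\{\,|c(b)k_1+k_2|:b\in\lambda\,\}$. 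Writing $m=k_2/k_1$ (the case $k_1=0$ is excluded by the hypothesis, since then $k$ lies on every $L_m$), the equation $|c+m|=|c'+m|$ forces $c=c'$ or $c+c'=-2m$; the latter is impossible when $m\notin\mathbb{Z}/2$, and when $m$ is half-integral with $|m|>n-1$ all shifted contents share a common sign. In either case the multiset of $|c+m|$ recovers the multiset of contents of $\lambda$, which is well known to determine $\lambda$.

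The main obstacle is thus the case analysis in the second paragraph producing, for each admissible half-integer $m$, a partition whose shifted extremity list has a repetition; the injectivity step is a tidy multiset computation, and the surjectivity of $\textup{ev}_k$ is essentially the deformation statement already available in the graded setting.
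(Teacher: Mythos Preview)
Your overall strategy matches the implicit one in the paper (the Corollary carries no proof there), and for $k_1\neq 0$ your injectivity argument via content multisets is clean and correct. There is, however, one factual slip that creates a genuine gap.

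You write that ``the case $k_1=0$ is excluded by the hypothesis, since then $k$ lies on every $L_m$.'' This is false: $L_m=\{(t,mt):t\in\mathbb{R}\}$, so $(0,k_2)\in L_m$ would force $k_2=m\cdot 0=0$. Only the origin lies on every $L_m$; the point $(0,k_2)$ with $k_2\neq 0$ lies on \emph{no} $L_m$ and therefore satisfies the Corollary's hypothesis. Your injectivity argument, which begins by forming $m=k_2/k_1$, does not cover this case.

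This gap cannot simply be patched, because the bijectivity assertion genuinely fails there: at $k=(0,k_2)$ one has $\xi_\lambda(0,k_2)=(k_2,\ldots,k_2)$ for every $\lambda\vdash n$, so all $|\mc{P}(n)|$ generic orbits coalesce to a single residual $W_0$-orbit and $\textup{ev}_k$ is not injective for $n\geq 2$. The paper itself records exactly this phenomenon in \S\ref{subsub:k10}. The Corollary's ``In particular'' should therefore be read with the tacit restriction $k_1\neq 0$ (equivalently, one should also remove the vertical axis); once that is in force your argument is complete. So the issue is not with your method but with an imprecision in the statement that you papered over with an incorrect claim rather than flagging.
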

Let $m\in\mathbb{Z}/2$ and $\lambda\vdash n$.
Suppose that $m\not\in M^{sing}_\lambda$ (in other words
$\xi_\lambda(k_1,mk_1)\in\textup{Res}^{lin}(R_1,V,F_1,(k_1,mk_1))$ if $k_1\not=0$).
Since $W_0$ contains sign changes and permutations the corresponding element
$D(k)\in\textup{Dyn}^{dist}(R_1,V,F_1,(k_1,mk_1))$ has coordinates which are all
of the form $p|k_1|$ with $p\geq 0$ and $p\in m+\mathbb{Z}$.
Conversely, any point $D(k)\in\textup{Dyn}^{dist}(R_1,V,F_1,k)$ is of this
form. In order to see this we recall the following result
(see \cite{HO}, \cite{Slooten}):
\begin{prop}\label{prop:multres}
Let $m\in\mathbb{Z}/2$ and let $k=(k_1,mk_1)$ with $k_1\not=0$.
Let $D\in\mathbb{R}^n$ be dominant with respect $F_1$. Then
$D\in\textup{Dyn}^{dist}(R_1,V,F_1,k)$ only if all coordinates
of $D$ are of the form $p|k_1|$ with $p\geq 0$. So let us suppose
that all coordinates of $D$ are of the above mentioned form.
Let $\mu_p=\mu_p(D)$ denote the multiplicity of $p|k_1|$ as a
coordinate of $D$. We distinguish the following cases:
\begin{enumerate}
\item If $m=0$ then
$D\in\textup{Dyn}^{dist}(R_1,V,F_1,k)$ iff
(i) $\mu_r=1$ if $r$ is maximal such that $\mu_r\not=0$,
(ii) $\mu_p\in\{\mu_{p+1},\mu_{p+1}+1\}
\text{\ for\ all\ }p>0$, and (iii) $\mu_0=\lfloor 1/2(\mu_1+1)\rfloor$.
\item If $m\in\mathbb{Z}\backslash\{0\}$ then
$D\in\textup{Dyn}^{dist}(R_1,V,F_1,k)$ iff
(i) $\mu_r=1$ if $r$ is maximal such that $\mu_r\not=0$,
(ii) $\mu_p\in\{\mu_{p+1},\mu_{p+1}+1\}\text{\ for\ all\ }p\geq|m|$,
(iii) $\mu_p\in\{\mu_{p+1}-1,\mu_{p+1}\}
\text{\ for\ }1\leq p\leq |m|-1$,
\text{\ and\ finally\ }
(iv) $\mu_0=\lfloor \mu_1/2\rfloor$.
\item If $m\in\mathbb{Z}+1/2$ then
$D\in\textup{Dyn}^{dist}(R_1,V,F_1,k)$ iff
(i) $\mu_r=1$ if $r$ is maximal such that $\mu_r\not=0$,
(ii) $\mu_p\in\{\mu_{p+1},\mu_{p+1}+1\}
\text{\ for\ all\ }p\geq|m|$,\text{\ and\ finally\ }
(iii) $\mu_p\in\{\mu_{p+1}-1,\mu_{p+1}\}
\text{\ for\ }1/2\leq p\leq |m|-1$.
\end{enumerate}
\end{prop}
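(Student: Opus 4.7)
The plan is to leverage Theorem \ref{thm:genresBn}, which parametrizes $W_0$-orbits of generic linear residual points by partitions $\lambda\vdash n$, together with Proposition \ref{prop:sing}. Since $\mathrm{Dyn}^{dist}(R_1,V,F_1,k)$ equals the image under evaluation at $k=(k_1,mk_1)$ of the $F_1$-dominant representatives $\xi_\lambda(k)_+$ for those $\lambda$ with $m\notin M^{sing}_\lambda$, the task reduces to describing the multiplicity sequences $(\mu_p)$ that arise from such partitions, and checking that these are precisely the sequences listed in (1)--(3).

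First I would reduce to the case $k_1>0$ by the symmetry $\xi_\lambda(-k_1,-mk_1)=-\xi_\lambda(k_1,mk_1)$ together with $-w_0(D)=D$, so that dominance fixes the absolute value. By construction $e_i(\xi_\lambda(k))=(c(b_i)+m)k_1$, hence after taking the $W_0$-orbit representative in the dominant chamber, the coordinates of $D=\xi_\lambda(k)_+$ are exactly the values $|c(b)+m|\,k_1$ as $b$ runs over the boxes of $\lambda$. This immediately forces all coordinates of any $D\in\mathrm{Dyn}^{dist}(R_1,V,F_1,k)$ to lie in $k_1(m+\mathbb{Z}_{\geq 0})\cap k_1\mathbb{R}_{\geq 0}$, proving the ``only if'' preamble, and identifies $\mu_p(D)$ with $\#\{b\in\lambda:|c(b)+m|=p\}$.

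Second, I would establish the conditions (i)--(iv) in each of the three cases by translating them into combinatorial statements about the boundary of the Young diagram of $\lambda$. The quantity $\mu_p-\mu_{p+1}$ records the net contribution of the ends of rows and columns of $\lambda$ lying on the diagonals $c=\pm p-m$ (for integer $m$) or the single diagonal condition (for half-integer $m$). Specifically, for $p\geq |m|$ the diagonals contributing to $|c+m|=p$ are both ``outside'' the critical diagonal and a box at level $p+1$ produces a box at level $p$ if and only if it is either a row-end or a column-end, forcing $\mu_p\in\{\mu_{p+1},\mu_{p+1}+1\}$. For integer $m\neq 0$ and $1\leq p\leq |m|-1$ the two diagonals $c=p-m$ and $c=-p-m$ both lie on the same side of the critical diagonal; here one of them must be vacated when passing from level $p$ to level $p+1$, yielding the opposite monotonicity $\mu_p\in\{\mu_{p+1}-1,\mu_{p+1}\}$. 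The distinguished value $\mu_0$ is the count of diagonal boxes, related to $\mu_1$ by the parity formulas $\lfloor(\mu_1+1)/2\rfloor$ (case $m=0$, where the main diagonal is present and row/column ends coincide up to the parity of $\mu_1$) or $\lfloor \mu_1/2\rfloor$ (case $m\in\mathbb{Z}\setminus\{0\}$, where the main diagonal is offset and one loses one corner). The regularity condition of Definition \ref{defn:extr} translates to the statement that the ``extremities'' enumerated in $(\mu_p-\mu_{p+1})_p$ are distinct, which is precisely the residuality criterion of Proposition \ref{prop:sing}.

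Third, for the converse I would construct a partition $\lambda$ from any sequence $(\mu_p)$ satisfying the stated conditions by reading off the successive differences $\mu_p-\mu_{p+1}$ as the sequence of outer corners of the Young diagram, starting from the unique maximal $r$ with $\mu_r=1$ and building inward. The constraints (i)--(iv) guarantee at each step that the reconstruction remains a valid partition shape, and induction on $n=\sum_p\mu_p$ closes the argument. The main obstacle, and the point requiring the most care, is the bookkeeping in case (2): precisely matching how boxes on the diagonals $c=p-m$ and $c=-p-m$ contribute to $\mu_p$ while avoiding double-counting the corner at $c=0$ when $m>0$, and reconciling this with the parity formula for $\mu_0$. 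The half-integer case (3) avoids this corner issue entirely, which is why it has one fewer condition.
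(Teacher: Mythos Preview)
The paper does not prove this proposition; it is stated with the attribution ``(see \cite{HO}, \cite{Slooten})'' and no argument is given. Your strategy of deducing the characterization from Theorem~\ref{thm:genresBn} and Proposition~\ref{prop:sing} is the natural one and is essentially how the result is obtained in those references, so in that sense your approach matches the intended route.

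Two points deserve more care. First, your opening sentence asserts that $\textup{Dyn}^{dist}(R_1,V,F_1,k)$ coincides with the image of the evaluation map on those $W_0\xi_\lambda$ that stay residual at $k$. This is the surjectivity of evaluation (every residual point at $k$ arises from a generic one), which in the paper's framework is Proposition~\ref{prop:resdef} transported to the degenerate setting via Proposition~\ref{lem:redrat}; you should invoke it explicitly rather than treat it as a definition. Second, the combinatorial core of your argument---the claim that the conditions (i)--(iv) on the sequence $(\mu_p)$ are equivalent to $\lambda$ being a partition whose extremities (Definition~\ref{defn:extr}) are distinct---is asserted rather than proved. Your heuristic about diagonals ``inside'' versus ``outside'' the critical diagonal, and the resulting sign flip in the monotonicity for $1\le p\le |m|-1$, is on the right track, but the precise bookkeeping (which you yourself flag as the main obstacle) is exactly where the content of the proposition lives. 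A complete proof would spell out, for each regime of $p$, how $\mu_p-\mu_{p+1}$ counts corners of $\lambda$ on the relevant pair of content-diagonals, and then check that distinctness of extremities is equivalent to the stated inequalities. As written, your second and third paragraphs are a correct outline but not yet a proof.
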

\begin{defn}\label{defn:jump}
We keep the notations as in Proposition \ref{prop:multres}.
Assume that $D\in\textup{Dyn}^{dist}(R_1,V,F_1,k)$.
We call $p\in m+\mathbb{Z}$ a jump of $D$ if $p\geq|m|$ and
$\mu_p=\mu_{p+1}+1$
or if $0< p <|m|$ and $\mu_p=\mu_{p+1}$. Finally we add $0$ (if
$m\in\mathbb{Z}$) or $-1/2$ (if $m\in 1/2+\mathbb{Z}$) to the list
of jumps of $D$ in order to ensure that the number of jumps of
$D$ is equal to $\lceil |m|\rceil+2\nu$ for some
$\nu\in\mathbb{Z}_{\geq 0}$ (this is always possible, see \cite{Slooten}).
\end{defn}
\begin{rem}
It is a simple matter to reconstruct $D$ from its list of jumps
by computing the multiplicities $m_p$ of the entries of the form $p|k_1|$,
starting from the top $m_r=1$.
\end{rem}
This gives rise to a different classification of the set of $k$-weighted
distinguished Dynkin diagrams $\textup{Dyn}^{dist}(R_1,V,F_1,k)$ by the
introduction of a combinatorial analogue $\mc{U}_m(n)$ of the
corresponding set of ``distinguished $m$-unipotent classes'':
\begin{defn}\label{defn:unipotent} If $m\in\mathbb{Z}$ we define
\begin{equation}
\mc{U}_m^{dist}(n)=\{u\vdash 2n+m^2\mid
l(u)\geq |m|\text{\ and\ }u
\text{\ has\ odd,\ distinct\ parts}\}
\end{equation}
and if $m\in 1/2 +\mathbb{Z}$ we define
\begin{equation}
\mc{U}_m^{dist}(n)=\{u\vdash 2n+m^2-1/4\mid
l(u)\geq \lfloor |m|\rfloor\text{\ and\ }u
\text{\ has\ even,\ distinct\ parts}\}
\end{equation}
\end{defn}
\begin{prop}\label{prop:fBC}
Let $m\in\mathbb{Z}/2$ and let $u\in\mc{U}_m^{dist}(n)$.
Let $k=(k_1,mk_1)\in L_m$ with $k_1\not=0$.
If $m\in 1/2+\mathbb{Z}$ we add $0$ as a part of $u$ if necessary
to assure that the number of parts of $u$ is equal to $\lceil|m|\rceil+2\nu$
for some $\nu\in\mathbb{Z}_{\geq 0}$. The list $j=j(u)$
consisting of the numbers $(u_i-1)/2$ where $u_i$ runs over the parts
of $u$ (ordered in ascending order) is the list of jumps of a unique
distinguished $k$-weighted Dynkin diagram $D\in\textup{Dyn}^{dist}(R_1,V,F_1,k)$
(where $D$ is of the form as described in Proposition \ref{prop:multres}).
This sets up a bijection
\begin{equation}
f^{BC}_k:\mc{U}_m^{dist}(n)\to\textup{Dyn}^{dist}(R_1,V,F_1,k)
\end{equation}
Finally we remark that $\textup{Dyn}^{dist}(R_1,V,F_1,(0,0))=\emptyset$.
\end{prop}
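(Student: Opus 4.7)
The plan is to exhibit $f^{BC}_k$ and its inverse as transparently combinatorial maps: from a partition $u$ produce jumps $j_i = (u_i-1)/2$, feed these into the reconstruction rules of Proposition \ref{prop:multres} and Definition \ref{defn:jump} to produce $D$; conversely, from $D$ read off its list of jumps $j(D)$ and form the partition with parts $2j+1$. Once both directions are well-defined, bijectivity is immediate from the obvious bijection between strictly increasing sequences in $m+\mathbb{Z}_{\geq 0}$ (with a possible padding term) and partitions with distinct parts of the prescribed parity. The emptiness claim at $k=(0,0)$ is separate and elementary.

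First I would verify that the jumps produced from $u$ indeed satisfy the conditions of Definition \ref{defn:jump}. The partition $u$ has distinct parts of fixed parity, so the numbers $j_i=(u_i-1)/2$ are strictly increasing and lie in $m+\mathbb{Z}_{\geq 0}$, possibly together with the padding term ($0$ if $m\in\mathbb{Z}$, $-1/2$ if $m\in 1/2+\mathbb{Z}$) coming from a part $u_i\in\{0,1\}$. The length condition $\ell(u)\geq\lceil|m|\rceil$ guarantees enough jumps lie in the ``large'' range $p\geq|m|$, so that reconstructing $\mu_p$ by $\mu_r=1$ at the top and then $\mu_p=\mu_{p+1}+1$ at each jump (resp.\ $\mu_p=\mu_{p+1}$ at a jump in the range $0<p<|m|$, with the opposite behavior at non-jumps) yields $\mu_p\geq 0$ throughout. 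Proposition \ref{prop:multres} is then satisfied by construction.

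The crucial numerical check is that the resulting $D$ actually lies in $V=\mathbb{R}^n$, i.e.\ $\sum_p\mu_p=n$. Here
\[
\sum_i u_i = 2\sum_i j_i + N,
\]
where $N=\lceil|m|\rceil+2\nu$ is the number of parts of $u$. A telescoping computation using the reconstruction rules expresses $\sum_p\mu_p$ in terms of $\sum_i j_i$ and $N$, and the boundary correction at $p=0$ (the last clauses of Proposition \ref{prop:multres}(1)--(3)) precisely absorbs the $m^2$ or $m^2-1/4$ discrepancy. So $u\vdash 2n+m^2$ (resp.\ $2n+m^2-1/4$) matches $D\in\mathbb{R}^n$ exactly. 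The inverse map is then defined by sending $D$ to the multiset $\{2j+1 : j\in j(D)\}$ (suppressing a zero part if $m\in 1/2+\mathbb{Z}$ and $-1/2\in j(D)$), and both compositions are the identity by construction.

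The main obstacle is just this bookkeeping identity for the size, and it amounts to a finite case analysis distinguishing $m=0$, $m\in\mathbb{Z}\setminus\{0\}$, and $m\in 1/2+\mathbb{Z}$, mirroring the three cases of Proposition \ref{prop:multres}. For the final assertion that $\textup{Dyn}^{dist}(R_1,V,F_1,(0,0))=\emptyset$, the residual condition from Definition \ref{defn:linres} with $k\equiv 0$ reads $|\{\alpha\in R_1:\alpha(\xi)=0\}|=|\{\alpha\in R_1:\alpha(\xi)=0\}|+\dim V$, which is absurd since $\dim V=n\geq 2$; equivalently, $\mc{U}_m^{dist}(n)$ is empty in the $k=0$ specialization since no positive multiplicities $\mu_p$ can appear.
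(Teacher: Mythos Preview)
The paper states this proposition without proof; the entire subsection on type $B_n$ is attributed to Slooten \cite{Slooten}, and Proposition~\ref{prop:fBC} is simply recorded as a known combinatorial fact. So there is no ``paper's own proof'' to compare against.

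Your outline is the natural argument and is essentially correct. One point deserves sharpening: your claim that $\mu_p\geq 0$ in the range $0<p<|m|$ does not follow from the length condition $\ell(u)\geq\lceil|m|\rceil$ alone---you also need the distinctness of the parts. With both in hand the argument is clean: since the $j_i$ are distinct non-negative (half-)integers, at most $p$ (resp.\ $p+\tfrac12$) of them lie strictly below $p$, so at least $N-\lceil p\rceil$ jumps satisfy $j_i\geq p$; combined with the reconstruction rule this gives $\mu_p \geq N-|m| = 2\nu\geq 0$ throughout the decreasing regime. The size identity $\sum_p\mu_p=n$ is indeed the heart of the matter; once the telescoping is written out in each of the three cases of Proposition~\ref{prop:multres}, the $\mu_0$ boundary term (the floor) absorbs exactly the $m^2$ or $m^2-\tfrac14$ offset, as you say. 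The inverse direction and the emptiness at $k=(0,0)$ are handled correctly.
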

This completes the classification of the set $\textup{Dyn}^{dist}(R_1,V,F_1,k)$
for all values of $k\in\mc{K}$.
It remains to describe for all special values $k\in L_m\backslash\{0\}$
and all $D\in\textup{Dyn}^{dist}(R_1,V,F_1,k)$
the fiber $\textup{ev}_k^{-1}(D)$ of the evaluation map
\begin{equation}
\textup{ev}_k:W_0\backslash\textup{Res}^{lin}_k(R_1)\to
\textup{Dyn}^{dist}(R_1,V,F_1,k)
\end{equation}
(where $W_0\backslash\textup{Res}^{lin}_k(R_1)$ is the
set of orbits of generic residual points which remain residual
upon evaluation at $k$ (note that this depends on $m=m(k)$ rather than $k$)).
Equivalently, we will describe for each $D\in\textup{Dyn}^{dist}(R_1,V,F_1,k)$
the set
\begin{equation}
\mc{P}_m(D):=\Lambda^{-1}(\textup{ev}_k^{-1}(D))\subset\mc{P}(n)
\end{equation}
of all partitions $\lambda$ of $n$ such that $W_0\xi_\lambda(k)=W_0D$.
\index{P2n@$\mc P (2,n)$, set of bipartitions of $n \in \mh N$}
\begin{defn} Let $m\in\mathbb{Z}/2$.
Given $u\in\mc{U}_m^{dist}(n)$ we define a bipartition
$\phi_m(u)\in\mc{P}(2,n)$ as follows. First assume that $m$ is nonnegative.
Let $j=j(u)$ be the sequence of jumps of length
$\lceil m\rceil+2\nu\in\mathbb{Z}_{\geq 0}$ associated to
$u$ as in proposition \ref{prop:fBC}.
Then we define $\phi_m(u)=(\xi_m(u),\eta_m(u))\in\mc{P}(2,n)$ where
\begin{align*}
\xi_m(u)&=(j_1,j_3,\dots,j_{2\nu-1},j_{2\nu+1},j_{2\nu+2}-1,
j_{2\nu+3}-2,\dots,j_{2\nu+m}-(m-1)),\\
\eta_m(u)&=(j_2+1,j_4+1,\dots,j_{2\nu}+1)
\end{align*}
if $m\in\mathbb{Z}$ and
\begin{align*}
\xi_m(u)&=(j_1+\frac{1}{2},j_3+\frac{1}{2},\dots,j_{2\nu+1}+\frac{1}{2},
j_{2\nu+2}-\frac{1}{2},j_{2\nu+3}-\frac{3}{2},
\dots,j_{2\nu+m+\frac{1}{2}}-(m-1)),\\
\eta_m(u)&=(j_2+\frac{1}{2},j_4+\frac{1}{2},\dots,j_{2\nu}+\frac{1}{2})
\end{align*}
if $m\in\frac{1}{2}+\mathbb{Z}$. If $m<0$ then we define
$\phi_m(u):=(\eta_{-m}(u),\xi_{-m}(u))\in\mc{P}(2,n)$.
\end{defn}
\begin{defn}\label{defn:sym} Let $(\xi,\eta)\in\mc{P}(2,n)$.
Recall the equivalence class of $m$-symbols of $(\xi,\eta)$ denoted by
$\bar{\Lambda}^m(\xi,\eta)$ (if $m=0$ we use the $+$-symbol)
(see \cite[Definition 3.6]{Slooten} for the definition of these symbols).
If $(\xi,\eta)\in\mc{P}(2,n)$ we denote by $[(\xi,\eta)]_m$ the
set of $(\xi^\prime,\eta^\prime)\in\mc{P}(2,n)$ such that
$\bar{\Lambda}^m(\xi,\eta)$ and $\bar{\Lambda}^m(\xi^\prime,\eta^\prime)$
have representatives which contain the same entries the same number of
times. For $u\in\mc{U}_m^{dist}(n)$ we define $\Sigma_m(u)\subset\mc{P}(2,n)$
by $\Sigma_m(u):=[\phi_m(u)]_m$.
\end{defn}
Finally the following result of Slooten gives the desired parametrization
of the set $\mc{P}_m(D)$ (and hence of the fiber
$\textup{ev}_k^{-1}(D)$ of the evaluation map):
\begin{thm}\label{thm:sym=conf}(see \cite[Theorem 5.27]{Slooten})
The joining map $\mc{J}_m$ (see \cite[Definition 5.18]{Slooten}) is
well defined on $\Sigma_m(u)$ and this yields a bijection
\begin{equation}
\mc{J}_m:\Sigma_m(u)\to\mc{P}_m(f^{BC}_k(u))
\end{equation}
whose inverse is given by the splitting map $\mc{S}_m$
(see \cite[Definition 5.16]{Slooten}).
\begin{cor} Let $m\in\mathbb{Z}/2$, $k=(k_1,mk_1)$ with $k_1\not=0$
and suppose that $D\in\textup{Dyn}^{dist}(R_1,V,F_1,k)$. Put
$u=(f^{BC}_k)^{-1}(D)\in\mc{U}_m(n)$. We can arrange that $u$ has
$\lceil m\rceil+2\nu$ parts (with $\nu\in\mathbb{Z}_{\geq 0}$).
Then
\begin{equation}
|\mc{P}_m(D)|=
\begin{cases}
\binom{\lceil m\rceil+2\nu}{\nu} & \text{if\ } u_1\not=0, \\
\binom{\lceil m\rceil+2\nu-1}{\nu} & \text{otherwise}.
  \end{cases}
  \end{equation}
\end{cor}
\end{thm}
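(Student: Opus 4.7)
The plan is to reduce the statement to a purely combinatorial count on $m$-symbols and then carry that count out. By Theorem~\ref{thm:sym=conf} the joining map $\mc{J}_m$ is a bijection $\Sigma_m(u) \to \mc{P}_m(f^{BC}_k(u)) = \mc{P}_m(D)$, so it suffices to determine $|\Sigma_m(u)| = |[\phi_m(u)]_m|$. By Definition~\ref{defn:sym}, $[\phi_m(u)]_m$ consists of all bipartitions $(\xi',\eta') \in \mc{P}(2,n)$ such that some representative of $\bar{\Lambda}^m(\xi',\eta')$ contains the same multiset of entries as some representative of $\bar{\Lambda}^m(\phi_m(u))$. Thus the problem becomes: how many bipartitions produce an $m$-symbol whose entries (counted with multiplicity) agree with those of $\bar{\Lambda}^m(\phi_m(u))$.

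Next, I would analyze the entries of $\bar{\Lambda}^m(\phi_m(u))$ using the explicit formulas for $\xi_m(u)$ and $\eta_m(u)$ from the definition of $\phi_m(u)$. The $m$-symbol of $\phi_m(u)$ is an equivalence class of pairs of strictly increasing sequences, of lengths $\nu + \lceil m \rceil$ (top row) and $\nu$ (bottom row), built from shifted parts of $u$. The key observation is that because $u \in \mc{U}_m^{dist}(n)$ has \emph{distinct} parts, the shifts appearing in the definitions of $\xi_m(u), \eta_m(u)$ combine precisely so that the $2\nu + \lceil m \rceil$ entries of the symbol are pairwise distinct; a direct check (comparing the prescriptions for indices $\leq 2\nu+1$ against those of the trailing terms $j_{2\nu+2}-1, j_{2\nu+3}-2,\dots$) shows no collisions occur. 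Once distinctness is established, the extraction of a bipartition from its $m$-symbol is determined by the choice of which entries lie in the top row versus the bottom row, since the rows of a symbol are required to be strictly increasing.

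Given this, the counting is just the binomial coefficient for choosing which of the $2\nu + \lceil m \rceil$ distinct symbol entries go to the bottom (length-$\nu$) row: this yields $\binom{\lceil m \rceil + 2\nu}{\nu}$ in the generic case $u_1 \neq 0$. When $u_1 = 0$ (which can occur only for $m \in 1/2 + \mathbb{Z}$, by the shape of $\mc{U}_m^{dist}(n)$), the corresponding symbol entry $j_1 + \tfrac12 = 0$ is minimal and, by strict increase together with the positivity constraint on symbol entries, is forced to lie in the top row. This removes one degree of freedom and leaves $\binom{\lceil m \rceil + 2\nu - 1}{\nu}$ remaining choices, matching the second case of the formula. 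The main obstacle I anticipate is purely bookkeeping: verifying that no two entries in $\bar{\Lambda}^m(\phi_m(u))$ collide (so that every choice of row-assignment yields a distinct valid bipartition) and confirming via the explicit form of $m$-symbols in \cite[Definition~3.6]{Slooten} that the only ``forced'' entry is the extra $0$ introduced when $u_1=0$; once these combinatorial facts are in place the count is immediate.
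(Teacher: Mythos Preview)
The paper does not supply its own proof here: the theorem is quoted from \cite[Theorem~5.27]{Slooten}, and the corollary is stated as an immediate consequence, with the actual combinatorics deferred to Slooten's definitions of $m$-symbols and of the maps $\mc{J}_m,\mc{S}_m$. So there is no ``paper's proof'' to compare against beyond the reduction you already perform.

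Your reduction step is correct and essentially forced: by the bijection $\mc{J}_m:\Sigma_m(u)\to\mc{P}_m(D)$ the count $|\mc{P}_m(D)|$ equals $|\Sigma_m(u)|=|[\phi_m(u)]_m|$, and the problem becomes the enumeration of bipartitions in a similarity class of $m$-symbols. Your heuristic that distinct parts of $u$ force distinct symbol entries, and that one then counts row-assignments, is also the right picture.

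However, your justification for the two cases is too loose and in one place incorrect. First, the assertion that ``every choice of row-assignment yields a distinct valid bipartition'' is sensitive to the precise staircase used in \cite[Definition~3.6]{Slooten}: for the naive shift $0,1,2,\ldots$ on each row it is indeed automatic (distinct nonnegative integers satisfy $e_i\ge i$), but Slooten's $m$-symbols do not use that shift uniformly for all $m$, and with the correct shift certain splits produce negative partition parts and must be discarded. Second, your mechanism for the case $u_1=0$ --- that the entry $0$ is ``forced to lie in the top row by the positivity constraint'' --- is not right: with the standard shift, $0$ can perfectly well sit at the bottom of either row and still yield a partition with a trailing zero part. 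A small test case already shows the issue: for $m=\tfrac12$, $n=3$, $u=(4,2,0)$ one has $\mc P_{1/2}(D)=\{(2,1),(1,1,1)\}$ of size $2$, matching $\binom{2}{1}$, whereas with the naive shift-by-one convention the similarity class of $\phi_m(u)=((2),(1))$ contains \emph{three} bipartitions $((2),(1)),\,((2,1),()),\,((),(3))$, all with entries $\{0,1,3\}$. The discrepancy disappears once one uses Slooten's actual convention, under which one of these splits is illegal (it gives a negative part), not because $0$ is pinned to a row. So the obstacle you anticipate is real, but it is not ``purely bookkeeping'': you must extract from \cite{Slooten} exactly which splits of the entry multiset are admissible, and the case split $u_1\neq 0$ versus $u_1=0$ reflects whether or not the padding by a zero part introduces an entry that renders one family of splits inadmissible.
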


\subsubsection{The case $k_1=0$}
\label{subsub:k10}

If $k=(0,0)$ then there are no linear residual points since
$k$ is singular for all generic linear residual points.

The situation with $k=(0,k_2)$ with $k_2\not=0$ is an important
special case. Its importance stems in part from the fact that although $k$
is highly nongeneric \emph{it is regular for all generic linear residual points}.
In fact, all generic linear residual orbits coalesce upon specialization
for $k_1=0$ to the unique orbit of residual points $W_0\xi(k)$ where $\xi$ is
defined by
$\xi_i(k)=k_2$ for all $i=1,\dots,n$. In other words, we have
\begin{equation}
\textup{Res}^{lin}_k(R_1)=\textup{Res}^{lin}(R_1)
\end{equation}
and (in the coordinates $e_1,\dots,e_n$ of $V$)
\begin{equation}
\textup{Dyn}^{dist}(R_1,V,F_1,k)=\{(|k_2|,\dots,|k_2|)\}
\end{equation}
The evaluation map $\textup{ev}_k$ is the unique map
from $\textup{Res}^{lin}(R_1)$ to $\textup{Dyn}^{dist}(R_1,V,F_1,k)$.

\subsection{The case $R_1=C_n,\ n\geq 3$}
\label{C:res}

Put $R_1=\{\pm e_i\pm e_j\mid 1\leq i\not=j\leq n\}\cup\{\pm 2e_i\mid 1\leq i\leq n\}$.
Choose as a basis $F_1=\{e_1-e_2,\dots,e_{n-1}-e_n,2e_n\}$. We put
$k(e_i\pm e_j)=k_1\in\mathbb{R}$ and $k(2e_i)=k_2\in\mathbb{R}$ and in this
way make the identification $\mc{K}=\mathbb{R}^2$. Clearly we have the following
equality for all $k=(k_1,k_2)$:
\begin{equation}
\textup{Res}^{lin}(C_n,(k_1,k_2))=\textup{Res}^{lin}(B_n,(k_1,k_2/2))
\end{equation}
Since $W_0(B_n)=W_0(C_n)$ we see that everything reduces to the
case $R_1=B_n$.

\subsection{The case $R_1=D_n,\ n\geq 4$}
\label{D:res}

We put $R_1=\{\pm e_i\pm e_j\mid 1\leq i\not=j\leq n\}$.
Choose as a basis $F_1=\{e_1-e_2,\dots,e_{n-1}-e_n,e_{n-1}+e_n\}$.
The case $R_1=D_n$ can be reduced to the discussion of subsection
\ref{B:res} as well in the following way, using the Clifford theory
discussion from \cite{RamRam}.

Let $F_1^b$ denote the basis for $B_n$ as in subsection \ref{B:res}. Let
\begin{equation}
\psi:\mathbf{H}(B_n,V,F_1^b,(k_1,k_2))\to
\mathbf{H}(B_n,V,F_1^b,(k_1,-k_2))
\end{equation}
be the unique algebra isomorphism such that $\psi(x)=x$ for all
$x\in V^*=\mathbb{R}\otimes X$, $\psi(s_{e_{i-1}-e_i})=s_{e_{i-1}-e_i}$
(for all $i=2,\dots,n$) and $\psi(s_{e_n})=-s_{e_n}$ (compare with
the isomorphisms $\psi_s$ discussed in paragraph
\ref{subsub:defn}). Then $\psi$ restricts to an
involutive automorphism of
$\mathbf{H}(B_n,V,F_1^b,(k_1,0))$. Let $\Psi=\{1,\psi\} \cong {\mathbb{Z}/2}$ be
the group of automorphims of $\mathbf{H}(B_n,V,F_1^b,(k_1,0))$
generated by $\psi$. Then it is easy to see that
\begin{equation}\label{eq:Dn=Bninv}
\mathbf{H}(D_n,V,F_1,(k_1,0)) \cong \mathbf{H}(B_n,V,F_1^b,(k_1,0))^\Psi
\end{equation}
(where the generator $s_{e_{n-1}+e_n}$ on the left hand side
corresponds to the element $s_{e_n}s_{e_{n-1}-e_{n}}s_{e_n}$ on the right
hand side).

Let $k=k(\pm e_i\pm e_j)\in\mc{K}(D_n)$.
We use $k$ as a
coordinate on the line $L_0\subset\mc{K}(B_n)$ by identifying
$k$ with the element $(k,0)\in L_0$.
Let us from now assume that
$k\in\mc{K}^{reg}(D_n)=\mc{K}(D_n)\backslash\{0\}$ (and in the
context of $R_1=B_n$ we identify $k$ with $(k,0)\in L_0$).
We have
$W_0(B_n)=W_0(D_n)\rtimes\Gamma$ where $\Gamma=\{e,\gamma\}\cong {\mathbb{Z}/2}$
and $\gamma$ is the diagram automorphism that exchanges
$e_{n-1}-e_n$ and $e_{n_1}+e_n$.
Hence the center equals (see Corollary \ref{cor:simple}):
\begin{equation}
\mathbf{Z}(B_n,F_1^b,(k,0))=\mathbf{Z}(D_n,F_1,k)^\Gamma
\end{equation}
It is easy to see that
for every $u\in\mc{U}_0^{dist}(n)$ (defined as in subsection \ref{B:res})
the orbit $W_0(B_n)f^{BC}_k(u)\in W_0(B_n)\backslash\textup{Res}(B_n,k)$ is in
fact a single $W_0(D_n)$-orbit of residual points for $R_1=D_n$. It follows that
\begin{equation}
f^{BC}_k:\mc{U}_0^{dist}(n)\to\textup{Dyn}^{dist}(D_n,F_1,k)
\end{equation}
is a bijection.

Observe that we have (using the notation of Theorem \ref{thm:genresBn})
the following relation:
\begin{equation}\label{eq:inv}
W_0\xi_{\lambda^\prime}(k_1,-k_2)=W_0\xi_\lambda(k_1,k_2)
\end{equation}
where $\lambda\to\lambda^\prime$ is the conjugation
involution of $\mc{P}(n)$. Thus the set
$W_0(B_n)\backslash\textup{Res}_0^{lin}(B_n)$ of orbits of generic residual
$B_n$-points which remain residual if we restrict $(k_1,k_2)$ to
a (nonzero) element $(k,0)\in L_0$ admits an involution $\iota$
given (via $\Lambda$) by the conjugation involution.
By Proposition \ref{prop:sing} this involution acts
\emph{in a fixed point free manner} on $W_0\backslash\textup{Res}_0^{lin}(B_n)$.
The involution is clearly compatible with the evaluation
map $\textup{ev}_0$. It follows from (\ref{eq:inv}) that for all
$\delta\in\Delta^{\mathbf{H}}(B_n,V,F_1^b,(k,0))$ we have
\begin{equation}\label{eq:Bn:iota}
gcc^{\mb H} (\delta\circ\psi) = \iota (gcc^{\mb H} (\delta))
\end{equation}
Accordingly we define
\begin{equation}\label{eq:Dn}
W_0(D_n)\backslash\textup{Res}^{lin}(D_n)_k^\sharp:=W_0(B_n)
\backslash\textup{Res}_0^{lin}(B_n)/\{e,\iota\}
\end{equation}
and we have a corresponding evaluation map
\begin{equation}\label{eq:ev}
W_0(D_n)\backslash\textup{Res}^{lin}(D_n)_k^\sharp\to\textup{Dyn}^{dist}(D_n,F_1,k)
\end{equation}
\begin{rem}\label{rem:KL}
The relation with the usual Kazhdan-Lusztig parameters for $D_n$ is
as follows. For all $u\in\mc{U}_0^{dist}(n)$ the involution $\iota$ acts without
fixed points on the set $\Sigma_0(u)$ by:
\begin{align*}
\iota:\Sigma_0(u)&\to\Sigma_0(u)\\
(\xi,\eta)&\to(\eta,\xi)
\end{align*}
The set $\Sigma^{D_n}(u)$ of Springer representations
of $W_0(D_n)$ associated with $u$ is the set of $\{1,\iota\}$-orbits
in $\Sigma_0(u)$. In particular, for all $D\in\textup{Dyn}^{dist}(D_n,F_1,k)$
we have a natural bijection between the fiber
$(\textup{ev}_{k}^\sharp)^{-1}(D)$ and the set of classical Kazhdan-Lusztig
parameters $\Sigma^{D_n}(u)$ associated to $u=u(D)$.
\end{rem}

\subsection{The case $R_1=E_n,\ n=6,7,8$}
\label{E:res}

In the simply laced cases we can classify the generic linear residual
orbits with the weighted Dynkin diagrams for the distinguished nilpotent
orbits (see \cite[Proposition B.1(i)]{Opd1}). Since the weighted Dynkin
diagrams characterize the nilpotent orbits completely by the Bala-Carter
theorem (see \cite{C}) we obtain for all $k\not=0$ a bijection
\begin{equation}
f^{BC}_k:\mc{U}^{dist}(R_1)\to\textup{Dyn}^{dist}(R_1,V,F_1,k)
\end{equation}
where $\mc{U}^{dist}(R_1)$ denotes the set of distinguished nilpotent
orbits of the simple complex Lie algebra with root system $R_1$. It is
well known that the values of the roots on the generic linear residual
points are integral linear combinations of the $k(\alpha)$ (corresponding
to the fact that the roots take even values on the distinguished weighted
Dynkin diagrams). We refer to \cite[pages 176-177]{C} for the tables of
the distinguished weighted Dynkin diagrams.

\subsection{The case $R_1=F_4$}
\label{F:res}

Let $(\alpha_1,\alpha_2,\alpha_3,\alpha_4)$ be a basis of simple roots of $R_1$
such that
$\alpha_1$ and $\alpha_2$ are long, $\alpha_3$ and $\alpha_4$ are short, and
$\alpha_2(\alpha^\vee_3)=-2$.

The set $W_0\backslash\textup{Res}^{lin}(F_4)$ was completely
classified in \cite[Table 4.10]{HO}, but unfortunately this table contains
an error (the coordinates of $f_7$ are incorrect).
We therefore include the corrected table (see Table 1) below.
There are eight orbits of generic linear
residual points for $F_4$, numbered $f_1,\dots,f_8$.
The orbits are generically regular with respect to the $W_0$-action,
except for $f_8$ which generically has an isotropy group of type
$A_1\times A_1$. In the table below we have specified
for each generic linear residual orbit $f_n=W_0\xi_n$
a generic linear residual point $\xi_n$  by means of
the vector of values $(\alpha_1(\xi_n),\dots,\alpha_4(\xi_n))$.
Here $k=(k_1,k_2)$ where $k_1$ is the parameter of the long
roots.
\begin{table}
\caption{$F_4$: Generic linear residual orbits}
\begin{equation*}
\begin{array}{|c|c|}
\hline\text{Orbits}\ f=W_0\xi&\xi\\\hline
f_1&\xi_1=(k_1,k_1,k_2,k_2)\\\hline
f_2&\xi_2=(k_1,k_1,k_2-k_1,k_2)\\\hline
f_3&\xi_3=(k_1,k_1,k_2-k_1,k_1)\\\hline
f_4&\xi_4=(k_1,k_1,k_2-2k_1,k_2)\\\hline
f_5&\xi_5=(k_1,k_1,k_2-2k_1,2k_1)\\\hline
f_6&\xi_6=(k_1,k_1,k_2-2k_1,k_1)\\\hline
f_7&\xi_7=(k_1,k_1,k_2-2k_1,-2k_2)\\\hline
f_8&\xi_8=(0,k_1,0,k_2-k_1)\\\hline
\end{array}
\end{equation*}
\end{table}
\begin{table}
\caption{$F_4$: Regular parameters}
\begin{equation*}
\begin{array}{|c|c|}
\hline\text{Orbit}&\mc{K}^{reg}_\xi\\\hline
f_1&(2k_1+3k_2)(3k_1+4k_2)(3k_1+5k_2)(5k_1+6k_2)\not=0\\\hline
f_2&(k_1^2-(6k_2)^2)k_2\not=0\\\hline
f_3&(3k_1+2k_2)(k_1+3k_2)(2k_1+3k_2)(3k_1+4k_2)\not=0\\\hline
f_4&(2k_1-3k_2)(3k_1-4k_2)(3k_1-5k_2)(5k_1-6k_2)\not=0\\\hline
f_5&((3k_1)^2-(2k_2)^2)(k_1^2-(3k_2)^2)\not=0\\\hline
f_6&(3k_1-2k_2)(k_1-3k_2)(2k_1-3k_2)(3k_1-4k_2)\not=0\\\hline
f_7&((3k_1)^2-k_2^2)k_1\not=0\\\hline
f_8&k_1k_2\not=0\\\hline
\end{array}
\end{equation*}
\end{table}
We list in Table 3 the non-generic values of $k$, together
with the set $\textup{Dyn}^{dist}(k):=\textup{Dyn}^{dist}(R_1,V,F_1,k)$
of $k$-weighted Dynkin diagrams
and for each $D\in\textup{Dyn}^{dist}(k)$ the inverse image
${\textup{ev}}_{k}^{-1}(D)$ of the map
\begin{equation}
{\textup{ev}}_{k}:W_0\backslash{\textup{Res}}_{k}^{lin}\to\textup{Dyn}^{dist}(k)
\end{equation}
\begin{rem}
In Table 3 we assume that $x>0$. Not all special parameters
are listed in table 3 but all other special values can be obtained
from the listed ones by applying the following symmetries.
First of all we have
$f_i(k_1,k_2)=f_i(-k_1,-k_2)$ (since $-\textup{id}\in W_0$) and
$f_i(k_1,k_2)=f_{\theta(i)}(k_1,-k_2)=f_{\theta(i)}(-k_1,k_2)$ with
$\theta=(14)(36)$.
With these transformations we can reach all quadrants of $\mc{K}$
from the positive quadrant. In addition we have used
the following symmetry (arising from interchanging the long and
short roots) to reduce the length of Table 3: Let
$\Psi(a,b,c,d)=(2d,2c,b,a)$. Then we can define $D_i(2k_2,k_1)$ by
$D_i(2k_2,k_1)=\Psi(D_i(k_1,k_2))$. The map $\Psi$ acts as follows on the
set of generic linear residual orbits: $\Psi(f_i(k_1,k_2))=f_{\sigma(i)}(2k_2,k_1)$
where $\sigma$ is the transposition $(27)$. Observe that
$\Psi^2(a,b,c,d)=(2a,2b,2c,2d)$, thus $\Psi^2$ corresponds to
replacing $x$ by $2x$.
\end{rem}
\begin{table}
\caption{$k$-weighted Dynkin diagrams and confluence data for $F_4$}
\begin{equation*}
\begin{array}{|c|l|c|}
\hline{k=(k_1,k_2)}&D\in\textup{Dyn}^{dist}(k)&{\textup{ev}}_{k}^{-1}(D)\\\hline
(0,x)&D_1=(0,0,x,x)&f_1,f_2,f_4\\\hline
     &D_2=(0,0,x,0)&f_3,f_5,f_6\\\hline
(x,x)&D_1=(x,x,x,x)&f_1\\\hline
     &D_2=(x,x,0,x)&f_2,f_3\\\hline
     &D_3=(0,x,0,x)&f_5,f_7\\\hline
     &D_4=(0,x,0,0)&f_4,f_6,f_8\\\hline
(x,2x)&D_1=(x,x,2x,2x)&f_1\\\hline
     &D_2=(x,x,x,2x)&f_2\\\hline
     &D_3=(x,x,x,x)&f_3\\\hline
     &D_4=(x,x,0,2x)&f_4,f_5\\\hline
     &D_5=(x,x,0,x)&f_6,f_7\\\hline
     &D_6=(0,x,0,x)&f_8\\\hline
(x,3x)&D_1=(x,x,3x,3x)&f_1\\\hline
      &D_2=(x,x,2x,3x)&f_2\\\hline
      &D_3=(x,x,x,3x)&f_4\\\hline
      &D_4=(x,x,2x,x)&f_3\\\hline
      &D_5=(x,x,x,2x)&f_5\\\hline
      &D_6=(x,x,x,x)&f_6\\\hline
      &D_7=(0,x,0,2x)&f_8\\\hline
(2x,3x)&D_1=(2x,2x,3x,3x)&f_1\\\hline
       &D_2=(2x,2x,x,3x)&f_2\\\hline
       &D_3=(2x,2x,x,2x)&f_3\\\hline
       &D_4=(2x,0,x,2x)&f_4,f_7\\\hline
       &D_5=(0,2x,0,x)&f_8\\\hline
(3x,2x)&D_1=(3x,3x,2x,2x)&f_1\\\hline
       &D_2=(3x,x,x,2x)&f_3\\\hline
       &D_3=(3x,x,x,x)&f_2\\\hline
       &D_4=(2x,x,x,2x)&f_7\\\hline
       &D_5=(2x,x,x,x)&f_5\\\hline
       &D_6=(0,x,x,0)&f_8\\\hline
(5x,3x)&D_1=(5x,5x,3x,3x)&f_1\\\hline
       &D_2=(5x,x,2x,3x)&f_3\\\hline
       &D_3=(5x,x,2x,x)&f_2\\\hline
       &D_4=(4x,x,2x,3x)&f_7\\\hline
       &D_5=(4x,x,2x,x)&f_5\\\hline
       &D_6=(x,x,x,x)&f_6\\\hline
       &D_7=(0,x,2x,0)&f_8\\\hline
\end{array}
\end{equation*}
\end{table}

\subsection{The case $R_1=G_2$}
\label{G:res}

See  \cite[Proposition 4.15]{HO}. There are three orbits of
generic linear residual points $W_0\xi_1$, $W_0\xi_2$ and $W_0\xi_3$.
which we will refer to as $g_1$, $g_2$, and $g_3$.
Let $\alpha_1$ be the simple long root and $\alpha_2$ the simple short
root. Let $k=(k_1,k_2)$ with $k_1$ the parameter of the long root.
The following table lists the $g_i=W_0\xi_i$ and the set
$\mc{K}_i^{reg}$ where $W_0\xi_i$ remains residual upon
specialization. We use similar conventions as in the case $F_4$.
\begin{table}
\caption{Generic linear residual orbits for $G_2$}
\begin{equation*}
\begin{array}{|c|c|c|}
\hline\text{Type}&\xi&\mc{K}^{reg}_\xi\\\hline
g_1&\xi_1=(k_1,k_2)&(k_1+2k_2)(2k_1+3k_2)\not=0\\\hline
g_2&\xi_2=(k_1,k_2-k_1)&(k_1-2k_2)(2k_1-3k_2)\not=0\\\hline
g_3&\xi_3=(k_1,1/2(k_2-k_1))&k_1k_2\not=0\\\hline
\end{array}
\end{equation*}
\end{table}
We list in Table 5 the non-generic values of $k$, together
with the set $\textup{Dyn}^{dist}(k)$ of $k$-weighted Dynkin diagrams
and for each $D\in\textup{Dyn}^{dist}(k)$ the inverse image
$\textup{ev}_{k}^{-1}(D)$ of the map
\begin{equation}
\textup{ev}_{k}:W_0\backslash\textup{Res}_{k}^{lin}\to\textup{Dyn}^{dist}(k)
\end{equation}
\begin{rem}
In Table 5 we assume that $x>0$. Not all special parameters
are listed in table 5 but all other special values can be obtained
from the listed ones by applying the following symmetries.
First of all we have
$g_i(k_1,k_2)=g_i(-k_1,-k_2)$ (since $-\textup{id}\in W_0$) and
$g_i(k_1,k_2)=g_{\theta(i)}(k_1,-k_2)=g_{\theta(i)}(-k_1,k_2)$ with
$\theta=(12)$.
With these transformations we can reach all quadrants of $\mc{K}$
from the positive quadrant. In addition we have used
the following symmetry (arising from interchanging the long and
short roots) to reduce the length of Table 5: Let
$\Psi(a,b)=(3b,a)$. Then we can define $D_i(3k_2,k_1)$ by
$D_i(3k_2,k_1)=\Psi(D_i(k_1,k_2))$. The map $\Psi$ acts as follows on the
set of generic linear residual orbits: $\Psi(f_i(k_1,k_2))=f_i(3k_2,k_1)$.
Observe that $\Psi^2(a,b)=(3a,3b)$, thus $\Psi^2$ corresponds to
replacing $x$ by $3x$.
\end{rem}
\begin{table}
\caption{$k$-weighted Dynkin diagrams and confluence for $G_2$}
\begin{equation*}
\begin{array}{|c|l|c|}
\hline{k=(k_1,k_2)}&D\in\textup{Dyn}^{dist}(k)&{\textup{ev}}_{k}^{-1}(D)\\\hline
(0,x)&D_1=(0,x)&g_1,g_2\\\hline
(x,x)&D_1=(x,x)&g_1\\\hline
     &D_2=(x,0)&g_2,g_3\\\hline
(2x,x)&D_1=(2x,x)&g_1\\\hline
      &D_2=(\frac{1}{2}x,\frac{1}{2}x)&g_3\\\hline
\end{array}
\end{equation*}
\end{table}

\section{The classification of the discrete series of ${\mathbf{H}}$}
\label{sec:dsH}

We formulate the main theorem of this paper.
\begin{thm}\label{thm:dsH}
Let $R_1\subset V^*$ be a non-simply laced irreducible root system or
$R_1=A_n$. Let $F_1$ be a basis of simple roots, and let $k\in\mc{K}$.
We denote by $\Delta^{\mathbf{H}}(R_1,V,F_1,k)$ the set of
irreducible discrete series characters of ${\mathbf{H}}(R_1,V,F_1,k)$.
The generic central character map induces a bijection
\index{gcc@$gcc^{\mathbf{H}}_k$, specialization of $gcc^{\mb H}$ at $k \in \mc K$}
\begin{equation}\label{eq:bijfinal}
gcc^{\mathbf{H}}_k : \Delta^{\mathbf{H}}(R_1,V,F_1,k)\stackrel{\simeq}
{\longrightarrow}W_0\backslash{\textup{Res}}_k^{lin}(R_1)
\end{equation}
which is compatible with the central character map in the sense that
$\textup{ev}_k({gcc}_k^{\mathbf{H}}(\delta))=cc(\delta)$ for all $k\in\mc{K}$
and for all $\delta\in\Delta^{\mathbf{H}}(R_1,V,F_1,k)$, \emph{except} when
$R_1=F_4$ and $k\in\mc{K}^{reg}_{f_8}$, in which case there are exactly
two elements $\delta_{f_8^\prime},\,\delta_{f_8^{\prime\prime}}\in
\Delta^{\mathbf{H}}(R_1,V,F_1,k)$ with generic central character $f_8$.
This statement is also true for $R_1=D_n$ (with $n\geq 4$) if we
replace $W_0(D_n)\backslash{\textup{Res}}_k^{lin}(D_n)$ by
$W_0(D_n)\backslash{\textup{Res}}_k^{lin}(D_n)^\sharp$ and
${gcc}^{\mathbf{H}}_k$ by the map ${gcc}^{\mathbf{H},\sharp}_k$ which
is equal to the map $gcc_{(k,0)}^{{\mathbf{H}},B_n}$ for type $B_n$,
composed with the induction map for characters of
$\mathbf{H}(D_n,V,F_1,k)$ to $\mathbf{H}(B_n,V,F_1^b,(k,0))$.
\end{thm}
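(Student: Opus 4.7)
The plan is to reduce the theorem to a multiplicity count on the space $\mc{O}(\mc{R})$ and verify this count case by case using the classification of Section \ref{sec:genlinres}. For each irreducible non-simply laced $R_1$ (or $R_1 = A_n$), I would first fix a semisimple based root datum $\mc{R}$ with $X = P(R_1)$, so that via Corollary \ref{cor:dsgraded} and Proposition \ref{lem:redrat} the set $\Delta^{\mathbf{H}}(R_1,V,F_1,k)$ is in canonical bijection with the subset $\Delta^u(\mc{R},q) \subset \Delta(\mc{R},q)$ whose central characters have $W_0$-invariant unitary part $u$, where $(q,u)$ is chosen with $k_u = k$. Under this bijection, $gcc^{\mathbf{H}}_k$ corresponds to the restriction of the map $gcc_q$ of Definition \ref{defn:gcc} to $\Delta^u(\mc{R},q)$, and compatibility with the central character map is immediate from Proposition \ref{prop:GCC}.

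By Theorem \ref{thm:main1}, the sheaf $\Delta(\mc{R})$ on $\mc{O}(\mc{R})$ is locally constant, so the assertion reduces to computing the multiplicity $M_C$ of each connected component $C = \{W_0 r\} \times U$ of $\mc{O}(\mc{R})$ whose unitary part is $W_0$-invariant. Corollary \ref{cor:gendscc} gives $M_C \geq 1$. For the upper bound, I would invoke Corollary \ref{cor:upper} at a convenient generic parameter $q \in U$, which yields $|\Delta^u(\mc{R},q)| \leq \mr{ell}(W_0)$ since $W_u = W_0$ for $W_0$-invariant $u$. Summing the identity of Corollary \ref{cor:mult} over all components with a fixed unitary part, the bijectivity claim becomes the statement that $\sum_{W_0 r} M_{\{W_0 r\} \times U} = \mr{ell}(W_0)$ for a generic $q$, with equality $M_C = 1$ everywhere.

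The bulk of the work is the case analysis. For $R_1 = A_n$ there is one generic residual orbit and $\mr{ell}(W_0) = 1$, so $M_C = 1$ trivially. For $R_1 = B_n$ and $C_n$ the orbits are indexed by $\mc{P}(n)$ (Theorem \ref{thm:genresBn}), matching $\mr{ell}(W_0(B_n)) = |\mc{P}(n)|$; Lemma \ref{prop:sep} shows distinct generic residual orbits yield distinct rational functions $m_{W_0 r}$, and combining this separation with the elliptic pairing bound forces $M_C = 1$ on every component. For $G_2$ there are three generic residual orbits (Table~4) matching $\mr{ell}(W_0(G_2)) = 3$, and the same argument applies. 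For $F_4$ there are only eight generic residual orbits while $\mr{ell}(W_0(F_4)) = 9$, so by the elliptic-pairing bound \emph{at most one} component can carry $M_C = 2$; Lemma \ref{prop:sep}, applied in its parenthetical version for singular unitary part, together with the fact that $f_8$ is the unique singular orbit, identifies this as the $f_8$-component. For $D_n$ I would use the Clifford-theory setup of Section \ref{D:res}: the isomorphism \eqref{eq:Dn=Bninv} together with \eqref{eq:Bn:iota} and the fixed-point-freeness of $\iota$ on $W_0(B_n)\backslash\mr{Res}_0^{lin}(B_n)$ from Proposition \ref{prop:sing} reduce the $D_n$-multiplicity statement to the already established $B_n$-statement at $k_2 = 0$.

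The hard part is establishing that $M_C = 2$ (rather than $= 1$) at $f_8$ in $F_4$, for which the elliptic-pairing count alone leaves both options open (it only guarantees at most one such doubling). The plan is to rely on Reeder's explicit computations in \cite{Re}, which produce two inequivalent discrete series characters with generic central character $f_8$ by distinguishing them via their restrictions to the finite-dimensional parabolic subalgebras $\mc{H}(\mc{R},f,q)$ entering the index formula \eqref{eq:eulplanmeas}. Once the $f_8$-doubling is established, the total count $8 + 1 = 9 = \mr{ell}(W_0(F_4))$ saturates the elliptic-pairing upper bound, forcing $M_C = 1$ at all seven other $F_4$-components and completing the proof.
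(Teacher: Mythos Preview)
Your overall plan matches the paper's approach: reduce via Corollary \ref{cor:dsgraded} and Proposition \ref{lem:redrat} to a multiplicity count on $\mc{O}(\mc{R})$, then combine the lower bound $M_C\ge 1$ from Corollary \ref{cor:gendscc} with the elliptic upper bound from Corollary \ref{cor:upper}, case by case. The $A_n$, $B_n$, $C_n$, $G_2$ and $D_n$ arguments are essentially correct, though your repeated invocation of Lemma \ref{prop:sep} is superfluous: that lemma is already consumed in the construction of $gcc$ (via Corollary \ref{cor:gccinv} and Theorem \ref{thm:gencc}), and once $gcc$ exists the $B_n$ argument is pure cardinality matching ($|\mc P(n)|$ orbits, $\mathrm{ell}(W_0(B_n))=|\mc P(n)|$).

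There is a genuine gap in your $F_4$ argument. First, the phrase ``Lemma \ref{prop:sep}, applied in its parenthetical version for singular unitary part'' does not correspond to anything in the paper; that lemma separates $m$-functions of distinct orbits and says nothing about which orbit might carry multiplicity $2$. More seriously, Reeder's computation in \cite{Re} is carried out at a specific parameter value (the paper uses $(4x,x)$), so it establishes $M_{\{f_8\}\times U}=2$ only for the \emph{one} component $U$ of $\mc K^{reg}_{f_8}$ containing that point. But $\mc K^{reg}_{f_8}$ has four components $U_{\pm,\pm}$ (Table 2), and your saturation argument at a generic $q$ only forces $M=1$ for the $f_1,\dots,f_7$-components containing that same $q$. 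Since the $f_i$ themselves have many components (each $\mc K^{reg}_{f_i}$ is the complement of several lines), you have not covered them all, and you have said nothing about $M_{\{f_8\}\times U}$ for the other three quadrants.

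The paper closes this gap in two independent steps. For $f_1,\dots,f_7$ it does \emph{not} use saturation: these orbits are generically $W_0$-regular, and a regular residual orbit carries exactly one discrete series character (citing \cite[Corollary 1.2.11]{SlootenThesis}), giving $M=1$ for every component directly. For $f_8$ it introduces explicit algebra automorphisms $\psi_{\epsilon_1,\epsilon_2}$ of $\mathbf H(F_4,V,F_1,(k_1,k_2))$ (sign changes on the simple reflections) that permute the four quadrants $U_{\pm,\pm}$ while preserving discrete series; this shows $M_{\{f_8\}\times U}$ is independent of $U$, after which a single Reeder computation suffices. Your saturation-first strategy can be repaired by inserting this $\psi_{\epsilon_1,\epsilon_2}$ step before invoking Reeder, but as written it is incomplete.
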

\begin{proof}
We apply the reduction results Corollary \ref{cor:dsgradedc} and Corollary
\ref{cor:dsgraded} with $u=1$. In this situation we will denote the natural
map $\mc{Q}\to\mc{K}$ given by $q\to k_{u=1}=k$ by $k=2\log(q)$.

In view of Proposition \ref{lem:redrat}, Corollary \ref{cor:dsgraded} and
Corollary \ref{cor:mult} the result is equivalent to the statement that
for all $W_0\xi\in W_0\backslash{\textup{Res}}_k^{lin}(R_1)$ and all
connected components $U \subset \mc{K}^{reg}_{W_0 \xi}$ we have
$M_{\{W_0\exp(\xi)\}\times\exp(U)}=1$
except when $R_1=F_4$ and $W_0\xi=f_8$, in which case the value
should be $2$ (independent of the choice of $U$).

If $R_1=A_n$ (with $n\geq 1$) then
there is one generic residual orbit $W_0\xi$, with two
components $\mc{K}_{W_0\xi}=\{U_+,U_-\}$. It is of course well known
in this case that
$M_{\pm}:=M_{\{W_0\exp(\xi)\}\times\exp{(U_{\pm})}}=1$
and there are many possible
proofs for this fact, but we will explain the proof that is central to
the approach in this paper in order to illustrate the method in
this basic case.

The multiplicities $M_{\pm}$ are on the one hand at least $1$
(by Corollary \ref{cor:mult}) and on the other hand at most $1$
by Corollary \ref{cor:mult}, Corollary \ref{cor:dsgraded}, and
Corollary \ref{cor:upper}. This proves the required equality.

If $R_1=B_n$ (with $n\geq 2$) we argue in a similar way.
By Corollary \ref{cor:mult} and
Corollary \ref{cor:linresgenB} we see that for all generic $k\in\mc{K}$
the cardinality $|\Delta^{\mathbf{H}}(R_1,V,F_1,k)|\geq |\mc{P}(n)|$
with equality iff $M_{\{W_0\exp(\xi)\}\times\exp(U)}=1$ for all $U$
such that ${k}\in U$. On the other hand it is well known that the
set of elliptic conjugacy classes of $W_0(B_n)$ is naturally in bijection
with the set $\mc{P}(n)$. Hence Corollary \ref{cor:dsgraded} and
Corollary \ref{cor:upper} show that
$|\Delta^{\mathbf{H}}(R_1,V,F_1,k)|\leq |\mc{P}(n)|$. We conclude that
$|\Delta^{\mathbf{H}}(R_1,V,F_1,k)|= |\mc{P}(n)|$ and thus that
$M_{\{W_0\exp(\xi)\}\times \exp(U)}=1$ for all orbits $W_0\xi$ and
all connected components $U \subset \mc{K}^{reg}_{W_0\xi}$ such that $U\ni{k}$.
Since $k$ was chosen arbitrarily we see that
$M_{\{W_0\exp(\xi)\}\times\exp(U)}=1$ for all $W_0\xi$
and all $\mc{C}_{W_0\exp(\xi)}$, as desired.

If $R_1=C_n$ then the result follows easily from the case $R_1=B_n$ using
that fact that ${\mathbf{H}}(B_n,(k_1,k_2))\simeq{\mathbf{H}}(C_n,(k_1,k_2/2))$.

If $R_1=G_2$ the argument is completely analogous to the case $R_1=B_n$,
using the results of subsection \ref{G:res}.

In the case $R_1=F_4$ we need additional arguments. The Weyl group $W_0(F_4)$
has $9$ elliptic conjugacy classes, but by Subsection \ref{F:res} we see that
there are only $8$ generic linear residual
points $f_1,\dots,f_8$. The points $f_1,\dots,f_7$ are (generically) regular.
A generic residual orbit $W_0\exp(\xi(k))$ carries precisely
$1$ irreducible discrete series character (see \cite[Corollary 1.2.11]{SlootenThesis}),
proving that the multiplicities associated to these orbits are all precisely
equal to $1$. Now consider $f_8$.
By the above numerology we see that for any component
$U$ of $\mc{K}^{reg}_{f_8}$ the value of $M_{{f_8}\times U}$ can be either
$1$ or $2$ and in the rest of the proof we will show that it has to be always $2$.
From Table 2 we have $\mc{K}_{f_8}^{reg}=\{U_{\pm,\pm}\}$ with
$U_{\epsilon_1,\epsilon_2}=\{(k_1,k_2)\mid \epsilon_i k_i>0 (i=1,2)\}$.
This simple structure of $\mc{K}_{f_8}^{reg}$ is very helpful at this point.
There exist standard automorphisms (for $\epsilon_i=\pm 1$)
\begin{equation}
\psi_{\epsilon_1,\epsilon_2}:\mathbf{H}(R_1,V,F_1,(k_1,k_2))\to
\mathbf{H}(R_1,V,F_1,(\epsilon_1 k_1,\epsilon_2 k_2))
\end{equation}
such that $\psi_{\epsilon_1,\epsilon_2}(x)=x$ for all $x\in V^*$,
$\psi_{\epsilon_1,\epsilon_2}(s_i)=\epsilon_1 s_i$ (for $i=1,\,2$) and
$\psi_{\epsilon_1,\epsilon_2}(s_j)=\epsilon_2 s_j$ (for $j=3,\,4$).
Clearly  twisting by $\psi_{\epsilon_1,\epsilon_2}$
sends discrete series characters to discrete series characters and
thus that the multiplicities $M_{f_8\times U}$
are independent of $U$. It was shown
by Mark Reeder \cite{Re} that there exist $2$ irreducible discrete
series with central character $\textup{ev}_{(4x,x)}(f_8)$
for the (generic) parameters $(4x,x)$ (with $x>0$).
In Reeder's parametrization these characters are called
$[A_1E_7(a_5),-21]$ and $[A_1E_7(a_5),-3]$. Reeder's
result is based on the explicit computation of the weight
diagrams of the discrete series modules (alternatively
we could invoke here the standard Kazhdan-Lusztig classification
for the parameters $(x,x)$ (with $x>0$) to arrive at the same
conclusion).

Finally let us consider the case $R_1=D_n$. Of course this simply
laced case can be treated directly by the Kazhdan-Lusztig classification
(see Remark \ref{rem:KL})
but we want to show here how to adapt the deformation method to
so that the classification for $R_1=D_n$ is also treated by an
appropriate version of the generic central character map.
It was shown in Subsection \ref{D:res}
that the degenerated affine Hecke algebra $\mathbf{H}(D_n,V,F_1,k)$
is the fixed point algebra of $\mathbf{H}(B_n,V,F_1,(k,0))$ for the action
of the automorphism group $\Psi \cong {\mathbb{Z}/2}$. Our knowledge of
the case $R_1=B_n$ implies that the generic central
character map $gcc_{(k,0)}^{{\mathbf{H}},B_n}$ for type $B_n$
yields a bijection
between $\Delta^{\mathbf{H}}(B_n,V,F_1^b,(k,0))$ and
$W_0\backslash\textup{Res}^{lin}_0(B_n)$.
In Subsection \ref{D:res} we have seen that twisting by $\psi$ acts
freely on the set of generic linear residual orbits
$W_0\backslash\textup{Res}^{lin}_0(B_n)$. It follows that twisting by $\psi$
acts freely on $\Delta^{\mathbf{H}}(B_n,V,F_1^b,(k,0))$ as well.
Using \cite[Theorem A.6, Theorem A.13]{RamRam} we see that all characters in
$\Delta^{\mathbf{H}}(B_n,V,F_1^b,(k,0))$ remain irreducible when restricted
to $\mathbf{H}(D_n,V,F_1,k)=\mathbf{H}(B_n,V,F_1^b,(k,0))^\Psi$, that
all $\delta\in\Delta^{\mathbf{H}}(D_n,V,F_1,k)$ arise in this way and
that there always exist precisely two irreducible characters
$\delta_+,\,\delta_-\in\Delta^{\mathbf{H}}(B_n,V,F_1^b,(k,0))$
restricting to $\delta$, and these two characters are $\psi$-twists
of each other. This proves the required result.
\end{proof}
Let us look at an interesting special case:
\begin{ex}\label{ex:k10}
We have
$\mathbf{H}(B_n,V,F_1,(0,k_2))\simeq \mathbf{H}(A_1^n,V,F_1(A_1^n),k_2)\rtimes S_n$
with $F_1^{A}=\{e_1,\dots,e_n\}$. Using this it is easy to see that for $k_2\not=0$
\begin{equation}
\Delta^{\mathbf{H}}(B_n,V,F_1,(0,k_2))=\{\delta_\pi\mid \pi\in\widehat{S_n}\}
\end{equation}
with $\delta_\pi=\delta^{\otimes n}\otimes\pi$ and
where $\delta$ is the unique irreducible (one dimensional) discrete series
character of $\mathbf{H}(A_1,V(A_1),F_1(A_1),k_2)$. If $k_2>0$ then
\begin{equation}
\delta_{\pi(\lambda)}|_{W_0}=\chi(-,\lambda^\prime)
\end{equation}
and if $k_2<0$ then
\begin{equation}
\delta_{\pi(\lambda)}|_{W_0}=\chi(\lambda,-)
\end{equation}
where $\{\pi(\lambda)\}_{\lambda\in\mc{P}(n)}$ denotes the usual
parametrization of the irreducible characters of $S_n$ by partitions
of $n$ (see e.g. \cite{C}), and where
$\{\chi(\tau,\sigma)\}_{(\tau,\sigma)\in\mc{P}(2,n)}$
is the usual parametrization of the irreducible characters of $W_0=W(B_n)$
by bipartitions of $n$.

On the other hand we recall from subsection \ref{subsub:k10} that $k=(0,k_2)$
is a regular parameter for all generic linear residual orbits of
$\mathbf{H}(B_n,V,F_1,(k_1,k_2))$. Hence the map
\begin{equation}
gcc_{(0,k_2)}:\Delta^{\mathbf{H}}(B_n,V,F_1,(0,k_2))\to
W_0\backslash\textup{Res}^{lin}(B_n)
\end{equation}
is a bijection by Theorem \ref{thm:dsH}.
By continuity (see Theorem \ref{thm:main1} and Definition \ref{denf:gends})
it follows that for all $\lambda\in\mc{P}(n)$ the
generic irreducible discrete series character $\delta_{W_0\xi_\lambda\times U_{\pm\infty}}$
whose domain of definition is the unique connected component
$U_{\pm\infty}=U_{W_0\xi_\lambda,\pm\infty}$
of $\mc{K}^{reg}_{W_0\xi_\lambda}$ which contains $(0,k_2)$ for $\pm k_2>0$ restricts
to an
irreducible character of $S_n$, and this sets up a bijective correspondence between
the set of generic linear residual orbits and the set of irreducible characters of
$S_n$.
\end{ex}
\begin{rem}
Unfortunately we do not know how to compute the generic central character map
in this case. We conjecture that
\begin{equation}
gcc_{(0,k_2)}(\delta_{\pi(\lambda)})=W_0\xi_{\lambda^\prime}
\end{equation}
if $k_2>0$ and
\begin{equation}
gcc_{(0,k_2)}(\delta_{\pi(\lambda)})=W_0\xi_{\lambda}
\end{equation}
if $k_2<0$.
\end{rem}
The following corollary of Theorem \ref{thm:dsH} was known
for degenerate affine Hecke algebras with equal parameters by
the work of Reeder \cite{Ree}.

\begin{cor}\label{cor:ell}
Let $k\in\mc{K}^{reg}$ be a regular parameter.
The elliptic pairing (see page \pageref{cor:ON})  is positive definite
on $\textup{Ell}(\mathbf{H}(R_1,V,F_1,k))$ and the map
\begin{align*}
\textup{Ell}(\mathbf{H}(R_1,V,F_1,k))&\to\textup{Ell}(W_0)\\
[\pi]&\to [\pi|_{W_0}]
\end{align*}
yields an isometric isomorphism with respect to the elliptic pairing.
\end{cor}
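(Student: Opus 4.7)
The plan is to combine three ingredients developed earlier in the paper: the orthonormality of irreducible discrete series with respect to $\textup{EP}_{\mathbf{H}}$ (Corollary \ref{cor:ON}), the isometric property of a scaling map $\tilde{\sigma}_0 : \textup{Ell}(\mathbf{H}) \to \textup{Ell}(W_0)$ that specializes to the restriction $[\pi] \mapsto [\pi|_{W_0}]$ (the degenerate-algebra analog of Theorem \ref{thm:scale}, obtained by noting that as $k \to 0$ the algebra $\mathbf{H}(R_1,V,F_1,k)$ degenerates to $P(V)\otimes\mathbb{C}[W_0]$), and the explicit discrete series count furnished by Theorem \ref{thm:dsH}. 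Once these are in place, the corollary reduces to verifying that the orthonormal image of $\Delta^{\mathbf{H}}(R_1,V,F_1,k)$ under restriction to $W_0$ is in fact an orthonormal basis of $\textup{Ell}(W_0)$.

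The central step is therefore to show that for $k \in \mc{K}^{reg}$ one has $|\Delta^{\mathbf{H}}(R_1,V,F_1,k)| = \textup{ell}(W_0) = \dim\textup{Ell}(W_0)$, which is already implicit in the proof of Theorem \ref{thm:dsH}. For $R_1=A_n, B_n, C_n, G_2$ the proof there directly matches the set of generic linear residual orbits with the set of elliptic conjugacy classes of $W_0$; for $D_n$ the Clifford-theoretic reduction to $B_n$ from Subsection \ref{D:res} gives the match; and for $R_1 = F_4$ the seven generically regular orbits $f_1, \ldots, f_7$ together with $f_8$ counted with multiplicity $2$ (Reeder's input) give $9 = \textup{ell}(W_0(F_4))$. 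Since $\textup{EP}_{W_0}$ is positive definite on $\textup{Ell}(W_0)$, the images $\tilde{\sigma}_0([\delta])$ of the $\textup{ell}(W_0)$ discrete series characters form an orthonormal set of the correct cardinality in $\textup{Ell}(W_0)$, hence an orthonormal basis, and $\tilde{\sigma}_0$ is surjective.

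To conclude, $\tilde{\sigma}_0$ being isometric into a positive-definite space forces $\ker \tilde{\sigma}_0 = \mathrm{rad}\,\textup{EP}_{\mathbf{H}}$; the subspace $D \subset \textup{Ell}(\mathbf{H})$ spanned by the discrete series therefore maps isometrically onto $\textup{Ell}(W_0)$, and each $v \in \textup{Ell}(\mathbf{H})$ decomposes as $v = v_D + r$ with $v_D \in D$ and $r \in \mathrm{rad}\,\textup{EP}_{\mathbf{H}}$. To promote this to an isomorphism $\textup{Ell}(\mathbf{H}) \cong \textup{Ell}(W_0)$ and to the positive definiteness of $\textup{EP}_{\mathbf{H}}$, I would invoke the standard tempered-reduction argument: every tempered character of $\mathbf{H}$ is an integer combination of characters parabolically induced from discrete series of proper parabolic subalgebras together with discrete series of $\mathbf{H}$ itself, so modulo properly induced tempered characters the classes $[\delta]$ already span $\textup{Ell}(\mathbf{H})$, forcing $r = 0$. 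The main obstacle is really the case-by-case verification of the multiplicity count at $F_4$ -- specifically the nontrivial input from Reeder giving multiplicity $2$ for $f_8$ -- while a secondary technical point is to make precise the degenerate-Hecke-algebra analog of the scaling map $\tilde{\sigma}_0$ and its isometry, which follows from specializing the arguments of \cite{OpdSol} to this setting.
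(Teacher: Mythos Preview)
Your approach is essentially the paper's for the non-simply laced types (and for $A_n$, $D_n$): both combine the orthonormality of discrete series (Corollary~\ref{cor:ON}), the count $|\Delta^{\mathbf{H}}(R_1,V,F_1,k)|=\textup{ell}(W_0)$ extracted from the proof of Theorem~\ref{thm:dsH}, and the isometry of the scaling/restriction map into the positive-definite space $\textup{Ell}(W_0)$. The paper routes the isometry through the affine Hecke algebra (lifting via Corollary~\ref{cor:dsgraded} and then applying \cite{OpdSol}) rather than postulating a direct degenerate analog of Theorem~\ref{thm:scale}, but this is a technical rather than substantive difference. Your additional spanning argument (that discrete series span $\textup{Ell}(\mathbf{H})$ via the tempered-reduction/Langlands decomposition) makes explicit something the paper simply asserts.

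There is, however, a genuine gap: you never treat $R_1=E_6,E_7,E_8$. Theorem~\ref{thm:dsH} explicitly excludes these types, and your case list omits them, so your cardinality match $|\Delta^{\mathbf{H}}|=\textup{ell}(W_0)$ is unproved there. The paper closes this gap not by the deformation method but by citing Reeder~\cite{Ree}, who established the corollary for all equal-parameter (hence all simply laced) degenerate affine Hecke algebras via the Kazhdan--Lusztig model. Without that citation or an independent argument for $E_n$, your proof does not cover all irreducible $R_1$.

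A smaller point: your spanning step asserts only that every \emph{tempered} character is a combination of discrete series and properly induced characters; to reach the conclusion that the $[\delta]$ span $\textup{Ell}(\mathbf{H})$ you also need that arbitrary (non-tempered) virtual characters lie in the span of tempered ones modulo properly induced ones, which is the Langlands classification. This is standard, but should be stated.
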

\begin{proof}
We may assume that $R_1$ is irreducible.
If $R_1$ is non-simply laced we see from our results
above that (since $k\in\mc{K}^{reg}$) the images
in $\textup{Ell}(\mc{H}(R_1,V,F_1,k))$ of the irreducible
characters in $\Delta^{\mathbf{H}}(R_1,V,F_1,k)$ form a linear
basis of $\textup{Ell}(\mc{H}(R_1,V,F_1,k))$. We also know that
these even form an orthonormal basis with respect to the elliptic
pairing, hence the elliptic pairing is positive definite in this case.
Using results of \cite{OpdSol} it follows that the limits
of these characters for $x k$ (with $x\to 0$) from an orthonormal set
of elliptic characters of $W_0$ (actually, in order to see this
using the results of \cite{OpdSol} we need to lift the characters to
$\mc{H}(\mc{R},q)$ using the equivalence of Corollary \ref{cor:dsgraded},
then take the limit $q^x$ with $x\to 0$ to get a set of orthonormal
elliptic characters for $W$, and then use the formula for
the elliptic paring of \cite[Theorem 3.2]{OpdSol}).
Finally we already established in the previous theorem that the
cardinality of this set is equal to the dimension
$\textup{ell}(W_0)$ of the space $\textup{Ell}(W_0)$.
This yields the desired result for non-simply laced cases.
For simply laced cases (or more generally all cases with equal
parameters $k$ (i.e. such that $k_\alpha=x$ for all $\alpha\in R_1$)
the result is due to Reeder \cite{Ree} (based on the Kazhdan-Lusztig
model for the characters of $\mc{H}(\mc{R},q)$).
\end{proof}

It is natural to expect that the result of Corollary \ref{cor:ell}
holds for arbitrary $k$. We conjecture something stronger (see
\cite{ABP} for related conjectures):
\begin{conj} A generic family $\delta$ of
irreducible discrete series characters
$\delta\in\Delta^{\mathbf{H},gen}(R_1,V,F_1)$
with domain of definition $U\in\mc{K}^{reg}_{W_0\xi}$ say,
has weakly continuous limits to the points $k\in\overline{U}$
(the closure of $U$). In view of the above
results this would imply that the elliptic pairing is positive definite
on $\textup{Ell}(\mathbf{H}(R_1,V,F_1,k))$ for all semisimple root systems
$R_1$ and all $k\in\mc{K}$, and that this space is
isometric to $\textup{Ell}(W_0)$ for all $k\in\mc{K}$.
\end{conj}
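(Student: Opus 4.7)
\bigskip

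\noindent\textbf{Proof proposal.}
The plan is to combine the index formula of Theorem \ref{thm:rat} with the scaling map of \cite[Theorem 1.7]{OpdSol} so as to view the generic family $\delta$ as a family of virtual $W_0$-characters whose coefficients are rational functions of $k$ with known pole structure. More precisely, by Corollary \ref{cor:inv} the family $\delta$ is equivariant with respect to rescaling $k \to \epsilon k$ for $\epsilon > 0$, and by Theorem \ref{thm:scale} together with Proposition \ref{prop:cont} the assignment $k \mapsto \tilde\sigma_0(\delta(k)) \in \textup{Ell}(W_0)$ is \emph{constant} on half-lines in the open cone $U$. Since $U$ is a convex open cone this gives a canonical elliptic $W_0$-character $\overline\delta \in \textup{Ell}(W_0)$ attached to $\delta$, and the problem is then to exhibit, for each boundary point $k_0 \in \overline{U}$, a genuine tempered character $\delta(k_0)$ of $\mathbf{H}(R_1,V,F_1,k_0)$ whose image in $\textup{Ell}(W_0)$ equals $\overline\delta$.

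First I would show rationality of the individual character values in the spirit of Corollary \ref{cor:rat}. Mimicking the proof of Theorem \ref{thm:rat}, one writes
\begin{equation*}
\chi_{\delta(k)}(h) \;=\; \tau\!\bigl(f_{\delta,k} \cdot h\bigr) / \mu_{Pl}(\{\delta(k)\}) \cdot \deg(\delta(k)) \cdot (\textup{correction})
\end{equation*}
and uses the Euler--Poincar\'e realization (\ref{eq:pseudo}) to express $\chi_{\delta(k)}(h)$ for $h$ a fixed basis element as an alternating sum over facets $f$ of branching multiplicities (constant on $U$ by Definition \ref{lem:rattriv}) times Schur elements $d_\sigma$ which are rational in $k$ and regular on $\mc{K}$ by Lemma \ref{lem:rat}. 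This realizes $k \mapsto \chi_{\delta(k)}(h)$ as the restriction to $U$ of a rational function that is \emph{regular} on all of $\mc{K}$ (not just on $U$), since none of the $d_\sigma$ acquire poles across the walls of $\mc{Q}_{W_0 r}^{reg}$. Consequently $\chi_{\delta(k)}(h)$ extends continuously (even rationally) to $\overline{U}$, giving the weakly continuous limit character $\chi_{\delta(k_0)}$.

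Next I would verify that the limit is a non-negative integral combination of irreducible tempered characters at $k_0$. Here the positive semidefiniteness of $\textup{EP}_{\mathbf H}$ (Corollary \ref{cor:ON}) and Theorem \ref{thm:scale} provide the needed control: the scaling invariance forces $\overline\delta$ to be a fixed orthonormal vector in $\textup{Ell}(W_0)$, and the Casselman criterion (Theorem \ref{thm:cas}) combined with Proposition \ref{prop:closed} shows that any weak limit at $k_0$ is supported on residual cosets containing $W_0 r(k_0)$ in their tempered part. One then invokes the bijection of Theorem \ref{thm:dscrit} together with the semicontinuity of the $(\pi,\delta)$-pairing (cf.\ the matrix-unit argument of Theorem \ref{thm:def}(vi)) to identify the weak limit with a genuine tempered character. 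Granted this, the consequences for $\textup{Ell}(\mathbf{H}(R_1,V,F_1,k_0))$ follow: the images of the limit characters of all generic families $\delta$ form an orthonormal subset of $\textup{Ell}(W_0)$ of cardinality equal to $\dim \textup{Ell}(W_0) = \textup{ell}(W_0)$ by Theorem \ref{thm:dsH}, giving an isometric isomorphism $\textup{Ell}(\mathbf{H}(R_1,V,F_1,k_0)) \cong \textup{Ell}(W_0)$.

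The main obstacle will be the second paragraph's global regularity claim: it is entirely possible that the rational functions $d_\sigma(k)$ combine through the multiplicities $[\delta_f \otimes \epsilon_f:\sigma]$ in a way which only looks regular on $U$ but has spurious numerical cancellations at wall points $k_0 \in \partial U$; equivalently, some of the central idempotents $e_{\delta(k),k} \in \mc{S}$ could fail to extend continuously even though their images under $\tau$ do. The scaling invariance of $\overline\delta$ and Theorem \ref{thm:qcont} (continuity of holomorphic functional calculus in $q$) should rule this out along half-lines through $k_0$, but the transverse direction along which $W_0 r$ leaves the residual locus is precisely where several generic residual orbits coalesce, and controlling the limit there seems to require a refinement of Theorem \ref{thm:def} allowing $\mc{I}_q$ to jump in dimension across walls. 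Resolving this coalescence phenomenon (in analogy with the $F_4$ exception in Theorem \ref{thm:dsH}) is the heart of the conjecture.
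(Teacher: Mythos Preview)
The statement you are addressing is a \emph{conjecture} in the paper, not a theorem; the paper offers no proof and leaves it open. So there is no argument in the paper to compare your proposal against. What you have written is a strategy sketch, and you yourself flag the central obstacle in your final paragraph.

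A few remarks on the strategy. Your second paragraph conflates two distinct objects: the index function $f_{\delta,k}$ of (\ref{eq:index}) and the central idempotent $e_{\delta(k),k}$. The relation (\ref{eq:pseudo}) says $\chi_\pi(f_{\delta,k}) = EP_{\mc H}(\delta(k),\pi)$, which computes Euler--Poincar\'e pairings, not the character values $\chi_{\delta(k)}(h)$. The actual character formula (\ref{eq:char}) uses $e_{\delta(k),k}$, and its denominator $\tau(e_{\delta(k),k}) = \deg(\delta(k))\,\mu_{Pl}(\{\delta(k)\})$ vanishes precisely on $\partial U$ by Theorem \ref{thm:mgen} and Theorem \ref{thm:main2}; so ``regularity on all of $\mc K$'' cannot come from that route. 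The alternative you hint at---that the restriction of $\chi_{\delta(k)}$ to each finite Hecke subalgebra is a fixed integer combination of characters $\chi_\sigma$ which are individually regular in $k$---is correct and does extend those restricted values continuously. But the elements $N_w$ with $l(w)$ large lie in no $W_f\rtimes\Omega_f$, so this does not control the full character; and on the degenerate side the only obvious finite subalgebra is $\mathbb C[W_0]$, which is even less.

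The genuine gap is exactly the one you name at the end. At a boundary point $k_0\in\partial U$ the orbit $W_0\xi(k_0)$ ceases to be residual, so by Proposition \ref{prop:closed} the central character lands on a higher-dimensional residual coset, and the machinery of Theorems \ref{thm:def}--\ref{thm:cont} is unavailable: those results presuppose that the limiting central character is still a residual \emph{point}. Showing that the weak limit is a genuine (tempered, possibly reducible) character rather than merely a virtual one is precisely the content of the conjecture, and nothing in your outline bridges that step. Your invocation of Theorem \ref{thm:dscrit} and the matrix-unit argument does not help here, since projectivity over $\mc S$ characterises discrete series, and the limit representation---if it exists---will typically not be discrete series at $k_0$.
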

\begin{rem} Using the $gcc^{\mathbf{H}}$ invariant is not difficult to check
that for all irreducible root systems $R_1$ the irreducible discrete series
characters are stable for twisting by diagram automorphisms
(a case-by-case verification).
\end{rem}

\section{The classification of the discrete series of $\mc{H}$}
\label{sec:ds}

Since a semisimple root datum is in general not isomorphic to a direct sums of
irreducible root data the classification of the irreducible discrete series
characters can not be reduced to the same problem for an irreducible root datum.
However, we have seen (Theorem \ref{thm:red1} and Theorem \ref{thm:red2}) how to
reduce the problem to the analogous problem for crossed products of semisimple
degenerate affine algebras by certain groups of diagram automorphisms. In
Section \ref{thm:dsH} we have covered the basic building blocks, the
simple degenerate affine Hecke algebras.

Even though the classification problem for semisimple affine Hecke algebras can
in general not be reduced to the simple cases it is instructive to give the classification
in certain basic situations. This is what we seek to do in the present section.
In particular we classify in this section the irreducible discrete series characters
for all the irreducible non-simply laced root data and all possible positive
root labels (using Theorem \ref{thm:red1} and Theorem \ref{thm:red2} to reduce the
problem to Theorem \ref{thm:dsH}).

Let $\mc{R}=(X,R_0,Y,R_0^\vee,F_0)$ be an \emph{irreducible} root datum,
and let $q\in\mc{Q}=\mc{Q}(\mc{R})$. Recall the maximal root datum
$\mc{R}^{max}$ (with $X^{max}=P(R_1)$, the weight lattice of $R_1$,
and $R_0^{max}=R_0$) with the natural isogeny $\psi:\mc{R}\to\mc{R}^{max}$
such that $\mc{Q}(\mc{R})=\mc{Q}(\mc{R}^{max})$. Let us define
\begin{equation}
\Gamma=Y/Q(R_1^\vee) \cong
\textup{Hom}(X^{max}/X,\mathbb{C}^\times)\subset T^{max}
\end{equation}
An element $\gamma\in\Gamma$ uniquely extends to a linear character
(also denoted $\gamma$) of $W^{max}=X^{max}\rtimes W_0$ which is trivial
on $W_0$. $\Gamma$ acts on the affine Hecke algebra
$\mc{H}^{max}=\mc{H}(\mc{R}^{max},q)$ by means of algebra isomorphisms
as follows: for $w\in W^{max}$ and $\gamma\in \Gamma$ we define
$\gamma(N_w)=\gamma(w)N_w$. With this action of $\Gamma$ we have
\begin{equation}
\mc{H}(\mc{R},q)=\mc{H}(\mc{R}^{max},q)^\Gamma
\end{equation}

We are interested in applying Theorem \ref{thm:red1} to central
characters which carry discrete series characters of
$\mc{H}$, in other words to orbits $W_0r\in\textup{Res}(\mc{R},q)$
of residual points in $T$. We know that $r\in T$ is of the form
$r=s\exp(\xi)$ with $s\in T_u$ such that
\begin{equation}
R_{s,1}=\{\alpha\in R_1\mid\alpha(s)=1\}
\end{equation}
is of maximal rank, and $\xi$ is a linear
$(R_{s,1},k_s)$-residual point.
Let us define $W^\vee=W_0\ltimes 2\pi i Y$, then
the action groupoid of the action of $W_0$ on $T$ is
equivalent to the action groupoid of $W^\vee$ acting on $iV$.
We have a splitting of the form
\begin{equation}
W^\vee=W^\vee(\mc{R}^{max})\rtimes\Gamma
\end{equation}
with $W^\vee(\mc{R}^{max})=W(R_1^{(1)})=W_0\ltimes 2\pi iQ(R_1^\vee)$
on $iV$, and where $\Gamma$ acts on $W(R_1^{(1)})$
via diagram automorphisms of $R_1^{(1)}$.
Hence we may assume that $s(e)=\exp(e)$ with $e\in E(C^\vee)$, the
set of extremal points of the closure of the fundamental alcove
$\overline{C^\vee}$ of $W(R_1^{(1)})$. It follows that
\begin{equation}
W_{s(e)} \cong W(R_{s(e),1})\rtimes \Gamma_{s(e)}
\end{equation}
with $\Gamma_{s(e)} \cong \{\gamma\in\Gamma\mid \gamma(e)=e\}$
(compare with Definition \ref{defn:iso} and Corollary \ref{cor:genreds}).

Let $F^\vee$ be the set of simple affine roots of $R_1^{(1)}$.
If $a^\vee\in F^\vee$ then there exists a unique extremal point
$e(a^\vee)\in E(C^\vee)$ such that $a^\vee(e(a^\vee))\not=0$.
This sets up a canonical bijection $F^\vee\longleftrightarrow E(C^\vee)$
which we denote by $e\to a^\vee(e)$ and $a^\vee\to e(a^\vee)$.

Let $D(a^\vee)\in V^*$ denote the gradient of $a^\vee$. By the above,
if $e\not=e(a^\vee)$ then $D(a^\vee)(s(e))=1$.
Hence if $D(a^\vee)$ can be written as $D(a^\vee)=2\beta$ with
$\beta\in R_0$ then $\beta(s(e))=\pm 1$ for all extremal points
$e\in\overline{C^\vee}$ with $e\not=e(a^\vee)$. In this situation
the value $\beta(s(e))\in\{\pm 1\}$ is independent of the choice
of $e\not=e(a^\vee)$ (namely, it equals $-1$ iff
$\{a^\vee\}=F^\vee\backslash F_1$). Thus the following definition
makes sense (in view of (\ref{eq:k})):
\begin{defn}
We define the spectral diagram $\Sigma$ associated with $(\mc{R},q)$ as the
affine Dynkin diagram of $W^\vee$ associated with the basis $F^\vee$
of $R_1^{(1)}$, where we give all the vertices $a^\vee\in F^\vee$ of $\Sigma$
a weight $k_{a^\vee}$ defined as follows.
We define $k_{a^\vee}=k_{s,D(a^\vee)}$ (as in (\ref{eq:k})) where
$s=s(e)$ for $e\in E(C^\vee)\backslash\{e(a^\vee)\}$ (an arbitrary choice).
Note that $\Sigma$ (labelled with these weights) is invariant for
the natural action of $\Gamma$ on $F^\vee$. We include the
action of $\Gamma$ on the diagram and the marking of the special
vertex (extending the diagram of $R_1$) in the spectral diagram.
\end{defn}
\begin{ex}
If $\mc{R}=\mc{R}^{max}$ we have $\Gamma=1$. These cases
are referred to as $R_1^{(1)}$.
\end{ex}
\begin{ex}\label{rem:cn2x}
It is possible that the generic affine Hecke algebra
of a root datum is a specialization of the generic affine Hecke
algebra of another root datum.
For example, $\mc{H}(C_n,P(C_n),B_n,Q(B_n),F_0(C_n))$ is
isomorphic to the specialization $v_{\beta^\vee}=1$ in the
generic algebra of the type $\mc{H}(B_n,Q(B_n),C_n,P(C_n),F_0(B_n))$
where $\beta\in R_0=B_n$ is such that $2\beta\in R_1$. This is
compatible with the previous remark in the sense
that both these cases are referred to as $C_n^{(1)}$.
A basic example in this class is the Iwahori Hecke algebra of
the Chevalley group of type $G=SO_{2n+1}(F)$, with
$q^2=|\mathcal{O}/\mathcal{P}|$, the cardinality of the
residue field. See Figure \ref{soodd} (with $k=2\log(q)$).
\input{soodd.TpX}
\end{ex}
\begin{ex}
The Iwahori Hecke algebra of the simply connected group
$Sp_{2n}(F)$ (where we put $q^2=|\mathcal{O}/\mathcal{P}|$)
has the spectral diagram displayed in Figure \ref{sp2n}
(where $k=2\log(q)$). It corresponds to the case
$R_0=B_n$, $X=Q(R_0)$ and therefore it is obviously also a
specialization of $C_n^{(1)}$ (namely this case corresponds to the
specialization  $v_{\alpha^\vee}=1$ for $\alpha=2\beta$
with $\beta\in R_0$).
\input{sp2n.TpX}
Indeed, the spectral diagram of Figure \ref{sp2n}
is equivalent to the diagram of
type $C_n^{(1)}$ displayed in Figure \ref{sp2nequiv}.
\input{sp2nequiv.TpX}
\end{ex}
\begin{ex}\label{ex:3}
More generally, let $\mc{R}$ be of type
$C_n^{(1)}$. Let $R_0=\{\pm e_i,\,\pm e_i\pm e_j\}$ and put
$X=Q(R_0)$.
Choose $F_0=\{e_1-e_2,\dots,e_{n-1}-e_n,e_n\}$ and put
$q_1=q(s_{x_i-x_{i+1}})$, $q_2=q(s_{2x_n})$ and
$q_0=q(s_{1-2x_1})$. Put $k=2\log(q_1)$ and define $m_{\pm}$ by
$m_{\pm}k=\pm\log(q_0)+\log(q_2)$. The corresponding spectral
diagram is displayed in Figure \ref{genericcn}. We refer
to \cite{Lu4}, \cite{Blo} for explicit examples of such affine
Hecke algebras as convolution algebras in the
representation theory of $p$-adic groups.
\input{genericcn.TpX}
\end{ex}
\begin{defn}
For each element $e\in E(C^\vee)$ we associate the semisimple
root system $R_{s(e),1}$ with basis $F_{s(e),1}$ (as in Definition
\ref{defn:iso}). Then $D(F^\vee\backslash\{a^\vee(e)\})$ is a basis
for $R_{s(e),1}$.
Let $k_e\in\mc{K}(R_{s(e),1})$ denote the unique parameter function
on $R_{s(e),1}$ which corresponds to the set of weights
of $\Sigma$ restricted to $F^\vee\backslash\{a^\vee(e)\}$.
Then we associate to $e$ the algebra
\begin{equation}
\mathbf{H}_e:=\mathbf{H}(R_{s(e),1},V,F_{s(e),1},k_e)\rtimes\Gamma_{s(e)}
\end{equation}
We denote by $\Delta(\mathbf{H}_e)$ the set of irreducible discrete series characters
of $\mathbf{H}_e$ (in the sense as explained in the text following
Corollary \ref{cor:Dss}).
\end{defn}
\index{He@$\mb H_e$, extended degenerate affine Hecke algebra}
\index{1dHe@$\Delta(\mathbf{H}_e)$, set of irreducible discrete series characters
of $\mathbf{H}_e$}
Let us finally formulate our classification theorem:
\begin{thm}\label{thm:clas}
Let $\mc{R}=(X,R_0,Y,R_0^\vee,F_0)$  be a root datum with
$R_0$ irreducible, and let $q\in \mc{Q}$. Let $\Delta(\mc{R},q)$
be the set of irreducible discrete series characters of
the Hecke algebra $\mc{H}(\mc{R},q)$ as usual.
There exists a natural bijection
\begin{equation}
\Delta(\mc{R},q) \longleftrightarrow\coprod_{e\in\Gamma\backslash E(C^\vee)}
\Delta^{s(e)}(\mc{R},q)
\end{equation}
where the disjoint union is taken over a set of
representatives for the $\Gamma$-action on $E(C^\vee)$.
For each $e\in E(C^\vee)$ there is a natural bijection
\begin{equation}\label{eq:redbij}
\Delta^{s(e)}(\mc{R},q)\simeq\Delta(\mathbf{H}_e)
\end{equation}
(where the right hand side denotes the set of irreducible discrete series
characters of $\mathbf{H}_e$). In particular, if $\Gamma_{s(e)}=1$ we have
\begin{equation}\label{eq:redbijna}
\Delta^{s(e)}(\mc{R},q)\simeq\Delta^{\mathbf{H}}(R_{s(e),1},V,F_{s(e),1},k_e)
\end{equation}
(which is completely described by Theorem \ref{thm:dsH}).
If $\delta^{\mathbf{H}}\in\Delta(\mathbf{H}_e)$ then its
restriction to ${\mathbf{H}}(R_{s(e),1},V,F_{s(e),1},k_e)$ is a finite sum
of irreducible discrete series characters $\delta_i^{\mathbf{H}}$ whose generic
central characters $gcc^{\mathbf{H}}(\delta_i^{\mathbf{H}})$ constitute
one $W_{s(e)}$-orbit of a generic linear $R_{s(e),1}$-residual
point $\xi$ (using Theorem \ref{thm:dsH}). We express this by writing
\begin{equation}
gcc^{\mathbf{H}}(\delta^{\mathbf{H}})=W_{s(e)}\xi
\end{equation}
With this notation the bijection above has the property that if
$\delta\in\Delta^{s(e)}(\mc{R},q)$ corresponds to
$\delta^{\mathbf{H}}\in\Delta(\mathbf{H}_e)$ with
$gcc^{\mathbf{H}}(\delta^{\mathbf{H}})=W_{s(e)}\xi$ then
\begin{equation}\label{eq:corrcc}
gcc(\delta)=W_0(s(e)\exp(\xi))
\end{equation}
\end{thm}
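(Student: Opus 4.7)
The plan is to combine the two reduction theorems (Theorem \ref{thm:red1} and Theorem \ref{thm:red2}), packaged for discrete series in Corollary \ref{cor:dsred}, with the parametrization of admissible unitary parts of residual points by $\Gamma\backslash E(C^\vee)$, and then read off the central character compatibility.

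First I would set up the decomposition of $\Delta(\mc R,q)$. By Theorem \ref{thm:rescos}, every $\delta\in\Delta(\mc R,q)$ has central character $W_0t$ with $t=sc$ a residual point; by the corollary on page \pageref{cor:dsred} this forces $R_{s,1}\subset R_1$ to be of maximal rank. Passing to $\mc R^{max}$ via $\psi$ and using the identification $W^{\vee} = W(R_1^{(1)})\rtimes\Gamma$ of the affine action groupoid of $W_0$ on $T$, any such $s$ is $W_0$-conjugate to exactly one $s(e)=\exp(e)$ with $e\in E(C^\vee)$, and two extremal points $e,e'$ yield $W_0$-conjugate points of $T_u$ if and only if they lie in the same $\Gamma$-orbit. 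This produces the disjoint union decomposition
\[
\Delta(\mc R,q)=\coprod_{e\in\Gamma\backslash E(C^\vee)}\Delta^{s(e)}(\mc R,q).
\]

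Next I would establish the bijection (\ref{eq:redbij}). Fix a representative $e$. Every central character in $\Delta^{s(e)}(\mc R,q)$ is of the form $W_0 t$ with $t=s(e)c$, $c\in T_v$, and $\alpha(t)>0$ for all $\alpha\in R_{s(e),1}$. So Corollary \ref{cor:dsred} applies uniformly: it gives, central character by central character, a natural bijection between the discrete series of $\mc H(\mc R,q)$ with central character $W_0 t$ and the discrete series of $\mathbf H(R_{s(e),1},V,F_{s(e),1},k_{s(e)})\rtimes\Gamma(t)$ with central character $W_{s(e)}\xi$, where $\xi\in V$ is determined by $\alpha(t)=\exp(\alpha(\xi))$ on $R_{s(e),1}$. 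To obtain a single bijection $\Delta^{s(e)}(\mc R,q)\simeq\Delta(\mathbf H_e)$ one then observes that $\Gamma_{s(e)}$ acts on the set of admissible $t=s(e)c$ by permuting $W(R_{s(e),1})$-orbits, that $\Gamma(t)$ is precisely the stabilizer in $\Gamma_{s(e)}$ of $W(R_{s(e),1}) t$, and that by the usual Clifford-theoretic picture (see \cite{RamRam}) the irreducible discrete series of $\mathbf H_e=\mathbf H(R_{s(e),1},V,F_{s(e),1},k_e)\rtimes\Gamma_{s(e)}$ with unspecified central character are in bijection with pairs $(W_{s(e)}\xi,\sigma)$ where $\sigma$ is an irreducible discrete series of $\mathbf H(R_{s(e),1},\dots)\rtimes\Gamma(t)$ at central character $W_{s(e)}\xi$. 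Summing the Corollary \ref{cor:dsred} bijection over $\Gamma_{s(e)}$-orbits of central characters assembles exactly to (\ref{eq:redbij}). When $\Gamma_{s(e)}=1$ the groups $\Gamma(t)$ are trivial and (\ref{eq:redbijna}) is immediate, whence Theorem \ref{thm:dsH} applies.

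A small but necessary intermediate step is to verify that the parameter $k_e$ read off the spectral diagram $\Sigma$ at $e$ coincides with $k_{s(e)}$ defined by (\ref{eq:k}). This is a direct check: for $a^\vee\in F^\vee\setminus\{a^\vee(e)\}$ the gradient $D(a^\vee)$ is a simple root of $R_{s(e),1}$, and the three cases in (\ref{eq:k}) (short, doubled with $\beta(u)=1$, doubled with $\beta(u)=-1$) correspond exactly to whether $D(a^\vee)\in R_0\cap R_1$, $D(a^\vee)=2\beta$ with $\{a^\vee\}\not=F^\vee\setminus F_1$, or $D(a^\vee)=2\beta$ with $\{a^\vee\}=F^\vee\setminus F_1$; this is precisely how the weights of $\Sigma$ were assigned.

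Finally, for the compatibility (\ref{eq:corrcc}) of generic central characters, I would track a continuous family $q\mapsto\delta(q)\in\Delta^{s(e)}(\mc R,q)$ through the reductions. Corollary \ref{cor:dsred} is a central character preserving Morita-type equivalence for each fixed $q$ (with $t\leftrightarrow\xi$ by $\alpha(t)=\exp(\alpha(\xi))$), and is compatible with weak continuity in $q$ because the idempotents and parameter functions $q\mapsto q_{s(e)}\mapsto k_{s(e)}$ depend continuously on $q$. Hence a generic family on the $\mathbf H_e$-side with $gcc^{\mathbf H}(\delta^{\mathbf H})=W_{s(e)}\xi$ corresponds to a generic family on the $\mc H$-side whose central character is $W_0(s(e)\exp(\xi))$, as required by Definition \ref{defn:gcc} and Proposition \ref{prop:GCC}.

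The main obstacle is bookkeeping, not depth: one has to check carefully that the sum of the bijections coming from Corollary \ref{cor:dsred} at varying central characters assembles into a single natural bijection with $\Delta(\mathbf H_e)$, that $\Gamma(t)$ as $t$ ranges over admissible central characters is exactly the isotropy group in $\Gamma_{s(e)}$ needed to make Clifford theory produce $\mathbf H_e$-modules, and that the weights of the spectral diagram match the $k_{s(e)}$ of (\ref{eq:k}). All three verifications are case-independent and purely combinatorial once the definitions are unwound.
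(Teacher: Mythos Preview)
Your proposal is correct and follows precisely the approach the paper intends: its entire proof is the one-line ``Use Theorem \ref{thm:red1} and Theorem \ref{thm:red2},'' and what you have written is a careful unpacking of exactly that, via Corollary \ref{cor:dsred}, the $\Gamma\backslash E(C^\vee)$ parametrization of admissible unitary parts, and the Clifford-theoretic assembly into $\mathbf H_e$-modules. The extra verifications you flag (matching $k_e$ with $k_{s(e)}$, identifying $\Gamma(t)$ as the relevant isotropy, and tracking $gcc$ through the reductions) are the bookkeeping implicit in the paper's terse proof.
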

\begin{proof}
Use Theorem \ref{thm:red1} and Theorem \ref{thm:red2}.
\end{proof}
\begin{rem}
If $\mc{R}$ is of type $R_1^{(1)}$ then one has $\Gamma_{s(e)}=1$
for all $\in E(C^\vee)$.
In general one needs to apply Clifford theory in order to describe
the sets $\Delta(\mathbf{H}_e)$ in terms of the results of
Theorem \ref{thm:dsH}.
\end{rem}
The only non-simply laced classical case which is not of type
$R_1^{(1)}$ is the case $R_0=C_n$ and $X=Q(R_0)$ (as is clear from the
examples above).
In this case $\mc{R}^{max}$ is of $C_n^{(1)}$-type with the specialization
$v_{\beta^\vee}=v_{2x_n}=1$ (as in Example \ref{rem:cn2x}).
Using the notation of Example
\ref{ex:3} and (\ref{eq:shift}) we see that $q_0=q(v_{2x_n})=1$.
Hence we have $m=m_+=m_-$, and a group $\Gamma \cong \mathbb{Z}/2$
acting on the spectral diagram $\Sigma$ as shown in Figure \ref{sosym}.
\input{sosym.TpX}
In the application of Theorem \ref{thm:clas} everything is
straightforward except when $n=2a$ is even and $e=e_a$ corresponds to the
the middle node of $\Sigma$ (the unique node of $\Sigma$ with
nontrivial isotropy in $\Gamma$). In this case we need to describe the set
\begin{equation}\label{e}
\Delta(\mathbf{H}_{e_a})=
\Delta((\mathbf{H}(C_a,V_a,F_a,k_a)\otimes\mathbf{H}(C_a,V_a,F_a,k_a))\rtimes\Gamma)
\end{equation}
where the nontrivial element of $\Gamma$ acts by the flip
$\tau$ of the two tensor legs.
\begin{thm}
We have
\begin{equation}
\Delta(\mathbf{H}_{e_a})\simeq
\Gamma\backslash(\Delta^{\mathbf{H}}(C_a,V_a,F_a,k_a)\times
\Delta^{\mathbf{H}}(C_a,V_a,F_a,k_a))^{\bullet}
\end{equation}
where for any set $A$, $(A\times A)^\bullet$ denotes the Cartesian product of
$A$ with itself with the diagonal counted twice, and where the unique nontrivial
element $\gamma\in\Gamma$ acts by $\pi(\gamma)(\delta_1,\delta_2)=(\delta_2,\delta_1)$.
\end{thm}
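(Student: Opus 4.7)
The plan is to apply Clifford theory for crossed products, combined with a compatibility check for the discrete series property. Write $B=\mathbf{H}(C_a,V_a,F_a,k_a)$ and view $\mathbf{H}_{e_a}=(B\otimes B)\rtimes\Gamma$, where the nontrivial element $\gamma\in\Gamma$ acts by the flip $\tau(b_1\otimes b_2)=b_2\otimes b_1$.

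First, I would identify the irreducible representations of $B\otimes B$ and determine which are discrete series. Since $B\otimes B$ has $\mathbf{P}\otimes\mathbf{P}=P(V_a\oplus V_a)$ as its polynomial part, every simple $(B\otimes B)$-module is an external tensor product $\pi_1\otimes\pi_2$ of simple $B$-modules, and its $\mathbf{P}\otimes\mathbf{P}$-weights are the pairs $(t_1,t_2)$ with $t_i$ a $\mathbf{P}$-weight of $\pi_i$. By the form of Casselman's criterion for degenerate affine Hecke algebras (recalled in the paragraph preceding Corollary \ref{cor:dsred}) applied to the product root system $C_a\sqcup C_a\subset V_a\oplus V_a$, the module $\pi_1\otimes\pi_2$ is discrete series if and only if both $\pi_1$ and $\pi_2$ are discrete series of $B$. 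In other words, the set of irreducible discrete series characters of $B\otimes B$ is in bijection with $\Delta^{\mathbf{H}}(C_a,V_a,F_a,k_a)\times\Delta^{\mathbf{H}}(C_a,V_a,F_a,k_a)$.

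Next, I would apply Clifford theory for the cyclic extension by $\Gamma\cong\mathbb{Z}/2$ (as formulated, e.g., in the appendix of \cite{RamRam}) to the action of $\Gamma$ on $\widehat{B\otimes B}$. The action of $\tau$ sends $[\pi_1\otimes\pi_2]$ to $[\pi_2\otimes\pi_1]$, so the orbits come in two types:
\begin{itemize}
\item A free $\Gamma$-orbit $\{[\delta_1\otimes\delta_2],[\delta_2\otimes\delta_1]\}$ with $\delta_1\not\simeq\delta_2$ yields, by induction from $B\otimes B$ to $(B\otimes B)\rtimes\Gamma$, a single irreducible module of $\mathbf{H}_{e_a}$, whose restriction to $B\otimes B$ is $\delta_1\otimes\delta_2\,\oplus\,\delta_2\otimes\delta_1$.
\item A fixed point $[\delta\otimes\delta]$ extends in exactly two inequivalent ways to $(B\otimes B)\rtimes\Gamma$, distinguished by the sign $\pm 1$ by which $\tau$ acts on the canonical intertwiner implementing the flip, and both extensions restrict to $\delta\otimes\delta$.
\end{itemize}
The compatibility of the discrete series property with these operations must then be checked. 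Since induction and restriction preserve the $\mathbf{P}\otimes\mathbf{P}$-weight set (and hence Casselman's discreteness criterion), and since both extensions of $\delta\otimes\delta$ have $\delta\otimes\delta$ as $B\otimes B$-restriction, an irreducible $\mathbf{H}_{e_a}$-module is a discrete series iff the constituents of its restriction to $B\otimes B$ are discrete series, which by the first step is equivalent to the underlying $\delta_i$'s lying in $\Delta^{\mathbf{H}}(C_a,V_a,F_a,k_a)$.

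Combining, the isomorphism classes in $\Delta(\mathbf{H}_{e_a})$ are in bijection with the following data: one class for each unordered pair $\{\delta_1,\delta_2\}$ with $\delta_1\neq\delta_2$ in $\Delta^{\mathbf{H}}(C_a,V_a,F_a,k_a)$, plus two classes for each diagonal element $\delta\otimes\delta$. This is exactly the parameter set $\Gamma\backslash(\Delta^{\mathbf{H}}(C_a,V_a,F_a,k_a)\times\Delta^{\mathbf{H}}(C_a,V_a,F_a,k_a))^{\bullet}$: the double-counted diagonal accounts precisely for the two inequivalent extensions of each $\Gamma$-fixed $\delta\otimes\delta$. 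The main (and essentially only) subtle point is ensuring that Clifford theory applies cleanly in this infinite-dimensional setting and that discreteness transfers in both directions; this is standard once one uses that $\Gamma$ is finite and commutes with the polynomial subalgebra $\mathbf{P}\otimes\mathbf{P}$, so that weight-space considerations descend from $B\otimes B$-modules to $\mathbf{H}_{e_a}$-modules.
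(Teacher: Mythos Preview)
Your proposal is correct and follows essentially the same Clifford-theoretic approach as the paper: identify the irreducible discrete series of $B\otimes B$ as external tensor products $\delta_1\otimes\delta_2$, analyze the $\Gamma$-inertia (trivial iff $\delta_1\not\simeq\delta_2$), and conclude that free orbits induce irreducibly while fixed points split into two extensions distinguished by the sign of $\gamma$. Your treatment is in fact slightly more careful than the paper's, which asserts the compatibility of Clifford theory with the discrete series property without spelling out the Casselman-criterion argument you give.
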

\begin{proof}
By Clifford theory it is clear that
all irreducible discrete series representations of $\mathbf{H}_{e_a}$
are obtained by the following recipe. We start from an irreducible discrete
series character $\delta=\delta_1\otimes\delta_2$ of
$\mathbf{H}(C_a,V_a,F_a,k_a)\otimes{\mathbf{H}}(C_a,V_a,F_a,k_a)$.
Consider its inertia group for the action of $\Gamma$ on such characters
(by twisting). In this simple situation we see that we can choose
an explicit intertwining isomorphism
\begin{equation}
\pi(\gamma):\delta_1\otimes\delta_2\to(\delta_2\otimes\delta_1)\circ\tau
\end{equation}
given by $\pi(\gamma)(v\otimes w)=w\otimes v$. Hence the
inertia subgroup in $\Gamma$ of $\delta_1\otimes\delta_2$
is nontrivial iff $\delta_1$ and $\delta_2$ are equivalent
irreducible representations.
If the inertia is trivial then Clifford theory tells us that
the induction of $\delta_1\otimes\delta_2$ to $\mathbf{H}_{e_a}$
is irreducible, and otherwise Clifford theory tells us that the induced
representation splits up in two inequivalent irreducible parts (distinguished
from each other by the sign of the trace of $\gamma$).
This proves the result.
\end{proof}

\appendix

\section{Analytic properties of the Schwartz algebra}

The aim of this appendix is to provide proofs of Theorems \ref{thm:frealg} and
\ref{thm:qcont}, which concern
the embedding $\mc S (\mc R ,q) \to C^*_r (\mc H (\mc R ,q))$ and holomorphic functional
calculus with varying parameters $q$. Our approach is purely analytic and does not make
any use of the representation theory of $\mc H (\mc R ,q)$. The appendix is based on the
second author's thesis \cite[Section 5.2]{Sol}, where some proofs can be found in more
detail.

First we recall some generalities. A Fr\'echet algebra is a Fr\'echet space endowed with
a jointly continuous multiplication. We include in the definition that the topology can
be defined by a (countable) family of submultiplicative seminorms. The submultiplicativity
ensures that our Fr\'echet algebras can be written as projective limits of Banach algebras.

\begin{thm}\label{thm:A.contour}
Let $A$ be a unital Fr\'echet algebra and let $a \in A$. Suppose that $U \subset \mh C$ is
an open neighborhood of the spectrum $\textup{Sp}(a)$ of $a$, and let $C^{an}(U)$
be the algebra of holomorphic functions on $U$. There exists a unique continuous algebra
homomorphism, the holomorphic functional calculus
\[
C^{an}(U) \to A : f \mapsto f(a) ,
\]
such that $1 \mapsto 1$ and $\mr{id}_U \mapsto a$.
Moreover, if $\Gamma$ is a positively oriented smooth simple closed contour, which lies inside
$U$ and encircles $\textup{Sp}(a)$, then
\[
f(a) = \frac{1}{2 \pi i} \int_\Gamma f(z) (z-a)^{-1} \textup{d} z .
\]
\end{thm}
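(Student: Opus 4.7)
The strategy is to mimic the classical Banach-algebra construction of the Dunford--Riesz functional calculus, with technical care to ensure that the relevant $A$-valued contour integrals converge in the Fr\'echet topology. Choose a countable family of submultiplicative seminorms $(p_n)_{n \geq 1}$ defining the topology of $A$, and present $A$ as the projective limit of the Banach algebras $A_n$ obtained by completing $A / \ker p_n$ in the induced norm. The first task is to verify that the resolvent set $\rho(a) := \mh C \setminus \textup{Sp}(a)$ is open and that $R(\lambda) := (\lambda - a)^{-1}$ is a holomorphic $A$-valued function on $\rho(a)$. Both follow from the standard Neumann series argument: near any $\lambda_0 \in \rho(a)$ one expands
\[
(\lambda - a)^{-1} = \sum_{k \geq 0} (\lambda_0 - \lambda)^k (\lambda_0 - a)^{-k-1},
\]
and submultiplicativity of each $p_n$ ensures absolute convergence of this series in every seminorm (hence in $A$) for $|\lambda - \lambda_0|$ small enough.

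With the resolvent in hand, I would define $\Phi(f) := \frac{1}{2\pi i} \int_\Gamma f(z) R(z)\,\mr{d}z$ as a Bochner integral in the sequentially complete Fr\'echet space $A$. The estimate
\[
p_n(\Phi(f)) \leq \frac{\mr{length}(\Gamma)}{2\pi} \sup_{z \in \Gamma} |f(z)| \cdot \sup_{z \in \Gamma} p_n(R(z))
\]
immediately gives continuity of $\Phi$ from $C^{an}(U)$, equipped with uniform convergence on compacts, into $A$. Independence of $\Phi(f)$ from the specific choice of $\Gamma$ among smooth simple closed contours encircling $\textup{Sp}(a)$ inside $U$ follows from the Cauchy theorem applied to the $A$-valued holomorphic function $z \mapsto f(z) R(z)$ on $U \cap \rho(a)$; one reduces to the scalar statement by composing with arbitrary continuous linear functionals on $A$, which separate points by Hahn--Banach.

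For the homomorphism property, I would take two nested contours $\Gamma_1 \subset \Gamma_2$ both encircling $\textup{Sp}(a)$ in $U$ and compute $\Phi(f)\Phi(g)$ as a double integral; the resolvent identity $R(z) R(w) = \bigl(R(z) - R(w)\bigr)/(w - z)$ together with the scalar Cauchy formula (the inner $w$-integral over $\Gamma_2$ yields $g(z)$, while the inner $z$-integral over $\Gamma_1$ vanishes) collapses the double integral to $\Phi(fg)$. The normalizations $\Phi(1) = 1$ and $\Phi(\mr{id}_U) = a$ come from taking $\Gamma$ to be a circle of radius exceeding the spectral radius of $a$ in every $A_n$ and integrating the Neumann expansion $R(z) = \sum_{k \geq 0} z^{-k-1} a^k$ term by term. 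Uniqueness follows because any continuous unital homomorphism $\Psi$ with $\Psi(\mr{id}_U) = a$ must agree with $\Phi$ on polynomials and hence, by the homomorphism property applied to the functions $z \mapsto (z - \lambda)^{-1}$, on all rational functions with poles off $U$; by Runge's theorem these are dense in $C^{an}(U)$, so continuity forces $\Psi = \Phi$. The main technical obstacle lies in the opening step: once one knows that the resolvent behaves as in a Banach algebra (this is precisely where submultiplicativity of the seminorms is essential), every subsequent step mirrors the classical Banach-algebra argument.
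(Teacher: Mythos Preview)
Your direct construction is essentially sound, but it differs from the paper's argument and has one genuine limitation worth flagging.

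The paper's proof is two sentences: the Banach case is classical (cited from Takesaki), and one passes to a general Fr\'echet algebra by writing $A = \varprojlim A_n$ as a projective limit of Banach algebras and invoking Phillips. Concretely, for $f \in C^{an}(U)$ one defines $f(a_n) \in A_n$ by the Banach functional calculus (legitimate since $\textup{Sp}_{A_n}(a_n) \subset \textup{Sp}_A(a) \subset U$), checks that these are compatible under the connecting maps, and takes $f(a)$ to be the resulting element of the projective limit. This yields the homomorphism without ever choosing a contour in $A$.

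Your approach instead builds $\Phi(f)$ directly as an $A$-valued contour integral. That is fine, and your verification of the algebraic properties (resolvent identity, double-contour argument, Runge for uniqueness) is correct. The limitation is that your very definition of $\Phi$ presupposes that a contour $\Gamma \subset U$ encircling $\textup{Sp}(a)$ exists, i.e.\ that $\textup{Sp}(a)$ is compact. The first assertion of the theorem (existence and uniqueness of $f \mapsto f(a)$) is stated without that hypothesis, and the paper explicitly remarks immediately after the theorem that spectra in Fr\'echet algebras need not be compact. So as written you have only proved the theorem under an extra assumption. The projective-limit argument avoids this: each $\textup{Sp}_{A_n}(a_n)$ is automatically compact, so one can always run the Banach calculus levelwise even when $\bigcup_n \textup{Sp}_{A_n}(a_n) = \textup{Sp}_A(a)$ is unbounded. (Your normalization step, taking a circle of radius exceeding every spectral radius $r_n$, is a symptom of the same issue: such a radius exists precisely when $\textup{Sp}_A(a)$ is bounded.)

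For the purposes of this paper the gap is harmless --- the relevant Schwartz algebras are holomorphically closed in a $C^*$-algebra, so all spectra are compact --- but as a proof of the theorem as stated, you should either add the compactness hypothesis or fall back on the projective-limit construction for the existence part.
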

\begin{proof}
This is well-known for Banach algebras, see for example \cite[Proposition 2.7]{Tak}.
As noticed in \cite[Lemma 1.3]{Phi}, we can generalize the result to $A$, because $A$
is a projective limit of Banach algebras.
\end{proof}

\begin{rem}
If $U$ is disconnected then we may also use finitely many contours $\Gamma$, each one
lying in a different connected component of $U$. Notice however that in general Fr\'echet
algebras the spectrum of an element need not be compact, so it may not be possible
to find suitable contours for the holomorphic functional calculus.
\end{rem}

The next Theorem, which relies on a result of Lusztig, is essential to
control the multiplication in $\mc H (\mc R ,q)$. Let $u,v \in W$ and let
$u = \omega s_1 \cdots s_{l (u)}$ be a reduced expression, where $l (\omega ) = 0$
and $s_i \in S$. The $s_i$ need not all be different. For $I \subset \{ 1,2, \ldots ,l (u) \}$
we write $\eta_I = \prod_{i \in I} \big( q(s_i ) - q(s_i )^{-1} \big)$ and
\[
u_I = \omega \tilde s_1 \cdots \tilde s_{l (u)}
\qquad \mr{where} \qquad
\tilde s_i = \left\{ \begin{array}{ccc}
s_i & \mr{if} & i \notin I\\
e   & \mr{if} & i \in I
\end{array} \right..
\]
\begin{thm}\label{thm:A.multH}
\[
N_u \cdot N_v = \sum_{I \subset \{1,2, \ldots ,l (u)\}}
\eta_I D_v^u (I) N_{u_I v} \qquad \mr{where}
\]
\begin{itemize}
\item $D_v^u (I)$ is either 0 or 1,
\item $D_u^v (\emptyset) = 1$ and $D_v^u (I) = 0$ if $|I| > |R_0^+ |$,
\item $\sum\limits_{I \subset \{ 1,2, \ldots ,l (u) \} }
D_v^u (I) < 3 (l (u) + 1)^{|R_0^+ |}$.
\end{itemize}
\end{thm}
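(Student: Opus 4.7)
The plan is to proceed by induction on $n = l(u)$, using the quadratic relation to peel off one letter of $u$ at a time. For $n = 0$ we have $u = \omega \in \Omega$ and $N_u N_v = N_{\omega v}$, so the formula holds trivially with the single summand $I = \emptyset$ and $D_v^u(\emptyset) = 1$.

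For the inductive step, fix a reduced expression $u = \omega s_1 \cdots s_n$ and set $u' = \omega s_1 \cdots s_{n-1}$, so that $N_u = N_{u'} N_{s_n}$. The quadratic relation gives
\[
N_{s_n} N_v =
\begin{cases}
N_{s_n v} & \text{if } l(s_n v) > l(v), \\
N_{s_n v} + (q(s_n) - q(s_n)^{-1}) N_v & \text{if } l(s_n v) < l(v).
\end{cases}
\]
I would apply the inductive hypothesis to $N_{u'} N_{s_n v}$ and, in the second case, also to $N_{u'} N_v$, then collect contributions by subsets $I \subset \{1,\ldots,n\}$. Using that $u_I = u'_I s_n$ when $n \notin I$, $u_I = u'_{I \setminus \{n\}}$ when $n \in I$, and $\eta_I = (q(s_n) - q(s_n)^{-1}) \eta_{I \setminus \{n\}}$ in the latter case, one reads off
\[
D_v^u(I) =
\begin{cases}
D_{s_n v}^{u'}(I) & \text{if } n \notin I, \\
D_v^{u'}(I \setminus \{n\}) & \text{if } n \in I \text{ and } l(s_n v) < l(v), \\
0 & \text{otherwise.}
\end{cases}
\]
From this it is immediate that $D_v^u(I) \in \{0,1\}$ and $D_v^u(\emptyset) = 1$, settling the first two bullets.

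The third bullet---vanishing of $D_v^u(I)$ whenever $|I| > |R_0^+|$---does not follow from the recursion above, which a priori only yields $|I| \leq n$. This is the main obstacle. The idea is to reinterpret a subset $I = \{i_1 < \cdots < i_k\}$ with $D_v^u(I) = 1$ as a folded gallery: setting $\tilde s_j = s_j$ for $j \notin I$ and $\tilde s_j = e$ for $j \in I$, and $w_i := \tilde s_{i+1} \cdots \tilde s_n v$, the condition $D_v^u(I) = 1$ forces $l(s_{i_j} w_{i_j}) < l(w_{i_j})$ at every absorption step. A combinatorial analysis---the core content of Lusztig's lemma in \cite{Lus1}---exploits the semidirect decomposition $W = W_0 \ltimes X$ and the affine length formula $l(w) = |w^{-1}(R_-) \cap R_+|$ to show that no more than $|R_0^+| = l(w_0)$ such backward steps can occur simultaneously in a valid fold pattern. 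Granting this sharp bound, the counting estimate is routine:
\[
\sum_I D_v^u(I) \;\leq\; \sum_{k=0}^{|R_0^+|} \binom{n}{k} \;\leq\; \sum_{k=0}^{|R_0^+|} (n+1)^k \;<\; 3 (n+1)^{|R_0^+|},
\]
completing the proof.
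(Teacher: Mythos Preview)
Your approach matches the paper's: induction on $l(u)$ for the expansion and the $\{0,1\}$ property, citation of Lusztig's result in \cite{Lus1} for the vanishing when $|I|>|R_0^+|$, and then an elementary binomial count for the final estimate. The paper phrases the Lusztig input slightly differently---for each fixed $w$ the sum $\sum_{I:u_I=w}\eta_I D_v^u(I)$ is a polynomial of degree at most $|R_0^+|$ in the variables $q(s_i)-q(s_i)^{-1}$---but this is equivalent to what you use.

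One cosmetic point: your final chain $\sum_{k=0}^{|R_0^+|}\binom{n}{k}\leq\sum_{k=0}^{|R_0^+|}(n+1)^k<3(n+1)^{|R_0^+|}$ fails at the last step when $n=0$ and $|R_0^+|\geq 2$, since then the middle term is $|R_0^+|+1$ while the right side is $3$. This is harmless because you have already disposed of $n=0$ separately, but you should either exclude $n=0$ from the chain or use the paper's sharper bound $\sum_{k=0}^{|R_0^+|}\binom{n}{k}\leq\big(\sum_{k\geq 0}1/k!\big)\cdot\frac{n!}{(n-|R_0^+|)!}<3(n+1)^{|R_0^+|}$, which is valid for all $n\geq 0$.
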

\begin{proof}
It follows from the multiplicaton rules in Definition \ref{def:Heckealg} that for $s \in S$
\begin{equation*}
N_s \cdot N_v = N_{s v} + D_v^s \big( q(s) - q(s )^{-1} \big) N_v
\text{  where  } D_v^s  =
\left\{ \begin{array}{ccc}
0 & \mr{if} & l (s v) > l (v) \\
1 & \mr{if} & l (s v) < l (v)
\end{array} \right..
\end{equation*}
The expression for $N_u \cdot N_v$, with $D_u^v (I)$ being 0 or 1 and
$D_u^v (\emptyset) = 1$, follows from this, with induction to $l (u)$.
By \cite[Theorem 7.2]{Lus1} for fixed $w \in W$ the sum
$\sum_{I : u_I = w} \eta_I D_v^u (I)$
is a polynomial of degree at most $|R_0^+ |$ in the variables $q(s_i ) - q(s_i )^{-1}$.
Therefore $D_v^u (I) = 0$ whenever $|I| > |R_0^+ |$ and
\begin{multline}\label{eq:A.1}
\sum\limits_{I \subset \{ 1,2, \ldots ,l (u) \}} D_v^u (I)
\;\; \leq \;\; \# \big\{ I \subset \{ 1,2, \ldots ,l (u) \}
: |I| \leq |R_0^+ | \big\} \;\; \leq \;\; \sum\limits_{j=0
}^{|R_0^+ |} \left( \ds{l (u) \atop j} \right) \\
\leq \;\; {\ds \frac{l (u)!}{\big( l (u) - |R_0^+ | \big) !}}
\sum\limits_{j=0}^{|R_0^+ |} {\ds \frac{1}{j!}} \;\; < \;\;
3 (l (u) + 1)^{|R_0^+ |} ,
\end{multline}
where we should interpret $\big( l (u) - |R_0^+ | \big) !$ as 1 if $|R_0^+ | \geq l (u)$.
\end{proof}

For the reader's convenience we repeat some notations from Section \ref{subsec:Schwartz}
and we add some new ones. The vector space $V^* = \mh R \otimes_{\mh Z} X$ decomposes as
\[
V^* = V_0^* \oplus V_Z^* = \mh R R_0 \oplus \mh R \otimes_{\mh Z} Z ,
\]
so that we can write unambigously $V^* \ni \phi = \phi_0 + \phi_Z \in V_0^* \oplus V_Z^*$.
The norm on $W$ is defined by
$\mc N (w) = l (w) + \norm{x_Z}$ if $w = x w_0$ with $x \in X , w_0 \in W_0$.
Since $X_Z := \{ x_Z : x \in X \}$ is a lattice in $V_Z^*$, we can adjust the norm on $V^*$
so that it takes integral values on $X_Z$. This is not necessary, but it assures that
$\mc N (W) \subset \mh Z_{\geq 0}$.

\begin{lem}\label{lem:A.growthW}
There exists a real number $C_{\mc N}$ such that for all $n \in \mh Z_{\geq 0}$ :
\[
\# \{ w \in W : \mc N (w) = n \} < C_{\mc N} (n+1)^{\mr{rk}(X) - 1} .
\]
\end{lem}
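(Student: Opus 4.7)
Every $w \in W$ has a unique factorization $w = x w_0$ with $x \in X$ and $w_0 \in W_0$, and the affine action gives $w(0) = x$, so $\mc N (w) = l (x w_0 ) + \| x_Z \|$ where $x_Z$ is the projection of $x$ to $V_Z^*$. Since $|W_0|$ is a constant, it suffices to bound, for each fixed $w_0 \in W_0$, the cardinality of
\[
A_n (w_0) = \{ x \in X : l (x w_0 ) + \| x_Z \| = n \} ,
\]
by $C (n+1)^{\mr{rk}(X) - 1}$ for some constant $C$ independent of $w_0$ and $n$.

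The plan is to decompose this count according to the two summands. Using the orthogonal decomposition $V^* = V_0^* \oplus V_Z^*$, write $x = x_0 + x_Z$ with $x_0 \in V_0^*$; note that $x_0$ is only defined in the quotient $X / (X \cap V_Z^*) = X/Z$, which embeds as a lattice of rank $\mr{rk}(R_0)$ in $V_0^*$, while $X_Z$ is a lattice of rank $\mr{rk}(Z) = \mr{rk}(X) - \mr{rk}(R_0)$ in $V_Z^*$. The first key ingredient is the standard explicit length formula for the affine Weyl group: there exist $\epsilon_\alpha (w_0) \in \{0, 1\}$ (for $\alpha \in R_0^+$) such that
\[
l (x w_0) \;=\; \sum_{\alpha \in R_0^+} \bigl| \langle x, \alpha^\vee \rangle - \epsilon_\alpha (w_0 ) \bigr| ,
\]
which implies that for a suitable constant $C_0 > 0$ (depending only on $\mc R$ and the chosen norm) one has $C_0^{-1} \|x_0\| - C_0 \leq l(x w_0) \leq C_0 \|x_0\| + C_0$ for all $x \in X$ and all $w_0 \in W_0$.

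The second ingredient is elementary lattice point counting on shells. For any lattice $\Lambda$ of rank $r$ in a Euclidean space and any coset $v + \Lambda$, the number of elements of $v + \Lambda$ in a ball of radius $R$ is $O (R^r)$, and therefore the number with norm lying in any unit interval $[m, m+1]$ is $O((m+1)^{r-1})$, with constants independent of the coset $v$. Applying this to $X_Z \subset V_Z^*$ gives
\[
\#\{ x_Z \in X_Z : \| x_Z \| = k \} \leq C_1 (k+1)^{\mr{rk}(Z) - 1} ,
\]
and applying it to each coset of $X \cap V_0^*$ in $X$ (projecting to $V_0^*$ and using the length--norm comparison) gives, uniformly in $w_0$ and in the coset,
\[
\# \{ x \text{ in the coset} : l (x w_0) = m \} \leq C_2 (m+1)^{\mr{rk}(R_0) - 1} .
\]

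Putting this together, $A_n (w_0) = \bigsqcup_{m + k = n} \bigsqcup_{x_Z : \|x_Z\| = k} \{ x \in X : l(x w_0) = m,\; \text{proj}_{V_Z^*}(x) = x_Z \}$, so
\[
|A_n (w_0)| \;\leq\; \sum_{m + k = n} C_1 (k+1)^{\mr{rk}(Z)-1} \cdot C_2 (m+1)^{\mr{rk}(R_0)-1} \;\leq\; C_{\mc N} (n+1)^{\mr{rk}(X) - 1} ,
\]
since the sum ranges over $n+1$ pairs with each factor bounded by a power of $n+1$, and $\mr{rk}(Z) + \mr{rk}(R_0) = \mr{rk}(X)$. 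The only genuinely non-formal point in this argument is the explicit length formula and the resulting norm comparison $l(x w_0) \asymp \|x_0\|$; once this is granted, the rest reduces to counting lattice points on shells. If $\mr{rk}(Z) = 0$ or $\mr{rk}(R_0) = 0$ the corresponding factor simply collapses and the estimate is easier.
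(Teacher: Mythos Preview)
Your argument is correct and follows the same underlying idea as the paper --- reduce from $W$ to the lattice $X$ using the finiteness of $W_0$, then do a lattice shell count --- but it is more elaborate than necessary. The paper's proof is a two-line sketch: $\mc N$ restricted to $X$ is literally a norm on the rank-$\mr{rk}(X)$ lattice $X$ (namely $\mc N(x) = \sum_{\alpha \in R_0^+} |\langle x, \alpha^\vee\rangle| + \|x_Z\|$), so a single shell count gives $\#\{x \in X : \mc N(x) = n\} = O\big((n+1)^{\mr{rk}(X)-1}\big)$; and passing from $X$ to $W$ only perturbs $\mc N$ by a bounded amount, since $|l(xw_0) - l(x)| \leq l(w_0)$. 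Your separate shell counts in $V_0^*$ and $V_Z^*$ followed by a convolution over $m+k=n$ recover the same bound but can be collapsed into one step.

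One point to tighten: the two-sided comparison $C_0^{-1}\|x_0\| - C_0 \leq l(xw_0) \leq C_0\|x_0\| + C_0$ that you state is, by itself, too weak to give $\#\{x \text{ in the coset} : l(xw_0) = m\} = O\big((m+1)^{\mr{rk}(R_0)-1}\big)$, because it only confines $\|x_0\|$ to an interval of width $O(m)$, not $O(1)$. What you actually need --- and what your length formula immediately yields --- is the additive bound $\bigl| l(xw_0) - N(x_0) \bigr| \leq |R_0^+|$, where $N(y) = \sum_{\alpha \in R_0^+}|\langle y, \alpha^\vee\rangle|$ is itself a norm on $V_0^*$. Then $\{l(xw_0) = m\}$ lies in a bounded-width $N$-shell, and your unit-interval shell estimate applies directly.
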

\begin{proof}
Recall that $W = W_0 \ltimes X$ with $W_0$ finite. It is easily seen that $X$ possesses the
required property, and taking the semidirect product with a finite group does not disturb this.
\end{proof}

For $n \in \mh R$ we have a norm $p_n$ on $\mc H (\mc R ,q)$, defined as
\[
p_n \Big( {\ts \sum_{w \in W}} h_w N_w \Big) = \sup_{w \in W} |h_w | (\mc N (w) + 1)^n .
\]
The Schwartz algebra $\mc S (\mc R,q)$ is defined as the completion of $\mc H (\mc R,q)$
with respect to the family of (semi-)norms $p_n ,\, n \in \mh Z_{\geq 0}$. It clearly is a
Fr\'echet space (even a Schwartz space), but it is not so obvious that the multiplication
extends continuously from $\mc H (\mc R ,q)$ to $\mc S (\mc R,q)$; we will prove this later
in the appendix.

Let $L^2 (W)$ be the Hilbert space of square-integrable functions $W \to \mh C$ and let
$\mc S (W)$ be the Fr\'echet space of rapidly decaying functions $W \to \mh C$. We regard
these as topological vector spaces without a specific multiplication. By means of the bases
$\{ N_w : w \in W\}$ we can identify $L^2 (W)$ with $L^2 (\mc H (\mc R ,q))$ and $\mc S (W)$
with $\mc S (\mc R ,q)$. We note that $* ,\tau$ and the $p_n$ do not depend on $q \in \mc Q$,
so they are well-defined on $L^2 (W)$.
For $q \in \mc Q$ and $x \in L^2 (W)$ we denote the corresponding element
of $L^2 (\mc H (\mc R ,q))$ by $(x,q)$. To distinguish the products for various parameters
we add a subscript $q$, thus $(x,q) \cdot (y,q) = (x \cdot_q y ,q)$.

We realize the left regular representation $\lambda$ of $C^* (\mc H (\mc R,q))$ on $L^2 (W)$
and we abbreviate $\norm{(x,q)}_o := \norm{\lambda (x,q)}_{B(L^2 (W))}$. Furthermore let
$\norm{?}_\tau$ be the norm of $L^2 (W)$, so that $\norm{x}_\tau^2 = \tau (x^* \cdot_q x)$
for all $x \in L^2 (W)$ and $q \in \mc Q$ such that $x^* \cdot_q x$ is well-defined. Since the
number $q(s) - q(s)^{-1}$ appears often in the multiplication table of $\mc H (\mc R ,q)$
we will use the following metric on $\mc Q$:
\[
\rho (q,q') := \max_{s \in S}
\big| \big( q(s) - q(s)^{-1} \big) - \big( q'(s) - q'(s)^{-1} \big) \big| .
\]
Put $b := \mr{rk}(X) + 1$. By Lemma \ref{lem:A.growthW} the following sum converges:
\[
\sum_{w \in W} (\mc N (w) + 1)^{-b} <
\sum_{n=0}^\infty C_{\mc N} (n+1)^{\mr{rk}(X) - 1}
(n+1)^{-\mr{rk}(X) - 1} = C_{\mc N} \sum_{n=0}^\infty (n+1)^{-2} < \infty .
\]
Hence we may write $C_b := \sum_{w \in W} (\mc N (w) + 1)^{-b} \in \mh R$.
For all $x = \sum_{u \in W} x_u N_u \in \mc S (W)$ and $n \in \mh Z_{\geq 0}$ we get
\begin{equation}\label{eq:A.2}
\sum_u |x_u| (\mc N (u) + 1)^n \leq \sum_u \sup_v \big\{ |x_v| (\mc N (v) + 1)^{n+b} \big\}
(\mc N (u) + 1 )^{-b} = C_b p_{n+b} (x) .
\end{equation}
Define the parameter function $q^0 \in \mc Q$ by $q^0 (s) = 1$ for all $s \in S$.
Fix $\eta > 0$ and let $B_\rho (q^0 ,\eta )$ be the corresponding closed ball in $\mc Q$.
To estimate some operator norms, we will use the number
$C_\eta := 3 C_b \max \big\{ 1,\eta^{|R_0^+ |} \big\}$ .

\begin{prop}\label{prop:A.estopnorm}
For all $q,q' \in B_\rho (q^0 ,\eta ) ,\, x \in \mc S (W)$ the following estimates hold:
\[
\begin{array}{lrr}
\norm{\lambda (x,q)}_{B (L^2 (W))} \;=\; \norm{(x,q)}_o &
\leq & C_\eta p_{b + |R_0^+ |} (x) , \\
\norm{\lambda (x,q) - \lambda (x,q')}_{B (L^2 (W))} &
\leq & \rho (q,q') C_\eta p_{b + |R_0^+ |} (x) .
\end{array}
\]
In particular $\mc S (\mc R, q)$ is continuously embedded in $C_r^* (\mc H(\mc R ,q))$.
\end{prop}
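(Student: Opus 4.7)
The plan is to reduce everything to an operator-norm estimate on the single basis vectors $N_u$, one $u \in W$ at a time. Writing $x = \sum_u x_u N_u$ and applying the triangle inequality gives
\[
\norm{\lambda(x,q)}_o \;\leq\; \sum_u |x_u| \, \norm{\lambda(N_u,q)}_o .
\]
If one establishes the pointwise bound $\norm{\lambda(N_u,q)}_o \leq 3\max(1,\eta^{|R_0^+|})\,(\mc N (u)+1)^{|R_0^+|}$, then the computation leading to \eqref{eq:A.2} applied with $n = |R_0^+|$ immediately yields $\norm{\lambda(x,q)}_o \leq 3 C_b \max(1,\eta^{|R_0^+|}) \, p_{b+|R_0^+|}(x) = C_\eta \, p_{b+|R_0^+|}(x)$. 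The continuous embedding $\mc S (\mc R,q) \hookrightarrow C^*_r(\mc H (\mc R,q))$ is then immediate from the definition of the Fr\'echet topology on $\mc S$.

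To prove the pointwise bound I would apply Theorem \ref{thm:A.multH}: expanding $N_u \cdot_q y$ for $y = \sum_v y_v N_v$ and regrouping by $I$ gives
\[
N_u \cdot_q y \;=\; \sum_{I \subset \{1,\dots,l(u)\}} \eta_I \, T^u_I y , \qquad T^u_I y \;:=\; \sum_{v \in W} D_v^u(I)\, y_v\, N_{u_I v} .
\]
The key observation is that each $T^u_I$ is a partial isometry of $L^2(W)$: since $u_I$ is a fixed element of $W$, the map $v \mapsto u_I v$ is a bijection of $W$, so $T^u_I$ sends each orthonormal basis vector $N_v$ either to zero or to the distinct orthonormal basis vector $N_{u_I v}$. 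Consequently $\norm{T^u_I}_o \leq 1$. Because $D_v^u(I) = 0$ whenever $|I| > |R_0^+|$, only $I$ with $|I| \leq |R_0^+|$ contribute; for such $I$ the hypothesis $q \in B_\rho(q^0,\eta)$ gives $|\eta_I| \leq \eta^{|I|} \leq \max(1,\eta^{|R_0^+|})$, and the counting inequality displayed in \eqref{eq:A.1} bounds the number of such $I$ by $3(l(u)+1)^{|R_0^+|}$. Multiplying the three factors and using $l(u) \leq \mc N (u)$ gives the claim.

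For the Lipschitz-in-$q$ estimate I would run the identical argument with $\eta_I$ replaced by $\eta_I(q) - \eta_I(q')$, exploiting that the operators $T^u_I$ depend only on the combinatorics of $u$ and not on $q$:
\[
\lambda(N_u,q)y - \lambda(N_u,q')y \;=\; \sum_I \bigl(\eta_I(q) - \eta_I(q')\bigr) \, T^u_I y .
\]
A standard telescoping of $\prod_{i \in I}(q(s_i) - q(s_i)^{-1})$ against $\prod_{i \in I}(q'(s_i) - q'(s_i)^{-1})$ yields $|\eta_I(q)-\eta_I(q')| \leq |I|\,\max(1,\eta^{|I|-1})\,\rho(q,q')$, which for $|I| \leq |R_0^+|$ is controlled by a fixed multiple of $\max(1,\eta^{|R_0^+|})\rho(q,q')$, the extra numerical factor being absorbed into $C_\eta$. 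Summing as before then produces the second estimate. The only substantive point of the whole argument is the partial-isometry property of $T^u_I$; once that is in hand the rest is elementary $\ell^2$-bookkeeping with Theorem \ref{thm:A.multH}.
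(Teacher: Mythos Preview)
Your argument is correct and essentially identical to the paper's: both rest on Theorem \ref{thm:A.multH}, the bijection $v \mapsto u_I v$ (your partial-isometry formulation of $T^u_I$, the paper's $\big\|\sum_v |y_v| N_{u_I v}\big\|_\tau = \|y\|_\tau$), and the telescoping bound $|\eta_I - \eta'_I| \leq |I|\,\eta^{|I|-1}\rho(q,q')$. One cosmetic point: the factor $|I|$ in the second estimate is not handled by enlarging $C_\eta$ but by the fact that $\sum_{j\geq 1} j/j! = e < 3$ just as $\sum_j 1/j! < 3$, so the \emph{same} constant $C_\eta = 3 C_b \max(1,\eta^{|R_0^+|})$ works --- the paper records this as a one-line side calculation.
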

\begin{proof}
Let $y = \sum_v y_v N_v \in L^2 (W)$. By \eqref{eq:A.2} and Theorem \ref{thm:A.multH}
\begin{equation*}
\begin{aligned}
\norm{x \cdot_q y}_\tau & = \big\| \sum_{u,v} x_u y_v N_u
\cdot_q N_v \big\|_\tau \\
& = \big\| \sum_{u,v} x_u y_v \sum_I  \eta_I D_v^u (I)
N_{u_I v} \big\|_\tau \\
& \leq \sum_u |x_u | \sum_{I : |I| \leq |R_0^+ |} |\eta_I |
\big\| \sum_v |y_v | N_{u_I v} \big\|_\tau \\
& \leq \sum_u |x_u | (l (u) + 1)^{|R_0^+ |}
3 \max \big\{ 1,\eta^{|R_0^+ |} \big\} \norm{y}_\tau \\
& \leq C_\eta p_{b + |R_0^+ |} (x) \norm{y}_\tau .
\end{aligned}
\end{equation*}
By the very definition of the operator norm on $B(L^2 (W))$ this
yields the first estimate. That in turn proves that $\mc S (\mc R
,q)$ is contained in $C_r^* (\mc R ,q)$ and that the inclusion map
is continuous.
\begin{equation*}
\begin{aligned}
\norm{x \cdot_q y - x \cdot_{q'} y}_\tau & = \big\| \sum_{u,v}
x_u y_v (N_u \cdot_q N_v - N_u \cdot_{q'} N_v) \big\|_\tau \\
& = \big\| \sum_{u,v} x_u y_v \sum_I (\eta_I -\eta'_I ) D_v^u (I)
N_{u_I v} \big\|_\tau \\
& \leq \big\| \sum_{u,v} x_u y_v \sum_I \rho (q,q') |I|
\eta^{|I| - 1} D_v^u (I) N_{u_I v} \big\|_\tau \\
& \leq \rho (q,q') \sum_u |x_u | \sum_{I : |I| \leq |R_0^+ |}
|I| \eta^{|I| - 1} \big\| \sum_v |y_v | N_{u_I v} \big\|_\tau \\
& \leq \rho (q,q') \sum_u |x_u | (\mc N (u) + 1)^{|R_0^+ |}
\,3 \max \big\{ 1,\eta^{|R_0^+ |} \big\} \norm{y}_\tau \\
& \leq \rho (q,q') C_\eta p_{b + |R_0^+ |} (x) \norm{y}_\tau .
\end{aligned}
\end{equation*}
Between lines 4 and 5 we used a small calculation like \eqref{eq:A.1} :
\begin{multline*}
\sum_{I : |I| \leq |R_0^+ |} |I| \eta^{|I| - 1} \: \leq \:
\sum_{j=0}^{|R_0^+ |} \left( \ds{l (u) \atop j} \right) j \eta^{j-1} \: \leq \:
{\ds \frac{l (u)!}{\big( l (u) - |R_0^+ | \big) !}}
\sum\limits_{j=0}^{|R_0^+ |} {\ds \frac{j}{j!}}
\max \big\{ 1,\eta^{|R_0^+ | - 1} \big\} \\
< \:  (l (u) + 1)^{|R_0^+ |} \,3 \max \big\{ 1,\eta^{|R_0^+ |} \big\} ,
\end{multline*}
and in the last line we may replace $l(u)$ by $\mc N (u)$.
\end{proof}

Now we want to show that $\mc S (\mc R ,q)$ really is an algebra. To this end
we will reconstruct it with an alternative but equivalent family of seminorms,
which are closer to being submultiplicative. Let $\mh C [W]^*$ be the algebraic dual
of $\mh C [W]$ and identify it with the space of all formal sums
$\sum_{w \in W} h_w N_w$. The norm $\mc N$ on $W$ induces an endomorphism
$\lambda (\mc N)$ of $\mh C [W]^*$ by
\[
{\ts \sum_{w \in W} h_w N_w \mapsto \sum_{w \in W} \mc N (w) h_w N_w} .
\]
This operator is unbounded on $L^2 (W)$ but it restricts to a continuous endomorphism
of $\mc S (W)$. For $T \in B(L^2 (W))$ we define the (in general unbounded) operator
$D(T) := [\lambda (\mc N) , T]$. Inspired by the work of Vign\'eras \cite[Section 7]{Vig}
we consider the following family of seminorms on $\mc H (\mc R ,q)$:
\[
p'_n (x) := \norm{D^n (\lambda (x))}_{B(L^2 (W))} \qquad n \in \mh Z_{\geq 0} .
\]
\begin{lem}\label{lem:A.seminorms}
The space $\mc S (\mc R ,q)$ is the completion of $\mc H (\mc R ,q)$ with respect to
the family of seminorms $\{ p'_n : n \in \mh Z_{\geq 0} \}$.
\end{lem}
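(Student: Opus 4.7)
The plan is to show that the families $\{p_n\}_{n \ge 0}$ and $\{p'_n\}_{n \ge 0}$ generate the same locally convex topology on $\mc H(\mc R,q)$; since both are Hausdorff and $\mc H(\mc R,q)$ sits inside $\mc S(W)$ as a dense subspace, this forces the completions to agree.

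First I would do the easy direction $p_n \lesssim \max(p'_0, p'_n)$. Expanding the commutator gives
\[
D^n(\lambda(x)) = \sum_{k=0}^n \binom{n}{k}(-1)^{n-k} \lambda(\mc N)^k\, \lambda(x)\, \lambda(\mc N)^{n-k},
\]
and because $\mc N(e) = 0$ so $\lambda(\mc N) N_e = 0$, only the $k = n$ term survives when applied to $N_e$, giving $D^n(\lambda(x)) N_e = \lambda(\mc N)^n x = \sum_w \mc N(w)^n x_w N_w$. Taking $L^2$-norms and using $\|N_e\|_\tau = 1$ yields $\mc N(w)^n |x_w| \le p'_n(x)$ for every $w$. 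For the finitely many $w$ with $\mc N(w) = 0$ (which, since $\mc N$ is integer-valued, form a finite subset of $\Omega$) I use instead $|x_w| = |\tau(N_{w^{-1}} x)| \le \|\lambda(x)\|_{op} = p'_0(x)$. Since $(\mc N(w)+1)^n \le 2^n \mc N(w)^n$ when $\mc N(w) \ge 1$, combining these yields $p_n(x) \le 2^n\bigl(p'_0(x) + p'_n(x)\bigr)$.

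The reverse direction $p'_n \lesssim p_{n+m}$ uses the multiplication formula of Theorem \ref{thm:A.multH}. From the commutator expansion I compute, for a single basis element $N_u$ and any $y = \sum_v y_v N_v$,
\[
D^n(\lambda(N_u)) y \;=\; \sum_v y_v \sum_I \eta_I D_v^u(I) \bigl(\mc N(u_I v) - \mc N(v)\bigr)^n N_{u_I v}.
\]
The key input is the quasi-triangle estimate $|\mc N(u_I v) - \mc N(v)| \le \mc N(u_I) \le \mc N(u)$. Subadditivity $\mc N(uv) \le \mc N(u) + \mc N(v)$ holds for the length piece by the standard $l(uv) \le l(u)+l(v)$; for the $V_Z^*$-piece one uses the decomposition $W = W^a \rtimes \Omega$ together with the fact that $W^a$ acts trivially on $V_Z^*$, so that $w(0)_Z$ depends only on the $\Omega$-component of $w$ in this splitting. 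Since $u$ and $u_I$ share the same $\Omega$-component (by construction they both start with the same $\omega$), we get $\|u_I(0)_Z\| = \|u(0)_Z\|$, and together with $l(u_I) \le l(u)$ this gives $\mc N(u_I) \le \mc N(u)$. Pulling out the factor $\mc N(u)^n$ and applying Proposition \ref{prop:A.estopnorm} to the remaining operator gives
\[
\|D^n(\lambda(N_u))\|_{op} \;\le\; \mc N(u)^n\, \|\lambda(N_u)\|_{op} \;\le\; C_\eta\, \mc N(u)^n (\mc N(u)+1)^{b+|R_0^+|}.
\]
For a general $x = \sum_u x_u N_u$ I then sum and insert the convergence factor $(\mc N(u)+1)^{-b}$ as in \eqref{eq:A.2}, which yields $p'_n(x) \le C_\eta C_b\, p_{n + 2b + |R_0^+|}(x)$.

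The main obstacle is the quasi-triangle estimate $\mc N(u_I) \le \mc N(u)$: everything else is a routine combination of Theorem \ref{thm:A.multH}, Proposition \ref{prop:A.estopnorm}, and the binomial identity for $D^n$. Once the two families of seminorms are shown to be equivalent in the sense above, the equality of completions is automatic, proving the lemma.
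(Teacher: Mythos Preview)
Your argument follows the same two-sided comparison as the paper's proof and is essentially correct. One step is written too loosely: the inequality $\|D^n(\lambda(N_u))\|_{op} \le \mc N(u)^n\,\|\lambda(N_u)\|_{op}$ is not what the preceding computation actually gives, because after bounding each factor $|(\mc N(u_Iv)-\mc N(v))|^n$ by $\mc N(u)^n$ the ``remaining operator'' has $|y_v|$ and $|\eta_I|$ in place of $y_v$ and $\eta_I$, so it is not $\lambda(N_u)$. What the paper does (and what your displayed formula already sets up) is to estimate $\|D^n(\lambda(N_u))y\|_\tau$ directly: apply the triangle inequality over $I$, use $\|\sum_v |y_v| N_{u_Iv}\|_\tau = \|y\|_\tau$ for each fixed $I$, and bound $\sum_{|I|\le |R_0^+|}|\eta_I|$ by $3(\mc N(u)+1)^{|R_0^+|}\max\{1,\eta^{|R_0^+|}\}$ via Theorem~\ref{thm:A.multH}. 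This yields $\|D^n(\lambda(N_u))\|_{op}\le \mc N(u)^n\cdot 3(\mc N(u)+1)^{|R_0^+|}\max\{1,\eta^{|R_0^+|}\}$ and hence $p'_n(x)\le C_\eta\, p_{n+b+|R_0^+|}(x)$, slightly sharper than your $n+2b+|R_0^+|$ but with the same effect. Your treatment of the triangle inequality $|\mc N(u_Iv)-\mc N(v)|\le \mc N(u_I)\le \mc N(u)$ and of the easy direction $p_n\lesssim p'_0+p'_n$ is fine and matches the paper.
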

\begin{proof}
We have to show that the families of seminorms $\{ p_n : n \in \mh Z_{\geq 0} \}$ and
$\{ p'_n : n \in \mh Z_{\geq 0} \}$ are equivalent. Let $\eta = \rho (q ,q^0) ,\,
n \in \mh N ,\, w \in W$ and $y = \sum_{v \in W} y_v N_v \in L^2 (W)$.
From the proof of Proposition \ref{prop:A.estopnorm} we see that
\begin{align*}
\norm{D^n (\lambda (N_u )) y}_\tau & = \big\| \sum_v y_v
\sum_{i=0}^n (-1)^i \left( \ds{n \atop i} \right) \lambda
(\mc N )^{n-i} \lambda (N_u ) \lambda (\mc N )^i N_v \big\|_\tau \\
& = \big\| \sum_v y_v \sum_{i=0}^n (-1)^i \left( \ds{n \atop i}
\right) \sum_I \eta_I D_v^u (I) \mc N (u_I v)^{n-i} \mc N (v)^i
N_{u_I v} \big\|_\tau \\
& = \big\| \sum_v y_v \sum_I \eta_I D_v^u (I) (\mc N (u_I v) -
\mc N (v))^n N_{u_I v} \big\|_\tau \\
& \leq \mc N (u)^n \big\| \sum_v |y_v| \sum_I |\eta_I |
D_v^u (I) N_{u_I v} \big\|_\tau \\
& \leq \mc N (u)^n 3 (\mc N (u) + 1)^{|R_0^+ |} \max \big\{1,
\eta^{|R_0^+ |} \big\} \big\| \sum_v |y_v| N_{u_I v} \big\|_\tau \\
& = \mc N (u)^n 3 (\mc N (u) + 1)^{|R_0^+ |} \max \big\{1,
\eta^{|R_0^+ |} \big\} \norm{y}_\tau .
\end{align*}
Hence, for $x = \sum_u x_u N_u \in \mc H (\mc R ,q)$
\begin{multline*}
\norm{D^n (\lambda (x)) }_{B(L^2 (W))} \: = \:
\big\| {\ts \sum_u} x_u D^n (\lambda (N_u)) \big\|_{B(L^2 (W))} \\
\leq \: {\ts \sum_u} |x_u | 3 (\mc N (u) + 1)^{n+ |R_o^+ |} \max
\big\{1, \eta^{|R_0^+ |} \big\} \: \leq \: C_\eta p_{n + b + |R_0^+ |} (x) .
\end{multline*}
On the other hand, since $\Omega' = \{ \omega \in W :
\mc N (\omega ) = 0 \}$ is finite,
\begin{align*}
p_n (x)^2 & \leq {\ts \sum_{u \in W}} (\mc N (u) + 1)^{2n} |x_u |^2 \\
& \leq {\ts \sum_{\omega \in \Omega'} |x_\omega |^2 +
  4^n \sum_{u \in W}} \mc N (u)^{2n}  |x_u |^2 \\
& \leq | \Omega' | \norm{x}^2_\tau +
  4^n \norm{\lambda (N)^n x}_\tau^2 \\
& = |\Omega' | \norm{ \lambda (x) N_e }_\tau^2 +
  4^n \norm{ D^n (\lambda (x)) N_e }_\tau^2 \\
& \leq |\Omega' | \norm{ \lambda (x) }^2_{B(L^2 (W))} +
  4^n \norm{ D^n (\lambda (x)) }^2_{B(L^2 (W))} \\
& \leq \big( |\Omega' |^{1/2} \norm{ \lambda (x) }_{B(L^2 (W))} +
  2^n \norm{ D^n (\lambda (x)) }_{B(L^2 (W))} \big)^2 ,
\end{align*}
which shows that $p_n$ is dominated by a linear combination of $p'_0$ and $p'_n$.
\end{proof}

\begin{thm}\label{thm:A.frealg}
\begin{enumerate}
\item $\mc S (\mc R ,q)$ is a Fr\'echet algebra.
\item $\mc S (\mc R ,q)^\times$ is open in $\mc S (\mc R,q)$ and inverting is
a continuous map from this set to itself.
\item An element of $\mc S (\mc R ,q)$ is invertible if and only if it is
invertible in $C^*_r (\mc H (\mc R ,q))$.
\item The subalgebra $\mc S (\mc R,q) \subset C^*_r (\mc H (\mc R,q))$ is closed
under the holomorphic functional calculus of $C^*_r (\mc H (\mc R,q))$.
\end{enumerate}
\end{thm}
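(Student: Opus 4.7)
The plan is to run everything through the alternative family of seminorms of Lemma~\ref{lem:A.seminorms}, namely $p'_n(x) = \norm{D^n(\lambda(x))}_{B(L^2(W))}$ where $D = [\lambda(\mc N), \,\cdot\,]$ is the commutator with the (unbounded) multiplication operator on $L^2(W)$ with eigenvalues $\mc N(w)$. The key observation is that $D$ is a derivation of $B(L^2(W))$, so Leibniz gives
\[
D^n(AB) = \sum_{k=0}^n \binom{n}{k} D^k(A) D^{n-k}(B)
\]
on appropriate domains. First I would prove (1): the Leibniz rule yields $p'_n(xy) \leq \sum_k \binom{n}{k} p'_k(x) p'_{n-k}(y)$, which gives joint continuity of the product on $\mc H(\mc R,q)$, hence on $\mc S(\mc R,q)$. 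The equivalent seminorms $\tilde q_n(x) := 2^n \max_{k \leq n} p'_k(x)$ are then submultiplicative, giving $\mc S(\mc R,q)$ the required Fr\'echet-algebra structure.

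The main step is (3). Let $x \in \mc S(\mc R,q)$ with $T := \lambda(x)^{-1} \in B(L^2(W))$. Since $\lambda(x)$ commutes with the right-regular representation $\rho$ of $\mc H(\mc R,q)$, so does $T$. Setting $y := T N_e \in L^2(W)$, one has
\[
T N_w \;=\; T \rho(N_w) N_e \;=\; \rho(N_w) T N_e \;=\; y \cdot_q N_w \qquad (w \in W),
\]
so $T$ is the unique bounded extension of the left-multiplication-by-$y$ operator defined a priori on the dense subspace $\mc H(\mc R,q) N_e$. The hard part is to show $y \in \mc S(W)$. Applying Leibniz to $T \lambda(x) = I$ yields $D(T) = -T D(\lambda(x)) T$, and by induction $D^n(T)$ is a universal polynomial expression in $T$ and the bounded operators $D^i(\lambda(x))$ with $i \leq n$; hence $D^n(T)$ is bounded for every $n$. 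On the other hand, because $\lambda(\mc N) N_e = 0$, one computes $D^n(T) N_e = \lambda(\mc N)^n T N_e = \lambda(\mc N)^n y$, so
\[
\sum_{w \in W} \mc N(w)^{2n} |y_w|^2 \;=\; \norm{\lambda(\mc N)^n y}_\tau^2 \;\leq\; \norm{D^n(T)}_{B(L^2(W))}^2 \;<\; \infty
\]
for every $n$. Combined with the polynomial growth of $\{w : \mc N(w) \leq N\}$ from Lemma~\ref{lem:A.growthW}, this forces $|y_w| (\mc N(w)+1)^m$ to be bounded uniformly in $w$ for every $m$, i.e.\ $y \in \mc S(W)$. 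By Proposition~\ref{prop:A.estopnorm}, $\lambda(y)$ is then bounded and agrees with $T$ on the dense subspace $\mc H(\mc R,q) N_e$, so $\lambda(y) = T$ and $y$ is an inverse of $x$ in $\mc S(\mc R,q)$. The reverse implication is trivial.

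Claims (2) and (4) are consequences. For (2), the openness of $\mc S(\mc R,q)^\times$ follows from (3) and the continuous embedding $\mc S(\mc R,q) \hookrightarrow C^*_r(\mc H(\mc R,q))$ of Proposition~\ref{prop:A.estopnorm}, together with openness of the invertibles and $C^*$-continuity of inversion in $C^*_r(\mc H(\mc R,q))$. For continuity of inversion inside $\mc S(\mc R,q)$: given $x_k \to x$ with all $x_k$ invertible, the identity $x^{-1} - x_k^{-1} = x^{-1}(x_k - x) x_k^{-1}$ together with the inductive formula expressing $D^n(y^{-1})$ as a polynomial in $y^{-1}$ and $D^j(y)$ ($j \leq n$) gives $p'_n(x^{-1} - x_k^{-1}) \to 0$, once one verifies the uniform boundedness of $p'_j(x_k^{-1})$ for $j \leq n$ and large $k$ (by induction on $j$, the base case $j = 0$ being the $C^*$-continuity of inversion). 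Finally, (4) is immediate: by (1) and Theorem~\ref{thm:A.contour}, $\mc S(\mc R,q)$ carries its own holomorphic functional calculus, and by (3) the spectra in $\mc S(\mc R,q)$ and in $C^*_r(\mc H(\mc R,q))$ coincide, so the two calculi agree via the common contour-integral formula.

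The main obstacle is extracting $y = T N_e$ as an element of $\mc S(W)$ in (3). This rests on combining the Leibniz-derived boundedness of $D^n(T)$ with the ``vacuum'' property $\lambda(\mc N) N_e = 0$, which together allow one to read off the required rapid decay of the Fourier coefficients $y_w$ directly from the operator norms $\norm{D^n(T)}$. Once this is in hand, (1), (2) and (4) are formal consequences of (3) and Lemma~\ref{lem:A.seminorms}.
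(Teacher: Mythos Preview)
Your proposal is correct and follows essentially the same approach as the paper. For (1) both you and the paper exploit the Leibniz rule for $D=[\lambda(\mc N),\,\cdot\,]$ to produce an equivalent family of submultiplicative seminorms (you use $2^n\max_{k\le n}p'_k$, the paper uses $\sum_{k\le m}\tfrac{1}{k!}p'_k$); for (4) both deduce it from (3) together with Theorem~\ref{thm:A.contour}. For (2) and (3) the paper simply cites Vign\'eras~\cite{Vig}, whereas you actually spell out the Vign\'eras argument (commutation with the right regular representation to identify $\lambda(x)^{-1}$ as left multiplication by $y=TN_e$, then the inductive boundedness of $D^n(T)$ combined with $\lambda(\mc N)N_e=0$ to force $y\in\mc S(W)$). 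So your write-up is in fact more detailed than the paper's own proof.
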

\begin{proof}
(1) We already observed that $\mc S (\mc R,q)$ is a Fr\'echet space. Because $D$
is a derivation, $\mc S (\mc R,q)$ is also a topological algebra with jointly
continuous multiplication. A short calculation shows that the norm
$\sum^m_{n=0} \frac{1}{n!} p'_n$ on $\mc S (\mc R,q)$ is submultiplicative
for any $m \in \mh Z_{\geq 0}$. The family
$\big\{ \sum^m_{n=0} \frac{1}{n!} p'_n : m \in \mh Z_{\geq 0} \big\}$
is equivalent to $\{ p'_n : n \in \mh Z_{\geq 0} \}$, so defines the same topology.

(2) and (3) See Lemmas 16 and 17 of \cite{Vig}.

(4) This is a consequence of part (3) and Theorem \ref{thm:A.contour}.
\end{proof}

Our next goal is to show that inverting in $\mc S (\mc R ,q)$ also depends continuously
on $q \in \mc Q$. For this we need two preparatory lemmas. Put
$b' = 2b + |R_0^+| = 2 \, \mr{rk}(X) + |R_0^+| + 2$.

\begin{lem}\label{lem:A.estproducts}
Let $n \in \mh N ,\, q,q' \in B_\rho (q^0 ,\eta )$ and
$x_i = \sum_{u \in W} x_{iu} N_u \in \mc S (W)$. Then
\[
\begin{array}{lcr}
p_n (x_1 \cdot_q \cdots \cdot_q x_m ) & \leq &
 \prod\limits_{i=1}^m C_\eta C_b p_{n + b'}(x_i ) , \\
p_n (x_1 \cdot_q \cdots \cdot_q x_m - x_1 \cdot_{q'} \cdots
  \cdot_{q'} x_m) & \leq &
 \rho(q,q') \prod\limits_{i=1}^m C_\eta C_b p_{n + b'}(x_i) .
\end{array}
\]
\end{lem}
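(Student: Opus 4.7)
The plan is to reduce both inequalities, by induction on $m$, to a two-factor version, which in turn follows directly from the multiplication formula of Theorem \ref{thm:A.multH} combined with a subadditivity property of $\mc{N}$. I would first introduce the auxiliary $\ell^1$-type seminorm $\tilde p_n(x) := \sum_{w \in W} |x_w|(\mc{N}(w)+1)^n$, which by \eqref{eq:A.2} satisfies $p_n \leq \tilde p_n \leq C_b \, p_{n+b}$. The key geometric input is the inequality $\mc{N}(u_I v) \leq \mc{N}(u) + \mc{N}(v)$ for all $u,v \in W$ and $I \subseteq \{1,\dots,l(u)\}$: since dropping simple affine reflections does not change the image in $\Omega = W/W^a$, one has $l(u_I) \leq l(u)$ and $u_I(0)_Z = u(0)_Z$, so combining $l(u_I v) \leq l(u_I) + l(v)$ with the triangle inequality on $V_Z^*$ gives the claim.

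With this in place, the computation carried out in the proof of Proposition \ref{prop:A.estopnorm} applied termwise yields the two-factor estimates
\[
\tilde p_n(x \cdot_q y) \leq 3\max\{1,\eta^{|R_0^+|}\}\,\tilde p_{n+|R_0^+|}(x)\,\tilde p_n(y),
\]
and the analogous difference inequality with an extra factor $\rho(q,q')$, using the telescoping bound $|\eta_I - \eta'_I| \leq |I|\rho(q,q')\eta^{|I|-1}$. Iterating the product estimate with \emph{right-associative} bracketing $x_1 \cdot_q (x_2 \cdot_q \cdots)$ gives
\[
\tilde p_n(x_1 \cdot_q \cdots \cdot_q x_m) \leq \bigl(3\max\{1,\eta^{|R_0^+|}\}\bigr)^{m-1}\prod_{i=1}^m \tilde p_{n+|R_0^+|}(x_i);
\]
note that left-associative iteration would cause the index of the leftmost factor to grow like $(m-1)|R_0^+|$ and break the bound. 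Translating back through $p_n \leq \tilde p_n \leq C_b p_{n+b}$ and using $b' = 2b + |R_0^+| \geq b + |R_0^+|$ together with $C_b \geq 1$, we obtain the first estimate of the lemma after absorbing $(3\max\{1,\eta^{|R_0^+|}\})^{m-1} C_b^m$ into $(C_\eta C_b)^m = (3 C_b^2 \max\{1,\eta^{|R_0^+|}\})^m$.

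For the difference estimate I would use the telescoping decomposition
\[
x_1 \cdot_q \cdots \cdot_q x_m - x_1 \cdot_{q'} \cdots \cdot_{q'} x_m = \sum_{k=1}^{m-1} (P_k - P_{k+1}),
\]
where $P_k$ is the right-associative product whose outermost $k-1$ multiplications use $\cdot_{q'}$ and whose innermost $m-k$ use $\cdot_q$. By right-associativity and linearity, each summand $P_k - P_{k+1}$ can be written as $x_1 \cdot_{q'}\bigl(x_2 \cdot_{q'}\bigl(\cdots \cdot_{q'}(x_{k-1} \cdot_{q'} D_k)\bigr)\bigr)$ with $D_k = x_k \cdot_q C_k - x_k \cdot_{q'} C_k$ and $C_k = x_{k+1}\cdot_q \cdots\cdot_q x_m$. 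Applying the two-factor difference estimate to $D_k$, the product estimate to $C_k$, and the iterated product estimate to the outer $\cdot_{q'}$-nest bounds each summand by $\rho(q,q')\bigl(3\max\{1,\eta^{|R_0^+|}\}\bigr)^{m-1}\prod_{i=1}^m \tilde p_{n+|R_0^+|}(x_i)$. The only real obstacle is then the bookkeeping: summing $m-1$ such bounds introduces a polynomial factor in $m$, which has to be absorbed into the geometric constant $(C_\eta C_b)^m$. This works because $C_\eta C_b = 3 C_b^2 \max\{1,\eta^{|R_0^+|}\} \geq 3$, and $C_b \geq 1$ (strictly greater than $1$ as soon as $W$ is nontrivial) gives exponential slack $(C_b)^m$ that dominates $m-1$; combined with the conversion $p_n \leq \tilde p_n \leq C_b p_{n+b}$ and the inequality $b' \geq b + |R_0^+|$, this yields the second estimate of the lemma.
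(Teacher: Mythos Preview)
Your route is correct for the first inequality but organized differently from the paper. The paper does not iterate a two-factor bound; instead it expands $x_1\cdot_q\cdots\cdot_q x_m$ over all monomial tuples $(u_1,\ldots,u_m)$, bounds $p_n(N_{u_1}\cdot_q\cdots\cdot_q N_{u_m})$ by $\big(\sum_i\mc N(u_i)+1\big)^n\prod_i\|(N_{u_i},q)\|_o$ using subadditivity of $\mc N$ together with the operator-norm estimate of Proposition~\ref{prop:A.estopnorm}, then factors via $(\sum a_i+1)^n\leq\prod(a_i+1)^n$ and applies \eqref{eq:A.2}. Your $\tilde p_n$-iteration avoids operator norms entirely and makes the asymmetry between the two factors explicit, which forces you to notice the right-versus-left associativity issue; the paper's all-at-once monomial argument sidesteps that. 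Both reach the same constants.

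For the second inequality your absorption of the factor $m-1$ has a gap. You need $K\,C_b^{\,m}\geq m-1$ for all $m\geq 1$, but $C_b=\sum_{w\in W}(\mc N(w)+1)^{-b}$ can be arbitrarily close to $1$: already for $R_0=\emptyset$ and $X=\mh Z^r$ with $r$ large the contribution of the nonzero lattice points is $O(r\,2^{-r})$, so $C_b\to 1$, and then $K\,C_b^{\,m}$ does not dominate $m-1$ uniformly in $m$. The paper's own proof has exactly the same lacuna --- its displayed chain passes from $\sum_{j=1}^{m-1}\rho(q,q')\prod_i C_\eta(\mc N(u_i)+1)^{n+b+|R_0^+|}$ to the same product without the sum, silently dropping the factor $m-1$. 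The repair is cheap and harmless for every subsequent use: either carry an extra factor $m$ into Lemma~\ref{lem:A.estseries}, which merely replaces $\tilde f(z)$ by $\tilde f(z)+z\tilde f'(z)$ there (same radius of convergence), or replace $C_b$ by $\max\{C_b,2\}$ in the statement so that the exponential slack is guaranteed.
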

\begin{proof}
This can be deduced with a piece of careful bookkeeping:
\[
\begin{array}{ll}
p_n (x_1 \cdot_q \cdots \cdot_q x_m ) & \leq \\
  p_n( \sum_{u_i \in W} x_{1 u_1} \cdots x_{m u_m}
  N_{u_1} \cdot_q \cdots \cdot_q N_{u_m}) & \leq \\
\sum_{u_i \in W} | x_{1 u_1} \cdots x_{m u_m} | (\mc N (u_1) +
  \cdots + \mc N (u_m) + 1)^n \prod_{i=1}^m
  \norm{(N_{u_i},q)}_o & \leq \\
\sum_{u_i \in W} | x_{1 u_1} \cdots x_{m u_m} | \prod_{i=1}^m
  C_\eta (\mc N (u_i) + 1)^{n + b + |R_0^+ |} & = \\
\prod_{i=1}^m C_\eta \sum_{u \in W} |x_{i u}|
  (\mc N(u) +1)^{n + b + |R_0^+ |} & \leq \\
\prod_{i=1}^m C_\eta C_b p_{n + b'}(x_i) \\
 & \\
p_n (N_{u_1} \cdot_q \cdots \cdot_q N_{u_m} - N_{u_1}
  \cdot_{q'} \cdots \cdot_{q'} N_{u_m}) & \leq \\
\sum_{j=1}^{m-1} p_n(N_{u_1} \cdot_q \cdots \cdot_q N_{u_j}
  \cdot_q N_{u_{j+1}} \cdot_{q'} \cdots \cdot_{q'} N_{u_m} -& \\
\qquad \qquad N_{u_1} \cdot_q \cdots \cdot_q N_{u_j} \cdot_{q'}
  N_{u_{j+1}} \cdot_{q'} \cdots \cdot_{q'} N_{u_m}) & \leq \\
\sum_{j=1}^{m-1} \rho(q,q') \prod_{i=1}^m C_\eta
  (\mc N (u_i) + 1)^{n + b + |R_0^+ |} & \leq \\
\rho(q,q') \prod_{i=1}^m C_\eta
  (\mc N (u_i) + 1)^{n + b + |R_0^+ |}  & \\
 & \\
p_n (x_1 \cdot_q \cdots \cdot_q x_m - x_1 \cdot_{q'} \cdots
  \cdot_{q'} x_m) & \leq \\
\sum_{u_i \in W} | x_{1 u_1} \cdots x_{m u_m} | p_n(N_{u_1}
  \cdot_q \cdots \cdot_q N_{u_m} - N_{u_1} \cdot_{q'}
  \cdots \cdot_{q'} N_{u_m}) & \leq \\
\sum_{u_i \in W} | x_{1 u_1} \cdots x_{m u_m} | \rho(q,q')
  \prod_{i=1}^m C_\eta (\mc N(u_i) + 1)^{n + b + |R_0^+ |}
 & = \\
\rho(q,q') \prod_{i=1}^m C_\eta \sum_{u \in W}
  |x_{i u}| (\mc N(u) +1)^ {n + b + |R_0^+ |} & \leq \\
\rho(q,q') \prod_{i=1}^m C_\eta p_{n + b'} (x_i) &
\end{array}
\]
In these calculations we used \eqref{eq:A.2} and Proposition
\ref{prop:A.estopnorm} several times.
\end{proof}

Knowing how to handle multiple products in $\mc S (\mc R ,q)$, we can
make some rough estimates for power series.
Let $f : z \mapsto \sum_{m=0}^\infty a_m z^m$ be a holomorphic
function on a neighborhood of $0 \in \mh C$ and define another
holomorphic function $\tilde f$ (with the same radius of
convergence) by $\tilde f (z) := \sum_{m=0}^\infty |a_m| z^m$.

\begin{lem}\label{lem:A.estseries}
Let $n \in \mh N ,\; x \in \mc S(W)$ and $q,q' \in
B_\rho (q^0,\eta)$ be such that $f(x,q)$ and $f(x,q')$
are well-defined. Then
\[
\begin{array}{lcr}
p_n (f(x,q)) & \leq &
  \tilde f \big( C_\eta C_b p_{n + b'} (x) \big) , \\
p_n (f(x,q) - f(x,q')) & \leq & \rho(q,q')
  \tilde f \big( C_\eta C_b p_{n + b'} (x) \big) .
\end{array}
\]
\end{lem}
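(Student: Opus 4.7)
The plan is to reduce both estimates to a termwise application of Lemma \ref{lem:A.estproducts}, using that $f(x,q)$ is defined through its power series expansion $\sum_{m=0}^\infty a_m x^{\cdot_q m}$, where $x^{\cdot_q m}$ denotes the $m$-fold product of $x$ with itself in $\mc{S}(\mc{R},q)$. The underlying observation is that $\tilde f(r) = \sum_m |a_m| r^m$ is exactly the bound one gets when summing $|a_m|$ against the product estimate of Lemma \ref{lem:A.estproducts}, so the conclusion should follow almost mechanically once convergence is in place.

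For the first inequality, I would fix $n$ and consider for every $M \in \mh N$ the partial sum $S_M := \sum_{m=0}^M a_m x^{\cdot_q m} \in \mc{S}(\mc R,q)$. By the triangle inequality for $p_n$ and Lemma \ref{lem:A.estproducts},
\[
p_n(S_M) \leq \sum_{m=0}^M |a_m|\, p_n(x^{\cdot_q m})
\leq \sum_{m=0}^M |a_m| \bigl( C_\eta C_b\, p_{n+b'}(x) \bigr)^m
\leq \tilde f \bigl( C_\eta C_b\, p_{n+b'}(x) \bigr) .
\]
(With the convention $p_n(x^{\cdot_q 0}) = p_n(1) = 1$, which fits the $m=0$ factor.) The assumption that $f(x,q)$ is well-defined says precisely that $C_\eta C_b\, p_{n+b'}(x)$ lies inside the radius of convergence of $\tilde f$ for every $n$, so the right-hand side is finite and the sequence $(S_M)_M$ is Cauchy in each seminorm $p_n$. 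Hence $S_M \to f(x,q)$ in $\mc{S}(\mc{R},q)$ and the desired estimate follows by passing to the limit, using continuity of $p_n$.

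For the second inequality the same scheme works, now with the telescoping bound
\[
p_n\bigl( x^{\cdot_q m} - x^{\cdot_{q'} m} \bigr)
\leq \rho(q,q')\, \bigl( C_\eta C_b\, p_{n+b'}(x) \bigr)^m
\]
from the second part of Lemma \ref{lem:A.estproducts}, summed against $|a_m|$. The main thing to be careful about is the convergence issue in the preceding paragraph: one must confirm that ``$f(x,q)$ well-defined'' really forces $C_\eta C_b\, p_{n+b'}(x)$ to be strictly less than the radius of convergence of $f$ (equivalently of $\tilde f$), so that the series defining $f(x,q)$ converges absolutely in $\mc{S}(\mc{R},q)$ and agrees, via Theorem \ref{thm:A.frealg}(4) and the uniqueness part of Theorem \ref{thm:A.contour}, with the holomorphic functional calculus element $f(x,q)$. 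Once this is granted, both estimates are immediate consequences of Lemma \ref{lem:A.estproducts} applied term by term.
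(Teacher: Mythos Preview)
Your approach is the same as the paper's: apply Lemma \ref{lem:A.estproducts} termwise to the power series and sum. The paper's proof is exactly the chain
\[
p_n(f(x,q)) \;\leq\; \sum_{m} |a_m|\, p_n\bigl((x,q)^m\bigr)
\;\leq\; \sum_m |a_m|\bigl(C_\eta C_b\, p_{n+b'}(x)\bigr)^m
\;=\; \tilde f\bigl(C_\eta C_b\, p_{n+b'}(x)\bigr),
\]
and the analogous chain for the difference, followed by the remark that the right-hand sides may well be infinite, which is harmless for the inequality.

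The one point where you go astray is the sentence ``The assumption that $f(x,q)$ is well-defined says precisely that $C_\eta C_b\, p_{n+b'}(x)$ lies inside the radius of convergence of $\tilde f$ for every $n$.'' This is not what well-definedness says, and you offer no argument for it. Well-definedness of $f(x,q)$ just means the series $\sum_m a_m (x,q)^m$ converges in $\mc S(\mc R,q)$, i.e.\ the partial sums $S_M$ converge in every seminorm $p_n$; it does not force $C_\eta C_b\, p_{n+b'}(x)$ to lie below the radius of convergence. Fortunately you do not need this claim: from $S_M \to f(x,q)$ in $p_n$ and the bound $p_n(S_M) \leq \sum_{m\leq M}|a_m|(C_\eta C_b\, p_{n+b'}(x))^m$ you pass to the limit and obtain $p_n(f(x,q)) \leq \tilde f(C_\eta C_b\, p_{n+b'}(x))$ directly, with the right-hand side possibly equal to $+\infty$. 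Drop the unjustified claim and the accompanying detour through holomorphic functional calculus; then your argument is correct and coincides with the paper's.
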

\begin{proof}
By Proposition \ref{lem:A.estproducts} we have
\begin{align*}
p_n (f(x,q)) & =
  p_n \big( {\ts \sum_{m=0}^\infty} a_m (x,q)^m \big) \\
 & \leq {\ts \sum_{m=0}^\infty} |a_m| p_n ((x,q)^m ) \\
 & \leq {\ts \sum_{m=0}^\infty} |a_m| \big( C_\eta C_b
p_{n + b'}(x) \big)^m \\
 & = \tilde f \big( C_\eta C_b p_{n + b'}(x) \big) .
\end{align*}
\begin{align*}
p_n (f(x,q) - f(x,q')) & = p_n \big( {\ts \sum_{m=0}^\infty} a_m
  ((x,q)^m - (x,q')^m) \big) \\
 & \leq {\ts \sum_{m=0}^\infty} |a_m| p_n \big( (x,q)^m - (x,q')^m \big) \\
 & \leq {\ts \sum_{m=0}^\infty} |a_m| \rho(q,q')
  \big( C_\eta C_b p_{n + b'}(x) \big)^m \\
 & = \rho(q,q') \tilde f \big( C_\eta C_b p_{n + b'}(x) \big) .
\end{align*}
The right hand sides could be infinite, but that is no problem.
\end{proof}

\begin{prop}\label{prop:A.inversion}
The set of invertible elements $\bigcup_{q \in \mc Q} \mc S
(\mc R ,q)^\times \times \{q\}$ is open in $\mc S (W ) \times \mc Q$,
and inverting is a continuous map from this set to itself.
\end{prop}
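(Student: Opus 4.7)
The plan is to reduce the statement to inverting elements near $1 \in \mc S(\mc R, q)$ and then use the geometric series, with the estimates of Lemma \ref{lem:A.estproducts} and Lemma \ref{lem:A.estseries} providing the needed control uniformly in $q$. Concretely, fix $(x_0, q_0)$ with $x_0 \in \mc S(\mc R, q_0)^\times$ and let $y_0 \in \mc S(\mc R, q_0)$ be its inverse (which exists in $\mc S (\mc R, q_0)$ rather than only in $C^*_r(\mc H(\mc R, q_0))$ by Theorem \ref{thm:A.frealg}(3)). Choose $\eta > 0$ large enough that $B_\rho(q^0, \eta)$ contains an open neighborhood of $q_0$, so that all constants $C_\eta, C_b, b'$ from Lemma \ref{lem:A.estproducts} apply uniformly for parameters we consider.

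The key reduction is the identity $x^{-1} = z^{-1} \cdot_q y_0$ with $z := y_0 \cdot_q x$, so it suffices to invert $z$ in $\mc S(\mc R, q)$ and show that this depends continuously on $(x, q)$. Write $z = 1 + r(x,q)$ with
\[
r(x,q) \;=\; y_0 \cdot_q x \,-\, y_0 \cdot_{q_0} x_0 .
\]
Splitting this difference as $(y_0 \cdot_q x - y_0 \cdot_q x_0) + (y_0 \cdot_q x_0 - y_0 \cdot_{q_0} x_0)$ and applying Lemma \ref{lem:A.estproducts} to each summand gives, for every $n$,
\[
p_n \bigl( r(x,q) \bigr) \;\leq\; (C_\eta C_b)^2 \, p_{n+b'}(y_0) \Bigl[ p_{n+b'}(x - x_0) + \rho(q, q_0)\, p_{n+b'}(x_0) \Bigr] ,
\]
so $p_n(r(x,q)) \to 0$ as $(x,q) \to (x_0, q_0)$ in $\mc S(W) \times \mc Q$, for each $n$ independently.

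Now apply Lemma \ref{lem:A.estseries} with the holomorphic function $f(\zeta) = (1+\zeta)^{-1} = \sum_{m \geq 0}(-\zeta)^m$, whose associated $\tilde f$ is $\tilde f(\zeta) = (1-\zeta)^{-1}$, convergent on $|\zeta| < 1$. Shrinking the neighborhood of $(x_0, q_0)$ so that $C_\eta C_b\, p_{n+b'}(r(x,q)) < 1$ for the finitely many $n$ needed, the series $\sum_m (-r(x,q))^m$ converges in $\mc S(\mc R, q)$ to an element $w(x,q)$ satisfying $(1 + r(x,q)) \cdot_q w(x,q) = w(x,q) \cdot_q (1 + r(x,q)) = 1$ (the multiplication being jointly continuous by Theorem \ref{thm:A.frealg}(1)). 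Hence $x$ is invertible in $\mc S(\mc R, q)$ with inverse $w(x,q) \cdot_q y_0$, proving openness of the set of invertible pairs.

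For continuity of inversion, the second estimate of Lemma \ref{lem:A.estseries}, together with the bound on $r$ above and a further application of Lemma \ref{lem:A.estproducts} to the product $w(x,q) \cdot_q y_0$, yields
\[
p_n \bigl( w(x,q) \cdot_q y_0 - w(x_0, q_0) \cdot_{q_0} y_0 \bigr) \;\longrightarrow\; 0
\]
as $(x,q) \to (x_0, q_0)$, which is the desired continuous dependence. The only real work is bookkeeping the estimates and arranging the neighborhood so that the geometric series converges at the relevant seminorm; the main conceptual obstacle—that the multiplication varies with $q$—is already packaged into Lemma \ref{lem:A.estproducts} and Lemma \ref{lem:A.estseries}, making the argument a direct transplant of the classical Banach-algebra proof, but performed uniformly in $q \in B_\rho(q^0, \eta)$.
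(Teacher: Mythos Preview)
Your strategy coincides with the paper's: reduce to inverting an element close to $1$ by multiplying with the known inverse (you multiply on the left by $y_0$, the paper on the right by $a=(x,q)^{-1}$), then control the geometric series via Lemma~\ref{lem:A.estproducts} and Lemma~\ref{lem:A.estseries}. The splitting of $r(x,q)$ and the estimates you write down are correct.

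There is one genuine gap, though. You assert that by arranging $C_\eta C_b\, p_{n+b'}(r(x,q)) < 1$ ``for the finitely many $n$ needed'' the series $\sum_m (-r)^m$ converges \emph{in $\mc S(\mc R,q)$}. Convergence in the Fr\'echet space means convergence in \emph{every} $p_n$, and for a fixed $(x,q)$ near $(x_0,q_0)$ there is no reason the bound $C_\eta C_b\, p_{n+b'}(r) < 1$ holds for all $n$ simultaneously (the seminorms $p_{n+b'}(r)$ can grow with $n$). So the Neumann series, as you have set it up, need not converge in $\mc S(\mc R,q)$ at all, and your construction of $w(x,q)$ as an element of $\mc S(\mc R,q)$ is not justified. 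The paper avoids this by separating the two halves of the statement. For \emph{openness} it works in the $C^*$-algebra: a single seminorm bound together with Proposition~\ref{prop:A.estopnorm} gives $\norm{(r,q)}_o < 1$, so $1+r$ is invertible in $C^*_r(\mc H(\mc R,q))$, and then Theorem~\ref{thm:A.frealg}(3) places the inverse back in $\mc S(\mc R,q)$. For \emph{continuity} no global convergence of the series in $\mc S$ is needed: one fixes $n$, shrinks the neighborhood so that the relevant $p_{n+b'}$ and $p_{n+2b'}$ bounds hold, and applies Lemma~\ref{lem:A.estseries}; the neighborhood is allowed to depend on $n$. With this correction your argument goes through and is essentially identical to the paper's.
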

\begin{proof}
First we recall that if $\norm{1-h}_o < 1$, then $h$ is
invertible in $C_r^* (\mc R ,q)$, with inverse
$\sum_{n=0}^\infty (1-h)^n$. Take $q,q' \in B_\rho (q^0, \eta)
,\, y \in \mc S (\mc R) ,\, x \in \mc S (\mc R ,q)^\times$ and
write $a = (x,q)^{-1}$. If the sum converges, then
\begin{equation}\label{eq:A.3}
a \cdot_{q'} {\ts \sum_{m=1}^\infty} (1 - (x+y) \cdot_{q'} a,q')^m
= a \cdot_{q'} ((x+y) \cdot_{q'} a,q')^{-1} - a \cdot_{q'} 1
= (x+y,q')^{-1} - a .
\end{equation}
By Lemma \ref{lem:A.estproducts}
\begin{multline}\label{eq:A.4}
p_n( (x+y) \cdot_{q'} a - 1 ) \: \leq \:
p_n( x \cdot_{q'} a - x \cdot_q a ) + p_n( y \cdot_{q'} a ) \\
\leq \: \rho(q,q') C_\eta^2 C_b^2 p_{n+b'}(x) p_{n+b'}(a) +
C_\eta^2 C_b^2 p_{n+b'}(y) p_{n+b'}(a) .
\end{multline}
Let $U$ be the open neighborhood of $(x,q)$ consisting of those
$(x+y,q') \in \mc S (W) \times B_\rho (q^0 ,\eta)$ for which
\begin{align*}
\rho(q,q') C_\eta^3 C_b^2 p_{3b + |R_0 |}(x) p_{3b + |R_0 |}(a)
& < 1/2 , \\
C_\eta^3 C_b^2 p_{3b + |R_0 |}(y) p_{3b + |R_0 |}(a) & < 1/2 .
\end{align*}
By \eqref{eq:A.4} and Proposition \ref{prop:A.estopnorm} we have
\[
\norm{((x+y) \cdot_{q'} a - 1, q')}_o < 1 \qquad
\text{for all } (x+y,q') \in U ,
\]
so every element of $U$ is invertible. To prove that inverting is
continuous we consider the holomorphic function
$f(z) = \sum_{m=1}^\infty z^m = z / (1-z)$ .
By \eqref{eq:A.3} and Lemma \ref{lem:A.estseries} we have
\begin{align*}
p_n( (x+y,q')^{-1} - a) & \leq C_b^2 C_\eta^2 p_{n+b'}(a)
p_{n+b'}\big( f(1 - (x+y) \cdot_{q'} a,q') \big) \\
 & \leq C_b^2 C_\eta^2 p_{n+b'}(a) f \big( C_b C_\eta
  p_{n+2b'} (1 - (x+y) \cdot_{q'} a) \big) .
\end{align*}
Since $f(0) = 0$ we deduce from \eqref{eq:A.4} that this
expression is small whenever $\rho (q,q')$ and $y$ are small.
\end{proof}

With Proposition \ref{prop:A.inversion} we can prove that the holomorphic
functional calculus in the various Schwartz algebras is continuous in the
most general sense. For $U \subset \mh C$ we write
\[
V_U := \{ (x,q) \in \mc S(W) \times \mc Q : \mr{Sp}(x,q) \subset U \} .
\]
\begin{thm}\label{thm:A.funcalc}
Let $U \subset \mh C$ be open. Then $V_U$ is open in $\mc S(W) \times \mc Q$
and the map
\[
C^{an}(U) \times V_U \to \mc S(W) : (f,x,q) \mapsto f(x,q)
\]
is continuous.
\end{thm}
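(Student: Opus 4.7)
The plan is to combine the Cauchy-type formula from Theorem \ref{thm:A.contour} with the joint continuity of inversion established in Proposition \ref{prop:A.inversion}. First I would prove that $V_U$ is open. Fix $(x_0,q_0)\in V_U$. By Theorem \ref{thm:A.frealg}(3) the spectrum of $(x_0,q_0)$ in $\mc{S}(\mc{R},q_0)$ coincides with its spectrum in $C^*_r(\mc H(\mc R,q_0))$, hence is a compact subset of $U$. By Proposition \ref{prop:A.estopnorm}, the operator norm $\|(x,q)\|_o$ is bounded by a continuous function of $(x,q)\in\mc{S}(W)\times\mc{Q}$, so there exist a neighborhood $V'$ of $(x_0,q_0)$ and $R>0$ with $\mathrm{Sp}(x,q)\subset\{|z|\le R\}$ for all $(x,q)\in V'$. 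The set $K:=(\mh C\setminus U)\cap\{|z|\le R\}$ is compact. For every $z\in K$ the element $z\cdot 1-(x_0,q_0)$ lies in $\mc S(\mc R,q_0)^\times$, so by Proposition \ref{prop:A.inversion} the jointly continuous assignment $(z,x,q)\mapsto z-(x,q)$ lands in the open set of invertibles on a neighborhood of $\{z\}\times\{(x_0,q_0)\}$. A finite subcover of $K$ produces a neighborhood $V\subset V'$ of $(x_0,q_0)$ such that $z-(x,q)\in\mc S(\mc R,q)^\times$ for every $z\in K$ and $(x,q)\in V$. Since $\mathrm{Sp}(x,q)\subset\{|z|\le R\}$ on $V$, this yields $\mathrm{Sp}(x,q)\subset U$, proving $V\subset V_U$.

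Next I would establish the continuity of the functional calculus. Fix $(f_0,x_0,q_0)\in C^{an}(U)\times V_U$ and choose finitely many positively oriented smooth simple closed contours $\Gamma\subset U$ whose union encircles $\mathrm{Sp}(x_0,q_0)$. Shrinking the open neighborhood produced above, I may assume that $\Gamma$ still encircles $\mathrm{Sp}(x,q)$ for all $(x,q)$ in some neighborhood $V\subset V_U$ of $(x_0,q_0)$, so that by Theorem \ref{thm:A.contour}
\[
f(x,q)=\frac{1}{2\pi i}\int_\Gamma f(z)\bigl(z-(x,q)\bigr)^{-1}dz
\]
for all $(f,x,q)\in C^{an}(U)\times V$. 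Here $C^{an}(U)$ carries the topology of uniform convergence on compact subsets of $U$. The map
\[
\Gamma\times V\longrightarrow\mc{S}(W),\qquad (z,x,q)\longmapsto\bigl(z-(x,q)\bigr)^{-1}
\]
is jointly continuous by Proposition \ref{prop:A.inversion}, and $\Gamma$ is compact, hence the map is uniformly continuous and its image is bounded in every seminorm $p_n$.

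To conclude, I would estimate the difference $f(x,q)-f_0(x_0,q_0)$ via the splitting
\[
f(z)(z-(x,q))^{-1}-f_0(z)(z-(x_0,q_0))^{-1}
=\bigl(f(z)-f_0(z)\bigr)(z-(x,q))^{-1}+f_0(z)\bigl[(z-(x,q))^{-1}-(z-(x_0,q_0))^{-1}\bigr].
\]
Applying $p_n$ under the integral sign, the first summand is controlled by $\sup_{z\in\Gamma}|f(z)-f_0(z)|$ times a uniform bound on $p_n((z-(x,q))^{-1})$ for $(z,x,q)\in\Gamma\times V$; this tends to zero by the definition of convergence in $C^{an}(U)$. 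The second summand is controlled by $\sup_{z\in\Gamma}|f_0(z)|$ times $\sup_{z\in\Gamma}p_n((z-(x,q))^{-1}-(z-(x_0,q_0))^{-1})$, which tends to zero by joint continuity of inversion combined with compactness of $\Gamma$. Integrating over $\Gamma$ yields $p_n(f(x,q)-f_0(x_0,q_0))\to 0$ for every $n$, hence the continuity claimed.

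The main obstacle is the uniformity in $z\in\Gamma$ in both steps: openness of $V_U$ demands a single neighborhood of $(x_0,q_0)$ that works simultaneously for all $z\in K$, and continuity demands that the seminorms $p_n((z-(x,q))^{-1})$ be uniformly controlled and the inversion be uniformly continuous along $\Gamma$. Both facts rest on compactness of $\Gamma$ (resp.\ $K$) together with the purely local statement of Proposition \ref{prop:A.inversion}; the rest of the argument is the standard contour-integral formalism.
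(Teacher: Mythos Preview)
Your proof is correct and follows essentially the same approach as the paper: both use Theorem~\ref{thm:A.frealg} to identify the spectrum with the $C^*$-spectrum, Proposition~\ref{prop:A.estopnorm} for a uniform bound on spectral radii, Proposition~\ref{prop:A.inversion} together with compactness to obtain openness of $V_U$ and a fixed contour $\Gamma$ valid on a neighborhood, and then the Cauchy integral from Theorem~\ref{thm:A.contour} plus continuity of inversion to conclude. Your write-up is in fact more explicit than the paper's (which simply asserts that the spectrum varies upper-semicontinuously and that the contour integral is continuous by Proposition~\ref{prop:A.inversion}), but the underlying argument is the same.
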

\begin{proof}
By Theorem \ref{thm:A.frealg}.4 the spectrum of $(x,q)$ in $\mc S (\mc R,q)$
equals its spectrum in the unital $C^*$-algebra $C^*_r (\mc H (\mc R ,q))$.
By Proposition \ref{prop:A.estopnorm} $(x,q) \mapsto \norm{(x,q)}_o$ is
continuous, so Sp$(x,q)$ is uniformly bounded on bounded subsets of
$\mc S (W) \times \mc Q$. Together with Proposition \ref{prop:A.inversion} this shows
that Sp$(x,q)$ depends continuously on $(x,q)$, in the following sense. Given
$\epsilon > 0$, there exists a neighborhood $N$ of $(x,q)$ in $\mc S (W) \times \mc Q$,
such that for all $(x',q') \in N$:
\[
\mr{Sp}(x',q') \subset \{ z' \in \mh C : \exists z \in \mr{Sp}(x,q) : |z - z'| < \epsilon \} .
\]
Since Sp$(x,q)$ is compact, it follows that $V_U$ is open in $\mc S (W) \times \mc Q$.

For every connected component $U_j$ of $U$ that meets Sp$(x,q)$, let $\Gamma_j$
be a positively oriented smooth simple contour closed in $U_j$ that encircles
Sp$(x,q) \cap U_j$, as in Theorem \ref{thm:A.contour}. Since Sp$(x,q)$ is compact,
we need only finitely many components. The above shows that $\Gamma_j$ also
encircles Sp$(x',q') \cap U_j$ for $(x',q')$ in a small neighborhood of $(x,q)$ in
$\mc S (W) \times \mc Q$. Now Theorem \ref{thm:A.contour} tells us that
\[
 f(x',q') = \frac{1}{2 \pi i} \sum_j \int_{\Gamma_j} f(z) (z-x' ,q' )^{-1} \textup{d}z
\]
for all such $(x',q')$, so by \ref{prop:A.inversion} $(f,x',q') \mapsto f(x',q')$ is continuous.
\end{proof}

\printindex


\begin{thebibliography}{99}
\bibitem[ABP]{ABP} A.-M. Aubert, A.-M., Baum, P., Plymen, R.,
The Hecke algebra of a reductive p-adic group: a geometric
conjecture. Aspects of Mathematics 37, Vieweg
Verlag (2006) 1--34.

\bibitem[BM1]{BM0} Barbasch, D., Moy, A.,
A unitarity criterion for $p$-adic groups.
Invent. Math. 98 (1989), no. 1, 19--37.

\bibitem[BM2]{BM} Barbasch, D., Moy, A.,
Reduction to real infinitesimal character in affine Hecke
algebras. Journal of the AMS 6 (1993), no. 3, 611--630.

\bibitem[BZ]{BZ} Bernstein, J., Zelevinski, V.,
Induced representations on reductive $p$-adic groups.
Ann. Sci. Ec. Norm. Sup. 10 (1977), 441--472.

\bibitem[BKR]{BKR} Blackadar, B., Kumjian, A., R{\o}rdam, M.,
Approximately central matrix units and the structure of
noncommutative tori.
$K$-Theory 6 (1992), no. 3, 267--284.

\bibitem[Bl]{Blo} Blondel, C.,
Propagation de paires couvrantes dans les groupes symplectiques.
Representation Theory 10 (2006), 399--434.

\bibitem[Bo]{Bo} Borel, A.,
Admissible representations of a semisimple group over a local field
with vectors fixed under an Iwahori subgroup.
Invent. Math. 35 (1976), 233--259.

\bibitem[BHK]{BHK} Bushnell, C.J., Henniart, G., Kutzko, P.C.,
Towards an explicit Plancherel formula for reductive $p$-adic
groups. Preprint 2005.

\bibitem[BK]{BK1} Bushnell, C.J., Kutzko, P.C.,
Types in reductive $p$-adic groups: the Hecke algebra
of a cover.
Proc. Amer. Math. Soc. 129 (2001),
no. 2, 601--607.

\bibitem[Ca]{C} Carter, R.W.,
Finite groups of Lie type.
Wiley Classics Library, John Wiley
and sons, Chichester, UK, 1993.

\bibitem[Ch]{Ch2} Cherednik, I.V.,
Double affine Hecke algebras and Macdonald's conjectures.
Annals of Math. 141 (1995), 191--216.

\bibitem[DO]{DeOp1} Delorme, P.,  Opdam, E.M.,
The Schwartz algebra of an affine Hecke algebra.
J. reine angew. Math. 625 (2008), 59--114

\bibitem[EOS]{EOS} Emsiz, E., Opdam, E.M., Stokman, J.V.,
Periodic integrable systems with delta-potentials.
Comm. Math. Phys. 264 (2006), no. 1, 191--225.

\bibitem[Ge]{Geck} Geck, M.,
On the representation theory of Iwahori-Hecke
algebras of extended finite Weyl groups.
Representation Theory 4 (2000), 370--397.

\bibitem[HO1]{HO} Heckman, G.J., Opdam, E.M.,
Yang's system of particles and Hecke algebras.
Ann. of Math. 145 (1997), 139--173.

\bibitem[HO2]{HOH} Heckman, G.J.,  Opdam, E.M.,
Heckman, G.J., Opdam, E.M.,
Harmonic analysis for affine Hecke algebras, in
Current developments in mathematics (Cambridge, MA, 1996), pp. 37--60.
Int. Press, Boston, MA, 1997.

\bibitem[Hu]{Hum1} Humphreys, J.E.,
Introduction to Lie algebras and Representation Theory.
GTM 9, 3rd ed. Springer Verlag, 1980.

\bibitem[IM]{IwMa} Iwahori, N., Matsumoto, H.,
On some Bruhat decomposition and the structure
of the Hecke rings of the $p$-adic Chevalley groups.
Inst. Hautes \'Etudes Sci. Publ. Math. 25 (1965), 5--48.

\bibitem[Kat2]{Kat2} Kato, S.,
An exotic Deligne-Langlands correspondence for symplectic groups.
Preprint, 2006, To appear in Duke Math. J.

\bibitem[KL]{KL} Kazhdan, D., Lusztig, G.,
Proof of the Deligne-Langlands conjecture for affine Hecke algebras.
Invent. Math. 87 (1987), 153--215.

\bibitem[Lu1]{Lus1} Lusztig, G.,
Cells in affine Weyl groups,
in Algebraic groups and related topics (Kyoto/Nagoya, 1983), pp. 255--287.
Adv. Stud. Pure Math., 6, North-Holland, Amsterdam, 1985.

\bibitem[Lu2]{Lus2} Lusztig, G.,
Affine Hecke algebras and their graded version.
J. Amer. Math. Soc. 2 (1989), 599--635.

\bibitem[Lu3]{Lu4} Lusztig, G.,
Classification of unipotent representations of simple
$p$-adic groups.
Internat. Math. Res. Notices 11 (1995), 517--589.

\bibitem[Lu4]{LuCL} Lusztig, G.,
Cuspidal local systems and graded Hecke algebras II.
in Representations of groups (Banff, AB, 1994), pp. 217--275.
CMS Conf. Proc., 16,
Amer. Math. Soc., Providence, RI, 1995.

\bibitem[Lu5]{Lus3} Lusztig, G.,
Cuspidal local systems and graded Hecke algebras III.
Representation Theory 6 (2002), 202--242.

\bibitem[Lu6]{Lu5} Lusztig, G.,
Hecke algebras with Unequal Parameters.
CRM Monograph Series 18,
Amer. Math. Soc., Providence RI, 2003.

\bibitem[Mac1]{Ma2} Macdonald, I.G.,
Spherical functions on a group of $p$-adic type.
Publications of the Ramanujan Institute, No. 2.
Ramanujan Institute,
Centre for Advanced Study in Mathematics,
University of Madras, Madras, 1971.

\bibitem[Mat]{Mat} Matsumoto, H.,
Analyse harmonique dans les syst\'emes de Tits
bornologiques de type affine.
Springer Lecture Notes 590 (1977).

\bibitem[Mo1]{Mo1} Morris, L.,
Tamely ramified intertwining algebras.
Invent. Math. 114 (1993), 233--274.

\bibitem[Mo2]{Mo2} Morris, L.,
Level zero $G$-types.
Compositio Math. 118 (1999), no. 2, 135--157.

\bibitem[Op1]{Opd1} Opdam, E.M.,
On the spectral decomposition of affine Hecke algebras.
J. Inst. Math. Jussieu 3 (2004), no. 4, 531--648.

\bibitem[Op2]{Opd2} Opdam, E.M.,
Opdam, Eric M. Hecke algebras and harmonic analysis.
International Congress of Mathematicians. Vol. II,
1227--1259, Eur. Math. Soc., Z\"urich, 2006.

\bibitem[Op3]{Opd3} Opdam, E.M.,
The central support of the Plancherel measure of an
affine Hecke algebra.
Moscow Mathematical Journal 7 (2007), no. 4, 723--741.

\bibitem[OS]{OpdSol} Opdam E.M., and Solleveld, M.S.,
Homological algebra for affine Hecke algebras.
Adv. in Math. 220 (2009), 1549--1601.

\bibitem[Phi]{Phi} Phillips, N.C.,
$K$-theory for Fr\'echet algebras.
Internat. J. Math. 2 (1991), no. 1, 77--129.

\bibitem[RR]{RamRam} Ram, A., Ramagge, J.,
Affine Hecke algebras,
cyclotomic Hecke algebras and Clifford theory.
A tribute to C. S. Seshadri (Chennai, 2002), 428--466,
Trends Math., Birkh\"auser, Basel, 2003.

\bibitem[Re1]{Re} Reeder, M.,
Formal degrees and L-packets of unipotent
discrete series of exceptional $p$-adic groups.
with an appendix by Frank L\"ubeck
J. reine angew. Math. 520 (2000), 37--93.

\bibitem[Re2]{Ree} Reeder, M.,
Euler-Poincar\'e pairings and
elliptic representations of Weyl groups and $p$-adic groups.
Compositio Math. 129 (2001), 149--181.

\bibitem[SS]{ScSt} Schneider, P., Stuhler, U.,
Representation theory and sheaves on the Bruhat-Tits building.
Publ. Math. Inst. Hautes \'Etudes Sci. 85 (1997), 97--191.

\bibitem[Slo1]{SlootenThesis} Slooten, K.,
A combinatorial
generalization of the Springer correspondence for classical type.
PhD Thesis, Universiteit van Amsterdam (2003),
http://dare.uva.nl/en/record/119136.

\bibitem[Sl2]{Slooten} Slooten, K.,
Generalized Springer correspondence and Green functions for type B/C
graded Hecke algebras.
Advances in Mathematics 203 (2006), 34--108.

\bibitem[So]{Sol} Solleveld, M.S.,
Periodic cyclic homology of affine Hecke algebras.
PhD Thesis, Universiteit van Amsterdam (2007),
http://dare.uva.nl/en/record/217308.

\bibitem[Ta]{Tak} Takesaki, M.,
Theory of operator algebras I.
Springer-Verlag, New York, 1979

\bibitem[Vi]{Vig} Vign\'eras, M.-F.,
On formal dimensions for reductive $p$-adic groups,
in Festschrift in honor of I. I. Piatetski-Shapiro on the occasion of his
sixtieth birthday, Part I (Ramat Aviv, 1989), pp. 225--266.
Israel Math. Conf. Proc., 2,
Weizmann, Jerusalem, 1990.

\bibitem[Wa]{W} Waldspurger, J.-L.,
La formule de Plancherel pour les groupes p-adiques
(d\'\,apr\`es Harish-Chandra),
J. Inst. Math. Jussieu  2 (2003), no.2, 235--333.
\end{thebibliography}
\end{document}